\newif
\ifdraftmode
\draftmodefalse

\ifdraftmode
\else
\fi	
\RequirePackage{fix-cm}
\documentclass[reqno]{amsart}
\usepackage[margin=1.5in,bottom=1.25in]{geometry}	

\usepackage{amssymb}	
\usepackage{amsfonts}	
\usepackage{amsthm}	
\usepackage[foot]{amsaddr}	

\usepackage{mathtools}	
\mathtoolsset{%
centercolon=true,
}

\usepackage[
cal=cm,
]
{mathalfa}

\usepackage{dsfont}	

\usepackage[tabular,lining,sf,mono=false]{libertine}

\usepackage[scaled=.95]{AlegreyaSans}

\usepackage[T1]{fontenc}	

\usepackage{acronym}	
\newcommand{\acli}[1]{\emph{\acl{#1}}}	
\newcommand{\acdef}[1]{\define{\acl{#1}} \textup{(\acs{#1})}\acused{#1}}	

\usepackage[labelfont={bf,small},font=small]{caption}	
\usepackage[svgnames]{xcolor}	

\definecolor{Bonfire}{HTML}{9E162E}
\definecolor{CardinalRed}{HTML}{C41E3A}
\definecolor{TuckOrange}{HTML}{D94415}

\definecolor{CadmiumGreen}{HTML}{097969}
\definecolor{Dartmouth}{HTML}{00693E}
\definecolor{ForestGreen}{HTML}{12312B}
\definecolor{RichForestGreen}{HTML}{0D1E1C}
\definecolor{SeaGreen}{HTML}{2E8B57}
\definecolor{SpringGreen}{HTML}{EAFAF1}
\definecolor{Jade}{HTML}{009900}

\definecolor{CobaltBlue}{HTML}{0047AB}
\definecolor{NavyBlue}{HTML}{000080}
\definecolor{RiverBlue}{HTML}{267ABA}
\definecolor{RiverNavy}{HTML}{003C73}
\definecolor{KleinBlue}{HTML}{002FA7}
\definecolor{OxfordBlue}{HTML}{002147}
\definecolor{SapphireBlue}{HTML}{0F52BA}
\definecolor{Zaffre}{HTML}{0819A8}

\colorlet{MyRed}{CardinalRed}
\colorlet{MyGreen}{Dartmouth}
\colorlet{MyBlue}{DodgerBlue}
\colorlet{MyViolet}{DarkOrchid}

\colorlet{MyLightRed}{MyRed!25}
\colorlet{MyLightGreen}{MyGreen!25}
\colorlet{MyLightBlue}{MyBlue!25}

\colorlet{PrimalColor}{MidnightBlue!50!DodgerBlue}
\colorlet{PrimalFill}{DodgerBlue!25}
\colorlet{DualColor}{MyRed}

\colorlet{AlertColor}{MyRed}	
\colorlet{BadColor}{MyRed}	
\colorlet{GoodColor}{MyGreen}	
\colorlet{LinkColor}{MediumBlue}	
\colorlet{RevColor}{MediumBlue}	

\ifdraftmode
	\colorlet{DraftColor}{MyRed}	
\else
	\colorlet{DraftColor}{black}	
\fi

\newcommand{\afterhead}{.}	
\newcommand{\para}[1]{\smallskip\paragraph{\textbf{#1\afterhead}}}	

\usepackage{latexsym}	
\usepackage{fontawesome}	
\usepackage{pifont}	

\newcommand{\asterism}{\ding{70}}

\usepackage{tikz}	
\usetikzlibrary{calc,patterns}	

\usepackage{array}	
\usepackage{booktabs}	
\usepackage[inline,shortlabels]{enumitem}	
\setlist[1]{topsep=\smallskipamount,itemsep=\smallskipamount,left=\parindent}
\setlist[2]{left=0pt}
\makeatletter
\AddToHook{env/enumerate*/begin}{\ifhmode\unskip\fi}
\makeatother

\usepackage[kerning=true]{microtype}	

\usepackage{tabto}	
\usepackage{xspace}	

\usepackage[sort&compress]{natbib}	

\bibpunct[, ]{[}{]}{,}{}{,}{,}
\setcitestyle{numbers,square}

\ifdraftmode
	\usepackage[notref,notcite,color]{showkeys}	
	\colorlet{refkey}{DraftColor}
	\colorlet{labelkey}{DraftColor}
\fi

\usepackage{hyperref}
\hypersetup{
final,
colorlinks=true,
linktocpage=true,
pdfstartview=FitH,
breaklinks=true,
pdfpagemode=UseNone,
pageanchor=true,
pdfpagemode=UseOutlines,
plainpages=false,
bookmarksnumbered,
bookmarksopen=false,
bookmarksopenlevel=1,
hypertexnames=false,
pdfhighlight=/O,
urlcolor=LinkColor,linkcolor=LinkColor,citecolor=LinkColor,	
pdftitle={},
pdfauthor={},
pdfsubject={},
pdfkeywords={},
pdfcreator={pdfLaTeX},
pdfproducer={LaTeX with hyperref}
}

\usepackage{thmtools}	
\usepackage{thm-restate}	

\usepackage[sort&compress,capitalize,nameinlink]{cleveref}	

\crefname{algo}{Algorithm}{Algorithms}
\crefname{assumption}{Assumption}{Assumptions}

\makeatletter
\AtBeginDocument
{
	\def\ltx@label#1{\cref@label{#1}}	
	\def\label@in@display@noarg#1{\cref@old@label@in@display{#1}}	
}%
\makeatother

\usepackage{algorithm}	
\usepackage{algpseudocode}	

\usepackage[most]{tcolorbox}

\theoremstyle{plain}
\newtheorem{theorem}{Theorem}	
\newtheorem{corollary}{Corollary}	
\newtheorem{lemma}{Lemma}	
\newtheorem{proposition}{Proposition}	

\newtheorem*{theorem*}{Theorem}	
\newtheorem*{corollary*}{Corollary}	

\theoremstyle{definition}
\newtheorem{definition}{Definition}	
\newtheorem{assumption}{Assumption}	
\newtheorem{example}{Example}	

\newtheorem*{definition*}{Definition}	
\newtheorem*{assumption*}{Assumptions}	
\newtheorem*{example*}{Example}	

\theoremstyle{remark}
\newtheorem{remark}{Remark}	

\newtheorem*{remark*}{Remark}	
\newtheorem*{notation*}{Notation}	

\def\endenv{\hfill\asterism}	

\newcounter{proofstep}

\numberwithin{example}{section}	

\ifdraftmode
	\usepackage[showdeletions]{color-edits}	
\else
	\usepackage[suppress]{color-edits}	
\fi

\ifdraftmode
	\newcommand{\draft}[1]{{\color{DraftColor}#1}}	
\else
	\newcommand{\draft}[1]{#1}	
\fi

\newcommand{\define}[1]{\emph{\draft{#1}}}	

\newcommand{\explain}[1]{\tag*{\small\ensuremath{\commentsymbol}\;\text{#1}}}

\newcommand{\newmacro}[2]{\newcommand{#1}{\draft{#2}}}	
\newcommand{\newop}[2]{\DeclareMathOperator{#1}{\draft{#2}}}	
\newcommand{\newoplims}[2]{\DeclareMathOperator*{#1}{\draft{#2}}}	

\newcommand{\eps}{\varepsilon}	

\DeclarePairedDelimiter{\braces}{\{}{\}}	
\DeclarePairedDelimiter{\bracks}{[}{]}	
\DeclarePairedDelimiter{\parens}{(}{)}	

\DeclarePairedDelimiter{\abs}{\lvert}{\rvert}	

\DeclarePairedDelimiterX{\setdef}[2]{\{}{\}}{#1:#2}	
\DeclarePairedDelimiterXPP{\exclude}[1]{\mathopen{}\setminus}{\{}{\}}{}{#1}	

\DeclarePairedDelimiterX{\braket}[2]{\langle}{\rangle}{#1,#2}	
\DeclarePairedDelimiterX{\inner}[2]{\langle}{\rangle}{#1,#2}	

\DeclarePairedDelimiter{\norm}{\lVert}{\rVert}	
\DeclarePairedDelimiterXPP{\dnorm}[1]{}{\lVert}{\rVert}{_{\draft{\ast}}}{#1}	
\DeclarePairedDelimiterXPP{\onenorm}[1]{}{\lVert}{\rVert}{_{\draft{1}}}{#1}	
\DeclarePairedDelimiterXPP{\twonorm}[1]{}{\lVert}{\rVert}{_{\draft{2}}}{#1}	
\DeclarePairedDelimiterXPP{\pnorm}[1]{}{\lVert}{\rVert}{_{\draft{p}}}{#1}	
\DeclarePairedDelimiterXPP{\qnorm}[1]{}{\lVert}{\rVert}{_{\draft{q}}}{#1}	
\DeclarePairedDelimiterXPP{\supnorm}[1]{}{\lVert}{\rVert}{_{\draft{\infty}}}{#1}	
\DeclarePairedDelimiterXPP{\matnorm}[1]{}{\lVert}{\rVert}{_{\draft{F}}}{#1}	

\newop{\defeq}{\coloneqq}	
\newop{\eqdef}{\eqqcolon}	

\newmacro{\from}{\colon}	
\newop{\too}{\rightrightarrows}	
\newop{\injects}{\hookrightarrow}	
\newop{\surjects}{\twoheadrightarrow}	

\newmacro{\F}{\mathbb{F}}	
\newmacro{\N}{\mathbb{N}}	
\newmacro{\Z}{\mathbb{Z}}	
\newmacro{\Q}{\mathbb{Q}}	
\newmacro{\R}{\mathbb{R}}	
\newmacro{\C}{\mathbb{C}}	

\newoplims{\argmax}{arg\,max}	
\newoplims{\argmin}{arg\,min}	
\newoplims{\intersect}{\bigcap}	
\newoplims{\union}{\bigcup}	

\newop{\aff}{aff}	
\newop{\bd}{bd}	
\newop{\bigoh}{\mathcal{O}}	
\newop{\tildeoh}{\mathcal{\tilde O}}	
\newop{\baroh}{\mathcal{\bar O}}	
\newmacro{\littleoh}{o}	
\newop{\card}{\#}	
\newop{\cl}{cl}	
\newop{\conv}{conv}	
\newop{\crit}{crit}	
\newop{\curl}{curl}	
\newop{\diag}{diag}	
\newop{\diam}{diam}	
\newop{\dist}{dist}	
\newop{\diver}{div}	
\newop{\dom}{dom}	
\newop{\eig}{eig}	
\newop{\ess}{ess}	
\newop{\grad}{grad}	
\newop{\Hess}{Hess}	
\newop{\ind}{ind}	
\newop{\im}{im}	
\newop{\intr}{int}	
\newop{\Jac}{Jac}	
\newop{\one}{\mathds{1}}	
\newop{\proj}{proj}	
\newop{\prox}{prox}	
\newop{\rank}{rank}	
\newop{\relint}{ri}	
\newop{\sign}{sgn}	
\newop{\supp}{supp}	
\newop{\Sym}{Sym}	
\newop{\tr}{tr}	
\newop{\unif}{unif}	
\newop{\vol}{vol}	

\newcommand{\cf}{cf.\xspace}	
\newcommand{\eg}{e.g.,\xspace}	
\newcommand{\ie}{i.e.,\xspace}	
\newcommand{\viz}{viz.\xspace}	

\newcommand{\textpar}[1]{\textup(#1\textup)}	

\newmacro{\commentsymbol}{\triangleright}	
\newcommand{\txs}{\textstyle}	
\newcommand{\eqcomma}{\,,}	
\newcommand{\eqstop}{\,.}	

\newcommand{\alt}[1]{#1'}	
\newcommand{\avg}[1]{\bar#1}	

\newmacro{\argdot}{\boldsymbol{\cdot}}	
\newmacro{\dd}{\kern0pt\:d}	
\newmacro{\ddt}{\frac{d}{dt}}	
\newmacro{\del}{\partial}	

\newcommand{\insum}{\sum\nolimits}	

\newmacro{\const}{c}	
\newmacro{\Const}{C}	

\newmacro{\param}{\theta}	
\newmacro{\params}{\Theta}	

\newmacro{\coef}{\lambda}	

\newmacro{\fn}{f} 

\newmacro{\pexp}{p}	
\newmacro{\qexp}{q}	
\newmacro{\rexp}{r}	

\newmacro{\iCount}{i}	
\newmacro{\jCount}{j}	
\newmacro{\kCount}{k}	
\newmacro{\nCounts}{n}	
\newmacro{\counts}{\mathcal{I}}	

\newmacro{\point}{x}	
\newmacro{\pointalt}{\alt\point}	
\newmacro{\pointaux}{u}	
\newmacro{\points}{\mathcal{X}}	
\newmacro{\intpoints}{\relint\points}	

\newmacro{\base}{q}	
\newmacro{\basealt}{q}	
\newmacro{\auxpoint}{u}	

\newmacro{\elem}{a}	
\newmacro{\elemalt}{b}	
\newmacro{\iElem}{a}	
\newmacro{\jElem}{b}	
\newmacro{\kElem}{c}	
\newmacro{\set}{\mathcal{A}}	

\newmacro{\borel}{\mathcal{B}}	
\newmacro{\closed}{\mathcal{C}}	
\newmacro{\cpt}{\mathcal{K}}	
\newmacro{\nhd}{\mathcal{U}}	
\newmacro{\nhdalt}{\mathcal{V}}	
\newmacro{\open}{\mathcal{U}}	

\newmacro{\domain}{\mathcal{D}}	
\newmacro{\region}{\mathcal{R}}	

\newmacro{\interval}{\mathcal{I}}	
\newmacro{\rectangle}{\mathcal{R}}	
\newmacro{\cone}{\mathcal{K}}	

\newmacro{\tstart}{0}	
\renewcommand{\time}{\draft{t}}	
\newmacro{\timealt}{s}	
\newmacro{\timealtalt}{\tau}	
\newmacro{\horizon}{T}	

\newmacro{\curve}{\gamma}	
\DeclarePairedDelimiterXPP{\curveof}[1]{\curve}{(}{)}{}{#1}	
\DeclarePairedDelimiterXPP{\curveofX}[2]{\curve_{#1}}{(}{)}{}{#2}	
\DeclarePairedDelimiterXPP{\velof}[1]{\dot\curve}{(}{)}{}{#1}	
\DeclarePairedDelimiterXPP{\velofX}[2]{\dot\curve_{#1}}{(}{)}{}{#2}	

\newmacro{\flowmap}{\Phi}	
\DeclarePairedDelimiterXPP{\flowof}[2]{\flowmap_{#1}}{(}{)}{}{#2}	

\newmacro{\traj}{x}	
\newmacro{\dtraj}{\dot\traj}	

\DeclarePairedDelimiterXPP{\trajof}[1]{\traj}{(}{)}{}{#1}	
\DeclarePairedDelimiterXPP{\trajofX}[2]{\traj_{#1}}{(}{)}{}{#2}	
\DeclarePairedDelimiterXPP{\dtrajof}[1]{\dtraj}{(}{)}{}{#1}	
\DeclarePairedDelimiterXPP{\dtrajofX}[2]{\dtraj_{#1}}{(}{)}{}{#2}	

\newmacro{\vdim}{n}	
\newmacro{\realspace}{\R^{\vdim}}	

\newmacro{\iCoord}{i}	
\newmacro{\jCoord}{j}	
\newmacro{\kCoord}{k}	
\newmacro{\nCoords}{\vdim}	
\newmacro{\mCoords}{m}	

\newmacro{\unitvec}{u}	
\newmacro{\bvec}{e}	
\newmacro{\bvecs}{\mathcal{E}}	

\newmacro{\vecspace}{\mathcal{V}}	
\newmacro{\subspace}{\mathcal{W}}	

\newcommand{\dual}[1][\vecspace]{#1^{\ast}}	
\newmacro{\dspace}{\dual[\vecspace]}	

\newmacro{\hilbert}{\mathcal{H}}	
\newmacro{\banach}{\mathcal{X}}	

\newmacro{\mat}{M}	
\newmacro{\hmat}{H}	

\newmacro{\ones}{\mathbf{1}}	
\newmacro{\eye}{I}	
\newmacro{\zer}{\mathbf{0}}	

\newmacro{\eigval}{\lambda}	
\newmacro{\eigvec}{u}	

\newmacro{\ball}{\mathbb{B}}	
\newmacro{\sphere}{\mathbb{S}}	
\newmacro{\radius}{r}
\newmacro{\Radius}{R}

\newmacro{\mfld}{\mathcal{M}}	
\newmacro{\tanvec}{z}	
\newmacro{\form}{\omega}	

\newmacro{\gmat}{g}	
\newmacro{\gdist}{\dist_{\gmat}}	

\newmacro{\vertex}{v}	
\newmacro{\vertexalt}{w}	
\newmacro{\iVertex}{\vertex_{\iCount}}	
\newmacro{\jVertex}{\vertex_{\jCount}}	
\newmacro{\kVertex}{\vertex_{\kCount}}	
\newmacro{\nVertices}{V}	
\newmacro{\vertices}{\mathcal{V}}	

\newmacro{\edge}{e}	
\newmacro{\edgealt}{\alt\edge}	
\newmacro{\iEdge}{\edge_{\iCount}}	
\newmacro{\jEdge}{\edge_{\jCount}}	
\newmacro{\kEdge}{\edge_{\kCount}}	
\newmacro{\nEdges}{E}	
\newmacro{\edges}{\mathcal{\nEdges}}	

\newmacro{\graph}{\mathcal{G}}	
\newmacro{\graphfull}{\graph(\vertices,\edges)}	

\newop{\minimize}{minimize}	
\newop{\opt}{Opt}	
\newop{\gap}{Gap}	

\newmacro{\cvx}{\mathcal{C}}	
\newmacro{\obj}{f}	
\newmacro{\sobj}{F}	
\newmacro{\oper}{A}	
\newmacro{\vecfield}{v}	

\newmacro{\subd}{\partial}	
\newmacro{\subsel}{\nabla}	
\newmacro{\gvec}{g}	

\newmacro{\gbound}{G}	
\newmacro{\vbound}{V}	
\newmacro{\lips}{L}	
\newmacro{\strong}{\mu}	
\newmacro{\smooth}{\beta}	

\newop{\tcone}{TC}	
\newop{\dcone}{\tcone^{\ast}}	
\newop{\ncone}{NC}	
\newop{\pcone}{PC}	
\newop{\hull}{\Delta}	

\newcommand{\sol}[1][\point]{#1^{\ast}}	

\newop{\ex}{\mathbb{E}}	
\newop{\prob}{\mathbb{P}}	
\newop{\Var}{\mathbb{V}}	
\newop{\cov}{cov}	
\newop{\simplex}{\Delta}	

\providecommand\given{}	

\DeclarePairedDelimiterXPP{\exof}[1]{\ex}{[}{]}{}{
\renewcommand\given{\nonscript\,\delimsize\vert\nonscript\,\mathopen{}} #1}

\DeclarePairedDelimiterXPP{\exwrt}[2]{\ex_{#1}}{[}{]}{}{
\renewcommand\given{\nonscript\:\delimsize\vert\nonscript\:\mathopen{}} #2}

\DeclarePairedDelimiterXPP{\probof}[1]{\prob}{(}{)}{}{
\renewcommand\given{\nonscript\:\delimsize\vert\nonscript\:\mathopen{}} #1}

\DeclarePairedDelimiterXPP{\probwrt}[2]{\prob_{#1}}{(}{)}{}{
\renewcommand\given{\nonscript\:\delimsize\vert\nonscript\:\mathopen{}} #2}

\DeclarePairedDelimiterXPP{\oneof}[1]{\one}{\{}{\}}{}{#1}	

\DeclarePairedDelimiterXPP{\varof}[1]{\var}{[}{]}{}{
\renewcommand\given{\nonscript\,\delimsize\vert\nonscript\,\mathopen{}} #1}

\DeclarePairedDelimiterXPP{\covof}[1]{\cov}{(}{)}{}{
\renewcommand\given{\nonscript\,\delimsize\vert\nonscript\,\mathopen{}} #1}

\newmacro{\event}{\mathcal{E}}       
\newmacro{\eventalt}{H}       

\newmacro{\sample}{\omega}	
\newmacro{\samples}{\Omega}	
\newmacro{\filter}{\mathcal{F}}	
\newmacro{\probspace}{(\samples,\filter,\prob)}	

\newmacro{\pdist}{P}	
\newmacro{\history}{\mathcal{H}}	

\newcommand{\as}{\draft{\textpar{a.s.}}\xspace}	

\newmacro{\mean}{\mu}	
\newmacro{\sdev}{\sigma}	
\newmacro{\variance}{\sdev^{2}}	
\newmacro{\covmat}{\Sigma}	

\newcommand{\new}[1][\point]{#1^{+}}	

\newmacro{\seq}{a}	
\newmacro{\seqalt}{b}	

\newmacro{\beforestart}{0}	
\newmacro{\start}{1}	
\newmacro{\afterstart}{2}	
\newmacro{\running}{\start,\afterstart,\dotsc}	

\newmacro{\run}{t}	
\newmacro{\runalt}{s}	
\newmacro{\runaltalt}{\tau}	
\newmacro{\nRuns}{T}	
\newmacro{\runs}{\mathcal{\nRuns}}	

\newcommand{\init}[1][\point]{\draft{#1}_{\start}}	

\newcommand{\curr}[1][\point]{\draft{#1}_{\run}}	
\renewcommand{\next}[1][\point]{\draft{#1}_{\run+1}}	

\newcommand{\last}[1][\point]{\draft{#1}_{\nRuns}}	

\newif
\ifparenargs
\parenargstrue

\ifparenargs
	\DeclarePairedDelimiterXPP{\eval}[2]{#1}{(}{)}{}{#2}	
	\DeclarePairedDelimiterXPP{\timed}[1]{#1}{(}{)}{}{\time}	
\else

	\makeatletter
	\newcommand{\eval}[2]{\eval@aux#1_\eval@stop{#2}}
	\def\eval@aux#1_#2\eval@stop#3{%
		\if\relax\detokenize{#2}\relax 
		#1_{#3}%
	\else 
		#1_{#2{},#3}%
  \fi
	}
	\makeatother
	\newcommand{\timed}[1]{\eval{#1}{\time}}
\fi

\newmacro{\hreg}{h}	
\newmacro{\proxdom}{\points_{\hreg}}	

\newmacro{\breg}{D}	
\newmacro{\mprox}{P}	

\newmacro{\hconj}{h^{\ast}}	
\newmacro{\mirror}{Q}	
\newmacro{\fench}{F}	
\newmacro{\hker}{\theta}	

\newmacro{\hstr}{K}	
\newmacro{\hrange}{H}	

\newmacro{\learn}{\eta}	
\newmacro{\weight}{\lambda}	

\DeclarePairedDelimiterXPP{\bregof}[2]{\breg}{(}{)}{}{#1,#2}	
\DeclarePairedDelimiterXPP{\bregofX}[3]{\breg_{#1}}{(}{)}{}{#2,#3}	
\DeclarePairedDelimiterXPP{\fenchof}[2]{\fench}{(}{)}{}{#1,#2}	
\DeclarePairedDelimiterXPP{\fenchofX}[3]{\fench_{#1}}{(}{)}{}{#2,#3}	
\DeclarePairedDelimiterXPP{\proxof}[2]{\mprox_{#1}}{(}{)}{}{#2}	

\newmacro{\choice}{\mirror}	

\newmacro{\zone}{\mathbb{D}}	

\newop{\Eucl}{\Pi}	
\newop{\logit}{\Lambda}	
\newop{\dkl}{KL}	

\newmacro{\dvec}{w}	
\newmacro{\dpoint}{y}	
\newmacro{\dpointalt}{\alt\dpoint}	
\newmacro{\dpoints}{\mathcal{Y}}	

\newmacro{\score}{y}	
\newmacro{\scorealt}{\alt\score}	
\newmacro{\scoreaux}{z}	
\newmacro{\scores}{\payspace}	

\newmacro{\momexp}{p}	

\newmacro{\state}{X}	
\newmacro{\dstate}{Y}	

\newmacro{\drift}{b}	
\newmacro{\diffmat}{\Sigma}	

\newmacro{\brown}{W}	
\newmacro{\ito}{M}	
\newmacro{\levy}{L}	

\newmacro{\cadshort}{S}	
\newmacro{\cadlong}{U}	

\ifparenargs
	\DeclarePairedDelimiterXPP{\brownof}[1]{\brown}{(}{)}{}{#1}	
	\DeclarePairedDelimiterXPP{\brownofi}[1]{\brown_{\iCoord}}{(}{)}{}{#1}	
	\DeclarePairedDelimiterXPP{\brownofj}[1]{\brown_{\jCoord}}{(}{)}{}{#1}	
	\DeclarePairedDelimiterXPP{\brownofX}[2]{\brown_{#1}}{(}{)}{}{#2}	

	\DeclarePairedDelimiterXPP{\itoof}[1]{\ito}{(}{)}{}{#1}	
	\DeclarePairedDelimiterXPP{\itoofi}[1]{\ito_{\iCoord}}{(}{)}{}{#1}	
	\DeclarePairedDelimiterXPP{\itoofj}[1]{\ito_{\jCoord}}{(}{)}{}{#1}	
	\DeclarePairedDelimiterXPP{\itoofX}[2]{\ito_{#1}}{(}{)}{}{#2}	

	\DeclarePairedDelimiterXPP{\levyof}[1]{\levy}{(}{)}{}{#1}	
	\DeclarePairedDelimiterXPP{\levyofi}[1]{\levy_{\iCoord}}{(}{)}{}{#1}	
	\DeclarePairedDelimiterXPP{\levyofj}[1]{\levy_{\jCoord}}{(}{)}{}{#1}	
	\DeclarePairedDelimiterXPP{\levyofX}[2]{\levy_{#1}}{(}{)}{}{#2}	

	\DeclarePairedDelimiterXPP{\stateof}[1]{\state}{(}{)}{}{#1}	
	\DeclarePairedDelimiterXPP{\stateofi}[1]{\state_{\iCoord}}{(}{)}{}{#1}	
	\DeclarePairedDelimiterXPP{\stateofj}[1]{\state_{\jCoord}}{(}{)}{}{#1}	
	\DeclarePairedDelimiterXPP{\stateofX}[2]{\state_{#1}}{(}{)}{}{#2}	

	\DeclarePairedDelimiterXPP{\dstateof}[1]{\dstate}{(}{)}{}{#1}	
	\DeclarePairedDelimiterXPP{\dstateofi}[1]{\dstate_{\iCoord}}{(}{)}{}{#1}	
	\DeclarePairedDelimiterXPP{\dstateofj}[1]{\dstate_{\jCoord}}{(}{)}{}{#1}	
	\DeclarePairedDelimiterXPP{\dstateofX}[2]{\dstate_{#1}}{(}{)}{}{#2}	
	\DeclarePairedDelimiterXPP{\dotdstateof}[1]{\dot\dstate}{(}{)}{}{#1}	

\else
	\newcommand{\brownof}[1]{\brown_{#1}}	
	\newcommand{\brownofi}[1]{\brown_{\iCoord,#1}}	
	\newcommand{\brownofj}[1]{\brown_{\jCoord,#1}}	
	\newcommand{\brownofX}[2]{\brown_{#1,#2}}	

	\newcommand{\itoof}[1]{\ito_{#1}}	
	\newcommand{\itoofi}[1]{\ito_{\iCoord,#1}}	
	\newcommand{\itoofj}[1]{\ito_{\jCoord,#1}}	
	\newcommand{\itoofX}[2]{\ito_{#1,#2}}	

	\newcommand{\levyof}[1]{\levy_{#1}}	
	\newcommand{\levyofi}[1]{\levy_{\iCoord,#1}}	
	\newcommand{\levyofj}[1]{\levy_{\jCoord,#1}}	
	\newcommand{\levyofX}[2]{\levy_{#1,#2}}	

	\newcommand{\stateof}[1]{\state_{#1}}	
	\newcommand{\stateofi}[1]{\state_{\iCoord,#1}}	
	\newcommand{\stateofj}[1]{\state_{\jCoord,#1}}	
	\newcommand{\stateofX}[2]{\state_{#1,#2}}	

	\newcommand{\dstateof}[1]{\dstate_{#1}}	
	\newcommand{\dstateofi}[1]{\dstate_{\iCoord,#1}}	
	\newcommand{\dstateofj}[1]{\dstate_{\jCoord,#1}}	
	\newcommand{\dstateofX}[2]{\dstate_{#1,#2}}	
	\newcommand{\dotdstateof}[1]{\dot\dstate_{#1}}	

\fi

\newmacro{\pois}{N} 
\newmacro{\cpois}{\tilde{N}} 

\newmacro{\jump}{z} 

\DeclarePairedDelimiterXPP{\poisof}[2]{\pois}{(}{)}{}{#1, #2} 
\DeclarePairedDelimiterXPP{\cpoisof}[2]{\cpois}{(}{)}{}{#1, #2} 
 
\newmacro{\jumpof}{\Delta} 

\newmacro{\levymes}{\nu} 
\newmacro{\levymeas}{\levymes} 

\newmacro{\cjump}{c} 
\newmacro{\cjumpinf}{\dnorm{\jumpvar} < \cjump} 
\newmacro{\cjumpsup}{\dnorm{\jumpvar} \geq \cjump} 

\newmacro{\model}{\hat\vecfield}	
\newmacro{\sgrad}{\gvec}	
\newmacro{\step}{\gamma}	
\newmacro{\runtime}{\tau}	

\newmacro{\stepexp}{\ell_{\step}}	
\newmacro{\learnexp}{\ell_{\learn}}	

\newmacro{\apt}{X}	
\DeclarePairedDelimiterXPP{\aptof}[1]{\apt}{(}{)}{}{#1}	
\DeclarePairedDelimiterXPP{\aptofX}[2]{\apt_{#1}}{(}{)}{}{#2}	

\newop{\orcl}{\mathsf{G}}	
\newop{\err}{\mathsf{\noise}}	
\newmacro{\seed}{\omega}	
\newmacro{\seeds}{\Omega}	

\newmacro{\noise}{U}	
\newmacro{\bias}{b}	

\newmacro{\bbound}{B}	
\newmacro{\totbound}{\paybound}	
\newmacro{\mombound}{V}	

\newmacro{\snoise}{\xi}	
\newmacro{\sbias}{\chi}	

\newmacro{\mix}{\delta}	
\newmacro{\perturb}{z}	
\newmacro{\pivot}{\point}	

\addauthor[Pan]{PM}{MediumBlue}

\newmacro{\energy}{E}	
\newmacro{\meas}{\mu}	
\newmacro{\hit}{\tau}	
\newmacro{\toler}{\eps}	

\newmacro{\jumpvar}{z}	
\newcommand{\contbound}{\sdev_{0}}	
\newcommand{\shortbound}{\sdev_{\textup{short}}}	
\newcommand{\longbound}{\sdev_{\textup{long}}}	
\newcommand{\tamebound}{\sdev_{\textup{tame}}}	
\newcommand{\heavybound}{\sdev_{\textup{heavy}}}	
\newcommand{\noisebound}{\sdev_{\textup{tot}}}	

\newmacro{\firstmart}{\xi}	
\newmacro{\secondmart}{\zeta}	

\newmacro{\cvxrate}{R}
\newmacro{\occrate}{B}

\newmacro{\tamecoef}{V_{\textup{tame}}}
\newmacro{\heavycoef}{V_{\textup{heavy}}}
\newmacro{\noisecoef}{V_{\sdev}}

\newmacro{\tameradius}{\precdist_{\textup{tame}}}
\newmacro{\heavyradius}{\precdist_{\textup{heavy}}}
\newmacro{\learnradius}{\precdist_{\learn}}
\newmacro{\critradius}{\precdist_{c}}

\addauthor{PL}{MediumOrange}

\newmacro{\poisful}{\poisof{d\time}{d\jump}}
\newmacro{\cpoisful}{\cpoisof{d\time}{d\jump}}

\newmacro{\poisfulalt}{\poisof{d\timealt}{d\jump}}
\newmacro{\cpoisfulalt}{\cpoisof{d\timealt}{d\jump}}

\DeclarePairedDelimiterXPP{\H2of}[2]{\mathcal{H}^2}{(}{)}{}{#1; #2} 
\DeclarePairedDelimiterXPP{\Hlevyof}[4]{\mathcal{H}_{\levymes}^{#1}}{(}{)}{}{#2, #3; #4} 

\newmacro{\contnoise}{\contbound} 
\newmacro{\smalljumps}{\shortbound} 
\newmacro{\bigjumps}{\longbound} 

\newmacro{\hess}{\nabla^2} 

\newmacro{\diamX}{\norm{\points}} 
\newcommand{\diamh}{\hrange} 
\newcommand{\hcst}{\hstr} 

\newmacro{\stateinit}{\point_0}
\newmacro{\dstateinit}{\dpoint_0}

\DeclarePairedDelimiterXPP{\meyerof}[1]{}{\langle}{\rangle}{}{#1} 

\DeclarePairedDelimiterXPP{\qvarof}[1]{}{[}{]}{}{#1} 

\newmacro{\precval}{\eps} 
\newmacro{\precdist}{\delta} 

\title
[Stochastic mirror descent with heavy-tailed noise]
{Bregman Meets Lévy: Stochastic Mirror Descent\\
with Heavy-Tailed Noise in Continuous and Discrete Time}

\author
[P.~L.~Cauvin]
{Pierre-Louis Cauvin$^{c,\ast}$}
\address{$^{c}$\,%
Corresponding author.}
\address{$^{\ast}$\,%
Univ. Grenoble Alpes, CNRS, Inria, Grenoble INP, LIG, 38000 Grenoble, France.}
\email{pierre-louis.cauvin@univ-grenoble-alpes.fr}
\author
[P.~Mertikopoulos]
{Panayotis Mertikopoulos$^{\ast}$}
\email{panayotis.mertikopoulos@imag.fr}

\subjclass[2020]{%
Primary 90C25, 60H10;
secondary 60G51, 90C15, 90C30, 68Q25.}
\keywords{%
Heavy-tailed noise;
stochastic mirror descent;
Lévy mirror flow;
Lévy processes;
concentration;
first-passage times;
rates of convergence.}

\newacro{LHS}{left-hand side}
\newacro{RHS}{right-hand side}
\newacro{iid}[i.i.d.]{independent and identically distributed}
\newacro{lsc}[l.s.c.]{lower semi-continuous}
\newacro{usc}[u.s.c.]{upper semi-continuous}
\newacro{rv}[r.v.]{random variable}
\newacro{whp}{with high probability}
\newacro{wp1}[w.p.$1$]{with probability $1$}

\newacro{cadlag}[càdlàg]{right continuous with left limits\,/\,\emph{continue à droite, limite à gauche}}

\newacro{NE}{Nash equilibrium}
\newacroplural{NE}[NE]{Nash equilibria}

\newacro{DA}{dual averaging}
\newacro{MD}{mirror descent}
\newacro{LMD}{lazy mirror descent}

\newacro{MF}{mirror flow}
\newacro{SMF}{stochastic mirror flow}
\newacro{LMF}{Lévy mirror flow}
\newacro{SDE}{stochastic differential equation}

\newacro{SGD}{stochastic gradient descent}
\newacro{SDA}{stochastic dual averaging}
\newacro{SMD}{stochastic mirror descent}
\newacro{SG}{stochastic gradient}
\newacro{SGO}{stochastic gradient oracle}
\newacro{SFO}{stochastic first-order oracle}

\begin{document}

\begin{abstract}
%
%
We study the robustness of \acf{SMD} under heavy-tailed noise, focusing on whether the method retains its convergence guarantees when run with infinite-variance stochastic gradient input.
To address this question in a principled manner, we begin by introducing a continuous-time model of \ac{SMD} as a \ac{SDE} driven by a centered Lévy noise process with finite $\momexp$-th order moments, $1 < \momexp \leq 2$.
This scheme\textemdash which we call the \acdef{LMF}\textemdash arises naturally as the scaling limit of \ac{SMD} in the presence of heavy-tailed noise.
In particular, when $\momexp < 2$\textemdash the \define{heavy noise} regime\textemdash the trajectories of \ac{LMF} generically exhibit jump discontinuities of arbitrary magnitude which, if frequent enough, lead to infinite variance.
Nonetheless, despite this highly singular behavior, we show that \ac{LMF} attains $\precval$-optimality within $\bigoh(\precval^{-\momexp/(\momexp-1)})$ time in the convex case,
and within $\tildeoh(\precval^{-1/(\momexp-1)})$ time for (relatively) strongly convex objectives.
These guarantees provide a transparent characterization of the impact of frequent long jumps on the convergence of the process, and percolate to a series of matching discrete-time guarantees for several variants of \ac{SMD} under heavy-tailed noise.

\end{abstract}

\allowdisplaybreaks	
\acresetall	
\maketitle
\acused{iid}
\acused{LHS}
\acused{RHS}

\section{Introduction}
\label{sec:introduction}

\para{Background and motivation}

\Acf{SMD} and its variants comprise one of the most widely studied classes of first-order methods in stochastic optimization.
Originally conceived by \citet{Nem79} and \citet{NY83} as a general scheme for solving constrained convex minimization problems, \ac{SMD} has since grown into a mature algorithmic template with applications in min-max problems and variational inequalities \cite{NJLS09,Nes07,Nes09}, online optimization and bandits \cite{AB10,ALT15,BCB12,LS20},
games and equilibrium problems \cite{MZ19,CBL06,JNT11},
sampling \cite{HKRC18,CLL+20},
optimal transport \cite{ZPFP20,PC19},
and many other fields that are central to machine learning.

At a high level, \acl{MD} methods proceed by
replacing Euclidean projections with a non-Euclidean ``prox-type'' step\textemdash or, dually to this, by aggregating gradient steps in an unconstrained, dual space, and performing a ``mirror step'' at each iteration to generate a new query point and obtain a new gradient sample.
This mechanism links the geometry of the underlying problem with the prox-step defining the method and which, if chosen appropriately, can lead to superior, almost-dimension-free convergence guarantees.

A major challenge in this setting occurs when the gradient samples that are fed into the algorithm have infinite variance, thus jeopardizing the method's convergence\textemdash which usually presumes tame, light-tailed noise.
This is a particularly pernicious issue as, even in $1$-dimensional quadratic problems, running ``vanilla'' \ac{SGD} with heavy-tailed noise could lead to divergence in cases where the method is otherwise convergent \cite{HFH24,FHL25,ZKV+20}.

Due to the massive\textemdash and massively complex\textemdash structure of deep neural nets, this ``heavy noise'' regime is ubiquitous in contemporary machine learning models and architectures, \cf \cite{HM21b,GSZ21,Sim17,LZZ23,ZKV+20,WGZ+21} and references therein.
Especially in the context of deep learning, the heavy-tailed behavior of gradient noise was first highlighted by \citet{SSG19}, who argued empirically that gradient minibatches in deep nets follow a heavy-tailed $\alpha$-stable noise distribution, even in relatively small datasets\textemdash such as MNIST and CIFAR10.
Even though these findings were later challenged by \citet{XSS20}, subsequent work\textemdash both empirical and theoretical\textemdash has reaffirmed the heavy-tailed profile of the noise in a wide range of training regimes, from adaptive, momentum-based methods, to recurrent networks \citep{ZKV+20}, deep convolutional architectures \citep{ZFM+20,BWL24}, large language models \citep{ACS+23}, and reinforcement learning \citep{GZP+21}.
Several theoretical works further attribute these phenomena to multiplicative noise effects leading to the convergence of the iterates toward a heavy-tailed stationary distribution \citep{GSZ21,HM21b,PDS23,DM24a}.

\para{Our contributions in the context of related work}

Our aim in this paper is precisely this:
\emph{to examine the robustness of \acl{SMD} methods under heavy noise.} 

Building on a long-standing trend in optimization, we begin with a continuous-time viewpoint and we introduce the \acdef{LMF}, a \acf{SDE} formulation of \ac{SMD} driven by a centered Lévy noise process.
In contrast to existing \ac{SDE} models of \acl{MD} driven by \emph{Brownian} noise \cite{RB13,KBB15,MerSta18,MS21,MerSta18b,MS17-CDC,HKRC18,ZPFP20,CLL+20,LTVW22}, the introduction of Lévy noise changes the landscape dramatically.
If the \ac{SDE} is driven by an Itô, Brownian-like martingale, the solutions of the \ac{SDE} are inherently ``diffusive'':
\begin{enumerate*}
[label=\upshape(\itshape\roman*\hspace*{.5pt}\upshape)]
\item
they exhibit tame, light-tailed increments;
\item
their accumulation leads to a finite-variance process;
and
\item
the resulting sample paths are continuous \as.
\end{enumerate*}
By contrast, under Lévy noise, \ac{LMF} trajectories are ``purely discontinuous'':
\begin{enumerate*}
[label=\upshape(\itshape\roman*\hspace*{.5pt}\upshape)]
\item
their increments are no longer Gaussian but heavy-tailed;
\item
they may\textemdash \emph{and do!}\textemdash have infinite variance;
and
\item
their sample paths may\textemdash \emph{and do!}\textemdash exhibit jump discontinuities of arbitrary magnitude.
\end{enumerate*}

This leads to a highly challenging optimization landscape, especially because Itô's formula\textemdash the workhorse of stochastic calculus\textemdash cannot be applied in this context.
A key technical contribution of our paper\textemdash which, to the best of our knowledge, is new in the stochastic analysis literature\textemdash is a \emph{weak Itô formula} for convex functions that are not $C^{2}$ (so the standard Itô formula does not apply).

Thanks to this new technical tool, we analyze several variants of the method, with both constant and variable learning rate, and we establish the following series of results:
\begin{enumerate}
\item
In convex problems, time-averaged \ac{LMF} orbits attain an $\precval$-approximate solution within $\tildeoh\parens[\big]{\precval^{-\momexp/(\momexp-1)}}$ time.
\item
In strongly convex problems, \ac{LMF} orbits converge at a geometric rate to an ``uncertainty ball'' whose radius $\learnradius$ scales with the method's learning rate and the intensity of the noise, and the fraction of the time spent $\precdist$-far from the problem's global minimum scales as $\bigoh(\learnradius^{2}/\precdist^{2})$.
\item
If the objective function is strongly convex relative to the method's Bregman function, \ac{LMF} orbits get within $\precdist$ of the problem's minimum in $\tildeoh(\precdist^{-2\momexp/(\momexp-1)})$ time (in terms of both ``first-passage'' and ``last-iterate'' guarantees).
\end{enumerate}

Subsequently, we turn to the bona fide, discrete-time setting, and we study different variants of \ac{SMD}
under heavy-tailed noise.
To connect with existing results, we first establish an $\tildeoh\parens[\big]{\precval^{-\momexp/(\momexp-1)}}$ ergodic convergence guarantee for relatively smooth convex functions, echoing the bounds of \citet{NY83} and \citet{VYB+22} for \ac{SMD} under a ``uniform smoothness'' assumption.
We then specialize to functions that are also strongly convex relative to the method's generating Bregman function:
here, we establish the convergence of the method's trajectories in mean square, and as in the continuous-time setting, we show that
\begin{enumerate*}
[label=\upshape(\itshape\roman*\hspace*{.5pt}\upshape)]
\item
the algorithm's orbits converge at a geometric rate to an ``uncertainty ball'' whose size scales with the intensity of the noise;
and
\item
the algorithm attains an $\precval$-optimal point in $\tildeoh(\precval^{-1/(\momexp-1)})$ iterations.
\end{enumerate*}
This is similar to the convergence rate estimates obtained by \citet{Liu25a} for \acl{SGD} and \acl{DA} with unbounded variance (for a more detailed comparison, see \cref{app:related}), but we are not otherwise aware of any results in the literature comparable to the derived concentration and first-passage estimates.

Importantly, our discrete-time  results mirror our continuous-time findings, both qualitatively and quantitatively.
In fact, in all cases, the discrete-time bounds decompose into a term stemming from the continuous-time analysis, plus a ``discretization'' term.
This indicates that the proposed Lévy model is a faithful proxy for studying \ac{SMD} in the heavy-noise regime, thus opening up an important direction for further research in the heavy-noise regime.

\section{Setup and preliminaries}
\label{sec:prelims}

\subsection{Notation and terminology}
\label{sec:notation}

We begin by collecting some basic definitions from convex analysis and optimization.
This is intended only to set terminology and notation;
for a detailed treatment, see \cite{Ber15,Roc70,BV04,Bub15}.

Throughout our paper, $\vecspace$ will denote a real $\vdim$-dimensional normed space with norm $\norm{\argdot}$ (typically an $L^{p}$-norm).
The dual space of $\vecspace$ will be denoted by $\dpoints \equiv \dual$, and we will write
$\dnorm{\dpoint} \defeq \sup\setdef{\braket{\dpoint}{\point}}{\norm{\point} \leq 1}$ for the \define{dual norm} of $\dpoint$ in $\dpoints$,
where
$\braket{\dpoint}{\point}$ denotes the \define{dual pairing} between $\dpoint\in\dpoints$ and $\point\in\vecspace$.
Following standard conventions in the field \cite{Roc70,RW98}, if $\obj\from\vecspace\to\R\cup\{\infty\}$ is an extended-real-valued function on $\vecspace$, we will write
$\dom\obj \defeq \setdef{\point\in\vecspace}{\obj(\point) < \infty}$ for the \define{effective domain} of $\obj$,
and
$\subd\obj(\point) \defeq \setdef{\dpoint\in\dpoints}{\obj(\pointalt) \geq \obj(\point) + \braket{\dpoint}{\pointalt - \point} \; \text{for all $\pointalt\in\vecspace$}}$ for the \define{subdifferential} of $\obj$ at $\point$.
Any $\gvec\in\subd\obj(\point)$ will be called a \define{subgradient} of $\obj$ at $\point$, and we will write $\dom\subd\obj \defeq \setdef{\point\in\dom\obj}{\subd\obj(\point) \neq \varnothing}$ for the \define{domain of subdifferentiability} of $\obj$.
By standard results \citep[Theorem 26.1]{Roc70}, we have $\relint\dom\obj \subseteq \dom\subd\obj \subseteq \dom\obj$.

Finally, we will use the ``soft-oh'' notation $\tildeoh(g(n))$ as a shorthand for $\bigoh(g(n)\log^{k}n)$ for some $k\geq0$, and the notation $\baroh(g(n))$ as a shorthand for $\bigoh(g(n)^{1+\littleoh(1)})$.

\subsection{Problem setup}
\label{sec:setup}

In the rest of our paper, we focus on convex minimization problems of the form
\begin{equation}
\label{eq:opt}
\tag{Opt}
\begin{aligned}
\textrm{minimize}
	&\quad
	\obj(\point)
	\\
\textrm{subject to}
	&\quad
	\point\in\points
\end{aligned}
\end{equation}
where $\points$ is a compact convex subset of $\vecspace$ and $\obj\from\vecspace\to\R\cup\{\infty\}
$ is a convex function satisfying the following standing assumptions:
\begin{enumerate}
[label=\upshape(\itshape\alph*\hspace*{.5pt}\upshape)]
\item
\define{Continuity:}
$\obj$ is finite and continuous on $\points$.
\item
\define{Smoothness:}
The subdifferential $\subd\obj$ of $\obj$ admits a \define{continuous selection}, \ie a continuous mapping $\nabla\obj$ such that $\nabla\obj(\point) \in \subd\obj(\point)$ for all $\point\in\dom\subd\obj$.
\end{enumerate}

\smallskip
\begin{remark}
In a slight abuse of terminology, we will refer to $\nabla\obj(\point)$ as the \define{gradient} of $\obj$ at $\point$.
This \emph{does not} imply\textemdash or presume\textemdash that $\subd\obj(\point)$ is a singleton;
instead, this assumption is intended to capture cases where $\dom\obj$ lies in a lower-dimensional subspace of $\vecspace$\textemdash like the probability simplex\textemdash but $\obj$ is smooth along any curve in $\points$.
Formally, it means that the directional derivative $\obj'(\point;\tanvec) = \left.d/dt\right|_{t=0^{+}} \obj(\point + t\tanvec)$ of $\obj$ at $\point$ is equal to $\braket{\nabla\obj(\point)}{\tanvec}$ for all vectors of the form $\tanvec = \pointalt-\point$, $\point\in\dom\subd\obj$, $\pointalt\in\points$.
\endenv
\end{remark}

We will specialize these assumptions later as needed.
For now, we only stress that the domain of subdifferentiability of $\obj$ need not extend to the boundary of $\points$;
in particular, the derivatives of $\obj$ may be unbounded over $\points$, even though $\points$ is assumed compact.
In this regard, the formulation \eqref{eq:opt} is sufficiently flexible to account for a wide range of optimization objectives that arise often in machine learning, signal processing and data science, but which may fail to be differentiable on the boundary of their domain\textemdash from Poisson problems to quantum state tomography and beyond \cite{LFN18,BBT17,AntBig06,BBDV09,BSTV18,ABM20}.

\subsection{Stochastic gradients}
\label{sec:oracle}

To solve \eqref{eq:opt}, we will assume that the optimizer has access to a black-box oracle that returns a stochastic estimate of the gradient of $\obj$ at the point of interest.
Formally, when queried at a $\point\in\dom\subd\obj$, this mechanism returns a \acli{SG} of the form
\begin{equation}
\label{eq:SG}
\tag{SG}
\orcl(\point;\seed)
	= \nabla\obj(\point)
		+ \err(\point;\seed)
\end{equation}
where
\begin{enumerate*}
[label=\upshape(\itshape\roman*\hspace*{.5pt}\upshape)]
\item
$\seed$ is a random seed drawn from some (complete) probability space $\probspace$;
and
\item
$\err(\point;\seed)$ denotes an umbrella error term that captures all sources of randomness and uncertainty in the oracle.
\end{enumerate*}

In practice, $\orcl$ is queried repeatedly at a sequence of points $\curr\in\points$, $\run=\running$, with a sequence of seeds $\curr[\seed]$ drawn \ac{iid} from $\seeds$ according to $\prob$.
In so doing, we obtain the sequence of stochastic gradients
\begin{equation}
\label{eq:sgrad}
\curr[\sgrad]
	= \orcl(\curr;\curr[\seed])
	= \nabla\obj(\curr)
		+ \curr[\noise]
\end{equation}
where $\curr[\noise]$ is a martingale difference sequence measuring the 
gradient noise at step $\run$.
For book-keeping purposes, we 
denote by $\curr[\filter]$ 
the history (adapted filtration) of $\curr$;
in particular, $\curr$ is $\curr[\filter]$-measurable, while $\curr[\seed]$, $\curr[\sgrad]$, and $\curr[\noise]$ are not.

With all this in hand, our standing assumptions for the gradient noise term $\curr[\noise] = \err(\curr;\curr[\seed])$ will be as follows:
\begin{enumerate}
[label=\upshape(\itshape\alph*\hspace*{.5pt}\upshape)]
\item
\define{Unbiasedness:}
$\exof{\err(\point;\seed)} = 0$ for all $\point\in\dom\subd\obj$.
\item
\define{Bounded $\momexp$-th central moments:}
	$\exof{\dnorm{\err(\point;\seed)}^{\momexp}} \leq \sdev^{\momexp}$ for some $\momexp\in(1,2]$ and all $\point\in\dom\subd\obj$.
\end{enumerate}
Importantly, for $\momexp<2$, this assumption allows for heavy-tailed gradient input with possibly infinite variance.
For concreteness, we will refer to the case $\momexp<2$ as the \define{heavy-noise} regime, and to the case $\momexp = 2$ as the \define{tame regime}.

As we noted in the introduction, the heavy-noise regime is particularly relevant for applications to deep learning \citep{BWL24}, attention models \citep{ACS+23}, and reinforcement learning \citep{GZP+21}, where empirical studies have shown that stochastic gradients are typically heavy-tailed.
The assumption we employ above is the standard surrogate for heavy-noise models in the literature, \cf \citep{NY83,ZKV+20} and references therein.

\subsection{Mirror descent and its friends}
\label{sec:mirror}

In the rest of our paper, we will focus on the seminal \acdef{MD} class of algorithms introduced by \citet{Nem79} and \citet{NY83} as a general scheme for solving constrained optimization problems of the form \eqref{eq:opt}.
This classical family of first-order methods replaces gradient steps by a ``mirror step'', that is, a non-Euclidean projection step which is tailored to the geometry of the problem under study.
The main ingredients of this template are as follows:

\begin{definition}
\label{def:mirror}
Let $\hreg\from\vecspace\to\R\cup\{\infty\}$ be a convex function with $\dom\hreg=\points$, and let $\proxdom \equiv \dom\subd\hreg$.
We say that $\hreg$ is a \define{Bregman regularizer} on $\points$ if:
\begin{enumerate}
[label=\upshape(\itshape\alph*\hspace*{.5pt}\upshape)]
\item
The subdifferential $\subd\hreg$ of $\hreg$ admits a continuous selection $\subsel\hreg(\point) \in \subd\hreg(\point)$ for all $\point\in\proxdom$.
\item
$\hreg$ is \define{strongly convex} relative to $\norm{\argdot}$ on $\points$, \viz
\begin{equation}
\label{eq:hstr}
\hreg(\pointalt)
	\geq \hreg(\point)
		+ \braket{\subsel\hreg(\point)}{\pointalt - \point}
		+ \frac{\hstr}{2} \norm{\pointalt - \point}^{2}
\end{equation}
for some $\hstr>0$ and all $\point\in\proxdom$, $\pointalt\in\points$.
\end{enumerate}
We also define the \define{Bregman divergence} of $\hreg$ as
\begin{equation}
\label{eq:Bregman}
\breg(\base,\point)
	= \hreg(\base)
		- \hreg(\point)
		- \braket{\nabla\hreg(\point)}{\base - \point}
\end{equation}
and the associated \define{prox-mapping} as
\begin{equation}
\label{eq:mprox}
\proxof{\point}{\dvec}
	= \argmin\nolimits_{\pointaux\in\points}
		\braces{\braket{\dvec}{\point - \auxpoint} + \breg(\auxpoint,\point)}
\end{equation}
for all $\base\in\points$, $\point\in\proxdom$, and all $\dvec\in\dpoints$.
Finally, we define the \define{mirror map} induced by $\hreg$ as
\begin{equation}
\label{eq:mirror}
\mirror(\dpoint)
	= \argmax\nolimits_{\point\in\points}
		\braces{\braket{\dpoint}{\point} - \hreg(\point)}
\end{equation}
for all $\dpoint\in\dpoints$.
\endenv
\end{definition}

With all this in hand, the \acdef{SMD} algorithm unfolds iteratively as
\begin{equation}
\label{eq:SMD}
\tag{SMD}
\next
	= \proxof{\curr}{-\curr[\step]\curr[\sgrad]}
\end{equation}
where $\curr[\sgrad]$, $\run=\running$, is a sequence of \aclp{SG} of the form \eqref{eq:sgrad}, and $\curr[\step] > 0$ is a variable step-size sequence, typically of the form $\curr[\step] \propto 1/\run^{\stepexp}$ for some $\stepexp\in[0,1]$.

Despite the method's remarkable and well-documented success\textemdash see \eg \cite{Bub15,Bec17,Lan20} and references therein\textemdash \eqref{eq:SMD} exhibits the somewhat strange feature that ``new gradient information enters the algorithm with decreasing weights''.
This observation was first articulated in this way by \citet{Nes09}, who used it as the starting point for the variant \acdef{SDA} scheme
\begin{equation}
\label{eq:SDA}
\tag{SDA}
\begin{aligned}
\next[\dpoint]
	&= \curr[\dpoint]
		- \curr[\sgrad]
	\\
\next[\point]
	&= \mirror(\next[\learn] \next[\dpoint])
\end{aligned}
\end{equation}
where $\curr[\learn] > 0$ is a variable nonincreasing \define{learning rate} sequence
that \emph{post-multiplies} the aggregation of gradient steps $\next[\dpoint] \gets \curr[\dpoint] - \curr[\sgrad]$ in the dual\textemdash hence the name, see \cite{Xia10}.

More generally, \citet{Nes09} considered the template
\begin{equation}
\label{eq:SDA-gen}
\tag{SDA$_{\step,\learn}$}
\begin{aligned}
\next[\dpoint]
	&= \curr[\dpoint]
		- \curr[\step]\curr[\sgrad]
	\\
\next[\point]
	&= \mirror(\next[\learn] \next[\dpoint])
\end{aligned}
\end{equation}
which includes both a \emph{pre-} and a \emph{post-}multiplier of gradient steps.
When run with constant learning rate $\curr[\learn] \gets 1$ instead of a constant step-size, this template specializes to the iterative method known as \define{\acl{SMD} with lazy updates}\textemdash or \acli{LMD} \cite{Zin03,SS11,Bub15}\textemdash namely
\begin{equation}
\label{eq:LMD}
\tag{LMD}
\begin{aligned}
\next[\dpoint]
	&= \curr[\dpoint]
		- \curr[\step]\curr[\sgrad]
	\\
\next[\point]
	&= \mirror(\next[\dpoint])
\end{aligned}
\end{equation}
Importantly, when $\hreg$ is \define{steep}\textemdash in the sense that $\proxdom = \relint\points$,
so $\nabla\hreg$ blows up at the boundary of $\points$\textemdash the methods \eqref{eq:SMD} and \eqref{eq:LMD} are equivalent;
for completeness, we state and prove this fact formally in \cref{app:mirror}.

\begin{remark}
At a high level, \acl{DA} provides a more flexible algorithmic template than \acl{MD}, all the while avoiding the inverse recency bias.
Discounting more recent gradient feedback in favor of older, more stale input could prove catastrophic in the presence of heavy-tailed noise, so we will focus primarily on \eqref{eq:SDA} in the sequel.
\endenv
\end{remark}

\section{Analysis and results in continuous time}
\label{sec:continuous}

In this section, we introduce and analyze a general class of stochastic dynamics modeling the heavy noise regime in continuous time.
Specifically,  we consider a \acf{SDE} driven by \define{Lévy noise}, a stochastic process with independent stationary increments and typically discontinuous paths.
Beyond its intrinsic interest, this continuous-time formulation serves as a principled stepping stone for understanding the convergence of \eqref{eq:SDA} under heavy noise, as it allows us to focus on the interplay between infinite-variance perturbations and unbounded jumps.

To streamline our presentation, the proofs of all results in this section are collected in \cref{app:levy,app:continuous}.

\subsection{The \acl{LMF}}
\label{sec:LMF}

In the absence of noise, a natural continuous-time model for \acl{MD} is given by the \acdef{MF} of $\obj$, defined here via the dynamics
\begin{equation}
\label{eq:MF}
\tag{MF}
\dotdstateof{\time}
	= -\nabla\obj(\stateof{\time})
	\quad
	\text{with}
	\quad
\stateof{\time}
	= \mirror(\timed{\learn} \dstateof{\time})
\end{equation}
where $\learn > 0$ is a nonincreasing, differentiable learning rate function, as per \eqref{eq:SDA}.
Following \cite{RB13,KBB15,MerSta18},
a standard way to account for noisy gradient observations in \eqref{eq:MF} is to consider the associated \acli{SMF}
\begin{equation}
\label{eq:SMF}
\tag{SMF}
\begin{aligned}
d\dstateof{\time}
	&= -\nabla\obj(\stateof{\time}) \dd\time
		+ \dd\itoof{\time}
	\notag\\
\stateof{\time}
	&= \mirror(\timed{\learn} \dstateof{\time})
\end{aligned}
\end{equation}
where $\itoof{\time}\in\dpoints$, $\time\geq0$, denotes a continuous square-integrable martingale.
As such, 
\eqref{eq:SMF} 
can be seen as 
a rigorous formulation of the informal Langevin dynamics
\begin{equation}
\dotdstateof{\time}
	= -\nabla\obj(\stateof{\time})
		+ \textrm{``noise''}
\end{equation}
with $\itoof{\time}$ playing the role of a catch-all, ``white noise'' term intended to capture all sources of randomness and uncertainty in the gradient signal $\nabla\obj(\stateof{\time})$.

Owing to its simplicity and flexibility, \eqref{eq:SMF} has been studied extensively in the literature\textemdash see \eg \cite{RB13,KB17,MerSta18,KBB15} for applications to optimization, \cite{HKRC18,ZPFP20,CLL+20,LTVW22} for sampling, and \cite{MM09,MM10,MV16,MerSta18b,BM17,MS17-CDC,CLM25,KDB14,KKDB15,LMBB25,LMBB25a} for variational inequalities, game theory, and beyond.
At the same time however, the driving noise process $\itoof{\time}$ of \eqref{eq:SMF} is inherently ``tame'':
\begin{enumerate*}
[label=\upshape(\itshape\roman*\hspace*{.5pt}\upshape)]
\item
its increments are Gaussian, and hence, light-tailed;
\item
their accumulation leads to a finite-variance process ($\exof{\dnorm{\itoof{\time}}^{2}} < \infty$ for all $\time$);
and
\item
as a result, the sample paths of \eqref{eq:SMF} remain (uniformly) continuous. 
\end{enumerate*}

To overcome these limitations of \eqref{eq:SMF}, we will instead consider the \acli{LMF}
\begin{equation}
\label{eq:LMF}
\tag{LMF}
\begin{aligned}
d\dstateof{\time}
	&= -\nabla\obj(\stateof{\time}) \dd\time
		+ \dd\levyof{\time}
	\notag\\
\stateof{\time}
	&= \mirror(\timed{\learn} \dstateof{\time})
\end{aligned}
\end{equation}
where, echoing our assumptions for \eqref{eq:SG}, the process $\levyof{\time}$ is a $\dpoints$-valued centered Lévy process with $\exof{\dnorm{\levyof{\time}}^{\momexp}} < \infty$ for some $\momexp\in(1,2]$.
For completeness, we provide all necessary background on Lévy processes in \cref{app:levy}.
For the moment, we only note that, for $\momexp<2$, \eqref{eq:LMF} should be seen as a formal version of the dynamics
\begin{equation}
\dotdstateof{\time}
	= -\nabla\obj(\stateof{\time})
		+ \textrm{``heavy noise''}
\end{equation}
in the sense that
\begin{enumerate*}
[label=\upshape(\itshape\roman*\hspace*{.5pt}\upshape)]
\item
the increments of $\levyof{\time}$ are no longer Gaussian but heavy-tailed;
\item
the variance of $\levyof{\time}$ may be infinite;
and
\item
the sample paths of \eqref{eq:SMF} may exhibit jump discontinuities of arbitrary magnitude.
\end{enumerate*}

This non-diffusive behavior is much closer to what one would intuitively expect from the ``heavy noise'' regime.
Formally, referring to \cref{app:levy} for a more detailed treatment, a centered Lévy process $\levyof{\time}$ is a stochastic process with $\levyof{\tstart} = 0$, independent stationary increments, and sample paths that are \ac{cadlag}.
In particular, by the Lévy-Itô decomposition theorem \citep[Theorem~2.4.16]{App09}, the Lévy noise process in \eqref{eq:LMF} can be decomposed as
\begin{equation}
\label{eq:Levy-Ito}
\levyof{\time}
	= \itoof{\time}
		+ \timed{\cadshort}
		+ \timed{\cadlong}
\end{equation}
where:
\begin{enumerate}
[label=\upshape(\itshape\alph*\hspace*{.5pt}\upshape)]
\item
$\itoof{\time}$ is a continuous square-integrable martingale
 of the form
\begin{equation}
\label{eq:Levy-brown}
\itoofi{\time}
	= \sum_{\jCoord=1}^{\mCoords} \diffmat_{\iCoord\jCoord} \brownofj{\time}
\end{equation}
where $\brownof{\time}$ is a standard Brownian motion in $\R^{\mCoords}$ and $\diffmat\in\R^{\nCoords\times\mCoords}$ is a diffusion matrix capturing possible correlations in the components of $\itoofi{\time}$;
by this token, we refer to $\itoof{\time}$ as the \define{diffusion component} of $\levyof{\time}$.
\item
$\timed{\cadshort}$ is a purely discontinuous \ac{cadlag} square-integrable martingale with jumps bounded by some fixed\textemdash but otherwise \emph{arbitrary}\textemdash positive constant $\cjump>0$;
we refer to it as the \define{short jump component} of $\levyof{\time}$.
\item
$\timed{\cadlong}$ is a purely discontinuous \ac{cadlag} martingale with finite $\momexp$-th moments and \emph{unbounded} jumps;
we call it the \define{unbounded jump component} of $\levyof{\time}$. 
\end{enumerate}
Importantly, a Lévy process has infinite variance only if it exhibits unbounded jumps, so \eqref{eq:LMF} concurrently allows the analysis of two important phenomena that cannot occur in \eqref{eq:SMF}:
heavy tails and long, unbounded jumps.

The distribution of these jumps is encoded by the 
\define{Lévy measure} of $\levyof{\time}$, a 
measure $\levymes$ on $\realspace\exclude{0}$ 
which counts how many jumps of a given magnitude occur in unit time, \viz
\begin{equation}
\label{eq:Levy-meas}
\levymeas(\ball)
	= \exof{\card\setdef{0\leq\time\leq 1}{\jumpof\levyof{\time} \in \ball}}
	\eqcomma
\end{equation}
where $\ball$ is a Borel subset of $\realspace\exclude{0}$, and
\begin{equation}
\label{eq:Levy-jump}
\jumpof\levyof{\time}
	\defeq \levyof{\time} - \levyof{\time-}
\end{equation}
is the jump process of $\levyof{\time}$.%
\footnote{Here and throughout, we write $u(\time-) \defeq \lim_{\timealt\to\time^{-}} u(\timealt)$ for the left limit of a \ac{cadlag} function $u(\time)$.}
With this in mind, the intensity of each component of $\levyof{\time}$ can be quantified as follows:
\begin{subequations}
\label{eq:bounds}
\begin{enumerate}
[label=\upshape(\itshape\alph*\hspace*{.5pt}\upshape)]
\item
\define{Diffusion component:}
\begin{align}
\contbound^{2}
	&= \matnorm{\diffmat}^{2}
	= \tr\bracks{\diffmat^{\top}\diffmat}
	\eqstop
\shortintertext{%
\item
\define{Short jump intensity:}
} 
\shortbound^{2}
	&= \int_{0<\dnorm{\jumpvar}<\cjump}
		\hspace{-.5em}
		\dnorm{\jumpvar}^{2} \,\levymeas(d\jumpvar)
	\eqstop
\shortintertext{%
\item
\define{Long jump intensity:}
} 
\longbound^{\momexp}
	&= \int_{\dnorm{\jumpvar}\geq\cjump}
		\hspace{-.5em}
		\dnorm{\jumpvar}^{\momexp} \,\levymeas(d\jumpvar)
	\eqstop
\end{align}
\end{enumerate}
Finally, with a fair amount of hindsight, we also set
\begin{align}
\label{eq:tame-heavy}
\tamebound^{2}
	= \contbound^{2} + \shortbound^{2}
	\quad
	\text{and}
	\quad
\heavybound^{\momexp}
	= \longbound^{\momexp}
\end{align}
for the tame and heavy part of the noise respectively.
\end{subequations}

Each of these quantities controls a specific part of the noise process $\levyof{\time}$.
Clearly, the diffusive regime of \eqref{eq:SMF} is recovered when $\shortbound = \longbound = 0$, \ie when $\levyof{\time}$ has no jump component.
We elaborate on all this in \cref{app:levy}, where we also discuss in detail the well-posedness of \eqref{eq:LMF}\textemdash that is, the existence and uniqueness of strong solutions from every initial condition $\dstateof{\tstart} \in \dpoints$.

\subsection{Analysis and results}
\label{sec:results-LMF}

We now turn our focus to the convergence guarantees of \eqref{eq:LMF} in the context of \eqref{eq:opt}.
To lighten notation, we will assume in the rest of this section that \eqref{eq:LMF} is initialized at the \define{prox-center} $\point_{c} = \argmin\hreg$ of $\points$, \ie with $\dstateof{\time} \gets 0$.

To provide some background, we begin with the tame, diffusive regime of \eqref{eq:SMF}, where \cite{MerSta18} showed that the time-averaged process
$\timed{\avg\state} = (1/\time) \int_{\tstart}^{\time} \stateof{\timealt} \dd\timealt$
enjoys the 
bound
\begin{equation}
\label{eq:cvx-SMF}
\exof{\obj(\timed{\avg\state}) - \min\obj}
	= \bigoh(1/\sqrt{\time})
	\eqstop
\end{equation}
The analysis of \cite{MerSta18} hinges on an application of Itô's formula\textemdash the chain rule of stochastic calculus \citep{Oks13,Kuo06}\textemdash to a carefully designed energy function.
To define it, consider first the \emph{Fenchel coupling} \citep{MerSta18}
\begin{equation}
\label{eq:Fench}
\fench(\base,\dpoint)
	= \hreg(\base)
		+ \hconj(\dpoint)
		- \braket{\dpoint}{\base}
	\hspace{1.5em}
	\text{for $\base\in\points$, $\dpoint\in\dpoints$},
\end{equation}
where $\hconj(\dpoint) = \max_{\auxpoint\in\points} \braces{\braket{\dpoint}{\auxpoint} - \hreg(\auxpoint)}$ denotes the convex conjugate of $\hreg$.
In a 
sense made precise in \cref{app:mirror},
the Fenchel coupling provides a measure of ``primal-dual'' divergence between $\base\in\points$ and $\dpoint\in\dpoints$, which is compatible with the Bregman setup of \cref{def:mirror}.
Accordingly, to account for the scaling of $\dstateof{\time}$ by $\learn$ in \eqref{eq:SMF}, a 
natural choice of energy function is the \emph{$\learn$-deflated} Fenchel coupling
\begin{equation}
\label{eq:energy}
\timed{\energy}
	= \frac{1}{\timed{\learn}} \fench(\base,\timed{\learn}\dstateof{\time})
	\eqstop
\end{equation}

The first challenge that arises in studying the evolution of $\timed{\energy}$ under \eqref{eq:LMF} is that
the analogue of Itô's lemma for Lévy-type processes includes second-order jump terms corresponding to the discontinuities of $\levyof{\time}$.
Thus, given that the unbounded jump component $\timed{\cadlong}$ of $\levyof{\time}$ may have infinite variance when $\momexp<2$, we cannot use crude estimates based on smoothness arguments to control these terms as in the diffusive case.

A second major difficulty is that, in general, the Fenchel coupling is Lipschitz smooth \emph{but not} $C^{2}$-smooth, so it must first be mollified in order to derive a ``weak'' version of Itô's lemma that only holds as an inequality (as opposed to an equality).
Putting all this together is a delicate task whose end result is \cref{thm:weakIto}, a ``weak Itô formula'' for Lipschitz-smooth convex functions relative to general Lévy-type integrators.
To the best of our knowledge, this result is new in the stochastic analysis literature;
however, because the statement itself requires additional technical machinery,
we defer the formal presentation and proof to \cref{subsec:weakIto}.

Instead, we only present here the application of this weak Itô formula to $\timed{\energy}$.
To that end, if we set with some hindsight
\begin{equation}
\label{eq:noisecoefs}
\tamecoef^{2}
	= \frac{\tamebound^{2}}{2\hstr}
	\quad
	\text{and}
	\quad
\heavycoef^{\momexp}
	= \frac{\diamX^{2-\momexp}\heavybound^{\momexp}}{\hstr^{\momexp-1}}
	\eqcomma
\end{equation}
we obtain the following explicit bound:

\begin{restatable}{proposition}{ENERGY}
\label{prop:energy-Ito}
Under \eqref{eq:LMF}, $\timed{\energy}$ is bounded as
\begin{align}
\label{eq:energy-Ito}
\timed{\energy} - \eval{\energy}{\tstart}
		&\leq \int_{\tstart}^{\time} \braket{\nabla\obj(\stateof{\timealt})}{\base - \stateof{\timealt}} \dd\timealt
		+ \timed{\firstmart}
		+ \timed{\secondmart}
		\notag\\
		&+ \tamecoef^{2}
			\int_{\tstart}^{\time} \eval{\learn}{\timealt} \dd\timealt
		+ \heavycoef^{\momexp}
			\int_{\tstart}^{\time} \eval{\learn}{\timealt}^{\momexp-1} \dd\timealt
		\notag\\
		&- \int_{\tstart}^{\time} \frac{\eval{\dot\learn}{\timealt}}{\eval{\learn}{\timealt}^{2}} \bracks{\hreg(\base) - \hreg(\stateof{\timealt})} \dd\timealt
		\eqcomma
\end{align}
where
$\diamX \defeq \max_{\point,\pointalt} \norm{\pointalt - \point}$ is the diameter of $\points$,
and
$\timed{\firstmart}$, $\timed{\secondmart}$ are martingales \textpar{the former square-integrable}.
\end{restatable}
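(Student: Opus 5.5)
Throughout, write $\dstate = \dstateof{\time}$ and $\learn = \timed{\learn}$, and set $\Phi(\time,\dstate) = \learn^{-1}\fench(\base,\learn\dstate) = \learn^{-1}\bracks{\hreg(\base) + \hconj(\learn\dstate)} - \braket{\dstate}{\base}$. The plan is to apply the weak Itô formula of \cref{thm:weakIto} to $\Phi$. This is legitimate because $\hconj$ is convex and, by the $\hstr$-strong convexity of $\hreg$, differentiable with $\nabla\hconj = \mirror$ being $(1/\hstr)$-Lipschitz. Consequently $\dstate\mapsto\Phi(\time,\dstate)$ is convex and Lipschitz-smooth with constant $\learn/\hstr$, and its gradient is $\nabla_{\dstate}\Phi = \mirror(\learn\dstate) - \base = \stateof{\time} - \base$. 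The weak Itô formula then bounds $\timed{\energy} - \eval{\energy}{\tstart}$ by five terms, which I would handle in turn: a time-derivative term, a drift term, a continuous-martingale term, a second-order diffusion term, and a compensated jump term together with a jump remainder.

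\textbf{Drift and time derivative.} The drift term equals $\int\braket{-\nabla\obj(\stateof{\timealt})}{\stateof{\timealt} - \base}\dd\timealt$, which is exactly the first term on the right of \eqref{eq:energy-Ito}. For the time derivative, differentiating $\learn^{-1}\bracks{\hreg(\base) + \hconj(\learn\dstate)}$ in $\time$ gives
\[
-\frac{\dot\learn}{\learn^{2}}\bracks{\hreg(\base) + \hconj(\learn\dstate)} + \frac{\dot\learn}{\learn}\braket{\dstate}{\stateof{\time}}.
\]
Using the identity $\hconj(\learn\dstate) = \learn\braket{\dstate}{\stateof{\time}} - \hreg(\stateof{\time})$, this collapses to $-(\dot\learn/\learn^{2})\bracks{\hreg(\base) - \hreg(\stateof{\time})}$, which is the last term of \eqref{eq:energy-Ito}.

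\textbf{Tame part of the noise.} The Brownian integral $\int\braket{\stateof{\timealt}-\base}{d\itoof{\timealt}}$ and the compensated short-jump integral are square-integrable martingales, because their integrands are bounded by $\diamX$; together they form $\firstmart$. The second-order Brownian term is bounded by $\frac{1}{2}\frac{\learn}{\hstr}\contbound^{2}$. For each short jump $\jumpvar$, smoothness bounds the remainder $\Phi(\dstate+\jumpvar) - \Phi(\dstate) - \braket{\nabla\Phi}{\jumpvar}$ by $\frac{\learn}{2\hstr}\dnorm{\jumpvar}^{2}$. Integrating this against $\levymeas$ on $\dnorm{\jumpvar}<\cjump$ gives $\frac{\learn}{2\hstr}\shortbound^{2}$. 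Summing the two contributions yields $\tamecoef^{2}\learn$.

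\textbf{Long jumps, the main obstacle.} Here the quadratic bound is not integrable, so I would interpolate. The remainder $\Phi(\dstate+\jumpvar) - \Phi(\dstate) - \braket{\nabla\Phi}{\jumpvar}$ is at most $\frac{\learn}{2\hstr}\dnorm{\jumpvar}^{2}$ by smoothness. It is also at most $\braket{\mirror(\learn(\dstate+\jumpvar)) - \mirror(\learn\dstate)}{\jumpvar} \leq \diamX\dnorm{\jumpvar}$, by convexity of $\Phi$ in $\dstate$. Taking the minimum of the two and using $\min(a,b)\leq a^{\momexp-1}b^{2-\momexp}$ gives a bound of order
\[
(\learn/\hstr)^{\momexp-1}\diamX^{2-\momexp}\dnorm{\jumpvar}^{\momexp}.
\]
Integrating against $\levymeas$ on $\dnorm{\jumpvar}\geq\cjump$ produces $\heavycoef^{\momexp}\learn^{\momexp-1}$, with harmless constants that I would absorb or handle more carefully. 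The corresponding compensated integral $\int\bracks{\Phi(\dstate_{-}+\jumpvar) - \Phi(\dstate_{-})}\cpoisful$ has integrand bounded by $\diamX\dnorm{\jumpvar}$, which is $\levymeas$-integrable on the large jumps because $\momexp>1$. It is therefore a (not necessarily square-integrable) martingale, and this is $\secondmart$.

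The delicate points are that \cref{thm:weakIto} must cover both the time dependence through $\learn$ and the non-$C^{2}$ character of $\Phi$. For the latter I would rely on the mollification already built into that theorem, which gives an inequality rather than an equality. I would also check that the long-jump interpolation constant matches $\heavycoef$ exactly.
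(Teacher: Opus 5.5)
Your route is the paper's: the weak Itô formula applied to the $\eta$-deflated Fenchel coupling, and the time-derivative term collapsed through $h^{*}(\eta Y)=\eta\langle Y,X\rangle-h(X)$. The diffusive and short-jump corrections are bounded by smoothness. The long jumps split into a compensated integral $\zeta$, whose integrand is bounded by $\|\mathcal X\|\,\|z\|_{*}$ and hence lies in $\mathcal H^{1}_{\nu}$, plus a remainder $\Phi(Y+z)-\Phi(Y)-\langle\nabla\Phi(Y),z\rangle$ controlled by interpolation. Your computations are right. Taking $a=\frac{\eta}{2K}\|z\|_*^2$ and $b=\|\mathcal X\|\,\|z\|_*$ in $\min(a,b)\le a^{p-1}b^{2-p}$ gives $2^{1-p}V_{\textup{heavy}}^{p}\eta^{p-1}$. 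That is slightly better than what the paper gets from \cref{lemma:diffF_upperbound}, which discards $-F(x^{+},y)$ and interpolates the primal displacement $\|Q(y+w)-Q(y)\|\le\min\{\|\mathcal X\|,\|w\|_*/K\}$. Do write out where the remainder comes from. It combines the $\nu(dz)\,dt$ part of the raw-measure long-jump integral in \cref{thm:weakIto} with the centering drift $-\int_{\|z\|_*\ge c}z\,\nu(dz)$ in $dY^{c}$, whereas your drift term lists only $-\nabla f(X)$.

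The step you leave open is a genuine gap. \cref{thm:weakIto} is stated for a time-independent $\psi$, so it does not apply to $\Phi(t,Y)=\eta(t)^{-1}F(q,\eta(t)Y)$. Saying it ``must cover'' the time dependence does not supply the argument.

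The paper closes this by applying \cref{thm:weakIto} to the time-homogeneous, convex, $(1/K)$-smooth function $\psi=F(q,\cdot)$ along $Z=\eta Y$. Since $\eta$ is deterministic, continuous and of finite variation, $dZ=\dot\eta\,Y\,dt+\eta\,dY$ with no bracket term, and $d\bigl(\eta^{-1}\psi(Z)\bigr)=-\dot\eta\,\eta^{-2}\psi(Z)\,dt+\eta^{-1}d\psi(Z)$. The extra drift $\dot\eta Y$ of $Z$, paired with $\nabla\psi(Z)=X-q$ and added to $-\dot\eta\,\eta^{-2}\psi(Z)$, is exactly your $\partial_t\Phi$. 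The rest of your computation therefore goes through unchanged. The alternative is to redo the mollification proof for $C^{1,2}$ functions of $(t,y)$, including the limit of the $\partial_t\Phi_\varepsilon$ term.

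Relatedly, you must check that the process is Lévy-type in the sense of \cref{thm:weakIto}. In particular, the expected integral over $[0,T]$ of the norm of its drift $\dot\eta Y-\eta\nabla f(X)-\eta\int_{\|z\|_*\ge c}z\,\nu(dz)$ must be finite. Since $\nabla f$ may blow up at the boundary of $\mathcal X$, this is not automatic. The paper gets it from \cref{lemma:moments_dual} and \cref{cor:moments_drift} under $(\mathcal H_1)$--$(\mathcal H_2)$.
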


We prove \cref{prop:energy-Ito} in \cref{app:continuous}, where we also provide explicit expressions and bounds for
$\firstmart$ and $\secondmart$.
Thanks to these bounds, we get the following guarantee for \eqref{eq:LMF}:

\begin{restatable}{theorem}{CVXLMF}
\label{thm:cvx-LMF}
Under \eqref{eq:LMF}, the time-averaged process $\timed{\avg\state} = (1/\time) \int_{\tstart}^{\time} \stateof{\timealt} \dd\timealt$ enjoys the bounds
\begin{align}
\label{eq:cvx-LMF-mean}
\exof{\obj(\timed{\avg\state})}
	&\leq \min\obj
		+ \cvxrate_{0}(\time)
\shortintertext{and, \acl{wp1},}
\label{eq:cvx-LMF-as}
\obj(\timed{\avg\state})
	\leq \min\obj
	&+ \cvxrate_{0}(\time)
		+ \baroh\parens*{\time^{-\frac{\momexp-1}{\momexp}}}
	+ \bigoh\parens*{\tamebound \diamX \sqrt{\frac{\log\log \time}{\time}}}
\shortintertext{where}
\label{eq:cvxrate}
\cvxrate_{0}(\time)
	= \frac{\hrange}{\time\timed{\learn}}
		&+ \frac{\tamecoef^{2}}{\time}
				\int_{\tstart}^{\time} \eval{\learn}{\timealt} \dd\timealt
		+ \frac{\heavycoef^{\momexp}}{\time}
			\int_{\tstart}^{\time} \eval{\learn}{\timealt}^{\momexp-1} \dd\timealt
\end{align}
and $\hrange \defeq \max\hreg - \min\hreg$.
In particular, if \eqref{eq:LMF} is run with learning rate $\timed{\learn} = 1/\time^{1/\momexp}$, we have
\begin{align}
\label{eq:cvxrate-spec}
\cvxrate_{0}(\time)
	&= \frac{\hrange}{\time^{(\momexp-1)/\momexp}}
		+ \frac{\momexp}{\momexp-1} \frac{\tamecoef^{2}}{\time^{1/\momexp}}
		+ \momexp \frac{\heavycoef^{\momexp}}{\time^{(\momexp-1)/\momexp}}
	= \bigoh(1/\time^{(\momexp-1)/\momexp})
	\eqstop
\end{align}
\end{restatable}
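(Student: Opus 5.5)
We start from \cref{prop:energy-Ito} with $\base$ chosen as a minimizer $\sol$ of $\obj$ over $\points$. The plan is to turn the energy inequality \eqref{eq:energy-Ito} into a bound on the time-averaged optimality gap, by exploiting convexity and the nonnegativity of the Fenchel coupling.

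\textbf{Step 1: from the energy inequality to a gap bound.} Since $\timed{\energy} \geq 0$, convexity of $\obj$ gives $\braket{\nabla\obj(\stateof{\timealt})}{\sol - \stateof{\timealt}} \leq \min\obj - \obj(\stateof{\timealt})$. Rearranging \eqref{eq:energy-Ito} then yields
\[
\int_{\tstart}^{\time} [\obj(\stateof{\timealt}) - \min\obj] \dd\timealt \leq \eval{\energy}{\tstart} + \timed{\firstmart} + \timed{\secondmart} + \tamecoef^{2}\int_{\tstart}^{\time}\eval{\learn}{\timealt}\dd\timealt + \heavycoef^{\momexp}\int_{\tstart}^{\time}\eval{\learn}{\timealt}^{\momexp-1}\dd\timealt - \int_{\tstart}^{\time}\frac{\eval{\dot\learn}{\timealt}}{\eval{\learn}{\timealt}^{2}}[\hreg(\sol)-\hreg(\stateof{\timealt})]\dd\timealt.
\]

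\textbf{Step 2: bounding the initial and learning-rate terms.} Initialization at the prox-center ($\dstateof{\tstart}=0$) gives $\eval{\energy}{\tstart} = [\hreg(\sol) - \min\hreg]/\eval{\learn}{\tstart} \leq \hrange/\eval{\learn}{\tstart}$. For the last term, $\dot\learn \leq 0$ and $\hreg(\sol)-\hreg(\stateof{\timealt}) \leq \hrange$, so it is at most $\hrange\int_{\tstart}^{\time} (-\dot\learn/\learn^{2}) = \hrange(1/\timed{\learn} - 1/\eval{\learn}{\tstart})$. Adding this to the initial term telescopes to exactly $\hrange/\timed{\learn}$.

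\textbf{Step 3: the mean bound \eqref{eq:cvx-LMF-mean}.} Divide by $\time$ and apply Jensen's inequality, $\obj(\timed{\avg\state}) \leq \frac1\time\int\obj(\stateof{\timealt})\dd\timealt$. Taking expectations kills the martingales, since $\exof{\timed{\firstmart}} = \exof{\timed{\secondmart}}=0$; here we need $\secondmart$ to be a true martingale, which is ensured by its $L^{\momexp}$ integrability from the explicit bounds in \cref{app:continuous}. This gives exactly $\cvxrate_{0}(\time)$. The specialization \eqref{eq:cvxrate-spec} is direct integration with $\learn = \time^{-1/\momexp}$: $\int_0^\time \timealt^{-1/\momexp}\dd\timealt = \frac{\momexp}{\momexp-1}\time^{(\momexp-1)/\momexp}$ and $\int_0^\time \timealt^{-(\momexp-1)/\momexp}\dd\timealt = \momexp\time^{1/\momexp}$. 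Strictly, $\learn$ is singular at $0$, so one either starts at $\time=1$ or accepts integrable singularities; the integrals remain finite either way.

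\textbf{Step 4: the almost sure bound \eqref{eq:cvx-LMF-as}, the main obstacle.} We must show $\timed{\firstmart}/\time$ and $\timed{\secondmart}/\time$ vanish at the stated rates.

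For $\firstmart$, a continuous-plus-short-jump square-integrable martingale, the integrand is of the form $\braket{\,\cdot\,}{\stateof{\timealt-}-\sol}$ and hence bounded by $\diamX$. Its quadratic variation is therefore $\bigoh(\tamebound^{2}\diamX^{2}\time)$. A law of the iterated logarithm for martingales with bounded jumps (or a time-change to Brownian motion for the continuous part, plus a Freedman-type argument for the short jumps) then gives $\timed{\firstmart} = \bigoh(\tamebound\diamX\sqrt{\time\log\log\time})$.

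For $\secondmart$, the compensated long-jump integral, only $\momexp$-th moments are available. Here I would use a Burkholder/von Bahr–Esseen-type inequality $\exof{\sup_{\timealt\leq\time}\abs{\secondmart_{\timealt}}^{\momexp}} \lesssim \diamX^{\momexp}\heavybound^{\momexp}\time$. This should be combined with a Borel–Cantelli argument over dyadic times $\time=2^{k}$, using a Chebyshev bound at threshold $2^{k/\momexp}k^{(1+\delta)/\momexp}$, which is summable. That yields $\timed{\secondmart}/\time = \bigoh(\time^{-(\momexp-1)/\momexp}\log^{(1+\delta)/\momexp}\time)$ for every $\delta>0$, i.e.\ the $\baroh$ term. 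Equivalently, one can invoke a strong law for $L^{\momexp}$ martingales (Chow's theorem).

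The delicate points are that $\secondmart$ is only a local martingale a priori, and that the integrand bound $\diamX$ must hold uniformly. Both should follow from the explicit form of $\secondmart$ given in \cref{app:continuous}.
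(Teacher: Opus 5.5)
Your proof is correct, and apart from one step it is the paper's argument. The paper likewise combines $E(t)\ge 0$, the bound $E(0)\le H/\eta(0)$ at the prox-center (with $H=\max h-\min h$), the telescoping estimate for the $\dot\eta/\eta^{2}$ term, convexity plus Jensen, and the law of the iterated logarithm of \cref{lemma:LIL} for $\xi$, whose jumps are bounded by $c\lVert\mathcal X\rVert$.

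The step that differs is the almost-sure growth of $\zeta$. The paper applies the strong law of large numbers of \cref{lemma:SLLN} with normalization $A(t)=t^{1/p+\epsilon}$; this only requires checking that $t\mapsto\lVert\mathcal X\rVert^{p}\sigma_{\textup{heavy}}^{p}t^{-1-\epsilon p}$ is integrable at infinity. You instead prove the maximal bound $\mathbb E[\sup_{s\le t}\lvert\zeta_s\rvert^{p}]\lesssim\lVert\mathcal X\rVert^{p}\sigma_{\textup{heavy}}^{p}t$ and run a dyadic Borel--Cantelli argument by hand. That bound does hold:
\begin{itemize}
\item by \cref{lemma:F_Lipschitz}, each jump of $\zeta$ satisfies $\lvert\Delta\zeta_s\rvert\le\lVert\mathcal X\rVert\,\lVert\Delta L_s\rVert_\ast$;
\item the Burkholder--Davis--Gundy inequality, combined with $[\zeta]_t^{p/2}\le\sum_{s\le t}\lvert\Delta\zeta_s\rvert^{p}$ (valid since $p/2\le 1$), reduces the estimate to $\mathbb E\sum_{s\le t}\lvert\Delta\zeta_s\rvert^{p}$;
\item the compensator identity bounds this expectation by $\lVert\mathcal X\rVert^{p}\sigma_{\textup{heavy}}^{p}t$.
\end{itemize}
Your route buys an explicit $\log^{(1+\delta)/p}t$ factor in place of an unspecified $t^{o(1)}$. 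The integrable supremum also settles the true-martingale property you flagged. The paper's route is shorter once the cited strong law is available.

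Two small imprecisions:
\begin{itemize}
\item The short-jump part of $\xi$ is not literally of the form $\langle\cdot,X(s-)-x^\ast\rangle$. Its integrand is still bounded by $\lVert\mathcal X\rVert\,\lVert z\rVert_\ast$, again via \cref{lemma:F_Lipschitz}.
\item The law of the iterated logarithm only applies on $\{\langle\xi\rangle_\infty=\infty\}$. On the complementary event $\xi$ converges almost surely, which the paper treats separately.
\end{itemize}
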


\cref{thm:cvx-LMF} is our main result for convex functions, so some remarks are in order.

\smallskip
\begin{remark}
From the expression \eqref{eq:cvxrate-spec} for $\cvxrate_{0}(\time)$, we further observe that $\obj(\timed{\avg\state}) \to \min\obj$ whenever the learning rate satisfies $\time\timed{\learn} \to \infty$ and $\timed{\learn} \to 0$.
Among such choices, the standard schedule $\timed{\learn} \propto 1/\time^{1/\momexp}$ is the one that optimizes 
the resulting upper bound in $\time$.
To connect with existing results, this gives $\exof{\obj(\timed{\avg\state})} - \min\obj = \bigoh(1/\sqrt{\time})$ in the tame regime $\momexp=2$, so we recover the diffusive expression \eqref{eq:cvx-SMF} for \eqref{eq:SMF} under Brownian noise \cite{MerSta18}.
On the other hand, under heavy noise ($1 < \momexp < 2$), the convergence rate of \eqref{eq:LMF} deteriorates smoothly with $\momexp$, matching the corresponding lower bounds of \cite{NY83}.
This deterioration is due expressly to the long-jump term $\bigoh(\heavybound^{\momexp}/\time^{(\momexp-1)/\momexp})$ in \eqref{eq:cvxrate-spec}, and suggests that the Lévy-driven dynamics \eqref{eq:LMF} comprise a particularly expressive model for the study of \acl{SMD} under heavy-tailed noise.
\endenv
\end{remark}

\smallskip
\begin{remark}
\label{remark:complexity_rate_convex_continuous}
We also note that, for any error 
$\precval > 0$, 
\cref{thm:cvx-LMF} shows that running \eqref{eq:LMF} with constant 
$\learn = \bigoh\parens[\big]{\cramped{\precval^{1/(\momexp-1)}}}$ guarantees $\exof*{\obj(\avg{\state}(\time)) - \min\obj} \leq \precval$ within time $\time = \bigoh\parens[\big]{\precval^{-\momexp/(\momexp-1)}}$.
This expression in terms of $\precval$ provides a convenient baseline for comparing convergence rates 
under constant learning rates;
we revisit this point later.
\endenv
\end{remark}

\subsection{Finer results under (relative) strong convexity}
\label{sec:strong-LMF}

In the rest of this section, we refine and expand on the convergence guarantees of \cref{thm:cvx-LMF} for functions that are \define{strongly convex} relative to either $\norm{\cdot}$ or $\hreg$.
Formally:
\begin{enumerate}
[label=\upshape(\itshape\roman*\hspace*{.5pt}\upshape)]
\item
$\obj$ is \define{$\strong$-strongly convex} if
\begin{equation}
\label{eq:strong}
\obj(\pointalt)
	\geq \obj(\point)
		+ \braket{\subsel\obj(\point)}{\pointalt - \point}
		+ \frac{\strong}{2} \norm{\pointalt - \point}^{2}
\end{equation}
for all $\point,\pointalt\in\points$.
\item
$\obj$ is \define{$\strong$-strongly convex relative to $\hreg$} if
\begin{equation}
\label{eq:strong-rel}
\obj(\pointalt)
	\geq \obj(\point)
		+ \braket{\subsel\obj(\point)}{\pointalt - \point}
		+ \strong \breg(\pointalt,\point)
\end{equation}
for all $\point\in\proxdom$ and all $\pointalt\in\points$.
\end{enumerate}

The notion of relative strong convexity is due to \citet{LFN18}, and is a refinement of ``vanilla'' strong convexity:
if $\obj$ is $\strong$-strongly convex relative to $\hreg$, then it is also $(\strong\hstr)$-strongly convex relative to $\norm{\cdot}$.
The converse, however, does not hold, especially when $\hreg$ is \define{steep} (that is, $\proxdom = \relint\points$).
In this case, the Bregman divergence $\breg(\pointalt,\point)$ may grow unbounded as $\point$ gets closer to the boundary of $\points$;
in turn, this means that the gradient of $\obj$ blows up at the boundary of $\points$, so a strongly convex function like $\norm{\point}^{2}$ cannot be strongly convex relative to $\hreg$ in this case.
We discuss this issue in more detail later;
for now, it suffices to keep in mind that the notion of relative strong convexity is more exacting, but also more aligned with the geometry of $\hreg$ (so one may reasonably expect better guarantees in this setting).

With all this in hand, we will examine the following questions for \eqref{eq:LMF} under (relative) strong convexity:
\begin{enumerate}
[label=\upshape(\itshape\alph*\hspace*{.5pt}\upshape)]
\item
The fraction of time that $\stateof{\time}$ spends near $\argmin\obj$.
\item
The time it takes $\stateof{\time}$ to get close to $\argmin\obj$.
\item
The convergence speed of $\stateof{\time}$ itself (as opposed to its time average).
\end{enumerate}
To quantify the above,
fix a distance tolerance $\precdist>0$,
let $\sol\in\argmin\obj$ denote a solution of \eqref{eq:opt},
and let
\begin{equation}
\label{eq:nhd}
\ball_{\precdist}
	= \setdef{\point\in\points}{\norm{\point - \sol} \leq \precdist}
\end{equation}
be a ball of radius $\precdist$ centered at $\sol\in\points$.
Finally, with some hindsight, consider the ``uncertainty radius''
\begin{equation}
\label{eq:learnradius}
\cramped{\learnradius^{2}}
	= \frac{2}{\strong}
		\bracks*{\learn \tamecoef^{2} + \learn^{\momexp-1} \heavycoef^{\momexp}}
	\eqstop
\end{equation}
We then have the following guarantees for \eqref{eq:LMF}.

\begin{restatable}[Concentration]{theorem}{OCCMEAS}
\label{thm:occmeas}
Assume $\obj$ is $\strong$-strongly convex, and let
\begin{equation}
\label{eq:occupation}
\meas_{\time}(\ball_{\precdist})
	= \frac{1}{\time}
		\int_{\tstart}^{\time} \oneof{\stateof{\timealt} \in \ball_{\precdist}} \dd\timealt
\end{equation}
be the fraction of time that $\stateof{\time}$ spends in $\ball_{\precdist}$ up to time $\time$.
If \eqref{eq:LMF} is run with constant $\timed{\learn} \equiv \learn$, we have
\begin{subequations}
\label{eq:occmeas}
\begin{equation}
\label{eq:occmeas-mean}
\exof{\meas_{\time}(\ball_{\precdist})}
	\geq 1
		- \frac{\cramped{\learnradius^{2}}}{\precdist^{2}}
		- \frac{2\hrange}{\learn\strong\precdist^{2}\time}
\end{equation}
and, \acl{wp1},
\begin{align}
\label{eq:occmeas-as}
\meas_{\time}(\ball_{\precdist})
	&\geq 1
		- \frac{\cramped{\learnradius^{2}}}{\precdist^{2}}
		- \frac{2\hrange}{\learn\strong\precdist^{2}\time}
		+ \baroh\parens*{\time^{-\frac{\momexp-1}{\momexp}}}
	+ \bigoh\parens*{\tamebound \diamX \sqrt{\frac{\log\log \time}{\time}}}
	\eqstop
\end{align}
\end{subequations}
\end{restatable}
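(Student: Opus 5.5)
The plan is to apply \cref{prop:energy-Ito} with constant learning rate and the base point $\base = \sol$, and then convert the resulting ``optimality gap'' integral into an occupation-measure statement via strong convexity. Since $\timed{\learn}\equiv\learn$, the term involving $\dot\learn$ drops out, and \eqref{eq:energy-Ito} becomes
\begin{equation*}
\timed{\energy} - \eval{\energy}{\tstart}
	\leq -\int_{\tstart}^{\time} \braket{\nabla\obj(\stateof{\timealt})}{\stateof{\timealt} - \sol} \dd\timealt
		+ \timed{\firstmart} + \timed{\secondmart}
		+ \time\bracks*{\learn\tamecoef^{2} + \learn^{\momexp-1}\heavycoef^{\momexp}}.
\end{equation*}
By strong convexity \eqref{eq:strong}, applied twice (at $\stateof{\timealt}$ toward $\sol$, and at $\sol$ toward $\stateof{\timealt}$, using first-order optimality $\braket{\nabla\obj(\sol)}{\point-\sol}\geq0$), or more simply via $\braket{\nabla\obj(\point)}{\point-\sol}\geq \obj(\point)-\obj(\sol)+\tfrac{\strong}{2}\norm{\point-\sol}^{2}\geq \tfrac{\strong}{2}\norm{\point-\sol}^{2}$, I get
$\braket{\nabla\obj(\stateof{\timealt})}{\stateof{\timealt}-\sol}\geq \tfrac{\strong}{2}\norm{\stateof{\timealt}-\sol}^{2} \geq \tfrac{\strong}{2}\precdist^{2}\oneof{\stateof{\timealt}\notin\ball_{\precdist}}$. (If $\stateof{\timealt}$ ever leaves $\proxdom$ this is handled as in the proof of \cref{thm:cvx-LMF}, where the gradient is only evaluated on $\dom\subd\obj$.)

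Next I bound the initial and final energies. Since $\dstateof{\tstart}=0$, we have $\eval{\energy}{\tstart} = \frac{1}{\learn}\fench(\sol,0) = \frac{1}{\learn}[\hreg(\sol)-\min\hreg]\leq \hrange/\learn$, and $\timed{\energy}\geq 0$ because the Fenchel coupling is nonnegative (Fenchel--Young). Rearranging therefore yields
\begin{equation*}
\frac{\strong\precdist^{2}}{2}\,\time\,\bracks*{1-\meas_{\time}(\ball_{\precdist})}
	\leq \frac{\hrange}{\learn} + \time\bracks*{\learn\tamecoef^{2} + \learn^{\momexp-1}\heavycoef^{\momexp}} + \timed{\firstmart} + \timed{\secondmart}.
\end{equation*}
Dividing by $\strong\precdist^{2}\time/2$ and recalling the definition \eqref{eq:learnradius} of $\learnradius^{2}$ gives
$\meas_{\time}(\ball_{\precdist}) \geq 1 - \learnradius^{2}/\precdist^{2} - 2\hrange/(\learn\strong\precdist^{2}\time) - 2(\timed{\firstmart}+\timed{\secondmart})/(\strong\precdist^{2}\time)$.

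Taking expectations kills the martingale terms (both are genuine martingales started at zero, per \cref{prop:energy-Ito}), which gives \eqref{eq:occmeas-mean}. For the almost sure bound \eqref{eq:occmeas-as}, I expect the main obstacle to be controlling $\timed{\firstmart}/\time$ and $\timed{\secondmart}/\time$ pathwise. Here I would reuse exactly the martingale estimates established for the almost sure part of \cref{thm:cvx-LMF}: the square-integrable martingale $\firstmart$, driven by the diffusion and short jumps, has quadratic variation $\bigoh(\tamebound^{2}\diamX^{2}\time)$, so the law of the iterated logarithm (via a time-change / Dambis--Dubins--Schwarz type argument) gives $\bigoh(\tamebound\diamX\sqrt{\time\log\log\time})$. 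The long-jump martingale $\secondmart$ has only finite $\momexp$-th moments, so a strong law of large numbers for $L^{\momexp}$ martingales (Burkholder inequality plus a Kronecker/Borel--Cantelli argument along dyadic times) gives $\timed{\secondmart} = \littleoh(\time^{1/\momexp+\eps})$ for every $\eps>0$, hence $\baroh(\time^{-(\momexp-1)/\momexp})$ after division by $\time$. These are precisely the error terms in \eqref{eq:cvx-LMF-as}, so I would simply invoke those bounds with the constant $2/(\strong\precdist^{2})$ absorbed into the $\bigoh$ and $\baroh$ terms.
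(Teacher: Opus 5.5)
Your proposal is correct and is essentially the paper's argument. The paper bounds $\oneof{\norm{\stateof{\timealt}-\sol}>\precdist}\leq\precdist^{-2}\norm{\stateof{\timealt}-\sol}^{2}\leq \tfrac{2}{\strong\precdist^{2}}\braket{\stateof{\timealt}-\sol}{\nabla\obj(\stateof{\timealt})}$ and then invokes \cref{thm:convex_convergence_continuous}, which packages exactly what you do by hand: \cref{prop:energy-Ito}, the bound $\eval{\energy}{\tstart}\leq\hrange/\learn$, the fact that $\timed{\energy}\geq0$, and the martingale estimates.

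One small caveat on the almost-sure part. $\firstmart$ contains the compensated short-jump integral, so it is not continuous and a Dambis--Dubins--Schwarz time change does not apply directly. The bound you want to reuse comes instead from Lépingle's law of the iterated logarithm for martingales with bounded jumps (\cref{lemma:LIL}), and that is what you should cite.
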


\begin{restatable}[Hitting time]{theorem}{HITTING}
\label{thm:hitting}
Assume $\obj$ is $\strong$-strongly convex, and let
\begin{equation}
\label{eq:hitting}
\hit_{\precdist}
	= \inf\setdef{\time\geq 0}{\norm{\stateof{\time} - \sol} \leq \precdist}
\end{equation}
denote the first time that $\stateof{\time}$ gets within $\precdist$ of $\sol$ for some small enough $\precdist > 0$.
We then have the following estimates:
\begin{enumerate}
[left=0em,label=\bfseries{Case} \arabic*.]
\item
Heavy noise:
If $\momexp<2$ and \eqref{eq:LMF} is run with $\timed{\learn} = \bracks[\big]{\strong\precdist^{2} / (8 \heavycoef^{\momexp})}^{1/(\momexp-1)}$, then
\begin{equation}
\label{eq:hitting-heavy}
\exof{\hit_{\precdist}}
	\leq 4\hrange
		\parens*{\frac{8^{1/\momexp} \heavycoef}{\strong\precdist^{2}}}^{\momexp/(\momexp-1)}
	\eqstop
\end{equation}
\item
Tame noise:
If $\momexp=2$ and \eqref{eq:LMF} is run with $\timed{\learn} = \strong\precdist^{2} \cdot \hstr/ (2\tamebound^{2} + 4\heavybound^{2})$, then
\begin{equation}
\label{eq:hitting-tame}
\exof{\hit_{\precdist}}
	\leq \frac{8\hrange}{\hstr} \frac{\tamebound^{2} + 2\heavybound^{2}}{\strong^{2}\precdist^{4}}
	\eqstop
\end{equation}
\end{enumerate}
\end{restatable}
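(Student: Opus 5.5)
The plan is to apply \cref{prop:energy-Ito} with constant learning rate, $\timed{\learn}\equiv\learn$, and base point $\base=\sol$, evaluated at the bounded stopping time $\hit_{\precdist}\wedge\time$. Then pass to expectations and let $\time\to\infty$.

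\emph{Step 1: the pathwise inequality.} With constant $\learn$, the last term of \eqref{eq:energy-Ito} vanishes because $\dot\learn=0$. Since we start at the prox-center ($\dstateof{\tstart}=0$), we have $\eval{\energy}{\tstart}=\fench(\sol,0)/\learn=[\hreg(\sol)-\min\hreg]/\learn\leq\hrange/\learn$. Moreover $\timed{\energy}\geq0$, since the Fenchel coupling is nonnegative by the Fenchel--Young inequality. For the drift term, strong convexity \eqref{eq:strong} applied at $\point=\stateof{\timealt}$ towards $\pointalt=\sol$ gives
\begin{equation*}
\braket{\nabla\obj(\stateof{\timealt})}{\sol-\stateof{\timealt}}
	\leq \obj(\sol)-\obj(\stateof{\timealt})-\tfrac{\strong}{2}\norm{\stateof{\timealt}-\sol}^{2}
	\leq -\tfrac{\strong}{2}\precdist^{2}
\end{equation*}
for all $\timealt<\hit_{\precdist}$. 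Writing $\tau_\time=\hit_{\precdist}\wedge\time$, we therefore get, pathwise,
\begin{equation*}
0\leq \frac{\hrange}{\learn}
	-\Bigl(\tfrac{\strong}{2}\precdist^{2}-\learn\tamecoef^{2}-\learn^{\momexp-1}\heavycoef^{\momexp}\Bigr)\tau_\time
	+\firstmart(\tau_\time)+\secondmart(\tau_\time).
\end{equation*}

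\emph{Step 2: optional stopping.} Next, take expectations. Since $\tau_\time$ is bounded, the optional stopping theorem gives $\exof{\firstmart(\tau_\time)}=\exof{\secondmart(\tau_\time)}=0$. This is the step I expect to be the main technical obstacle. For the square-integrable $\firstmart$ it is immediate. For $\secondmart$, which comes from the long jumps and is only $L^{\momexp}$-integrable, I would rely on the explicit form given in \cref{app:continuous}: a compensated Poisson integral whose integrand is bounded by a multiple of $\dnorm{\jumpvar}$ times a bounded factor, so that it is a true martingale, not merely a local one. I would then use the fact that a martingale stopped at a bounded stopping time has constant mean. If the choice of $\learn$ makes the bracket at least $\strong\precdist^{2}/4$, we obtain $\exof{\tau_\time}\leq 4\hrange/(\learn\strong\precdist^{2})$. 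Monotone convergence as $\time\to\infty$ then gives the same bound for $\exof{\hit_{\precdist}}$.

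\emph{Step 3: the choice of $\learn$.} In the heavy case, set $\learn^{\momexp-1}\heavycoef^{\momexp}=\strong\precdist^{2}/8$, which is the stated $\learn$. Since $\momexp-1<1$, this $\learn$ tends to zero faster than $\precdist^{2}$ as $\precdist\to0$. Hence $\learn\tamecoef^{2}\leq\strong\precdist^{2}/8$ for $\precdist$ small enough, and this is where the ``small enough'' hypothesis enters. The bracket is then at least $\strong\precdist^{2}/4$. Substituting this $\learn$, and using $1+1/(\momexp-1)=\momexp/(\momexp-1)$ together with $8^{1/(\momexp-1)}=(8^{1/\momexp})^{\momexp/(\momexp-1)}$, yields \eqref{eq:hitting-heavy}.

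In the tame case $\momexp=2$, both noise terms are linear in $\learn$, and by \eqref{eq:noisecoefs}
\begin{equation*}
\learn\tamecoef^{2}+\learn\heavycoef^{2}=\frac{\learn\,(\tamebound^{2}+2\heavybound^{2})}{2\hstr}.
\end{equation*}
With the stated $\learn$ this equals $\strong\precdist^{2}/4$, so the bracket is again $\strong\precdist^{2}/4$. Then $\exof{\hit_{\precdist}}\leq 4\hrange/(\learn\strong\precdist^{2})$, which gives \eqref{eq:hitting-tame} directly.
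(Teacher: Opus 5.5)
Your proof is correct and follows the paper's argument essentially step for step. It uses the constant-$\learn$ energy inequality with $\base=\sol$, strong convexity to bound the drift by $-\strong\precdist^{2}/2$ before $\hit_{\precdist}$, optional stopping at $\hit_{\precdist}\wedge\time$ followed by monotone convergence, and a choice of $\learn$ that makes each noise term at most $\strong\precdist^{2}/8$; your ``$\precdist$ small enough'' requirement $\learn\tamecoef^{2}\leq\strong\precdist^{2}/8$ is exactly the paper's explicit condition \eqref{eq:delta_small_cdt}, and your constants agree with \eqref{eq:hitting-heavy} and \eqref{eq:hitting-tame}.
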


\begin{restatable}[Convergence]{theorem}{STRLMF}
\label{thm:str-LMF}
Assume $\obj$ is $\strong$-strongly convex relative to $\hreg$
with $\hreg$ steep.
Then:
\begin{enumerate}
[label=\upshape(\itshape\alph*\hspace*{.5pt}\upshape)]
\item
If \eqref{eq:LMF} is run with constant $\timed{\learn} \equiv \learn$, we have:
\begin{align}
\label{eq:str-LMF-const}
\exof{\norm{\stateof{\time} - \sol}^{2}}
	&\leq \cramped{\learnradius^{2}}/\hstr
	+ (2\hrange/\hstr) e^{-\strong\learn\time}
	\eqstop
\end{align}
\item
If \eqref{eq:LMF} is run with  $\timed{\learn} = (1+\time)^{-1/\momexp}$, we have:
\begin{align}
\label{eq:str-LMF-var}
\exof{\norm{\stateof{\time} - \sol}^{2}}
	&\leq \frac{2}{\strong\hstr}
		\frac{\hrange}{\momexp\time^{(\momexp-1)/\momexp}}
	+ \frac{2}{\strong\hstr}
		\frac{\tamecoef^{2}}{\time^{1/\momexp}}
	+ \frac{2}{\strong\hstr}
		\frac{\heavycoef^{\momexp}}{\momexp\time^{(\momexp-1)/\momexp}}
	\notag\\
	&+ \frac{2\hrange}{\hstr}
		\exp\parens*{- \strong\momexp \frac{\time^{1-1/\momexp} - 1}{\momexp - 1}}
\eqstop
\end{align}
In particular, $\stateof{\time} \to \sol$ in mean square as $\time\to\infty$.
\end{enumerate}
\end{restatable}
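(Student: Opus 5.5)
Both parts rest on \cref{prop:energy-Ito} with $\base = \sol$, rewritten as a differential inequality for $\exof{\timed{\energy}}$. Relative strong convexity will close that inequality into a Grönwall bound. Steepness of $\hreg$ will then turn the Fenchel coupling into a Bregman divergence, so the energy controls $\norm{\stateof{\time} - \sol}^{2}$.

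\textbf{Preliminary identities.} Since $\hreg$ is steep, $\stateof{\time} = \mirror(\timed{\learn}\dstateof{\time}) \in \proxdom$. Hence $\timed{\learn}\dstateof{\time} \in \subd\hreg(\stateof{\time})$, and the Fenchel–Young equality gives
\[
\fench(\sol,\timed{\learn}\dstateof{\time}) = \breg(\sol,\stateof{\time}) \geq \tfrac{\hstr}{2}\norm{\stateof{\time}-\sol}^{2}.
\]
Thus $\timed{\energy} = \breg(\sol,\stateof{\time})/\timed{\learn}$. For the drift term I would apply \eqref{eq:strong-rel} at $\point = \stateof{\timealt}$, $\pointalt = \sol$:
\[
\braket{\nabla\obj(\stateof{\timealt})}{\sol - \stateof{\timealt}} \leq \min\obj - \obj(\stateof{\timealt}) - \strong\breg(\sol,\stateof{\timealt}) \leq -\strong\,\eval{\learn}{\timealt}\,\eval{\energy}{\timealt}.
\]

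\textbf{Grönwall step.} I would take expectations in \eqref{eq:energy-Ito}, using that $\firstmart,\secondmart$ have zero mean. The step I expect to be the main obstacle is justifying this for $\secondmart$, which is only a (local) martingale and is not square-integrable. I would handle it by localizing with stopping times and passing to the limit via the explicit bounds on $\secondmart$ from \cref{app:continuous}, which come from the finite $\momexp$-th moments of the long jumps. After that, the $\dot\learn$ term is at most $\hrange\,\abs{\dot\learn}/\learn^{2}$, since $\learn$ is nonincreasing. Then $u(\time) = \exof{\timed{\energy}}$ satisfies, in integral form,
\[
u(\time) - u(0) \leq \int_{0}^{\time}\bracks*{-\strong\learn u + \tamecoef^{2}\learn + \heavycoef^{\momexp}\learn^{\momexp-1} - \hrange\dot\learn/\learn^{2}}\,d\timealt.
\]
The initial value satisfies $u(0) \leq \hrange/\eval{\learn}{0}$, because we start at the prox-center.

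\textbf{Part (a).} With constant $\learn$ the $\dot\learn$ term vanishes, and Grönwall gives
\[
u(\time) \leq \frac{\tamecoef^{2} + \heavycoef^{\momexp}\learn^{\momexp-2}}{\strong} + \frac{\hrange}{\learn}\, e^{-\strong\learn\time}.
\]
Multiplying by $2\learn/\hstr$ to pass to $\norm{\stateof{\time}-\sol}^{2}$ yields $\learnradius^{2}/\hstr + (2\hrange/\hstr)e^{-\strong\learn\time}$, which is exactly \eqref{eq:str-LMF-const}.

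\textbf{Part (b).} Take $\learn = (1+\time)^{-1/\momexp}$, so that $-\dot\learn/\learn^{2} = \tfrac{1}{\momexp}(1+\time)^{1/\momexp - 1}$. Solve the linear inequality with the integrating factor $\exp\int_{0}^{\time}\strong\learn$, where $\int_{0}^{\time}\learn = \tfrac{\momexp}{\momexp-1}\bracks{(1+\time)^{1-1/\momexp} - 1}$. The homogeneous part gives the exponential term. For the forcing terms I would use the standard estimate: if a forcing term $g$ satisfies $g/\learn$ nonincreasing, then $\int_{0}^{\time} e^{-\strong\int_{\timealt}^{\time}\learn}\, g \leq g(\time)/(\strong\learn(\time))$, up to a constant. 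Each forcing term is a power of $(1+\time)$ with this monotonicity, so this gives $\tamecoef^{2}/\strong$, $\heavycoef^{\momexp}\learn^{\momexp-2}/\strong$ and $\hrange(1+\time)^{1/\momexp-1}/(\momexp\strong\learn)$. Multiplying by $2\learn/\hstr$ and bounding $(1+\time)$ powers by $\time$ powers produces the three rate terms in \eqref{eq:str-LMF-var}. Since all terms vanish as $\time\to\infty$, mean-square convergence follows. Matching the exact constants in \eqref{eq:str-LMF-var} may require tracking the integration more carefully, but the orders are as claimed.
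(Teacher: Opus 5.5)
Your route is the paper's: \cref{lemma:dV_1} (equivalently \cref{prop:energy-Ito}) at $\base=\sol$, then steepness to identify $\fench(\sol,\learn\dstateof{\time})$ with $\breg(\sol,\stateof{\time})$, then relative strong convexity to produce the $-\strong\learn\,\exof{\energy}$ term, then Grönwall, and finally $\norm{\stateof{\time}-\sol}^{2}\leq(2/\hstr)\breg(\sol,\stateof{\time})$. Part (a) then comes out with exactly the stated constants.

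The one step that is wrong as written is your auxiliary estimate in part (b): the monotonicity is reversed.
\begin{itemize}
\item Write $g=(g/\learn)\,\learn$ and use $\int_{0}^{\time}\eval{\learn}{\timealt}e^{-\strong\int_{\timealt}^{\time}\learn}\dd\timealt=\strong^{-1}\bigl(1-e^{-\strong\int_{0}^{\time}\learn}\bigr)$. This gives $\int_{0}^{\time}g(\timealt)\,e^{-\strong\int_{\timealt}^{\time}\learn}\dd\timealt\leq g(\time)/(\strong\timed{\learn})$ when $g/\learn$ is \emph{nondecreasing}.
\item For nonincreasing $g/\learn$ the bound fails even up to constants: take $\learn\equiv1$ and $g(\timealt)=e^{-2\strong\timealt}$.
\item Your three forcing terms have $g/\learn$ equal to $\tamecoef^{2}$, $\heavycoef^{\momexp}(1+\time)^{(2-\momexp)/\momexp}$ and $\tfrac{\hrange}{\momexp}(1+\time)^{(2-\momexp)/\momexp}$. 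All are nondecreasing, and the last two are \emph{not} nonincreasing when $\momexp<2$.
\item With the corrected hypothesis the estimate holds with constant exactly $1$. It reproduces the paper's \cref{lemma:integral_estimate}: $\exof{\timed{\energy}}\leq\hrange\,e^{-\strong\int_{0}^{\time}\learn}+\strong^{-1}\bigl[\tamecoef^{2}+(\heavycoef^{\momexp}+\hrange/\momexp)(1+\time)^{(2-\momexp)/\momexp}\bigr]$.
\end{itemize}
Multiplying by $2\timed{\learn}/\hstr$ gives a heavy-noise contribution of $\frac{2}{\strong\hstr}\heavycoef^{\momexp}\time^{-(\momexp-1)/\momexp}$, \emph{without} the $1/\momexp$ shown in \eqref{eq:str-LMF-var}. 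This is exactly what the paper's own proof delivers in \cref{thm:relatively_convex_continuous2}. Since $\heavycoef^{\momexp}\learn^{\momexp-2}/\strong$ is the asymptotic size of the comparison solution, more careful integration will not recover that factor, so you should not chase it.

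There are two smaller points.
\begin{itemize}
\item Grönwall with the negative coefficient $-\strong\learn u$ does not follow from the bound on $[0,\time]$ alone, which is all you wrote. In the noise-free case, a nonnegative $u$ that drops to $0$ and later returns to $u(0)$ satisfies it but violates exponential decay. You need the increment inequality on every $[\timealt,\time]$. It holds because the weak Itô argument applies on any subinterval, and the paper converts it into an a.e.\ derivative bound via \cref{lemma:bounded_to_differentiable} before applying Grönwall.
\item The obstacle you anticipate with $\secondmart$ does not arise. By \cref{lemma:F_Lipschitz} its integrand is bounded by $\diamX\dnorm{\jumpvar}$, which is $\levymes$-integrable on $\{\dnorm{\jumpvar}\geq\cjump\}$ because $\levy$ is integrable. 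So $\secondmart$ is a genuine martingale with zero mean (\cref{lemma:secondmart-bound}), and no localization is needed.
\end{itemize}
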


\Cref{thm:occmeas,thm:hitting,thm:str-LMF} are our main results for \eqref{eq:LMF} in the strongly convex regime, so we close this section with some remarks.

\smallskip
\begin{remark}
Note that \cref{thm:str-LMF} is stated in the context of \emph{relative}\textemdash as opposed to ordinary\textemdash strong convexity.
This is due to the fact that Grönwall-type arguments for the iterates of \acl{MD} (as opposed to time averages) require a two-way compatibility between norms and divergences, and this is absent when $\obj$ is norm-strongly convex.
This disparity is well-documented in the literature\textemdash for both stochastic \cite{Lu19,AIMM21} and deterministic methods \cite{LFN18,AIMM24}\textemdash so it is natural that \eqref{eq:LMF} is also subject to it.
For completeness, we also record in \cref{app:continuous} an ergodic convergence result under ``vanilla'' strong convexity, \cf \cref{thm:strongly_convex_continuous}.
\endenv
\end{remark}

\smallskip
\begin{remark}
It is also worth noting that \cref{thm:occmeas,thm:hitting,thm:str-LMF} paint a complementary picture.
At a high level, \cref{thm:occmeas} shows that, for any fixed $\learn$, there is a 
 ``critical radius'' $\critradius \equiv \critradius(\learn,\strong,\sdev)$ such that, after a transient $\bigoh(1/\run)$ regime, $\stateof{\time}$ is concentrated within $\critradius$ of $\sol$, spending only an $\bigoh(\critradius^{2}/\precdist^{2})$ fraction of time $\precdist$-away from $\sol$.
This 
is reflected in the guarantee \eqref{eq:str-LMF-const} which 
shows 
the geometric convergence of $\stateof{\time}$ 
to an $\bigoh(\critradius)$-neighborhood of $\sol$ 
before 
the noise overshadows the drift of the process.
In particular, letting $\time\to\infty$ in \cref{thm:str-LMF} yields $\limsup_{\time \to \infty} \exof{\norm{\stateof{\time} - \sol}^2} \leq \critradius^{2}/\hstr$,
matching the upper bound obtained from \cref{thm:strongly_convex_continuous}, but applies to the \emph{last iterate} rather than to ergodic averages (and
is reached at a geometric rate under relative strong convexity, instead of $1/\time$ for standard strong convexity).
\endenv
\end{remark}

\smallskip
\begin{remark}
As in the case of \cref{thm:cvx-LMF}, running \eqref{eq:LMF} with $\learn = \bigoh\parens*{\precval^{1/(\momexp - 1)}}$ for a fixed tolerance level $\precval > 0$ guarantees $\exof*{\norm{\stateof{\time} - \sol}^2} \leq \precval$ within time $\time = \bigoh\big(\precval^{-1/(\momexp-1)} \log(1/\precval)\big)$.
Since $\momexp>1$, this is always sharper than the (ergodic) rate $\bigoh(\precval^{-\momexp/(\momexp-1)})$ of \cref{thm:cvx-LMF,thm:strongly_convex_continuous};
in particular, in the tame regime $\momexp=2$, this rate matches the well-known $\bigoh(1/\precval)$ rate for stochastic strongly convex optimization up to a logarithmic factor.
However, for $\momexp < 2$, this bound does not match the $\Omega(\precval^{-\momexp/\bracks{2(\momexp-1)}})$ lower bound \citep[Theorem 5]{ZKV+20} for smooth strongly convex functions.
We discuss this in detail in the next section.
\endenv
\end{remark}


\section{Analysis and results in discrete time}
\label{sec:discrete}

With the continuous-time analysis in place, we return in this section to the discrete-time algorithmic setup of \eqref{eq:SDA}, where we show that, mutatis mutandis, the same convergence and concentration patterns persist.
The only additional element that we will require over \cref{sec:continuous} is a measure of smoothness for $\obj$ that is compatible with the Bregman geometry of $\hreg$.
This is mandated for two reasons:
First, under relative strong convexity, gradients may grow unbounded near the boundary of $\points$, so standard Lipschitz smoothness does not suffice.
Second, the heavy noise in the input is filtered through the mirror map $\mirror$, so, in order to control it in the primal space $\points$, we need a notion of smoothness which, in a sense, commutes with $\mirror$.

To that end, we will employ the notion of \define{relative smoothness} due to \cite{BDX11,BBT17,LFN18}.
Formally, $\obj$ is said to be \define{$\lips$-smooth relative to $\hreg$} if $\lips\hreg-\obj$ is convex, or, equivalently, if
\begin{equation}
\label{eq:RS}
\tag{RS}
\obj(\pointalt)
	\leq \obj(\point)
		+ \braket{\nabla\obj(\point)}{\pointalt - \point}
		+ \lips \breg(\pointalt,\point)
\end{equation}
for all $\point\in\proxdom$, $\pointalt\in\points$.
Dually to strong convexity, if a function is $\lips$-smooth relative to $\norm{\cdot}$\textemdash that is, \eqref{eq:RS} holds with $\norm{\pointalt - \point}^{2}/2$ in lieu of $\breg(\pointalt,\point)$\textemdash then it is also $(\lips/\hstr)$-smooth relative to $\hreg$.
The converse however does not hold, and there is a broad range of problems in machine learning and data science\textemdash from Poisson inverse problems to entropically regularized optimal transport\textemdash that are smooth relative to a suitable Bregman function, but not in the ordinary sense.

With all this in hand, we state below a series of representative results for different incarnations of \ac{SMD} under heavy noise.
For convenience, we write throughout
\begin{equation}
\noisecoef^{\momexp}
	= \diamX^{2-\momexp} \, (2/\hstr)^{\momexp-1} \, \sdev^{\momexp}
\end{equation}
in direct analogy with $\heavycoef$ in \eqref{eq:noisecoefs}.
In addition, to streamline our presentation, 
we assume that all algorithms are initialized at the prox-center $\point_{c} = \argmin\hreg$ of $\points$, \ie with $\init[\dpoint] \gets 0$,
and we defer all proofs to \cref{app:discrete}.

We start with two results in the spirit of \cref{thm:cvx-LMF}.

\begin{figure*}[tbp]
\centering
\includegraphics[height=30ex]{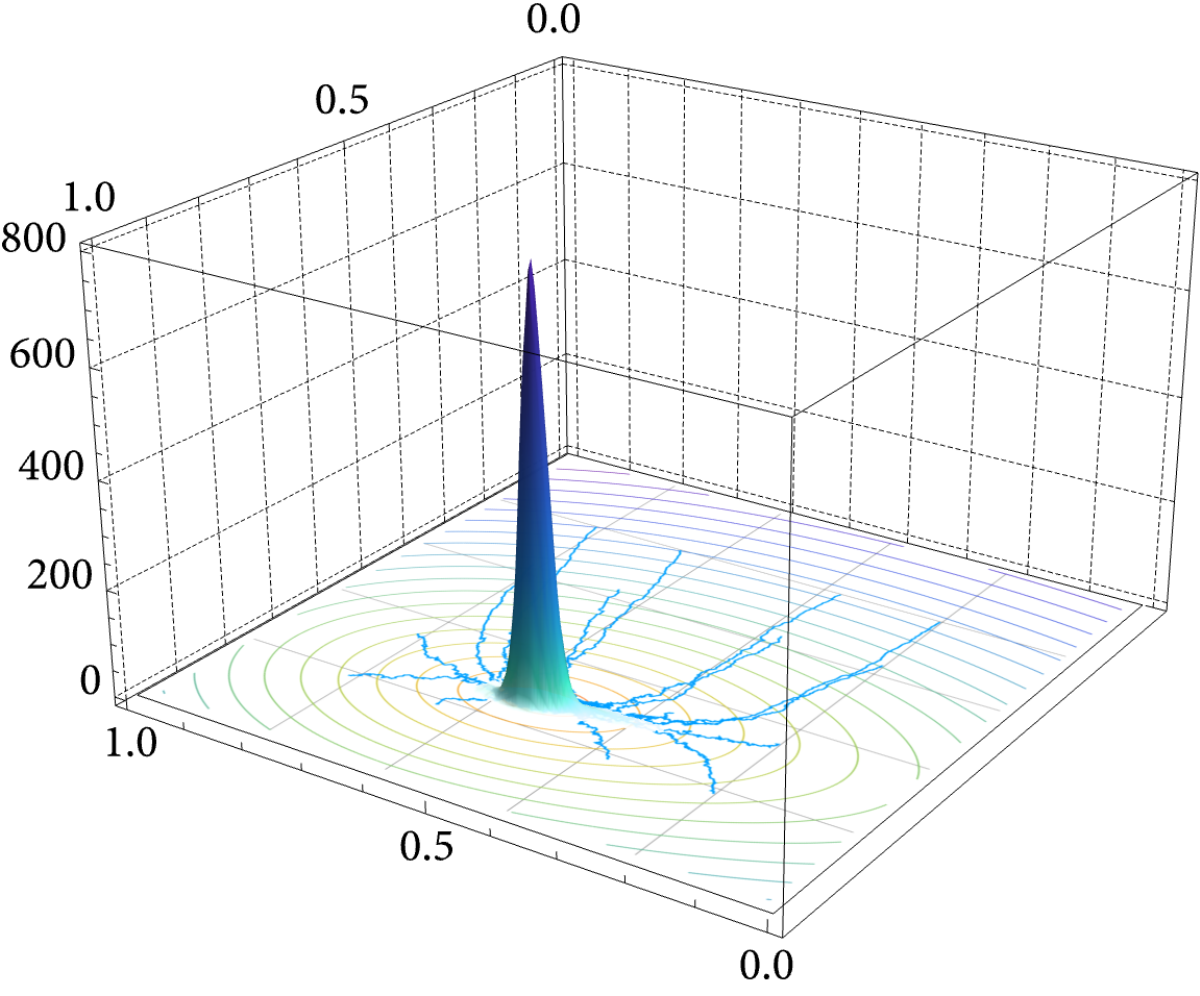}
\quad
\includegraphics[height=30ex]{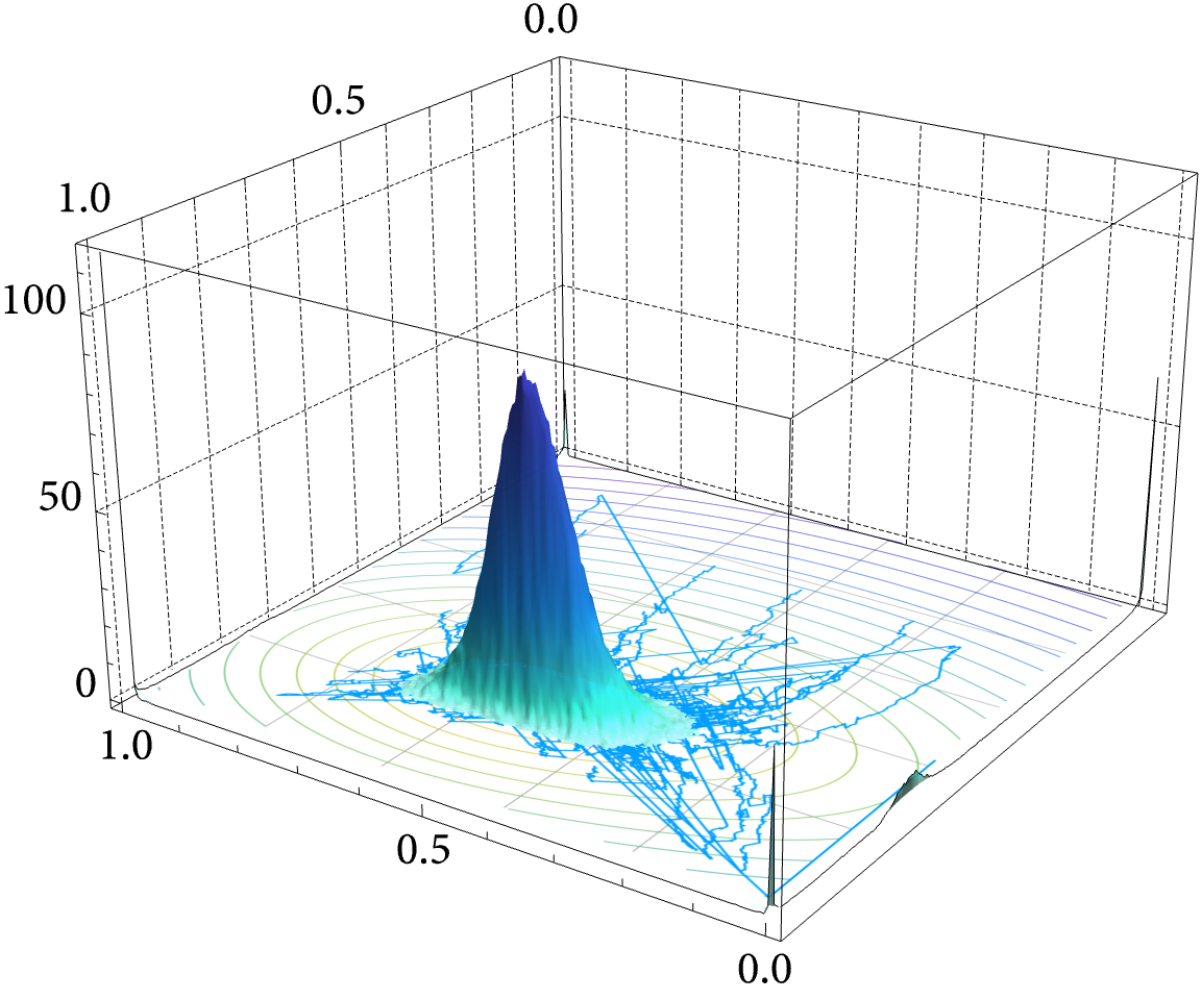}
\\[\smallskipamount]
\includegraphics[height=30ex]{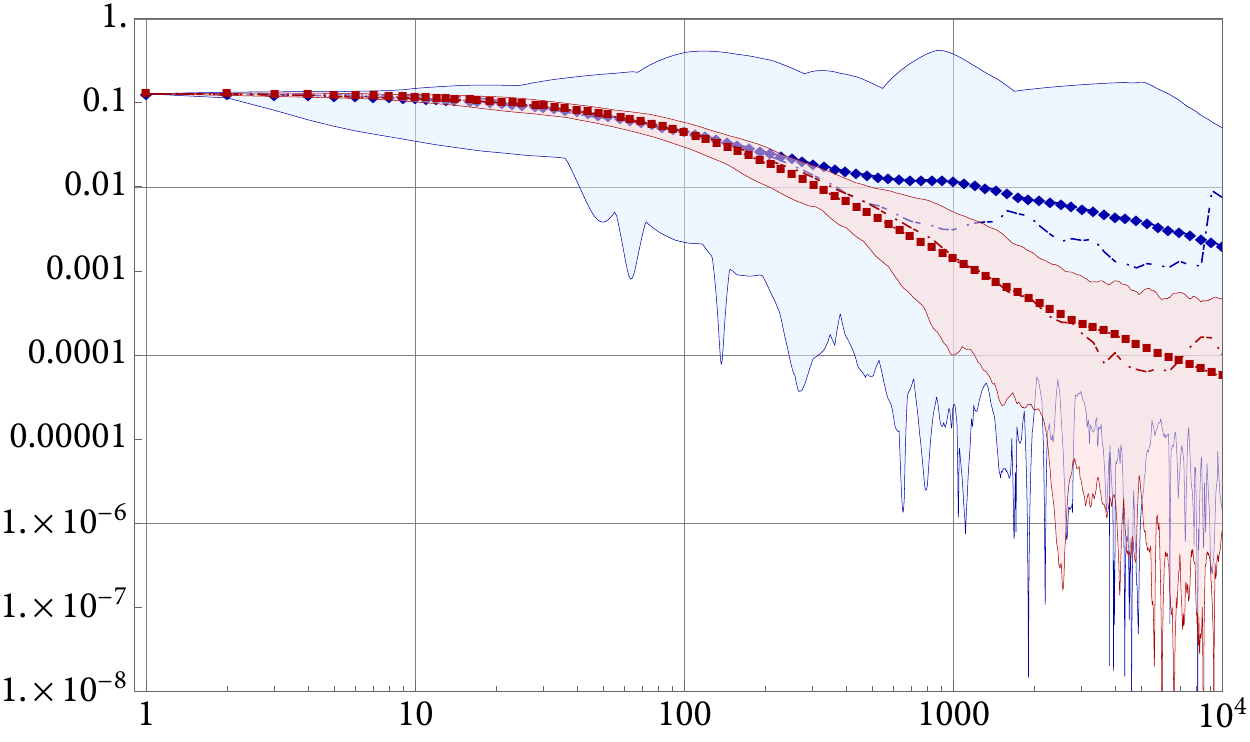}
\caption{The behavior of \eqref{eq:SDA} under tame and heavy noise.
We considered a strongly convex function over $\points = [0,1]\times[0,1]$ and we ran $N=200$ instances of \eqref{eq:SDA} with entropic regularization for $\nRuns = 10^{4}$ iterations and gradients perturbed by a centered $\alpha$-stable distribution with scale parameter $c=1$ and no skewness.
The density plot on the left shows a sample trajectory and the long-run distribution of the iterates of \eqref{eq:SDA} for the tame regime ($\alpha=\momexp=2$);
the one on the right shows the same information for the heavy-noise regime ($\alpha=\momexp=3/2$).
Finally, the bottom plot shows the evolution of $\obj(\last[\avg\point])$ under tame and heavy noise (red and blue coloring respectively; markers~=~mean, shading~=~range, and the dashed line represents a sample trajectory);
as predicted, $\obj(\last[\avg\point])$ follows a power law in both cases.}
\label{fig:plots}
\end{figure*}

\begin{restatable}[\ac{SDA}, convex]{theorem}{CVXSDA}
\label{thm:cvx-SDA}
Suppose that $\obj$ is $\lips$-smooth relative to $\hreg$ and \eqref{eq:SDA} is run with learning rate $\curr[\learn] = \beta/\run^{1/\momexp}$, $\beta\leq 1/(\momexp \lips)$.
Then the time-averaged process $\last[\avg\point] = (1/\nRuns) \sum_{\run=\start}^{\nRuns} \curr$ enjoys the bound
\begin{align}
\exof*{\obj(\last[\avg\point])}
	&\leq \min\obj
		+ \frac{\obj(\init) - \min\obj}{\nRuns}
		+ \frac{\diamh \hcst^{\momexp-1} + \beta^{\momexp} \, \noisecoef^{\momexp}}{\beta\nRuns^{(\momexp-1)/\momexp}}
	\eqstop
\notag
\end{align}
In particular, for
$\beta = \bracks{\hrange \hcst^{\momexp-1}/(\momexp-1)}^{1/\momexp} / \noisecoef$
we have
\begin{equation}
\exof*{\obj(\last[\avg\point])}
	\leq \min\obj
		+ \frac{\obj(\init) - \min\obj}{\nRuns}
	+ \frac{\momexp \bracks{\hrange\hcst^{\momexp-1}/(\momexp-1)}^{(\momexp-1)/\momexp} \, \noisecoef}{\nRuns^{(\momexp-1)/\momexp}}
	\eqstop
\end{equation}
\end{restatable}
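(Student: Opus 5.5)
We will run a discrete-time analogue of the energy argument behind \cref{prop:energy-Ito,thm:cvx-LMF}. The energy is the deflated Fenchel coupling
\[
\curr[\energy] = \frac{1}{\curr[\learn]}\fench(\sol,\curr[\learn]\curr[\dpoint]),
\]
with $\init[\dpoint]=0$, so that $\init[\energy] \leq \hrange/\init[\learn]$. The one-step change $\next[\energy]-\curr[\energy]$ splits into two parts: the change in $\learn$ at fixed $\next[\dpoint]$, and the change in $\dpoint$ at fixed learning rate $\next[\learn]$.

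\textbf{Learning-rate part.} The change in $\learn$ contributes at most $(1/\next[\learn]-1/\curr[\learn])\bracks{\hreg(\sol)-\min\hreg} \leq \hrange\,(1/\next[\learn]-1/\curr[\learn])$. This uses the standard monotonicity of $\learn\mapsto\hconj(\learn\dpoint)/\learn$, and it telescopes to $\hrange/\last[\learn]$.

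\textbf{Dual-step part.} For the move $\next[\dpoint]=\curr[\dpoint]-\curr[\sgrad]$, apply the three-point (prox) inequality at $\pointalt=\next$ rather than at $\curr$. This gives
\[
\braket{\curr[\sgrad]}{\next-\sol} - \frac{1}{\curr[\learn]}\breg(\next,\curr)
\]
up to the sign conventions. We then split $\curr[\sgrad] = \nabla\obj(\curr)+\curr[\noise]$. The drift part is handled with \eqref{eq:RS}:
\[
\braket{\nabla\obj(\curr)}{\next-\sol} \geq \obj(\next)-\obj(\sol) - \lips\breg(\next,\curr).
\]
The condition $\curr[\learn]\leq\beta\leq 1/(\momexp\lips)$ lets the term $\lips\breg$ be absorbed by part of $\breg(\next,\curr)/\curr[\learn]$, leaving the fraction $(1-1/\momexp)\breg(\next,\curr)/\curr[\learn]$. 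This yields a descent inequality of the form
\begin{align}
\obj(\next)-\min\obj \leq \curr[\energy]-\next[\energy] &+ \hrange\parens{1/\next[\learn]-1/\curr[\learn]}
\notag\\
&+ \braket{\curr[\noise]}{\sol-\curr} + \braket{\curr[\noise]}{\curr-\next} - c\,\frac{\hstr}{\curr[\learn]}\norm{\next-\curr}^{2}.
\notag
\end{align}
Summing this inequality shifts the indices by one. That shift is why the result has the extra term $(\obj(\init)-\min\obj)/\nRuns$: we add and subtract the $\run=1$ term so that the average runs over $\init,\dotsc,\last$.

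\textbf{Noise terms.} The term $\braket{\curr[\noise]}{\sol-\curr}$ has zero conditional mean by unbiasedness, since $\curr$ is $\curr[\filter]$-measurable, so it vanishes in expectation. The main obstacle is the cross term $\braket{\curr[\noise]}{\curr-\next}$ when only $\momexp$-th moments exist. Young's inequality with the quadratic penalty alone would require $\dnorm{\curr[\noise]}^{2}$, which may be infinite. We therefore interpolate using compactness. With $d=\norm{\next-\curr}\leq\diamX$, we bound
\[
\dnorm{\noise}\, d - a d^{2} \leq \min\braces{\dnorm{\noise}\diamX,\ \dnorm{\noise}^{2}/(4a)},
\]
where $a=c\hstr/\curr[\learn]$. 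We then use $\min\{u,v\}\leq u^{2-\momexp}v^{\momexp-1}$. This gives a bound of the form
\[
\diamX^{2-\momexp}\parens{\curr[\learn]/(c\hstr)}^{\momexp-1}\dnorm{\curr[\noise]}^{\momexp},
\]
which has expectation at most $\curr[\learn]^{\momexp-1}\noisecoef^{\momexp}$ up to constants. The constants must be tuned so that this matches the given $(2/\hstr)^{\momexp-1}$ normalization.

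\textbf{Summation and tuning.} Sum over $\run$, take expectations, and apply Jensen's inequality to $\obj(\last[\avg\point])$. With $\curr[\learn]=\beta\run^{-1/\momexp}$ we have $\hrange/\last[\learn] = \hrange\nRuns^{1/\momexp}/\beta$ and
\[
\sum_{\run}\curr[\learn]^{\momexp-1} \leq \frac{\momexp}{\momexp-1}\,\beta^{\momexp-1}\nRuns^{1/\momexp}.
\]
Dividing by $\nRuns$ gives the stated form, with the extra $\hcst^{\momexp-1}$ and constants coming out of the bookkeeping. The specific $\beta$ balances the two terms $\hrange\hcst^{\momexp-1}/\beta$ and $\beta^{\momexp-1}\noisecoef^{\momexp}$, and minimizing in $\beta$ yields the second display.
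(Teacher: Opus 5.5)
Your architecture matches the paper's: the $\learn$-deflated Fenchel coupling, \eqref{eq:RS} absorbed through $\curr[\learn]\le 1/(\momexp\lips)$ with a $(1-1/\momexp)$ remainder, unbiasedness to remove $\braket{\curr[\noise]}{\curr-\sol}$, and the diameter used to trade the quadratic penalty for a $\momexp$-th moment. The descent inequality you display is also true. The gap is in how you derive it when $\learn$ varies.

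Suppose you bound the learning-rate part by $\hrange(1/\next[\learn]-1/\curr[\learn])$ via monotonicity, then take the dual step at fixed $\curr[\learn]$ from $\curr$. That step lands at $\hat\point_{\run+1}=\mirror(\curr[\learn]\next[\dpoint])$, not at $\next=\mirror(\next[\learn]\next[\dpoint])$. Since $\hat\point_{\run+1}$ maximizes $\pointalt\mapsto-\braket{\curr[\sgrad]}{\pointalt-\sol}-\fench(\pointalt,\curr[\learn]\curr[\dpoint])/\curr[\learn]$, replacing it by $\next$ gives an inequality in the wrong direction. Keeping $\hat\point_{\run+1}$ instead controls $\obj$ at auxiliary points rather than at the iterates.

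If you take the dual step at $\next[\learn]$, as you literally wrote, you do land at $\next$. But the coupling is then $\fench(\next,\next[\learn]\curr[\dpoint])$, which controls $\breg(\next,\mirror(\next[\learn]\curr[\dpoint]))$, not the $\breg(\next,\curr)$ that \eqref{eq:RS} at $\curr$ needs.

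What is missing is the exact identity of \cref{lemma:difference_of_V}, obtained by direct computation:
\[
\next[\energy]-\curr[\energy]
	=\parens*{\frac{1}{\next[\learn]}-\frac{1}{\curr[\learn]}}\bracks{\hreg(\sol)-\hreg(\next)}
	-\braket{\curr[\sgrad]}{\next-\sol}
	-\frac{1}{\curr[\learn]}\fench(\next,\curr[\learn]\curr[\dpoint]).
\]
Here the slack you discard in the learning-rate step exactly pays for moving from $\hat\point_{\run+1}$ to $\next$. Only after this identity should you bound $\hreg(\sol)-\hreg(\next)\le\hrange$ and $\fench(\next,\curr[\learn]\curr[\dpoint])\ge\breg(\next,\curr)$.

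Second, the statement has explicit constants, and your noise step cannot reach them. The coefficient $a=(\momexp-1)\hstr/(2\momexp\curr[\learn])$ is fixed by the absorption of $\lips\breg$, so there is nothing left to tune. With this $a$, your interpolation $\min\{u,v\}\le u^{2-\momexp}v^{\momexp-1}$ gives
\[
\parens{\momexp/(2(\momexp-1))}^{\momexp-1}\diamX^{2-\momexp}\hstr^{1-\momexp}\curr[\learn]^{\momexp-1}\dnorm{\curr[\noise]}^{\momexp}.
\]
This exceeds the paper's $\frac{2^{\momexp-1}}{\momexp}\diamX^{2-\momexp}\hstr^{1-\momexp}\curr[\learn]^{\momexp-1}\dnorm{\curr[\noise]}^{\momexp}$ (\cref{lemma:heavy_tails_descent}) by the factor $\momexp\parens{\momexp/(4(\momexp-1))}^{\momexp-1}$. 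That factor is strictly greater than $1$ for every $\momexp\in(1,2)$, reaching $4/3$ at $\momexp=4/3$.

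The paper gets the sharper constant in two steps. It first uses $\norm{\next-\curr}^{2}\ge\diamX^{-(2-\momexp)/(\momexp-1)}\norm{\next-\curr}^{\momexp/(\momexp-1)}$, then applies Young's inequality exactly with exponents $\momexp$ and $\momexp/(\momexp-1)$. The resulting $1/\momexp$ cancels the $\momexp$ in $\sum_{\run=1}^{\nRuns}\run^{-(\momexp-1)/\momexp}\le\momexp\nRuns^{1/\momexp}$, which is what makes the noise term come out as exactly $\beta^{\momexp-1}\noisecoef^{\momexp}/\nRuns^{(\momexp-1)/\momexp}$. Your sum bound with $\momexp/(\momexp-1)$ is the constant for $\int\learn$, not for $\int\learn^{\momexp-1}$, and it blows up as $\momexp\to1$.

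Finally, no bookkeeping produces the factor $\hstr^{\momexp-1}$ in front of $\hrange$. Both your argument and the paper's appendix version give $\hrange/\beta$, which implies the displayed bound only when $\hstr\ge1$. On the other hand, your indexing (summing to $\nRuns-1$ and adding the $\run=1$ term) yields $\hrange/\last[\learn]$ exactly. That is slightly cleaner than the paper's $\hrange/\learn_{\nRuns+1}$.
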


Our next result concerns the relatively strongly convex case\textemdash so $\argmin\obj = \{\sol\}$ for some $\sol\in\points$.
Then:

\begin{restatable}[\acs{SDA}, strongly convex]{theorem}{STRSDA}
\label{thm:str-SDA}
Suppose that $\obj$ is $\strong$-strongly convex and $\lips$-smooth, both relative to $\hreg$, with $\hreg$ steep.
If \eqref{eq:SDA} is run with constant $\curr[\learn] \equiv \learn \leq 1/ (\momexp\lips)$, then
\begin{equation}
\exof[\big]{\norm{\curr - \sol}^{2}}
	\leq \frac
		{\learn^{\momexp - 1} \noisecoef^{\momexp}}
		{\momexp \strong \hstr}
		+ \frac{2\hrange}{\hstr} (1 - \learn \strong)^{\run - 1}.
	\end{equation}
\end{restatable}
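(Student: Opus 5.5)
We will work with a discrete-time energy built from the Fenchel coupling, in direct analogy with the continuous-time \cref{thm:str-LMF}(a). Because $\hreg$ is steep, \eqref{eq:SDA} with constant $\learn$ coincides with \eqref{eq:SMD} with step $\learn$ (as recorded in \cref{app:mirror}). Hence $\next = \proxof{\curr}{-\learn\curr[\sgrad]}$, and the natural energy is the Bregman divergence $\curr[\energy] \defeq \breg(\sol,\curr) = \fench(\sol,\learn\curr[\dpoint])$. Initialization at the prox-center gives $\init[\energy] = \hreg(\sol) - \min\hreg \leq \hrange$. Strong convexity of $\hreg$ then converts the final bound on $\curr[\energy]$ into one on $\norm{\curr-\sol}^{2}$ via $\norm{\curr-\sol}^{2} \leq (2/\hstr)\curr[\energy]$.

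The first step is the standard three-point inequality for the prox step:
\begin{equation*}
\breg(\sol,\next) \leq \breg(\sol,\curr) - \learn\braket{\curr[\sgrad]}{\next - \sol} - \breg(\next,\curr).
\end{equation*}
We split $\curr[\sgrad] = \nabla\obj(\curr) + \curr[\noise]$. For the gradient part we write $\braket{\nabla\obj(\curr)}{\next-\sol} = \braket{\nabla\obj(\curr)}{\next-\curr} + \braket{\nabla\obj(\curr)}{\curr-\sol}$. Relative smoothness \eqref{eq:RS} bounds the first piece below by $\obj(\next)-\obj(\curr)-\lips\breg(\next,\curr)$. Relative strong convexity \eqref{eq:strong-rel} at $\pointalt=\sol$ bounds the second piece below by $\obj(\curr)-\obj(\sol)+\strong\breg(\sol,\curr)$. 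Since $\obj(\next)\geq\obj(\sol)$ and $\learn\lips\leq 1/\momexp$, this yields
\begin{equation*}
\next[\energy] \leq (1-\learn\strong)\curr[\energy] - \learn\braket{\curr[\noise]}{\next-\sol} - (1-\learn\lips)\breg(\next,\curr),
\end{equation*}
where at least a fraction $(1-1/\momexp)$ of $\breg(\next,\curr)$ survives for absorbing noise.

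The main obstacle is the noise term, since $\curr[\noise]$ may have infinite variance and the usual Young inequality $\learn\dnorm{\curr[\noise]}\norm{\next-\curr} \leq \learn^{2}\dnorm{\curr[\noise]}^{2}/\hstr + \dots$ is unavailable. We first write $\braket{\curr[\noise]}{\next-\sol} = \braket{\curr[\noise]}{\curr-\sol} + \braket{\curr[\noise]}{\next-\curr}$. The first term has zero conditional mean because $\curr$ is $\curr[\filter]$-measurable. For the second term, we interpolate using compactness of $\points$: $\norm{\next-\curr}\leq\diamX$, so $\norm{\next-\curr} \leq \diamX^{(2-\momexp)/\momexp}\,\norm{\next-\curr}^{2(\momexp-1)/\momexp}$. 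Together with $\norm{\next-\curr}^{2}\leq(2/\hstr)\breg(\next,\curr)$, this gives
\begin{equation*}
\learn\dnorm{\curr[\noise]}\norm{\next-\curr} \leq \learn\dnorm{\curr[\noise]}\,\diamX^{(2-\momexp)/\momexp}\bigl[(2/\hstr)\breg(\next,\curr)\bigr]^{(\momexp-1)/\momexp}.
\end{equation*}
We then apply Young's inequality with conjugate exponents $\momexp$ and $\momexp/(\momexp-1)$, with weights chosen so that the $\breg(\next,\curr)$ term is absorbed by the leftover $(1-\learn\lips)\breg(\next,\curr)$. The remaining term is of order $\learn^{\momexp}\diamX^{2-\momexp}(2/\hstr)^{\momexp-1}\dnorm{\curr[\noise]}^{\momexp}$. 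Taking conditional expectations, this is at most $\learn^{\momexp}\noisecoef^{\momexp}/(c\,\momexp)$ for a constant $c$ to be tracked. The bookkeeping of these constants, namely balancing Young's weights against $1-\learn\lips\geq 1-1/\momexp$ so that the final constant is $1/(2\momexp)$ after the factor $2/\hstr$, is where we expect the most care to be needed.

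Finally we take full expectations to obtain the linear recursion
\begin{equation*}
\exof{\next[\energy]} \leq (1-\learn\strong)\exof{\curr[\energy]} + \learn^{\momexp}\noisecoef^{\momexp}/(2\momexp).
\end{equation*}
Unrolling it and summing the geometric series $\sum_{k}(1-\learn\strong)^{k}\leq 1/(\learn\strong)$ gives
\begin{equation*}
\exof{\curr[\energy]} \leq \hrange(1-\learn\strong)^{\run-1} + \learn^{\momexp-1}\noisecoef^{\momexp}/(2\momexp\strong).
\end{equation*}
Multiplying by $2/\hstr$ then yields the stated bound.
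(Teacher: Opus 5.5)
Your route is essentially the paper's own. The paper also tracks $D(x^{\ast},x_t)=F(x^{\ast},\eta y_t)$, with equality by steepness. It uses the three-point identity (\cref{lemma:difference_of_V}, equivalent to your prox inequality), and then relative smoothness and relative strong convexity to get the factor $1-\eta\mu$ with a leftover $-(1-\eta L)D(x_{t+1},x_t)$. It absorbs $\eta\langle U_t,x_t-x_{t+1}\rangle$ by the same compactness interpolation plus Young with exponents $p$ and $p/(p-1)$ (\cref{lemma:heavy_tails_descent}), and finally unrolls a geometric recursion.

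The problem is the step you defer as bookkeeping: it cannot deliver the per-step constant $1/(2p)$ you need. With your interpolation, $\eta\lVert U_t\rVert_{\ast}\lVert x_{t+1}-x_t\rVert\le X B^{(p-1)/p}$, where $B=D(x_{t+1},x_t)$ and $X^{p}=\eta^{p}\lVert U_t\rVert_{\ast}^{p}\lVert\mathcal{X}\rVert^{2-p}(2/K)^{p-1}$. The only absorption budget guaranteed by $\eta\le 1/(pL)$ is $\frac{p-1}{p}B$. But $\sup_{B\ge 0}\{XB^{(p-1)/p}-\frac{p-1}{p}B\}=X^{p}/p$ exactly. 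So no choice of Young weights beats $\eta^{p}V_{\sigma}^{p}/p$ per step. Even optimizing directly over $w=\lVert x_{t+1}-x_t\rVert\in[0,\lVert\mathcal{X}\rVert]$, using only $B\ge\frac{K}{2}w^{2}$, returns the constant $1/p$, attained at $w=\lVert\mathcal{X}\rVert$.

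Consequently, what your argument actually proves is
\[
\mathbb{E}\bigl[\lVert x_t-x^{\ast}\rVert^{2}\bigr]\le\frac{2\eta^{p-1}V_{\sigma}^{p}}{p\mu K}+\frac{2H}{K}(1-\eta\mu)^{t-1},
\]
which is twice the stated noise term. This is also exactly what the paper's proof delivers. Its recursion has per-step term $\frac{2^{p-1}\lVert\mathcal{X}\rVert^{2-p}}{pK^{p-1}}\eta^{p}\sigma^{p}=\eta^{p}V_{\sigma}^{p}/p$, and multiplying the unrolled bound by $2/K$ gives $2^{p}$ in the numerator, not the $2^{p-1}$ in the statement. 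The version with the extra factor $2$ is also the natural discrete analogue of the $\delta_{\eta}^{2}/K$ term in \cref{thm:str-LMF}.

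So the constant in the statement looks like a factor-of-$2$ slip. Rather than trying to squeeze the missing $2$ out of Young's inequality, prove the bound with $2\eta^{p-1}V_{\sigma}^{p}/(p\mu K)$. One small addition: record that $\eta\mu\le\eta L\le 1/p<1$ (since $\mu\le L$), which your unrolling of the recursion requires.
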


\begin{corollary}
\label{cor:str-SDA}
Fix some target accuracy $\precval > 0$.
If \eqref{eq:SDA} is run with small enough $\learn = \bigoh\big((\precval/\sdev^\momexp)^{1/(\momexp-1)}\big)$, then $\exof{\norm{\curr - \sol}^{2}} \leq \precval$ for all $\time = \Omega\parens*{(\sdev^{\momexp}/\precval)^{1/(\momexp-1)} \log(1/\precval)}$.
\end{corollary}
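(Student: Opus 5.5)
The corollary follows directly from \cref{thm:str-SDA}. The plan is to make each of the two terms in that bound at most $\precval/2$: pick $\learn$ so that the noise floor is small, then pick $\run$ so that the geometric transient has decayed.

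\textbf{Step 1 (noise floor).} Since $\noisecoef^{\momexp} = \diamX^{2-\momexp}(2/\hstr)^{\momexp-1}\sdev^{\momexp}$, the first term of \cref{thm:str-SDA} reads $C_{1}\,\learn^{\momexp-1}\sdev^{\momexp}$ with $C_{1} = \diamX^{2-\momexp}(2/\hstr)^{\momexp-1}/(\momexp\strong\hstr)$, a constant independent of $\precval$ and $\sdev$. Requiring $C_{1}\learn^{\momexp-1}\sdev^{\momexp} \leq \precval/2$ gives the admissible choice
\begin{equation*}
\learn = \min\braces*{\frac{1}{\momexp\lips},\ \parens*{\frac{\precval}{2C_{1}\sdev^{\momexp}}}^{1/(\momexp-1)}}.
\end{equation*}
This is $\bigoh\big((\precval/\sdev^{\momexp})^{1/(\momexp-1)}\big)$. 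It also satisfies the hypothesis $\learn \leq 1/(\momexp\lips)$ of \cref{thm:str-SDA}; the phrase ``small enough'' in the statement refers to this cap. We also need $\learn\strong<1$. This holds automatically: relative strong convexity and relative smoothness together force $\strong\leq\lips$, so $\learn\strong \leq 1/\momexp < 1$.

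\textbf{Step 2 (transient).} Using $1-x\leq e^{-x}$, the second term is at most $(2\hrange/\hstr)\exp(-\learn\strong(\run-1))$. This is at most $\precval/2$ as soon as
\begin{equation*}
\run \geq 1 + \frac{1}{\learn\strong}\log\frac{4\hrange}{\hstr\precval}.
\end{equation*}
The right-hand side is nondecreasing in $1/\learn$. For small $\precval$ the second branch of the minimum is the active one, so $1/\learn \asymp (\sdev^{\momexp}/\precval)^{1/(\momexp-1)}$, and the threshold becomes $\bigoh\big((\sdev^{\momexp}/\precval)^{1/(\momexp-1)}\log(1/\precval)\big)$. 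Hence the bound $\exof{\norm{\curr-\sol}^{2}}\leq\precval$ holds for every $\run$ beyond this threshold, which is the $\Omega(\cdot)$ statement.

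\textbf{Main obstacle.} The only delicate point is bookkeeping rather than analysis. One must check that the constants hidden in $\bigoh$ and $\Omega$ depend only on $\diamX,\hstr,\strong,\hrange,\momexp$ and not on $\precval$. One must also handle the regime where the cap $1/(\momexp\lips)$ is active, i.e.\ moderately large $\precval$. In that regime $1/\learn = \momexp\lips$ is a constant and the iteration count is only $\bigoh(\log(1/\precval))$, which is still dominated by the stated bound once constants are absorbed (using $\precval$ bounded, say $\precval\leq 2\hrange/\hstr$, since otherwise the claim is trivial).
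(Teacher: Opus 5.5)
Your proposal is correct and follows the same route as the paper, which treats the corollary as read off directly from \cref{thm:str-SDA}: take $\learn\propto(\precval/\sdev^{\momexp})^{1/(\momexp-1)}$ to push the noise floor below $\precval/2$, then wait $\bigoh\bigl(\log(1/\precval)/(\learn\strong)\bigr)$ iterations for $(1-\learn\strong)^{\run-1}$ to decay. One small caveat on your last paragraph: absorbing the capped value $1/\learn=\momexp\lips$ into $(\sdev^{\momexp}/\precval)^{1/(\momexp-1)}$ requires constants that depend on $\lips$ and $\sdev$, and this blows up as $\sdev\to0$; it is cleaner to read ``small enough'' as the standing restriction $c\,(\precval/\sdev^{\momexp})^{1/(\momexp-1)}\leq1/(\momexp\lips)$ and drop the $\min$ altogether, which is how the statement is meant.
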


Our next two results echo \cref{thm:occmeas,thm:hitting}, and they estimate the fraction of time that $\curr$ spends away from $\argmin\obj$, and the time that it takes to get close to it for the first time.

\begin{restatable}[Concentration]{theorem}{OCCMEASDISC}
\label{thm:occmeas-disc}
Assume $\obj$ is $\strong$-strongly convex and $\lips$-smooth relatively to $\hreg$,
let $\ball_{\precdist}$ be a $\precdist$-ball in $\points$ around the global minimum $\sol$ of $\obj$,
and let
\begin{equation}
\label{eq:occupation-disc}
\meas_{\nRuns}(\ball_{\precdist})
	= \frac{1}{\nRuns}
		\sum_{\run=\start}^{\nRuns} \oneof{\curr \in \ball_{\precdist}}
\end{equation}
be the fraction of time that $\curr$ spends in $\ball_{\precdist}$ up to time $\nRuns$.
If \eqref{eq:SDA} is run with constant learning rate $\curr[\learn] \equiv \learn$, we have
\begin{equation}
\exof*{\meas_{\nRuns}(\ball_\precdist)}
	\geq 1
		- \frac{\precdist_{\learn}^{2}}{\precdist^{2}}
		- \frac{2\bracks{\obj(\point_1) - \min\obj}}{\strong \precdist^2 \nRuns}
		- \frac{2 \diamh}{\learn \strong \precdist^2 \nRuns}
\end{equation}
where, in a similar vein to \eqref{eq:learnradius}, we let
\begin{equation}
\label{eq:learnradius}
\learnradius^{2}
	= 2 / (\strong\momexp) \cdot \learn^{\momexp-1} \noisecoef^{\momexp}
	\eqstop
\end{equation}
\end{restatable}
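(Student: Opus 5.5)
The plan is to run a discrete-time energy argument for \eqref{eq:SDA} with constant $\learn$, using the deflated Fenchel coupling $\curr[\energy] = \learn^{-1}\fench(\sol,\learn\curr[\dpoint])$ as the discrete analogue of \eqref{eq:energy}. I will then convert the resulting summed optimality-gap bound into an occupation-measure bound through strong convexity and Markov's inequality.

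\textbf{Step 1: one-step inequality.} I will take the one-step estimate used in the proof of \cref{thm:cvx-SDA}, specialized to constant $\learn\le 1/(\momexp\lips)$ and $\base=\sol$. Since the learning rate is constant, the $\dot\learn$-type term vanishes. The estimate should have the form
\begin{equation*}
\exof{\obj(\next) - \min\obj} \leq \exof{\curr[\energy] - \next[\energy]} + \bracks{\exof{\obj(\curr)} - \exof{\obj(\next)}} + \frac{\learn^{\momexp-1}\noisecoef^{\momexp}}{\momexp}.
\end{equation*}
It comes from three ingredients:
\begin{itemize}
\item the three-point/Fenchel coupling identity;
\item relative smoothness \eqref{eq:RS}, which controls $\obj(\next)$ by $\obj(\curr)+\braket{\nabla\obj(\curr)}{\next-\curr}+\lips\breg(\next,\curr)$;
\item a $\momexp$-th moment bound on the noise term.
\end{itemize}
For the noise term, $\braket{\curr[\noise]}{\curr-\next}$ minus the strongly convex slack is bounded using $\norm{\curr - \next}\le\diamX$. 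This interpolates between $\norm{\cdot}^2$ and $\norm{\cdot}^{\momexp}$, which is exactly what yields $\noisecoef^{\momexp}=\diamX^{2-\momexp}(2/\hstr)^{\momexp-1}\sdev^{\momexp}$, and then Young's inequality with exponents $\momexp,\momexp/(\momexp-1)$ finishes it. Unbiasedness kills $\exof{\braket{\curr[\noise]}{\sol-\curr}}$.

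\textbf{Step 2: telescoping.} Summing over $\run=\start,\dots,\nRuns$, and allowing for a one-index shift, the energies telescope to $\init[\energy]/1 \le \diamh/\learn$, since $\fench(\sol,0)=\hreg(\sol)-\min\hreg\le\diamh$ at the prox-center. The $\obj$-differences telescope to at most $\obj(\point_1)-\min\obj$. This gives
\begin{equation*}
\frac{1}{\nRuns}\sum_{\run=\start}^{\nRuns}\exof{\obj(\curr)-\min\obj} \leq \frac{\obj(\point_1)-\min\obj}{\nRuns} + \frac{\diamh}{\learn\nRuns} + \frac{\learn^{\momexp-1}\noisecoef^{\momexp}}{\momexp}.
\end{equation*}

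\textbf{Step 3: from gap to occupation.} Relative strong convexity implies $(\strong\hstr)$-norm strong convexity; as stated the bound uses $\obj(\point)-\min\obj\ge(\strong/2)\norm{\point-\sol}^2$ (I will use first-order optimality at $\sol$; the $\hstr$ normalization is absorbed as in \cref{thm:occmeas}). Hence $\oneof{\curr\notin\ball_\precdist}\le 2(\obj(\curr)-\min\obj)/(\strong\precdist^2)$. Averaging, taking expectations and inserting Step 2 gives
\begin{equation*}
1-\exof{\meas_\nRuns(\ball_\precdist)} \le \frac{2\learn^{\momexp-1}\noisecoef^{\momexp}}{\strong\momexp\precdist^2} + \frac{2\bracks{\obj(\point_1)-\min\obj}}{\strong\precdist^2\nRuns} + \frac{2\diamh}{\learn\strong\precdist^2\nRuns}.
\end{equation*}
The first term is $\learnradius^2/\precdist^2$, which is precisely the claim.

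\textbf{Main obstacle.} The delicate step is Step 1: getting $\exof{\obj(\next)}$ rather than $\obj(\curr)$ on the left, with the exact constant $1/\momexp$ on the noise term. This needs the condition $\learn\le 1/(\momexp\lips)$, so that the $\lips\breg(\next,\curr)$ term is absorbed by part of the $\breg(\next,\curr)/\learn$ slack while the remainder still controls the noise via the $\momexp$-Young inequality. I would first reuse verbatim the descent lemma from the proof of \cref{thm:cvx-SDA}, where the same constants appear.
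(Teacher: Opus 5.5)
Your route is essentially the paper's. You combine the three-point identity for the deflated Fenchel coupling with the heavy-tail descent estimate to bound $\exof{\sum_{\run}\braket{\nabla\obj(\curr)}{\curr-\sol}}$, then convert this into the occupation bound via strong convexity and Markov's inequality. The paper applies strong convexity directly as $\braket{\nabla\obj(\curr)}{\curr-\sol}\ge\frac{\strong}{2}\norm{\curr-\sol}^{2}$, whereas you pass through function gaps and quadratic growth; this gives identical constants. You are also right that $\learn\le 1/(\momexp\lips)$ is needed even though the statement omits it, and the paper's proof relies on it too.

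Two slips need fixing.

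\textbf{Step~1.} What your three ingredients actually give (after the noise cross term vanishes by unbiasedness) is
\[
\exof{\braket{\nabla\obj(\curr)}{\curr-\sol}}
\le \exof{\curr[\energy]-\next[\energy]}
+ \exof{\obj(\curr)-\obj(\next)}
+ \tfrac{1}{\momexp}\learn^{\momexp-1}\noisecoef^{\momexp}.
\]
Convexity at $\curr$ turns the left side into $\exof{\obj(\curr)-\min\obj}$, not $\exof{\obj(\next)-\min\obj}$.
\begin{itemize}
\item To reach your displayed version from this one you would need $\exof{\obj(\next)}\le\exof{\obj(\curr)}$, which is not available under noise.
\item Telescoping your version as written would double the $\obj(\point_1)-\min\obj$ term.
\item With the corrected left side, summing over $\run=1,\dots,\nRuns$ gives your Step~2 bound exactly, with no index shift.
\end{itemize}
Also, the interpolation that produces $\diamX^{2-\momexp}$ is
$\norm{\next-\curr}^{2}\ge\diamX^{-(2-\momexp)/(\momexp-1)}\norm{\next-\curr}^{\momexp/(\momexp-1)}$, i.e.\ toward the conjugate exponent. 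Lowering the exponent to $\momexp$ would go the wrong way.

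\textbf{Step~3.} Drop the parenthetical. The hypothesis is ordinary $\strong$-strong convexity with respect to $\norm{\cdot}$. That is how the paper's proof uses it, and it is why no $\hstr$ accompanies $\strong$ in the bound and no steepness is assumed. Under $\strong$-relative strong convexity you would only get quadratic growth with modulus $\strong\hstr$. The resulting extra factor $1/\hstr$ is not absorbed when $\hstr<1$. With the norm reading, $\obj(\point)-\min\obj\ge\frac{\strong}{2}\norm{\point-\sol}^{2}$ holds directly, and the rest of your argument goes through.
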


\begin{restatable}[Hitting time]{theorem}{HITTINGDISC}
\label{thm:hitting-disc}
Assume $\obj$ is $\strong$-strongly convex and $\lips$-smooth relatively to $\hreg$, and let
\begin{equation}
\label{eq:hitting}
\hit_{\precdist}
	= \inf\setdef{\run=\running}{\norm{\curr - \sol} \leq \precdist}
\end{equation}
denote the first time that $\curr$ gets within $\precdist$ of $\sol$ for some small enough $\precdist > 0$.
If \eqref{eq:SDA} is run with constant learning rate
$\learn < \cramped{\bracks{\strong\momexp\precdist^{2}/(2\noisecoef^{\momexp})}^{1/(\momexp-1)}}$, then
\begin{equation}
\label{eq:discrete_hitting_time_1}
\exof{\hit_{\precdist}}
	\leq \frac{2}{\strong}
		\frac{\diamh/\learn + \obj(\init) - \min\obj + \strong\precdist^{2}/2}{\precdist^{2} - \cramped{\learnradius^{2}}}
	\eqstop
\end{equation}
In particular, if \eqref{eq:SDA} is run with $\learn = \cramped{\bracks{\strong\precdist^{2}/(2\noisecoef^{\momexp})}^{1/(\momexp-1)}}$, we have
\begin{align}
\label{eq:discrete_hitting_time_2}
\exof{\hit_{\precdist}}
	&\leq \frac{\momexp}{\momexp-1}
		\bracks*{
		1
			+ \frac{2\bracks{\obj(\init) - \min\obj}}{\strong\precdist^{2}}
			+ \diamh \parens*{\frac{2\noisecoef}{\strong \precdist^{2}}}^{\frac{\momexp}{\momexp-1}}
		} 
	= \bigoh\parens*{\frac{1}{\precdist^{2\momexp/(\momexp-1)}}}
	\eqstop
\end{align}
\end{restatable}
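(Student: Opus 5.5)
The plan is a discrete-time energy argument combined with optional stopping, mirroring the continuous-time \cref{thm:hitting}.

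\textbf{One-step inequality.} Take the energy $E_\run = \fench(\sol,\learn\curr[\dpoint])/\learn$ with constant $\learn$. Since $\learn$ is constant, the learning-rate-drift term of \cref{prop:energy-Ito} vanishes. The discrete analogue established for \cref{thm:str-SDA} and \cref{thm:occmeas-disc} gives a recursion of the form
\begin{equation*}
\exof{\Phi_{\run+1} \given \curr[\filter]}
	\leq \Phi_{\run}
		- \frac{\strong}{2}\norm{\curr - \sol}^{2}
		+ \frac{\strong}{2}\learnradius^{2},
\end{equation*}
where $\Phi_\run = E_\run + \obj(\curr) - \min\obj$ (possibly up to an index shift). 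The ingredients are:
\begin{enumerate*}
[label=\upshape(\itshape\roman*\hspace*{.5pt}\upshape)]
\item relative smoothness with $\learn\le 1/(\momexp\lips)$, which absorbs the deterministic second-order term;
\item the $\momexp$-th moment bound on the noise, which produces $\learn^{\momexp-1}\noisecoef^{\momexp}/\momexp$ through interpolation between the $\hstr$-strong convexity of $\hreg$ (Lipschitz gradient of $\hconj$) and the diameter bound $\diamX$;
\item strong convexity, which gives $\braket{\nabla\obj(\curr)}{\curr-\sol}\ge \obj(\curr)-\min\obj+\frac{\strong}{2}\norm{\curr-\sol}^2$.
\end{enumerate*}
I would take this recursion essentially from the proof of \cref{thm:occmeas-disc}. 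Summing it over $\run=1,\dots,\nRuns$ is exactly what yields that theorem's bound, with initial energy $\Phi_1 \le \diamh/\learn + \obj(\init)-\min\obj$.

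\textbf{Optional stopping.} On the event $\{\run<\hit_{\precdist}\}$ we have $\norm{\curr-\sol}>\precdist$, so the drift there is at most $-\frac{\strong}{2}(\precdist^2-\learnradius^2)<0$. The condition on $\learn$ ensures exactly that $\learnradius^2<\precdist^2$. Multiply the recursion by $\oneof{\run<\hit_\precdist}$, which is $\curr[\filter]$-measurable, take expectations, and sum up to $\nRuns\wedge\hit_\precdist$. Using $\Phi\ge0$, this gives
\begin{equation*}
\frac{\strong}{2}(\precdist^2-\learnradius^2)\,\exof{\nRuns\wedge\hit_\precdist - 1}\le \Phi_1 .
\end{equation*}
Monotone convergence as $\nRuns\to\infty$ then yields the bound
\begin{equation*}
\exof{\hit_\precdist}\le 1+\frac{2\Phi_1}{\strong(\precdist^2-\learnradius^2)} .
\end{equation*}
Writing $1$ as $\strong\precdist^2/2$ over $\strong(\precdist^2-\learnradius^2)/2$, with slack since $\learnradius^2\ge0$, recovers \eqref{eq:discrete_hitting_time_1}.

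\textbf{Specialized bound.} Substitute $\learn=[\strong\precdist^2/(2\noisecoef^\momexp)]^{1/(\momexp-1)}$. Then $\learnradius^2=\precdist^2/\momexp$, so $\precdist^2-\learnradius^2=\precdist^2(\momexp-1)/\momexp$, and $1/\learn=(2\noisecoef^{\momexp}/(\strong\precdist^2))^{1/(\momexp-1)}$. Collecting terms should reproduce \eqref{eq:discrete_hitting_time_2}, possibly with the powers of $2$ and $\noisecoef$ arranged as stated. This substitution is routine algebra.

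\textbf{Main obstacle.} The hard step is the one-step inequality, specifically controlling the noise term $\exof{\fench(\sol,\learn\next[\dpoint])-\fench(\sol,\learn\curr[\dpoint]+\dots)}$ under only $\momexp$-th moments. The Fenchel coupling is quadratically bounded via $\hstr$, but it is also linearly bounded, since $\hconj$ is $\diamX$-Lipschitz modulo a linear part. The plan is to take the minimum of the two bounds and use $\min(a^2,ab)\le a^{\momexp}b^{2-\momexp}$, which produces exactly $\noisecoef^{\momexp}$. Combining this with the relative-smoothness step, where the gradient-dependent cross term must be absorbed using $\learn\le1/(\momexp\lips)$, is the delicate part. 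I would reuse whatever lemma underlies \cref{thm:str-SDA} for it.
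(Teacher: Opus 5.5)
Your proposal is correct and is essentially the paper's proof. The paper also combines the three-point identity for the $\eta$-deflated Fenchel coupling with the heavy-tailed noise estimate, which gives exactly your pathwise recursion for $\Phi_t=E_t+f(x_t)-\min f$; it sums this up to the bounded stopping time $\tau_\delta\wedge T$, removes the martingale term using $\mathcal{F}_t$-measurability of the stopping indicator, applies strong convexity on the pre-hitting steps, and lets $T\to\infty$ by monotone convergence. The differences are minor: the paper charges the noise term for all $\tau_\delta\wedge T$ steps, which produces the $+\mu\delta^2/2$ in the numerator, so your restriction to $t<\tau_\delta$ gives the slightly sharper $1+2\Phi_1/(\mu(\delta^2-\delta_\eta^2))$; and the paper obtains the noise term not by your min-of-two-bounds heuristic but by splitting $\frac{1}{\eta}F(x_{t+1},\eta y_t)$ into a $\frac{1}{p}$-part that absorbs the relative-smoothness remainder (this is where $\eta\le 1/(pL)$ enters) and a $\frac{p-1}{p}$-part that absorbs $\langle U_t,x_t-x_{t+1}\rangle$ via $K$-strong convexity of $h$, the bound $\|x_{t+1}-x_t\|\le\|\mathcal{X}\|$, and Young's inequality.
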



Finally, to complete our range of results, we provide below the analogous result for \eqref{eq:LMD} with a variable step-size.

\begin{restatable}[\acs{LMD}, variable $\step$]{theorem}{STRLMD}
\label{thm:str-LMD}
Suppose that $\obj$ is $\strong$-strongly convex and $\lips$-smooth, both relative to $\hreg$, with $\hreg$ steep.
If \eqref{eq:LMD} is run with step-size $\curr[\step] = \beta/\run$, $\beta \leq 1/(\momexp\lips)$, then
\begin{equation}
\label{eq:str-LMD}
\exof[\big]{\norm{\curr - \sol}^2}
	= \begin{cases*}
		\bigoh\parens{1/\run^{\momexp-1}}
			&\quad
			if $\momexp < 1 + \beta \strong$,
			\\
		\bigoh\parens{\log \run / \run^{\momexp-1}}
			&\quad
			if $\momexp = 1 + \beta \strong$,
			\\
		\bigoh\parens{1/\run^{\beta\strong}}
			&\quad
			if $\momexp > 1 + \beta \strong$.
		\end{cases*}
\end{equation}
\end{restatable}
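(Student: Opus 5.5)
The plan is to track the Bregman divergence $\curr[\breg] \defeq \breg(\sol,\curr)$. Since $\hreg$ is steep, \eqref{eq:LMD} coincides with \eqref{eq:SMD}, i.e.\ $\next = \proxof{\curr}{-\curr[\step]\curr[\sgrad]}$. I would derive a one-step recursion of the form
\begin{equation*}
\exof{\next[\breg]}
	\leq (1-\strong\curr[\step])\,\exof{\curr[\breg]}
	+ \Const\,\curr[\step]^{\momexp}\sdev^{\momexp}\diamX^{2-\momexp}/\hstr^{\momexp-1}
\end{equation*}
and then unroll it with $\curr[\step]=\beta/\run$. At the end I would convert to the squared norm via $\norm{\curr-\sol}^{2} \leq (2/\hstr)\curr[\breg]$, which follows from \eqref{eq:hstr}.

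\emph{One-step inequality.} I would start from the three-point identity for the prox step:
\begin{equation*}
\next[\breg] \leq \curr[\breg] - \curr[\step]\braket{\curr[\sgrad]}{\next - \sol} - \breg(\next,\curr).
\end{equation*}
Write $\curr[\sgrad] = \nabla\obj(\curr) + \curr[\noise]$ and split $\next - \sol = (\next - \curr) + (\curr - \sol)$. The deterministic part is handled by \eqref{eq:RS} and \eqref{eq:strong-rel}:
\begin{equation*}
-\curr[\step]\braket{\nabla\obj(\curr)}{\next-\sol}
	\leq \curr[\step]\bracks{\obj(\sol)-\obj(\next)} - \strong\curr[\step]\curr[\breg] + \lips\curr[\step]\breg(\next,\curr).
\end{equation*}
Since $\lips\curr[\step]\leq 1/\momexp \leq 1$, the last term is absorbed by $-\breg(\next,\curr)$, and this leaves a fraction $(1-1/\momexp)\breg(\next,\curr)$ of slack. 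The term $\braket{\curr[\noise]}{\curr-\sol}$ has zero conditional mean.

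\emph{Noise term; this is the main obstacle.} The remaining term is $-\curr[\step]\braket{\curr[\noise]}{\next-\curr}$, and the usual Young's inequality would need a second moment, which need not exist. Instead I would use boundedness of $\points$: $\norm{\next-\curr}\leq\diamX$, and $\norm{\next-\curr}^{2}\leq (2/\hstr)\breg(\next,\curr)$. Writing $\norm{\next-\curr} = \norm{\next-\curr}^{\momexp-1}\norm{\next-\curr}^{2-\momexp}$ and applying Young with exponents $\momexp$ and $\momexp/(\momexp-1)$, I would bound
\begin{equation*}
\curr[\step]\dnorm{\curr[\noise]}\norm{\next-\curr}
	\leq \frac{\hstr}{2}\Big(1-\frac1\momexp\Big)\norm{\next-\curr}^{2}
	+ \Const_{\momexp}\,\curr[\step]^{\momexp}\dnorm{\curr[\noise]}^{\momexp}\diamX^{2-\momexp}\hstr^{1-\momexp}.
\end{equation*}
The quadratic term is absorbed by the leftover $(1-1/\momexp)\breg(\next,\curr)$, and taking expectations leaves $\curr[\step]^{\momexp}\noisecoef^{\momexp}$ up to constants. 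This mirrors how $\heavycoef$ arises in \cref{prop:energy-Ito}, and I expect it to be the same mechanism as in \cref{thm:str-SDA}. The delicate point is checking that the exponents balance exactly so that $\beta\leq 1/(\momexp\lips)$ suffices. Dropping $\curr[\step](\obj(\sol)-\obj(\next))\leq 0$ then gives the recursion $\exof{\next[\breg]}\leq(1-\strong\beta/\run)\exof{\curr[\breg]}+ \Const\beta^{\momexp}\noisecoef^{\momexp}/\run^{\momexp}$.

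\emph{Unrolling.} This is a Chung-type lemma. The product $\prod_{s=k}^{\run}(1-\strong\beta/s)$ is of order $(k/\run)^{\beta\strong}$, so
\begin{equation*}
\exof{\curr[\breg]}\lesssim \run^{-\beta\strong}\Big(\breg_{1} + \sum_{k\le\run} k^{\beta\strong-\momexp}\Big).
\end{equation*}
The sum behaves differently in three regimes:
\begin{itemize}
\item if $\beta\strong-\momexp<-1$, i.e.\ $\momexp>1+\beta\strong$, the sum is bounded and the bound is $\bigoh(\run^{-\beta\strong})$;
\item if $\momexp=1+\beta\strong$, the sum is $\log\run$, giving $\bigoh(\log\run/\run^{\momexp-1})$;
\item otherwise the sum is of order $\run^{\beta\strong-\momexp+1}$, giving $\bigoh(\run^{1-\momexp})$.
\end{itemize}
Finally, multiplying by $2/\hstr$ yields \eqref{eq:str-LMD}. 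For small $\run$, the case $\strong\beta/\run\geq1$ is harmless because $\curr[\breg]$ stays nonnegative and the recursion only improves.
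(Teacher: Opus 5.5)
Your proposal is correct and follows the paper's proof. Under steepness, your prox-step three-point inequality is exactly the Fenchel-coupling identity of \cref{lemma:LMD_diff_of_F}, and your $1/\momexp$ versus $(\momexp-1)/\momexp$ split of $\breg(\next,\curr)$ between relative smoothness and the noise is precisely \cref{lemma:LMD_heavy_descent} (your Young step gives $\Const_{\momexp}=2^{\momexp-1}/\momexp$, the paper's constant); the resulting recursion \eqref{eq:LMD_bregman_recurrence} with $\curr[\step]=\beta/\run$ is then closed by Chung's lemma, which your explicit unrolling reproduces, and your remark about $\strong\beta/\run\geq 1$ is unnecessary because $\strong\leq\lips$ and $\beta\leq 1/(\momexp\lips)$ give $\beta\strong\leq 1/\momexp<1$.
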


There are several points worth discussing at this point.

First, the choice of results to present in this section has not been arbitrary;
our aim was to show the broadest range of phenomena that arise.
Specifically, by the equivalence of \eqref{eq:SDA} and \eqref{eq:LMD} for constant $\step=\learn$, \cref{thm:str-SDA} also covers \eqref{eq:LMD};
and by the equivalence of \eqref{eq:LMD} and \eqref{eq:SMD} in the steep case, the bounds of \cref{thm:str-LMD} also apply directly to \eqref{eq:SMD}.
In \cref{app:discrete}, we provide a range of supplementary results that complete the picture.


Second, a salient pattern that arises when connecting the corresponding continuous- and discrete-time results
is that the derived discrete-time bounds consist of a term that mirrors the corresponding continuous-time guarantee, plus a term involving the value difference $\obj(\init) - \min\obj$,
which essentially stems from the ``discretization'' of \eqref{eq:LMF}.
This suggests that \eqref{eq:LMF} is a particularly faithful surrogate for the heavy-noise regime in continuous time\textemdash so the study of its properties proves highly beneficial for the discrete-time regime as well.

In terms of rates, the guarantees of \cref{thm:str-LMD} are fairly involved.
First, to provide some context, \cref{thm:str-LMD-var} in \cref{app:discrete} shows that running \eqref{eq:LMD} with $\curr[\step] \propto 1/\run^{1/\momexp}$ achieves an $\bigoh(1/\run^{(\momexp-1)/\momexp})$ convergence rate.
On the surface, this appears weaker than that of \cref{thm:str-LMD}, but there are several intricacies at play:
First, since $\lips \geq \strong$, we have $\beta \strong \leq \strong/(\momexp \lips) \leq 1/\momexp < 1$, so the rates in \cref{thm:str-LMD} are better when $\momexp \leq 1/(1-\beta \strong)$, but not otherwise.
Concretely, for $\beta = 1/(\momexp \lips)$, this condition reduces to $\momexp \leq 1 + \strong/\lips$, which generally holds only when $\momexp$ is close enough to $1$, that is, for very heavy noise;
in the other cases, the step-size $\curr[\step] \propto 1/\run^{1/\momexp}$ seems to be preferable.

Finally, we see that \eqref{eq:SDA} attains the improved complexity rate $\tildeoh\parens*{(\sdev^{\momexp}/\precval)^{1/(\momexp-1)}}$ with a suitably tuned $\learn$ (the case of a variable learning rate is more subtle, \cf \cref{thm:discrete_relative_strong_variable_learning_rate}).
As we mentioned in \cref{sec:continuous}, this does not match the $\bigoh(\precval^{-\momexp/\parens{2\momexp-2}})$ of \citet{ZKV+20} for strongly convex functions.
A key difference between our setting and \cite{ZKV+20} is that we work here with functions that are strongly convex relative to a steep regularizer over a bounded domain,
so $\nabla\obj$ grows unbounded at the boundary of $\points$;
by contrast, the lower bounds of \citep{ZKV+20} concern problems whose gradients are bounded over \emph{any} bounded domain, so there is no overlap.
Beyond this difference, we conjecture that our rates can be tightened further with a different choice of energy function;
we leave this as an open question for future work.

\section{Concluding remarks}
\label{sec:discussion}

An important take-away of our results is that \acl{SMD} remains remarkably robust in the presence of heavy noise, and maintains its convergence and concentration guarantees despite the very long jumps exhibited by the noise.
A second is that our discrete-time results closely mirror our continuous-time findings, both qualitatively and quantitatively;
this indicates that the proposed Lévy model is itself a particularly faithful proxy for studying \ac{SMD} in the heavy-noise regime, thus opening up several important directions for further research in the heavy-noise regime.
A first such question would be to sharpen \cref{thm:str-SDA} under relative strong convexity, where we conjecture that the lower bound of \citet{ZKV+20} is achievable (even with unbounded gradients);
a second would be to investigate the use of \eqref{eq:LMF} for sampling from constrained spaces in the spirit of \citep{HKRC18,ZPFP20}.
We leave these questions for the future.

\section*{Acknowledgments}
%
%
This research was supported in part by the French National Research Agency (ANR) in the framework of the PEPR IA FOUNDRY project (ANR-23-PEIA-0003).
PM is also a member of the Archimedes Research Unit/Athena RC, and was partially supported by project MIS 5154714 of the National Recovery and Resilience Plan Greece 2.0 funded by the European Union under the NextGenerationEU Program.

\appendix
\crefalias{section}{appendix}
\crefalias{subsection}{appendix}

\numberwithin{equation}{section}	
\numberwithin{lemma}{section}	
\numberwithin{proposition}{section}	
\numberwithin{theorem}{section}	
\numberwithin{corollary}{section}	
\numberwithin{remark}{section}	

\section{Related works}
\label{app:related}

In this first appendix, we discuss in detail some literature threads which, while relevant, would have otherwise disrupted the flow of our discussion.

\para{Stochastic mirror flows and Mirrored Langevin dynamics}

\emph{Stochastic mirror flows} were introduced and studied by \citet{RB13} as a noisy variant of the continuous-time mirror flow of \citet{NY83}, with subsequent works refining their convergence properties and extending the framework to state-dependent diffusion terms and variational inequalities \citep{MerSta18,MerSta18b}. Accelerated variants were later proposed, notably by \citet{KB17} and \citet{XWG18a,XWG18}, the latter also establishing convergence rates under relative strong convexity. All these works suppose $\lips$-smoothness of the objective function. Closely related are the \emph{mirrored Langevin dynamics}, which extend Langevin diffusions to non-Euclidean geometries for sampling measures on constrained spaces. The initial formulation, due to \citet{HKRC18}, considered a constant diffusion coefficient, while later studies introduced a diffusion term adapted to the geometry via the Hessian of the regularizer \citep{ZPFP20,CLL+20,LTVW22}, thereby placing mirrored Langevin dynamics as a special case of Riemannian Langevin methods \citep{RS02}. Convergence guarantees for convex and relatively strongly convex objectives were also established for these dynamics by \citet{TRRB23a} under a constant stepsize schedule.

\para{Lévy-driven SDEs in stochastic optimization}
Stochastic differential equations driven by Lévy processes have gained attention in machine learning as models for stochastic optimization under heavy-tailed noise.
A first line of work focuses on sampling algorithms based on Lévy-driven dynamics, most notably the \emph{fractional Langevin Monte Carlo} method introduced by \citet{Sim17}, and further studied by \citep{YZ18a,NSR19,SZTG20,HMW21,ZZ23}, where Brownian motion is replaced by a stable Lévy process.
More general \emph{Lévy Langevin Monte Carlo} methods allowing arbitrary Lévy noise were also recently studied in \citet{Oec23} and \citet{BS25b}.
Beyond sampling, Lévy-driven SDEs have been proposed as continuous-time approximations of stochastic gradient methods, notably to explain metastability and escape phenomena observed in deep learning \citep{SSG19,NSGR19,SGN+19,ZFM+20,BWL24}.
Finally, Lévy-driven models have also been used to study generalization properties of SGD, suggesting that heavier-tailed processes should achieve better generalization errors \citep{SSDE20,RBG+23,RZGS23,DS24,DBS+25}.

%
%
%

\para{Heavy-tailed noise in stochastic algorithms}

For \emph{standard SGD}, \citet{ZKV+20} showed that iterates may fail to converge for any fixed stepsize under heavy-tailed noise, even for strongly convex objectives, indicating that additional structure is generally required.
Convergence can nonetheless be recovered under strong assumptions on the objective, such as a bounded and uniformly $\momexp$-positive definite Hessian \citep{WGZ+21}, or by modifying the algorithm.

In the context of \emph{mirror descent}, \citet{NY83} established convergence for convex objectives with uniformly continuous mirror maps and proved the optimal in-expectation lower bound $\Omega\parens*{\time^{-(\momexp-1)/\momexp}}$.
This result was later refined by \citet{VYB+22} for uniformly convex regularizers, with improved dimension dependence, and extended to zeroth-order methods in \citet{KGDD23}.
For \emph{projected SGD on bounded domains}, recent work by \citet{FHL25} and \citet{Liu25a} showed that the optimal convex rates can in fact be achieved without modifying the algorithm; the latter also establishing this property for Dual Averaging and AdaGrad in online convex optimization.

A large body of work instead focuses on \emph{gradient clipping} as a stabilization mechanism under heavy-tailed noise.
This line of research was initiated by \citet{ZKV+20}, who proved optimal lower bounds in expectation of order $\Omega\parens*{\time^{-2(\momexp-1)/\momexp}}$ for strongly convex objectives and $\Omega\parens*{\time^{-(\momexp-1)/(3\momexp-2)}}$ for nonconvex smooth ones, and showed that clipped SGD can match these bounds.
Subsequent works derived high-probability guarantees matching these rates \citep{SDG+23} and extended the analysis to broader settings, including nonconvex \citep{CM21,LZZ23,AYS+24}, composite \citep{GSD+24} and distributed optimization \citep{GSD+24,YJK25}, as well as to more general $(L_0, L_1)$-smooth objectives \citep{CBHG25}.

Further refinements include variance-reduced and stabilized variants that can achieve even faster rates under additional structure \citep{PGKG24,Liu25b}.
The recent work of \citet{WR25} complements this line of work by providing a large-deviations and metastability analysis of clipped SGD without relying on a Lévy-driven SDE approximation.
Alternatives to clipping have also been proposed to address heavy-tailed noise without truncation.
Notably, normalized gradient methods achieve optimal or near-optimal convergence rates for nonconvex optimization \citep{LZ24b,HFH24,SLY25,YJK25}.

\para{Further comparison with \citep{Liu25a}}

For discrete-time algorithms, the closest work to ours is  \citep{Liu25a}, which derives several convergence guarantees for Online Gradient Descent and Dual Averaging on bounded domains under heavy-tailed noise.
Importantly, while the same general strategy is used to handle unbounded variance (\cf the chain of inequalities leading to \citep[Eq.~5]{Liu25a}), that work is restricted to standard Euclidean norms, whereas our results hold under general Bregman divergences.
This extension to non-Euclidean norms was in fact already alluded to by \citet[Remark~4]{Liu25a}, but carrying it out requires substantially more care than what one might initially expect, notably the need to distinguish between \emph{standard} and \emph{relative} strong convexity---the former being norm-agnostic while the latter accommodates the geometry induced by the regularizer.
In particular, when the regularizer is steep, relative strong convexity requires the objective function to have unbounded gradients, making it necessary to work under the refined \emph{relative smoothness} framework of \citet{BBT17}.
By contrast, gradients are assumed bounded throughout \citep{Liu25a}, except for the last-iterate convergence rate of \citep[Theorem~10]{Liu25a} and its corollaries, which rely instead on the condition
\begin{equation}
\frac{\strong}{2} \norm{\point - \pointalt}^{2}
	\leq \obj(\point) - \obj(\pointalt)
		- \braket{\nabla \obj(\pointalt)}{\point - \pointalt}
	\leq 2 G \norm{\point - \pointalt}
		+ \frac{H}{2} \norm{\point - \pointalt}^{2}
		\eqcomma
\end{equation}
for some $\strong, G, H \geq 0$ such that $G + H > 0$, and for all $\point,\pointalt\in\points$.
One may in fact observe that, in the Euclidean and compact setting of \citet{Liu25a}, the above condition effectively reduces to a bounded-gradient assumption, despite being formulated as a nonsmoothness condition.
However, unlike the other results of \citep{Liu25a}, its extension to the relatively smooth / relatively strongly convex setting now becomes rather natural, as it effectively amounts to replacing Euclidean norms by Bregman divergences.
With that said, this condition was only introduced in a revised version of \citep{Liu25a}, posted online concurrently with the initial drafting of the present paper, and whose existence was brought to our attention only later by a reviewer.

\section{Basic properties of Bregman methods}
\label{app:mirror}


In this appendix, we collect some background material, properties and examples regarding the mirror setup of \cref{sec:prelims}.
The results presented below (or a version thereof) are known in the literature;
nevertheless, we provide detailed proofs for completeness and to resolve any conflicts or ambiguities with different conventions in the literature.

%
%

\subsection{Refresher}

With notation as in \cref{sec:prelims}, let $\vecspace$ be a $\vdim$-dimensional normed space,
let $\dpoints \defeq \dspace$ denote the (algebraic) dual of $\vecspace$,
and
let $\braket{\dpoint}{\point}$ denote the canonical bilinear pairing between $\point\in\vecspace$ and $\dpoint\in\dspace$.
If $\norm{\cdot}$ is a norm on $\vecspace$ we will also write
\begin{equation}
\label{eq:dnorm}
\dnorm{\dpoint} = \max\setdef{\braket{\dpoint}{\point}}{\norm{\point} \leq 1}
\end{equation}
for the induced dual norm on $\dpoints$, so $\abs{\braket{\dpoint}{\point}} \leq \norm{\point} \dnorm{\dpoint}$ for all $\point\in\vecspace$ and all $\dpoint\in\dpoints$ by construction.
Moreover, given a closed convex subset $\points$ of $\vecspace$, we also define:
\begin{enumerate}
\item
The \emph{tangent cone} to $\points$ at $\base\in\points$:
\begin{flalign}
\label{eq:tcone}
\tcone(\base)
	&= \cl\setdef{\tanvec\in\vecspace}{\base+t\tanvec\in\points \; \text{for some $t>0$}}
\intertext{%
\ie as the closure of the set of  rays emanating from $\base$ and meeting $\points$ in at least one other point.
\item
The \emph{dual cone} to $\points$ at $\base\in\points$:}
\label{eq:dcone}
\dcone(\base)
	&= \setdef{\dvec\in\dpoints}{\braket{\dvec}{\tanvec} \geq 0 \; \text{for all $\tanvec\in\tcone(\base)$}}
\intertext{%
\item
The \emph{normal\,/\,polar cone} to $\points$ at $\base\in\points$:}
\label{eq:pcone}
\pcone(\base)
	&= \setdef{\dvec\in\dpoints}{\braket{\dvec}{\tanvec} \leq 0 \; \text{for all $\tanvec\in\tcone(\base)$}}
\end{flalign}
\end{enumerate}

Following standard conventions in the field \cite{Roc70}, convex functions will be allowed to take values in the extended real line $\R\cup\{\infty\}$, and we will denote the \define{effective domain} of a convex function $\obj\from\vecspace\to\R\cup\{\infty\}$ as
\begin{equation}
\label{eq:domfun}
\dom\obj
	\defeq \setdef{\point\in\vecspace}{\obj(\point) < \infty}
	\eqstop
\end{equation}
When there is no danger of confusion, we will identify a convex function $\obj\from\vecspace\to\R$ with its restriction on $\dom\obj$;
in other words, we will treat $\obj$ interchangeably
as a function on $\dom\obj$ with values in $\R$,
or
as a function on $\vecspace$ with values in $\R\cup\{\infty\}$ (and finite on $\dom\obj$).

Throughout the sequel, we will assume that all functions under study are \define{proper}, that is, $\dom\obj\neq\varnothing$.
Then, given a proper function $\obj\from\vecspace\to\R\cup\{\infty\}$, the \define{subdifferential} of $\obj$ at $\point\in\dom\obj$ is defined as
\begin{equation}
\label{eq:subdiff}
\subd\obj(\point)
	\defeq \setdef{\dpoint\in\dpoints}{\obj(\pointalt) \geq \obj(\point) + \braket{\dpoint}{\pointalt - \point} \; \text{for all $\pointalt\in\vecspace$}}
\end{equation}
and we denote the \define{domain of subdifferentiability} of $\obj$ as
\begin{equation}
\label{eq:domdiff}
\dom\subd\obj
	= \setdef{\point\in\vecspace}{\subd\obj(\point) \neq \varnothing}
	\eqstop
\end{equation}

With all this in hand, a \define{regularizer} on a closed convex subset $\points$ of $\vecspace$ is a continuous function $\hreg\from\points\to\R$ which is \define{strongly convex} relative to $\norm{\argdot}$, \ie there exists some $\hstr>0$ such that
\begin{equation}
\label{eq:hstr}
\hreg(\coef\point + (1-\coef)\pointalt)
	\leq \coef \hreg(\point)
		+ (1-\coef) \hreg(\pointalt)
		- \frac{\hstr}{2} \coef(1-\coef) \norm{\pointalt - \point}^{2}
\end{equation}
for all $\point,\pointalt\in\points$ and for all $\coef\in[0,1]$.
By standard arguments \cite{RW98,Ber15}, this immediately implies that
\begin{equation}
\label{eq:hstr-diff}
\hreg(\pointalt)
	\geq \hreg(\point)
		+ \hreg'(\point;\pointalt - \point)
		+ \frac{\hstr}{2} \norm{\pointalt - \point}^{2}
	\quad
	\text{for all $\point,\pointalt\in\points$},
\end{equation}
where
\begin{equation}
\hreg'(\point;\pointalt-\point)
	= \lim_{\theta\to0^{+}} \bracks{\hreg(\point + \theta(\pointalt-\point)) - \hreg(\point)} / \theta
\end{equation}
denotes the one-sided directional derivative of $\hreg$ at $\point$ along the direction of $\pointalt-\point$.

As per \cref{def:mirror}, we will say that $\hreg$ is a \define{Bregman regularizer} if $\subd\hreg$ admits a continuous selection $\subsel\hreg(\point) \in \subd\hreg(\point)$ for all $\point\in\dom\subd\hreg$.
For such a regularizer, we recall the following definitions:
\begin{subequations}
\begin{enumerate}
\item
The \define{prox-domain} of $\hreg$:
\begin{equation}
\label{eq:proxdom}
\proxdom
	\defeq \dom\subd\hreg
\end{equation}
\item
The \define{convex conjugate} $\hconj\from\dpoints\to\R$ of $\hreg$:%
\begin{equation}
\label{eq:hconj}
\hconj(\dpoint)
	\defeq \max_{\point\in\points} \{ \braket{\dpoint}{\point} - \hreg(\point) \}
	\quad
	\text{for all $\dpoint\in\dpoints$}.
\end{equation}
\item
The \define{mirror map} $\mirror\from\dpoints\to\points$ induced by $\hreg$:%
\begin{equation}
\label{eq:mirror}
\mirror(\dpoint)
	\defeq \argmax_{\point\in\points} \{ \braket{\dpoint}{\point} - \hreg(\point) \}
	\quad
	\text{for all $\dpoint\in\dpoints$}.
\end{equation}
\item
The \define{Fenchel coupling} $\fench\from\points\times\dpoints\to\R$ defined by $\hreg$:%
\begin{equation}
\label{eq:Fench}
\fench(\base,\dpoint)
	= \hreg(\base)
		+ \hconj(\dpoint)
		- \braket{\dpoint}{\base}
	\quad
	\text{for all $\base\in\points$, $\dpoint\in\dpoints$}.
\end{equation}
\item
The \define{Bregman divergence} $\breg\from\points\times\proxdom\to\R$ associated to $\hreg$:%
\begin{equation}
\label{eq:Breg}
\breg(\base,\point)
	= \hreg(\base)
		- \hreg(\point)
		- \braket{\subsel\hreg(\point)}{\base - \point}
	\quad
	\text{for all $\base\in\points$, $\point\in\proxdom$}.
\end{equation}
\item
The \define{prox-mapping} $\mprox\from\proxdom\times\dpoints\to\proxdom$ induced by $\hreg$:%
\begin{equation}
\label{eq:mprox}
\proxof{\point}{\dvec}
	= \argmin\nolimits_{\pointaux\in\points}
		\braces{\braket{\dvec}{\point - \auxpoint} + \breg(\auxpoint,\point)}
	\quad
	\text{for all $\point\in\proxdom$, $\dvec\in\dpoints$}.
\end{equation}
\end{enumerate}
\end{subequations}

\begin{remark}
Of the items above, only the last two require $\hreg$ to be a Bregman regularizer;
the rest apply to any regularizer.
\endenv
\end{remark}

The proposition below provides some basic properties linking the above:

\begin{proposition}
\label{prop:mirror}
Let $\hreg$ be a $\hstr$-strongly convex regularizer on $\points$.
Then:
\begin{enumerate}
[\upshape(\itshape a\hspace*{1pt}\upshape)]
\item
$\mirror$ is single-valued on $\dpoints$.

\item
For all $\point\in\proxdom$ and all $\dpoint\in\dpoints$, we have
\begin{equation}
\label{eq:hinv}
\point
	= \mirror(\dpoint)
	\quad
	\text{if and only if}
	\quad
\dpoint
	\in \subd\hreg(\point)
	\eqstop
\end{equation}

\item
The image $\im\mirror$ of $\mirror$ is equal to the prox-domain of $\hreg$, and we have
\begin{equation}
\label{eq:imQ}
\relint\points
	\subseteq \im\mirror
	= \proxdom
	\subseteq \points
	\eqstop
\end{equation}

\item
The convex conjugate $\hconj\from\dpoints\to\R$ of $\hreg$ is differentiable and satisfies
\begin{equation}
\label{eq:Danskin}
\mirror(\dpoint)
	= \nabla\hconj(\dpoint)
	\quad
	\text{for all $\dpoint\in\dpoints$}.
\end{equation}

\item
$\mirror$ is $(1/\hstr)$-Lipschitz continuous, that is,
\begin{equation}
\label{eq:QLips}
\norm{\mirror(\dpointalt) - \mirror(\dpoint)}
	\leq (1/\hstr) \dnorm{\dpointalt - \dpoint}
	\quad
	\text{for all $\dpoint,\dpointalt\in\dpoints$}.
\end{equation}

\item
Fix some $\dpoint\in\dpoints$ and let $\point = \mirror(\dpoint)$.
Then, for all $\pointalt\in\points$ we have:
\begin{equation}
\label{eq:hdir}
\hreg'(\point;\pointalt - \point)
	\geq \braket{\dpoint}{\pointalt - \point}
	\eqstop
\end{equation}

\item
Fix some $\dpoint\in\dpoints$, and let $\point = \mirror(\dpoint)$.
Then $\mirror(\dpoint+\dvec) = \point$ for all $\dvec\in\pcone(\point)$.
\end{enumerate}
\end{proposition}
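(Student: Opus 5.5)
The plan is to prove the seven items essentially in order, since each one feeds the next. For (a), the objective $\braket{\dpoint}{\point} - \hreg(\point)$ is continuous on the compact set $\points$, and it is strictly concave because $\hreg$ is strongly convex in the sense of \eqref{eq:hstr}. Hence a maximizer exists and is unique. For (b), the relation $\point = \mirror(\dpoint)$ means that $\point$ maximizes $\braket{\dpoint}{\cdot} - \hreg$ over $\points$. Since $\hreg \equiv \infty$ off $\points$, this is exactly the statement that $\hreg(\pointalt) \geq \hreg(\point) + \braket{\dpoint}{\pointalt - \point}$ for all $\pointalt \in \vecspace$, i.e.\ $\dpoint \in \subd\hreg(\point)$. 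Item (c) then follows directly from (b): $\im\mirror = \dom\subd\hreg = \proxdom$. The remaining inclusion $\relint\points \subseteq \proxdom$ is \citep[Theorem 26.1]{Roc70}, already quoted in \cref{sec:notation}.

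For (d), I would invoke Danskin's theorem, or equivalently the standard conjugate identity $\subd\hconj(\dpoint) = \argmax_{\point}\{\braket{\dpoint}{\point} - \hreg(\point)\}$ for closed proper convex $\hreg$. By (a) this argmax is a singleton, so $\hconj$ is differentiable, and since $\hconj$ is finite everywhere (because $\points$ is compact) its gradient is $\mirror$.

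For (e), set $\point = \mirror(\dpoint)$ and $\pointalt = \mirror(\dpointalt)$. I would first establish the first-order condition (f), then apply it twice: once at $\point$ with direction $\pointalt - \point$, and once at $\pointalt$ with direction $\point - \pointalt$. Adding these to the strong convexity inequality \eqref{eq:hstr-diff}, written in both directions, gives
\[
\hstr \norm{\pointalt - \point}^{2} \leq \braket{\dpointalt - \dpoint}{\pointalt - \point} \leq \dnorm{\dpointalt - \dpoint}\,\norm{\pointalt - \point},
\]
and dividing by $\norm{\pointalt - \point}$ yields \eqref{eq:QLips}. For (f) itself, consider $\phi(\theta) = \braket{\dpoint}{\point + \theta(\pointalt - \point)} - \hreg(\point + \theta(\pointalt - \point))$ on $[0,1]$. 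Because $\point$ is the maximizer, $\phi(\theta) \leq \phi(0)$. Dividing by $\theta > 0$ and letting $\theta \to 0^{+}$ gives $\braket{\dpoint}{\pointalt - \point} - \hreg'(\point; \pointalt - \point) \leq 0$; the one-sided derivative exists by convexity.

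For (g), take $\dvec \in \pcone(\point)$. For every $\pointalt \in \points$ the vector $\pointalt - \point$ lies in $\tcone(\point)$, so $\braket{\dvec}{\pointalt - \point} \leq 0$. Using (b), $\dpoint \in \subd\hreg(\point)$, and therefore
\[
\hreg(\pointalt) \geq \hreg(\point) + \braket{\dpoint}{\pointalt - \point} \geq \hreg(\point) + \braket{\dpoint + \dvec}{\pointalt - \point}
\]
for all $\pointalt \in \points$. Outside $\points$ the inequality is trivial because $\hreg = \infty$ there. Thus $\dpoint + \dvec \in \subd\hreg(\point)$, and (b) gives $\mirror(\dpoint + \dvec) = \point$.

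The step needing the most care is (d), and more precisely the direction in (b) that passes from a subgradient on $\points$ to a global subgradient. Both depend on the convention $\hreg = \infty$ outside $\points$ and on lower semicontinuity of $\hreg$, which holds by continuity on the closed set $\points$. Once those conventions are fixed, the conjugate subdifferential formula applies verbatim.
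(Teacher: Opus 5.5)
Your proposal is correct. For (a)--(d), (f) and (g) it follows the paper's proof essentially step for step. Your direct reading of maximality over $\points$ (with $\hreg=\infty$ off $\points$) as the subgradient inequality is exactly what the paper calls Fermat's rule, and your proof of (f) is the same difference-quotient argument.

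The genuine difference is in (e). The paper handles it by citing the duality result that $\hconj$ is $(1/\hstr)$-Lipschitz smooth whenever $\hreg$ is $\hstr$-strongly convex \citep[Theorem~12.60(b)]{RW98}. You instead prove it directly. Combining (f) with \eqref{eq:hstr-diff} at $\point=\mirror(\dpoint)$ and at $\pointalt=\mirror(\dpointalt)$ gives the strong monotonicity estimate $\hstr\norm{\pointalt-\point}^{2}\leq\braket{\dpointalt-\dpoint}{\pointalt-\point}$, and the dual-norm inequality finishes.

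Your route is self-contained and works verbatim for an arbitrary norm $\norm{\argdot}$ and its dual $\dnorm{\argdot}$, which is the paper's setting. The Rockafellar--Wets result the paper cites is stated for the Euclidean norm, so the paper is implicitly relying on its standard non-Euclidean analogue. What the citation buys is brevity and the full equivalence between strong convexity of $\hreg$ and smoothness of $\hconj$, which is more than this item needs.

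One detail is worth stating explicitly in your version. The directional derivatives $\hreg'(\point;\pointalt-\point)$ and $\hreg'(\pointalt;\point-\pointalt)$ are finite: (f) bounds them below, and convexity bounds them above by the corresponding difference of $\hreg$-values. This ensures that adding the two instances of \eqref{eq:hstr-diff} involves no $-\infty$ terms.
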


\begin{proof}
For the most part, these properties are well known in the literature (except possibly the last one), so we only provide a pointer or a short sketch for most of them.
\begin{enumerate}
[\upshape(\itshape a\hspace*{.5pt}\upshape)]

\item
This follows from the fact that $\hreg$ is strongly convex, so the $\argmax$ in \eqref{eq:mirror} is attained and is unique for all $\dpoint\in\dpoints$.

\item
By Fermat's rule \citep[Chap.~26]{Roc70}, we readily see that $\point$ solves \eqref{eq:mirror} if and only if $\dpoint - \subd\hreg(\point) \ni 0$, that is, if and only if $\dpoint\in\subd\hreg(\point)$.

\item
By \eqref{eq:hinv}, we readily get $\im\mirror = \proxdom$.
As for the second part of our claim, it follows from basic properties of the subdifferential, \cf \citet[Chap.~26]{Roc70}.

\item
This is simply Danskin's theorem, see \eg \citet[Proposition 5.4.8, Appendix B]{Ber15}.

\item
This is a consequence of the fact that $\hconj$ is $(1/\hstr)$-Lipschitz smooth, \cf \citet[Theorem 12.60(b)]{RW98}.

\item
Since $\dpoint \in \subd\hreg(\point)$ by \eqref{eq:hinv}, we readily get that
\begin{equation}
\hreg(\point + \theta(\pointalt-\point))
	\geq \hreg(\point) + \theta \braket{\dpoint}{\pointalt-\point}
	\quad
	\text{for all $\theta\in[0,1]$}
	\eqstop
\end{equation}
Hence, by rearranging and taking the limit $\theta\to0^{+}$,
we conclude that
\begin{equation}
\label{eq:dir-subdiff}
\hreg'(\point;\pointalt-\point)
	= \lim_{\theta\to0^{+}} \frac{\hreg(\point + \theta(\pointalt-\point)) - \hreg(\point)}{\theta}
	\geq \braket{\dpoint}{\pointalt-\point}
\end{equation}
as claimed.%
\footnote{The existence of the limit is guaranteed by elementary convex analysis arguments, \cf \citet[App.~B]{Ber15}.}

\item
By \eqref{eq:hinv} it suffices to show that $\dpoint+\dvec\in\subd\hreg(\point)$ for all $\dvec\in\pcone(\point)$.
However, if $\dvec\in\pcone(\point)$, we also have $\braket{\dvec}{\pointalt - \point} \leq 0$ for all $\pointalt\in\points$, and hence, with $\dpoint\in\subd\hreg(\point)$, we readily get
\begin{flalign}
\hreg(\pointalt)
	&\geq \hreg(\point)
		+ \braket{\dpoint}{\pointalt - \point}
	\notag\\
	&\geq \hreg(\point)
		+ \braket{\dpoint + \dvec}{\pointalt - \point}
	\quad
	\text{for all $\pointalt\in\points$}.
\end{flalign}
This shows that $\dpoint+\dvec \in \subd\hreg(\point)$ and completes our proof. 
\qedhere
\end{enumerate}
\end{proof}

Following \cite{MS16,MZ19}, we also define the \define{Fenchel coupling} associated to $\hreg$ as
\begin{equation}
\label{eq:Fench}
\fench(\base,\dpoint)
	= \hreg(\base)
		+ \hconj(\dpoint)
		- \braket{\dpoint}{\base}
	\quad
	\text{for all $\base\in\points$, $\dpoint\in\dpoints$}.
\end{equation}
The next proposition shows that the Fenchel coupling can be seen as a ``primal-dual'' measure of divergence between $\base\in\points$ and $\dpoint\in\dpoints$:

\begin{proposition}
\label{prop:Fench}
Let $\hreg$ be a Bregman regularizer on $\points$.
Then, for all $\base\in\points$ and all $\dpoint\in\dpoints$, $\point=\mirror(\dpoint)$, we have:
\begin{subequations}
\begin{flalign}
\label{eq:Fench-posdef}
\quad
(a)
	&\;\;
	\fench(\base,\dpoint)
	\geq 0
	\;\;
	\text{with equality if and only if $\base = \point$}.
	&
	\\
\label{eq:Fench-Breg}
\quad
(b)
	&\;\;
	\fench(\base,\dpoint)
	\geq \breg(\base,\point)
	\;\;
	\text{with equality whenever $\point\in\relint\points$}.
	&
	\\
\label{eq:Fench-norm}
\quad
(c)
	&\;\;
	\fench(\base,\dpoint)
	\geq \tfrac{1}{2} \hstr \, \norm{\mirror(\dpoint) - \base}^{2}.
	&
\end{flalign}
\end{subequations}
\end{proposition}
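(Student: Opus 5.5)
The plan is to reduce all three claims to the Fenchel–Young inequality and to the characterization $\dpoint\in\subd\hreg(\point)$ from \cref{prop:mirror}. Throughout, fix $\base\in\points$ and $\dpoint\in\dpoints$, and set $\point=\mirror(\dpoint)$.

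Since $\point$ attains the maximum in the definition of $\hconj$, we have $\hconj(\dpoint)=\braket{\dpoint}{\point}-\hreg(\point)$. Substituting this into $\fench$ gives
\begin{equation*}
\fench(\base,\dpoint)=\hreg(\base)-\hreg(\point)-\braket{\dpoint}{\base-\point}.
\end{equation*}
This identity drives the whole argument.

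For (a), the identity together with $\dpoint\in\subd\hreg(\point)$ (by \eqref{eq:hinv}) immediately gives $\fench\geq0$. For the equality case, note that if $\base=\point$ then $\fench=0$. Conversely, I will prove (c) first and deduce the remaining direction from it: $\fench=0$ forces $\norm{\point-\base}=0$.

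For (c), I would use strong convexity along the segment from $\point$ to $\base$. By \eqref{eq:hstr-diff},
\begin{equation*}
\hreg(\base)\geq\hreg(\point)+\hreg'(\point;\base-\point)+\tfrac{\hstr}{2}\norm{\base-\point}^{2}.
\end{equation*}
By \eqref{eq:hdir}, $\hreg'(\point;\base-\point)\geq\braket{\dpoint}{\base-\point}$. Inserting both bounds into the identity yields $\fench(\base,\dpoint)\geq\frac{\hstr}{2}\norm{\base-\point}^{2}$, which is (c).

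For (b), note that $\point\in\proxdom$ by \eqref{eq:imQ}, so $\breg(\base,\point)$ is well defined. Subtracting the definition of $\breg$ from the identity gives
\begin{equation*}
\fench(\base,\dpoint)-\breg(\base,\point)=\braket{\subsel\hreg(\point)-\dpoint}{\base-\point}.
\end{equation*}
We therefore need this pairing to be nonnegative, with equality when $\point\in\relint\points$. This is the main obstacle, because $\subsel\hreg(\point)$ and $\dpoint$ are two possibly different subgradients at $\point$.

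I would handle it through the directional-derivative interpretation of the continuous selection, as in the Remark after the standing assumptions. Applied to $\hreg$, that interpretation says $\braket{\subsel\hreg(\point)}{\base-\point}=\hreg'(\point;\base-\point)$. This holds because continuity of the selection makes it the limit of the (essentially unique, on the relative interior) gradients along the segment, and it matches the one-sided derivative. Combining with \eqref{eq:hdir}, $\hreg'(\point;\base-\point)\geq\braket{\dpoint}{\base-\point}$, so the pairing is nonnegative.

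When $\point\in\relint\points$, both $\dpoint$ and $\subsel\hreg(\point)$ lie in $\subd\hreg(\point)$. Two subgradients at a relative interior point differ by an element annihilating the direction space $\aff\points-\point$. Since $\base-\point$ lies in that space, the pairing vanishes and equality holds in (b).

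If the directional-derivative identity for $\subsel\hreg$ turns out to need more care, the fallback is a direct limiting argument. Take $x_\theta=\point+\theta(\base-\point)\in\relint\points$ for $\theta\in(0,1]$. Using monotonicity of subgradients and continuity of $\subsel\hreg$ as $\theta\to0^{+}$, one obtains $\braket{\subsel\hreg(\point)}{\base-\point}\geq\braket{\dpoint}{\base-\point}$ directly.
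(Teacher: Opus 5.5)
Your proposal is correct and is essentially the paper's own proof. It uses the same rewriting $\fench(\base,\dpoint)=\hreg(\base)-\hreg(\point)-\braket{\dpoint}{\base-\point}$, obtains (c) from \eqref{eq:hstr-diff} together with \eqref{eq:hdir}, and obtains (b) from $\braket{\subsel\hreg(\point)}{\base-\point}=\hreg'(\point;\base-\point)$ plus \eqref{eq:hdir}. The only harmless deviation is that you get the equality case of (a) from (c), rather than from the equality case of Fenchel--Young together with \eqref{eq:hinv}.

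One caution on the equality in (b): two subgradients at a point of $\relint\points$ need not agree along $\base-\point$ for a general convex function; here they do only because of the continuous selection, and the paper also treats this as immediate. Your monotonicity fallback supplies exactly this justification if you run it in both directions $\pm(\base-\point)$. When $\point$ lies on the relative boundary, first nudge $\base$ into $\relint\points$ so that the segment stays in $\proxdom$.
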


\begin{proof}
These properties are all fairly well known in the literature;
we only provide a quick proof for completeness.
\begin{enumerate}
[\upshape(\itshape a\hspace*{.5pt}\upshape)]

\item
By the Fenchel\textendash Young inequality, we have $\hreg(\base) + \hconj(\dpoint) \geq \braket{\dpoint}{\base}$ for all $\base\in\points$, $\dpoint\in\dpoints$, with equality if and only if $\dpoint\in\subd\hreg(\base)$.
Our claim then follows from \eqref{eq:hinv}.

\item
By definition, we have
\begin{align}
\fench(\base,\dpoint)
	&= \hreg(\base)
		+ \hconj(\dpoint)
		- \braket{\dpoint}{\base}
	\notag\\
	&= \hreg(\base)
		+ \braket{\dpoint}{\point}
		- \hreg(\point)
		- \braket{\dpoint}{\base}
	= \hreg(\base)
		- \hreg(\point)
		- \braket{\dpoint}{\base - \point}
	\eqstop
\end{align}
Since $\point\in\relint\points$ and $\dpoint\in\subd\hreg(\point)$, we readily get $\braket{\dpoint}{\base - \point} = \hreg'(\point;\base-\point) = \braket{\subsel\hreg(\point)}{\base - \point}$, showing that $\fench(\base,\dpoint) = \breg(\base,\point)$ in this case.
Otherwise, by \eqref{eq:hdir}, we get $\braket{\dpoint}{\base - \point} \leq \hreg'(\point;\base-\point) = \braket{\subsel\hreg(\point)}{\base - \point}$, and the desired inequality follows.

\item
Arguing as above, we have
\begin{align}
\fench(\base,\dpoint)
	&= \hreg(\base) + \hconj(\dpoint) - \braket{\dpoint}{\base}
	\notag\\
	&= \hreg(\base) + \braket{\dpoint}{\point} - \hreg(\point) - \braket{\dpoint}{\base}
	\explain{since $\dpoint \in \subd\hreg(\point)$}
	\\
	&\geq \hreg(\base) - \hreg(\point) - \hreg'(\point;\base-\point)
	\explain{by \cref{prop:mirror}}
	\\
	&\geq \tfrac{1}{2} \hstr \norm{\point - \base}^{2}
	\explain{by \eqref{eq:hstr}}
\end{align}
so our proof is complete.
\qedhere
\end{enumerate}
\end{proof}


Our last result at this point is a useful differentiation formula for the Fenchel coupling:

\begin{lemma}
\label{lem:dFench}
For all $\base\in\points$ and all $\dpoint\in\dpoints$, we have:
\begin{equation}
\label{eq:dFench}
\nabla_{\dpoint} \fench(\base,\dpoint)
	= \mirror(\dpoint) - \base
	\eqstop
\end{equation}
\end{lemma}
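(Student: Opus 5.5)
The plan is to differentiate $\fench(\base,\dpoint) = \hreg(\base) + \hconj(\dpoint) - \braket{\dpoint}{\base}$ term by term in $\dpoint$, with $\base\in\points$ held fixed. The first term $\hreg(\base)$ does not depend on $\dpoint$, so its gradient vanishes. The last term $-\braket{\dpoint}{\base}$ is linear in $\dpoint$. Its gradient is therefore the constant vector $-\base$, which we regard as an element of the bidual $\vecspace^{\ast\ast} \cong \vecspace$; this identification is legitimate because $\vecspace$ is finite-dimensional.

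The only nontrivial term is $\hconj(\dpoint)$. For it we invoke \cref{prop:mirror}(d), which rests on Danskin's theorem. That result states that $\hconj$ is differentiable on all of $\dpoints$ and that $\nabla\hconj(\dpoint) = \mirror(\dpoint)$. Summing the three contributions gives $\nabla_{\dpoint}\fench(\base,\dpoint) = \mirror(\dpoint) - \base$, which is the claim.

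Should one want a self-contained argument instead of citing Danskin, I would proceed as follows.
\begin{enumerate}
\item Set $\point = \mirror(\dpoint)$ and $\point' = \mirror(\dpoint')$.
\item Use the definition of $\hconj$ as a maximum, evaluating the objective at $\point$ to bound $\hconj(\dpoint')$ from below and at $\point'$ to bound it from above. This sandwiches the increment:
\begin{equation*}
\braket{\dpoint' - \dpoint}{\point}
	\leq \hconj(\dpoint') - \hconj(\dpoint)
	\leq \braket{\dpoint' - \dpoint}{\point'} .
\end{equation*}
\item Subtract $\braket{\dpoint' - \dpoint}{\point}$ throughout. By the $(1/\hstr)$-Lipschitz continuity of $\mirror$ from \cref{prop:mirror}(e), the remainder is at most $\dnorm{\dpoint'-\dpoint}\,\norm{\point'-\point} \leq \dnorm{\dpoint'-\dpoint}^{2}/\hstr$, which is $\littleoh(\dnorm{\dpoint'-\dpoint})$.
\item Conclude that $\hconj$ is Fréchet differentiable at $\dpoint$ with gradient $\mirror(\dpoint)$.
\end{enumerate}

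I do not expect a genuine obstacle here. The one point needing care is that the Lipschitz bound of \cref{prop:mirror}(e) must be available before it is used in step 3, so that the remainder is indeed of second order. Beyond that, the gradient of the linear term has to be interpreted through the canonical pairing, so that it is read as the primal vector $\base$ and not as an element of $\dpoints$.
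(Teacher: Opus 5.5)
Your proposal is correct and follows the paper's own proof. The paper proves this by appeal to Danskin's theorem in the form of \cref{prop:mirror}(d), $\nabla\hconj = \mirror$, with the other two terms of $\fench$ constant and linear in $\dpoint$. Your self-contained sandwich argument is also sound, provided you obtain the $(1/\hstr)$-Lipschitz bound on $\mirror$ directly from strong convexity (via strong monotonicity of $\subd\hreg$). Taking it from the smoothness of $\hconj$ instead, as \cref{prop:mirror}(e) does, would be circular.
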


\begin{proof}
The proof follows immediately from Danskin's theorem, \cf \cref{eq:Danskin} of \cref{prop:mirror}.
\end{proof}

\subsection{Update lemmas}

Moving forward, we note that the basic update step of \eqref{eq:LMD} can be written as
\begin{equation}
\label{eq:update}
\new[\dpoint]
	= \dpoint + \dvec
	\quad
	\text{and}
	\quad
\new
	= \mirror(\new[\dpoint])
\end{equation}
for some $\dpoint,\dvec\in\dpoints$.

We begin with the equivalence of the primal and dual updates\textemdash of \eqref{eq:SMD} and \eqref{eq:LMD} respectively\textemdash when $\hreg$ is steep, that is, $\proxdom=\relint\points$.
The key element of this equivalence is the general proposition below.

\begin{proposition}
\label{prop:eager-lazy}
Fix some $\dpoint\in\dpoints$, $\point=\mirror(\dpoint)$, and $\dvec\in\dpoints$.
Suppose further that $\dpoint - \nabla\hreg(\point)$ annihilates all tangent vectors to $\points$ at $\point$, \ie
\begin{equation}
\braket{\dpoint - \nabla\hreg(\point)}{\pointalt - \point}
	= 0
	\quad
	\text{for all $\pointalt\in\points$}.
\end{equation}
Then:
\begin{equation}
\label{eq:eager-lazy}
\proxof{\point}{\dvec}
	= \mirror(\dpoint + \dvec)
	\eqstop
\end{equation}
\end{proposition}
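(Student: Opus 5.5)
The plan is to characterize both sides through the optimality conditions of the corresponding convex programs, and then show they coincide by uniqueness. Set $\new = \mirror(\dpoint+\dvec)$. By \eqref{eq:hinv} of \cref{prop:mirror}, $\new$ is characterized by $\dpoint + \dvec \in \subd\hreg(\new)$, and in particular $\new\in\proxdom$. On the other side, the prox-step $\proxof{\point}{\dvec}$ is the unique minimizer over $\points$ of
\[
\phi(\auxpoint) = \braket{\dvec}{\point - \auxpoint} + \breg(\auxpoint,\point)
	= \hreg(\auxpoint) - \braket{\subsel\hreg(\point) + \dvec}{\auxpoint} + \text{const}.
\]
Here uniqueness comes from the strong convexity of $\hreg$. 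This objective is exactly the one defining $\mirror(\subsel\hreg(\point) + \dvec)$, so $\proxof{\point}{\dvec} = \mirror(\subsel\hreg(\point)+\dvec)$.

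It therefore suffices to show that $\mirror(\subsel\hreg(\point)+\dvec) = \mirror(\dpoint+\dvec)$. I would verify directly that $\new$ maximizes $\braket{\subsel\hreg(\point)+\dvec}{\auxpoint} - \hreg(\auxpoint)$ over $\points$. Equivalently, writing $\delta = \subsel\hreg(\point) - \dpoint$, I need to show that $\subsel\hreg(\point) + \dvec = (\dpoint+\dvec) + \delta \in \subd\hreg(\new)$. The hypothesis says $\braket{\delta}{\pointalt - \point} = 0$ for all $\pointalt\in\points$, so $\delta$ annihilates every difference of points of $\points$. In particular, $\braket{\delta}{\pointalt - \new} = \braket{\delta}{\pointalt-\point} - \braket{\delta}{\new - \point} = 0$ for all $\pointalt\in\points$, since $\new\in\points$.

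The subgradient inequality then transfers immediately. For every $\pointalt\in\points$ we have
\[
\hreg(\pointalt) \geq \hreg(\new) + \braket{\dpoint+\dvec}{\pointalt-\new} = \hreg(\new) + \braket{\dpoint+\dvec+\delta}{\pointalt - \new}.
\]
For $\pointalt\notin\points$ the inequality is trivial because $\hreg(\pointalt)=\infty$. Hence $\dpoint+\dvec+\delta\in\subd\hreg(\new)$, and \eqref{eq:hinv} together with single-valuedness of $\mirror$ gives the claim. This is the same mechanism as item (g) of \cref{prop:mirror}, with $\pcone$ replaced by the annihilator of $\aff\points - \point$, which sits inside $\pcone(\new)$.

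I expect the only delicate point to be the identification of the prox-step with a mirror step, specifically the bookkeeping of constants and signs in $\breg$. One must also check that $\point\in\proxdom$, so that $\subsel\hreg(\point)$ is defined; this holds because $\point = \mirror(\dpoint)\in\im\mirror = \proxdom$ by \eqref{eq:imQ}. Everything else follows directly from \cref{prop:mirror}.
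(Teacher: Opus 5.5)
Your proof is correct and is essentially the paper's argument: the paper does the same expansion of the prox objective, but applies the hypothesis directly inside the $\argmin$, replacing $\braket{\nabla\hreg(\point)}{\pointalt-\point}$ by $\braket{\dpoint}{\pointalt-\point}$ for every $\pointalt\in\points$, so the objective becomes the one defining $\mirror(\dpoint+\dvec)$ up to an additive constant. Your detour through the subgradient characterization \eqref{eq:hinv} is valid but not needed, since $\braket{\delta}{\auxpoint}$ is already constant on $\points$ and so cannot change the maximizer.
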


\begin{corollary}
\label{cor:eager-lazy}
\Cref{prop:eager-lazy} holds whenever $\point = \mirror(\dpoint) \in\relint\points$;
in particular, if $\hreg$ is steep, \eqref{eq:eager-lazy} holds for all $\dpoint\in\dpoints$.
\end{corollary}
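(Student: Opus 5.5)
The plan is to check the single hypothesis of \cref{prop:eager-lazy}. We must show that if $\point = \mirror(\dpoint) \in \relint\points$, then $\braket{\dpoint - \nabla\hreg(\point)}{\pointalt - \point} = 0$ for all $\pointalt\in\points$. Once this is done, \cref{prop:eager-lazy} gives \eqref{eq:eager-lazy} at once. The steep case then follows from \eqref{eq:imQ}: steepness means $\proxdom = \relint\points$, and $\im\mirror = \proxdom$, so $\mirror(\dpoint)\in\relint\points$ for every $\dpoint\in\dpoints$.

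To verify the annihilation property, fix $\pointalt\in\points$ and set $\tanvec = \pointalt - \point$. The key step is to show that both $\tanvec$ and $-\tanvec$ are feasible directions at $\point$. For $\tanvec$ this is immediate from convexity. For $-\tanvec$, note that $\point$ lies in the relative interior and $\pointalt$ lies in the affine hull of $\points$. Hence there is $\theta>0$ with $\pointalt' \defeq \point - \theta\tanvec \in \points$, so that $-\theta\tanvec = \pointalt' - \point$.

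Next we compare the two subgradients $\dpoint$ and $\nabla\hreg(\point)$. By \eqref{eq:hinv} we have $\dpoint\in\subd\hreg(\point)$, and \eqref{eq:hdir} then gives $\braket{\dpoint}{\pointalt - \point} \leq \hreg'(\point;\pointalt-\point)$ for every $\pointalt\in\points$. The existence of a continuous selection $\nabla\hreg$ means, as in the remark following the standing assumptions and as already used in the proof of \cref{prop:Fench}(b), that $\hreg'(\point;\pointalt-\point) = \braket{\nabla\hreg(\point)}{\pointalt-\point}$ for all $\point\in\proxdom$ and $\pointalt\in\points$.

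Applying this with $\pointalt$ gives $\braket{\dpoint}{\tanvec} \leq \braket{\nabla\hreg(\point)}{\tanvec}$. Applying it with $\pointalt'$ and dividing by $\theta>0$ gives $-\braket{\dpoint}{\tanvec} \leq -\braket{\nabla\hreg(\point)}{\tanvec}$. Together these yield equality, which is exactly the required annihilation property.

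The only delicate point is the identification $\hreg'(\point;\tanvec) = \braket{\nabla\hreg(\point)}{\tanvec}$ along directions into $\points$, which is precisely the paper's convention for the continuous selection. Given that, the argument reduces to the two-sided feasibility afforded by $\point\in\relint\points$.
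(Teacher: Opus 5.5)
Your proof is correct. It follows essentially the route the paper relies on: the paper gives no separate proof of this corollary, but uses exactly the identity $\braket{\dpoint}{\base - \point} = \hreg'(\point;\base-\point) = \braket{\subsel\hreg(\point)}{\base - \point}$ for $\point\in\relint\points$ in the proof of \cref{prop:Fench}(b), together with \eqref{eq:imQ} for the steep case, and your two-sided feasibility step is precisely what justifies the first of these equalities.
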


\begin{corollary}
\label{cor:SMD-LMD}
Suppose that \eqref{eq:SMD} and \eqref{eq:LMD} are initialized respectively at $\init\in\proxdom$ and $\init[\dpoint]\in\dpoints$, with $\init = \mirror(\init[\dpoint])$.
If $\hreg$ is steep and the algorithms are run with the same input sequence $\curr[\sgrad]$, $\run=\running$, then they generate the same sequence of iterates.
\end{corollary}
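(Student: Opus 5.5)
The statement to prove is \cref{cor:SMD-LMD}: when $\hreg$ is steep and both methods receive the same gradient inputs, \eqref{eq:SMD} and \eqref{eq:LMD} produce identical iterates. I will prove this by induction on $\run$. The inductive statement is: for every $\run$, the primal iterate $\curr$ of \eqref{eq:SMD} equals $\mirror(\curr[\dpoint])$, where $\curr[\dpoint]$ is the dual state of \eqref{eq:LMD}. The step-size sequence $\curr[\step]$ is taken to be the same in both methods.

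The base case $\run=\start$ is exactly the hypothesis $\init = \mirror(\init[\dpoint])$. For the inductive step, suppose $\curr = \mirror(\curr[\dpoint])$. Since $\hreg$ is steep, \cref{prop:mirror}(c) gives $\curr \in \im\mirror = \proxdom = \relint\points$. We may therefore apply \cref{cor:eager-lazy} (equivalently \cref{prop:eager-lazy}) with $\dpoint = \curr[\dpoint]$ and $\dvec = -\curr[\step]\curr[\sgrad]$. This yields
\begin{equation*}
\next = \proxof{\curr}{-\curr[\step]\curr[\sgrad]} = \mirror(\curr[\dpoint] - \curr[\step]\curr[\sgrad]) = \mirror(\next[\dpoint]),
\end{equation*}
which is precisely the \eqref{eq:LMD} update. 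This closes the induction.

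There is one point to check: that the inputs actually coincide. The corollary fixes a common input sequence $\curr[\sgrad]$, so nothing more is needed. If the gradients are generated by querying the oracle at the current points, the induction still goes through, because the query points agree at every step.

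The real work sits inside \cref{prop:eager-lazy}, which I take as given. If it needed justification, the argument would be a first-order optimality comparison. The prox step at $\point$ solves $\min_{\auxpoint\in\points}\{\hreg(\auxpoint) - \braket{\nabla\hreg(\point)+\dvec}{\auxpoint}\}$, while $\mirror(\dpoint+\dvec)$ solves the same problem with $\dpoint+\dvec$ in place of $\nabla\hreg(\point)+\dvec$. The two linear terms differ by $\braket{\dpoint - \nabla\hreg(\point)}{\auxpoint}$. By the annihilation hypothesis this difference is constant on $\points$, so the two problems have the same minimizer.

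The step I expect to need the most care is the one \cref{cor:eager-lazy} handles: verifying the annihilation condition at a relative interior point. Both $\dpoint$ and $\nabla\hreg(\point)$ lie in $\subd\hreg(\point)$. At $\point \in \relint\points$, every subgradient gives the same directional derivative along tangent directions $\pointalt - \point$, since those directions are two-sided. Hence $\dpoint - \nabla\hreg(\point)$ annihilates them.
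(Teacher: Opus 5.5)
Your proof is correct and is exactly the argument the paper leaves implicit. You induct on $\run$, use $\im\mirror=\proxdom=\relint\points$ under steepness, and apply \cref{cor:eager-lazy} at each step to turn the prox step of \eqref{eq:SMD} into the lazy update of \eqref{eq:LMD}.

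The one thing to fix is your side justification of the annihilation condition. Two-sidedness of $d=\pointalt-\point$ only gives $-\hreg'(\point;-d)\leq\braket{\dpoint}{d}\leq\hreg'(\point;d)$ for $\dpoint\in\subd\hreg(\point)$, and this does not pin down $\braket{\dpoint}{d}$ at a kink (e.g.\ $\abs{\point}+\point^{2}/2$ on $[-1,1]$ at $0$). What actually forces $\braket{\dpoint-\nabla\hreg(\point)}{d}=0$ is the continuity of the selection $\nabla\hreg$ from \cref{def:mirror}, via $\hreg'(\point;\pm d)\leq\lim_{\theta\to0^{+}}\braket{\nabla\hreg(\point\pm\theta d)}{\pm d}=\pm\braket{\nabla\hreg(\point)}{d}$.
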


\begin{proof}[Proof of \cref{prop:eager-lazy}]
By the definitions \eqref{eq:mprox} and \eqref{eq:mirror} of $\mprox$ and $\mirror$ respectively, we have:
\begin{align}
\proxof{\point}{\dvec}
	&= \argmin_{\pointalt\in\points}
		\{\braket{\dvec}{\point - \pointalt} + \breg(\pointalt,\point)\}
	\notag\\
	&= \argmin_{\pointalt\in\points}
		\{\braket{\dvec}{\point - \pointalt} + \hreg(\pointalt) - \hreg(\point) - \braket{\nabla\hreg(\point)}{\pointalt - \point}\}
	\notag\\
	&= \argmin_{\pointalt\in\points}
		\{\braket{\dvec}{\point - \pointalt} + \hreg(\pointalt) - \hreg(\point) - \braket{\dpoint}{\pointalt - \point}\}
	\notag\\
	&= \argmax_{\pointalt\in\points}
		\{\braket{\dpoint+\dvec}{\pointalt} - \hreg(\pointalt)\}
	= \mirror(\dpoint+\dvec)
	\eqstop
	\tag*{\qedhere}
\end{align}
\end{proof}

We now proceed to state a series of identities and estimates for the Fenchel coupling before and after an update of the form \eqref{eq:update}.
These results are not new, \cf \cite{JNT11,MZ19,MLZF+19} and references therein;
however, the assumptions used to derive them vary in the literature, so we provide detailed proofs for completeness.

The first is a primal-dual version of the so-called ``three-point identity'' for Bregman functions \citep{CT93}:

\begin{lemma}
\label{lem:3point}
Fix some $\base\in\points$, $\dpoint\in\dpoints$, and let $\point = \mirror(\dpoint)$.
Then, for all $\new[\dpoint]\in\dpoints$, we have:
\begin{equation}
\label{eq:3point}
\fench(\base,\new[\dpoint])
	= \fench(\base,\dpoint)
		+ \fench(\point,\new[\dpoint])
		+ \braket{\new[\dpoint] - \dpoint}{\point - \base}.
\end{equation}
\end{lemma}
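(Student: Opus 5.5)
The identity is purely algebraic, so the plan is to expand every Fenchel coupling from the definition \eqref{eq:Fench} and compare the two sides term by term. The one non-definitional input is the Fenchel--Young equality at $\point = \mirror(\dpoint)$. By \eqref{eq:hinv} in \cref{prop:mirror} we have $\dpoint\in\subd\hreg(\point)$. Equivalently, $\point$ attains the maximum in the definition \eqref{eq:hconj} of $\hconj$, so
\begin{equation*}
\hconj(\dpoint) = \braket{\dpoint}{\point} - \hreg(\point).
\end{equation*}
This is the only relation that will be needed. Note that it does not require $\point\in\relint\points$, so the boundary case is covered automatically.

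Concretely, I would write the right-hand side of \eqref{eq:3point} as the sum of three pieces:
\begin{enumerate*}[label=\upshape(\itshape\roman*\hspace*{.5pt}\upshape)]
\item
$\fench(\base,\dpoint) = \hreg(\base) + \hconj(\dpoint) - \braket{\dpoint}{\base}$;
\item
$\fench(\point,\new[\dpoint]) = \hreg(\point) + \hconj(\new[\dpoint]) - \braket{\new[\dpoint]}{\point}$;
\item
$\braket{\new[\dpoint]}{\point} - \braket{\new[\dpoint]}{\base} - \braket{\dpoint}{\point} + \braket{\dpoint}{\base}$.
\end{enumerate*}
Summing these, the terms $\pm\braket{\dpoint}{\base}$ and $\pm\braket{\new[\dpoint]}{\point}$ cancel. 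What remains is
\begin{equation*}
\hreg(\base) + \hconj(\new[\dpoint]) - \braket{\new[\dpoint]}{\base} + \bracks{\hconj(\dpoint) + \hreg(\point) - \braket{\dpoint}{\point}}.
\end{equation*}

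The bracketed term is exactly $\fench(\point,\dpoint)$, which vanishes by the Fenchel--Young equality above (equivalently, by \cref{prop:Fench}(a), since $\point = \mirror(\dpoint)$). The remaining terms are precisely $\fench(\base,\new[\dpoint])$, which proves the claim. There is no real obstacle here. The only point to check is that $\point\in\points$, so that $\fench(\point,\new[\dpoint])$ is well defined, and this holds because $\im\mirror\subseteq\points$ by \eqref{eq:imQ}.
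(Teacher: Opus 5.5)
Your proof is correct and matches the paper's: expand the three Fenchel couplings from the definition, cancel the pairing terms, and use the Fenchel--Young equality $\hreg(\point) + \hconj(\dpoint) = \braket{\dpoint}{\point}$ at $\point = \mirror(\dpoint)$ to eliminate the leftover bracket. The only difference is bookkeeping: the paper subtracts the expansions of $\fench(\base,\dpoint)$ and $\fench(\point,\new[\dpoint])$ from that of $\fench(\base,\new[\dpoint])$, whereas you sum the right-hand side directly.
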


\begin{proof}
By definition, we have:
\begin{subequations}
\begin{align}
\label{eq:Fench1}
\fench(\base,\new[\dpoint])
	&= \hreg(\base)
		+ \hconj(\new[\dpoint])
		- \braket{\new[\dpoint]}{\base}
	\\
\label{eq:Fench2}
\fench(\base,\dpoint)
	&= \hreg(\base)
		+ \hconj(\dpoint)
		- \braket{\dpoint}{\base}
	\\
\label{eq:Fench3}
\fench(\point,\new[\dpoint])
	&= \hreg(\point)
		+ \hconj(\new[\dpoint])
		- \braket{\new[\dpoint]}{\point}
\end{align}
\end{subequations}
Thus, subtracting \eqref{eq:Fench2} and \eqref{eq:Fench3} from \eqref{eq:Fench1}, and rearranging, we get
\begin{equation}
\fench(\base,\new[\dpoint])
	= \fench(\base,\dpoint)
		+ \fench(\point,\new[\dpoint])
		- \hreg(\point)
		- \hconj(\dpoint)
		+ \braket{\new[\dpoint]}{\point}
		- \braket{\new[\dpoint] - \dpoint}{\base}
	\eqstop
\end{equation}
Our assertion then follows by recalling that $\point = \mirror(\dpoint)$, so $\hreg(\point) + \hconj(\dpoint) = \braket{\dpoint}{\point}$.
\end{proof}

The next result we present concerns the Fenchel coupling before and after a direct update step;
similar results exist in the literature, but we again provide a proof for completeness.

\begin{lemma}
\label{lem:onestep}
Fix some $\base\in\points$ and $\dpoint,\dvec\in\dpoints$.
Then, letting $\point = \mirror(\dpoint)$, $\new[\dpoint] = \dpoint + \dvec$, and $\new = \mirror(\new[\dpoint])$ as per \eqref{eq:update}, we have:
\begin{subequations}
\label{eq:onestep}
\begin{align}
\fench(\base,\new[\dpoint])
	&= \fench(\base,\dpoint)
		+ \braket{\dvec}{\new - \base}
		- \fench(\new,\dpoint)
	\label{eq:onestep1}
	\\
	&\leq \fench(\base,\dpoint)
		+ \braket{\dvec}{\point - \base}
		+ \frac{1}{2\hstr} \dnorm{\dvec}^{2}
	\eqstop
	\label{eq:onestep2}
\end{align}
\end{subequations}
\label{lemma:Fenchel_update_tame_noise}
\end{lemma}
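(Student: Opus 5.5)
The equality \eqref{eq:onestep1} follows from the three-point identity of \cref{lem:3point}, applied with the two dual points in the opposite order. The inequality \eqref{eq:onestep2} then follows by bounding the last two terms with strong convexity of $\hreg$.

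For the equality, I will apply \cref{lem:3point} with base point $\base$, initial dual point $\new[\dpoint]$ (so the associated primal point is $\new = \mirror(\new[\dpoint])$), and target dual point $\dpoint$. This gives
\begin{equation*}
\fench(\base,\dpoint)
	= \fench(\base,\new[\dpoint])
		+ \fench(\new,\dpoint)
		+ \braket{\dpoint - \new[\dpoint]}{\new - \base}
	\eqstop
\end{equation*}
Since $\dpoint - \new[\dpoint] = -\dvec$, rearranging yields $\fench(\base,\new[\dpoint]) = \fench(\base,\dpoint) + \braket{\dvec}{\new - \base} - \fench(\new,\dpoint)$, which is \eqref{eq:onestep1}.

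For the inequality, I will split $\braket{\dvec}{\new - \base} = \braket{\dvec}{\point - \base} + \braket{\dvec}{\new - \point}$. The task is then to show
\begin{equation*}
\braket{\dvec}{\new - \point} - \fench(\new,\dpoint)
	\leq \frac{1}{2\hstr}\dnorm{\dvec}^{2}
	\eqstop
\end{equation*}
By \cref{prop:Fench}(c), applied with $\mirror(\dpoint) = \point$, we have $\fench(\new,\dpoint) \geq \tfrac{\hstr}{2}\norm{\new - \point}^{2}$. Together with $\braket{\dvec}{\new - \point} \leq \dnorm{\dvec}\norm{\new - \point}$, the left-hand side is at most $\dnorm{\dvec}\,r - \tfrac{\hstr}{2}r^{2}$ with $r = \norm{\new - \point}$. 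Maximizing this quadratic over $r \geq 0$ gives the bound $\dnorm{\dvec}^{2}/(2\hstr)$, which is exactly what is needed.

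The only delicate point is keeping the roles of the arguments straight. \cref{prop:Fench}(c) measures the distance between $\mirror(\dpoint)$ and the primal argument of $\fench$. Here that primal argument is $\new$, and $\mirror(\dpoint) = \point$, so the bound is indeed on $\norm{\new - \point}$. The rest of the argument is routine algebra.
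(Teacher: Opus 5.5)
Your proof is correct and follows the paper's argument. The equality comes from \cref{lem:3point} with the roles of $\dpoint$ and $\new[\dpoint]$ swapped, and the inequality from splitting $\braket{\dvec}{\new - \base}$ at $\point$ and combining \cref{prop:Fench}(c) with a Young-type bound. Your maximization of $\dnorm{\dvec}\, r - \tfrac{\hstr}{2} r^{2}$ is exactly the Fenchel--Young step the paper invokes, just written out explicitly.
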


\begin{proof}
By the three-point identity \eqref{eq:3point}, we have
\begin{equation}
\fench(\base,\dpoint)
	= \fench(\base,\new[\dpoint])
		+ \fench(\new,\dpoint)
		+ \braket{\dpoint - \new[\dpoint]}{\new - \base}
\end{equation}
so our first claim is immediate.
For our second claim, rearranging terms and employing the Fenchel\textendash Young inequality gives
\begin{align}
\fench(\base,\new[\dpoint]) - \fench(\base, \dpoint)
	&= \braket{\dvec}{\new - \base}
		- \fench(\new,\dpoint) 
	\notag\\
	&=	\braket{\dvec}{\point - \base}
		+ \braket{\dvec}{\new-\point}
		- \fench(\new,\dpoint)
	\notag\\
	&\leq \braket{\dvec}{\point - \base}
		+ \frac{1}{2\hstr} \dnorm{\dvec}^{2}
		+ \frac{\hstr}{2} \norm{\point-\new}^{2}
		- \fench(\new,\dpoint)
\end{align}
so our claim follows from \cref{prop:Fench}.
\end{proof}

In particular, as a consequence of \cref{lemma:Fenchel_update_tame_noise} we obtain the Lipschitzness of the Fenchel coupling with respect to its second variable:

\begin{lemma}
	Fix some $\base \in \points$ and suppose that the diameter $\diamX$ of $\points$ is finite. Then, for all $\dpoint, \new[\dpoint] \in \dpoints$,
	\begin{equation}
		\abs{\fench(\base, \new[\dpoint]) - \fench(\base, \dpoint)} \leq \diamX \dnorm{\new[\dpoint] - \dpoint}.
	\end{equation}
\label{lemma:F_Lipschitz}
\end{lemma}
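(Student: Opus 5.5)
The plan is to apply the first identity \eqref{eq:onestep1} of \cref{lemma:Fenchel_update_tame_noise} twice, once in each direction, and bound the resulting linear term by the diameter. Set $\dvec = \new[\dpoint] - \dpoint$, $\point = \mirror(\dpoint)$ and $\new = \mirror(\new[\dpoint])$.

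First, \eqref{eq:onestep1} gives
\begin{equation}
\fench(\base,\new[\dpoint]) - \fench(\base,\dpoint)
	= \braket{\dvec}{\new - \base} - \fench(\new,\dpoint)
	\leq \braket{\dvec}{\new - \base},
\end{equation}
since $\fench(\new,\dpoint) \geq 0$ by \cref{prop:Fench}(a). Both $\new$ and $\base$ lie in $\points$, so $\norm{\new - \base} \leq \diamX$. The duality bound $\abs{\braket{\dvec}{\point}} \leq \norm{\point}\dnorm{\dvec}$ then yields $\fench(\base,\new[\dpoint]) - \fench(\base,\dpoint) \leq \diamX\dnorm{\new[\dpoint]-\dpoint}$.

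Next, I swap the roles of $\dpoint$ and $\new[\dpoint]$, applying \eqref{eq:onestep1} with base dual point $\new[\dpoint]$ and increment $-\dvec$. This gives $\fench(\base,\dpoint) - \fench(\base,\new[\dpoint]) \leq \braket{-\dvec}{\point - \base} \leq \diamX\dnorm{\dvec}$. Combining the two one-sided bounds gives the absolute-value estimate.

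There is no real obstacle here. The only point to check is that \eqref{eq:onestep1} is an exact identity requiring no assumption on $\dvec$, so the symmetric application is legitimate, and that $\fench \geq 0$ is what allows the quadratic term to be dropped. Finiteness of $\diamX$ is used only in the final bound.
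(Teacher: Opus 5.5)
Your proof is correct and is essentially the paper's argument: the paper also applies the identity \eqref{eq:onestep1} to both ordered pairs $(\dpoint,\new[\dpoint])$ and $(\new[\dpoint],\dpoint)$, drops the nonnegative Fenchel coupling term, and bounds the remaining pairing by $\diamX\dnorm{\new[\dpoint]-\dpoint}$ via the duality inequality.
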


\begin{proof}
	Applying the first claim of \cref{lemma:Fenchel_update_tame_noise} to both couples $(\dpoint, \new[\dpoint])$ and $(\new[\dpoint], \dpoint)$ and using that $\fench \geq 0$ (\cf \cref{prop:Fench}), we readily get
\begin{align}
- \diamX \dnorm{\dpoint - \new[\dpoint]}
	&\leq - \norm{\mirror(\new[\dpoint]) - \base}\dnorm{\new[\dpoint] - \dpoint}
	\notag\\
	&\leq - \braket*{\new[\dpoint] - \dpoint}{\mirror(\new[\dpoint]) - \base}
	\notag\\
	&\leq \fench(\base, \dpoint) - \fench(\base, \new[\dpoint])
	\leq \braket*{\dpoint - \new[\dpoint]}{\mirror(\dpoint) - \base}
	\notag\\
	&\leq \norm{\mirror(\dpoint) - \base}\dnorm{\dpoint - \new[\dpoint]}
	\leq \diamX \dnorm{\dpoint - \new[\dpoint]},
\end{align}
which proves the result.
\end{proof}

Finally, we present a refinement of \cref{lemma:Fenchel_update_tame_noise} that will be needed in the sequel to deal with heavy-tailed noise.

\begin{lemma}
	Fix some $\base \in \points$ and $\dpoint, \dvec \in \dpoints$, and suppose that the diameter $\diamX$ of $\points$ is finite. Then, letting $\point = \mirror(\dpoint), \new[\dpoint] = \dpoint + \dvec$, and $\new[\point] = \mirror(\new[\dpoint])$, we have 
	\begin{align}
		\fench(\base, \new[\dpoint]) \leq \fench(\base, \dpoint) + \braket*{\dvec}{\point - \base} + \dfrac{{\diamX}^{2-r}}{\hcst^{r-1}} \dnorm{\dvec}^r
		\label{eq:diffF_2}
	\end{align}
	for every $r \in [1, 2]$.
\label{lemma:diffF_upperbound}
\end{lemma}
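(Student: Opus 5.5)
We start from the exact identity \eqref{eq:onestep1} of \cref{lemma:Fenchel_update_tame_noise} and split off the term $\braket{\dvec}{\point - \base}$:
\begin{equation*}
\fench(\base,\new[\dpoint]) - \fench(\base,\dpoint) - \braket{\dvec}{\point - \base}
	= \braket{\dvec}{\new - \point} - \fench(\new,\dpoint)
	\eqstop
\end{equation*}
The whole task is then to bound the remainder $R \defeq \braket{\dvec}{\new - \point} - \fench(\new,\dpoint)$ by $\diamX^{2-r}\hcst^{1-r}\dnorm{\dvec}^{r}$ for every $r\in[1,2]$. We will obtain two separate bounds on $R$ and interpolate between them.

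\emph{The endpoint $r=2$.} This is exactly \eqref{eq:onestep2}: by Young's inequality and \cref{prop:Fench}(c),
\begin{equation*}
R \leq \frac{1}{2\hstr}\dnorm{\dvec}^{2}
	\leq \frac{1}{\hstr}\dnorm{\dvec}^{2}
	\eqstop
\end{equation*}

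\emph{The endpoint $r=1$.} Since $\fench\geq0$ and both $\point,\new\in\points$, we have $R \leq \norm{\new-\point}\,\dnorm{\dvec} \leq \diamX\dnorm{\dvec}$.

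\emph{Interpolation.} Write $a \defeq \dnorm{\dvec}$. From the two endpoints, $R \leq \min\{\diamX a, a^{2}/\hstr\}$, using the (deliberately loose) constant $1/\hstr$ at $r=2$. For $\theta = r-1 \in [0,1]$ we have $\min\{A,B\} \leq A^{1-\theta}B^{\theta}$, which gives
\begin{equation*}
R \leq (\diamX a)^{2-r}\,(a^{2}/\hstr)^{r-1}
	= \diamX^{2-r}\,\hstr^{1-r}\,a^{r}
	\eqstop
\end{equation*}
This is exactly \eqref{eq:diffF_2}.

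A cleaner variant is also available: since $\norm{\new-\point} \leq \diamX$ and, by \eqref{eq:QLips}, $\norm{\new-\point} \leq a/\hstr$, we get $\norm{\new-\point} \leq \diamX^{2-r}(a/\hstr)^{r-1}$. Dropping $-\fench(\new,\dpoint)\leq0$ then yields the same bound directly.

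The only delicate point is constant bookkeeping at $r=2$ (the factor $1/2$ is simply discarded). Beyond that, the argument needs no further assumptions besides the finiteness of $\diamX$.
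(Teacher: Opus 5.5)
Your proof is correct, and your ``cleaner variant'' is exactly the paper's argument: start from \eqref{eq:onestep1}, drop $-\fench(\new,\dpoint)\leq 0$, and bound $\norm{\new-\point}\leq\diamX^{2-r}\norm{\mirror(\new[\dpoint])-\mirror(\dpoint)}^{r-1}\leq\diamX^{2-r}(\dnorm{\dvec}/\hcst)^{r-1}$ using the $(1/\hcst)$-Lipschitz continuity of $\mirror$. Your main route instead interpolates the two endpoint bounds on the whole remainder. It is equally valid, and it would even give the slightly sharper constant $2^{1-r}\diamX^{2-r}\hcst^{1-r}$ if you kept the factor $1/2$ at $r=2$ rather than discarding it.
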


\begin{proof}
From the first claim of \cref{lemma:Fenchel_update_tame_noise} and the positiveness of the Fenchel coupling, we get
\begin{align}
\fench(\base, \new[\dpoint])
	&= \fench(\base, \dpoint) + \braket*{\dvec}{\point - \base}
		+ \braket*{\dvec}{\new[\point] - \point} - \fench(\new, \dpoint)
	\notag\\
	&\leq \fench(\base, \dpoint) + \braket*{\dvec}{\point - \base} + \dnorm{\dvec}\norm{\new[\point] - \point}.
\end{align}
To continue, note that
\begin{align}
	\norm{\new[\point] - \point}\dnorm{\dvec} = \norm{\new[\point] - \point}^{2-r} \norm{\mirror(\new[\dpoint]) - \mirror(\dpoint)}^{r-1} \dnorm{\dvec} \leq \dfrac{\diamX^{2-r}}{\hcst^{r-1}} \dnorm{\dvec}^r,
\end{align}
where the inequality follows by the boundedness of $\points$ and the fact that the mirror map $\mirror$ is $(1/\hcst)$-Lipschitz (\cf \cref{prop:mirror}).
This proves our assertion and concludes our proof.
\end{proof}

\section{A primer on Lévy processes}
\label{app:levy}

In this section, we review the necessary background on Lévy processes and stochastic analysis, with an emphasis on the tools required for the convergence analysis developed in the main text. The material presented here is largely standard, and we refer the interested reader to the monographs of \citet{App09}, \citet{Ber96} and \citet{Sat99} for comprehensive introductions to the subject. Throughout this section, we assume a filtered probability space $(\samples, \filter, (\filter(\time))_{\time \geq 0}, \prob)$ satisfying the usual conditions\footnote{That is, (a) the probability space $(\samples, \filter, \prob)$ is complete, meaning that every subset of a $\prob$-null set is measurable; and (b) the filtration $(\filter(\time))_{\time \geq 0}$ is right-continuous, \ie $\filter(\time) = \bigcap_{u > \time} \filter(u)$.}. 

\subsection{Lévy processes}

A Lévy process $\levy = (\levyof{\time})_{\time \geq 0}$ is a continuous-time stochastic process taking values in $\mathbb{R}^\vdim$ and satisfying:
\begin{enumerate}
	\item \textbf{Initial condition:} $\levyof{0} = 0$ \as
	\item \textbf{Independent increments:} For every $0 \leq \time_1 < \dots < \time_k$, the increments $\levyof{\time_{i+1}} - \levyof{\time_i}$ are independent.
	\item \textbf{Stationary increments:} For every $\time, \timealt > 0$, the distribution of $\levyof*{\time + \timealt} - \levyof{\time}$ is the same as $\levyof{\timealt}$.
	\item \textbf{Stochastic continuity:} For every $\eps > 0$ and $\time \geq 0$, $\lim_{\timealt \to 0^+} \probof*{ \norm{\levyof{\time + \timealt} - \levyof{\time}} > \eps } = 0$.
\end{enumerate}

Under the standard assumptions made on the filtered space, one may always assume that $\levy$ admits càdlàg (\ie right-continuous with left limits) paths.

Lévy processes encompass many important continuous-time stochastic model; among the most prominent examples are the following:

\begin{example}[Brownian motion]
	A \emph{Brownian motion} $\brown = (\brownof{\time})_{\time \geq 0}$ is a Lévy process whose increments $\brownof{\time + \timealt} - \brownof{\time}$ are distributed as $\mathcal{N}(0, \timealt \eye_\vdim)$. In particular, Brownian motions are the only Lévy processes that admit continuous sample paths, and they admit finite moments of any order.
\end{example}

\begin{example}[Poisson process]
	A \emph{Poisson process} $\pois = (\pois(\time))_{\time\geq 0}$ with intensity $\lambda > 0$ is a nondecreasing, integer-valued Lévy process whose increments $\pois(\time + \timealt) - \pois(\time)$ are distributed as $\mathrm{Poisson}(\lambda \timealt)$. In particular, it also admits moments of all orders.
\end{example}

\begin{example}[Compound Poisson process]
	Let $Z = (Z_i)_{i \in \N}$ be a sequence of independent and identically distributed random variables, and let $\pois$ be an independent Poisson process with intensity $\lambda > 0$. Then the stochastic process $X(\time) = \sum_{i=1}^{\pois(\time)} Z_i$ is called a \emph{compound Poisson process}. In particular, $X$ is a Lévy process. Moreover, a compound Poisson process admits moments of order $\momexp$ if and only if the common distribution of the $Z_i$ admits moments of order $\momexp$.
\end{example}

\begin{example}[Stable processes]
	A rotationally invariant stable process with stability index $\alpha \in (0, 2]$ is a Lévy process $\levy^{\alpha}$ such that 
	\begin{equation}
		\exof*{ e^{i \inner{u}{\levy^{\alpha}(\time)} }} = e^{-\sigma^\alpha \norm{u}^\alpha \time} \quad \text{for all } u \in \R^\vdim, \time \geq 0,
	\end{equation}
	where $\sigma > 0$ is a scale factor. When $\alpha = 2$, this definition recovers the Brownian motion. By contrast, when $\alpha < 2$, an $\alpha$-stable process admits finite moments only of order $\momexp < \alpha$, making such processes particularly well suited for modeling infinite-variance phenomena. More general definitions of stable processes without rotational invariance are available in the literature, although their explicit characterization is substantially more involved. We refer the interested reader to the monograph of \citet{ST94} for further details.
\end{example}

\subsection{Jump processes and Poisson random measures}
\label{subsec:random_measures}

As illustrated by the previous examples, Lévy processes are typically discontinuous in nature, making it essential to characterize the behavior of their \emph{jumps} (\ie of their discontinuities) in order to understand their long-term behavior.

Let $\jumpof \levyof{\time} = \levyof{\time} - \levyof{\time-}$ denote the jump size at time $\time$. We define a (random) measure $\poisful$ on $\R^+ \times (\R^\vdim - \{0\})$ by counting the jumps:
\begin{equation}
	\poisof{[\timealt, \time]}{B} = \#\setdef*{ \timealt < r \leq \time }{ \jumpof\levyof{r} \in B } = \sum_{\timealt < r \leq \time} \one_{B}(\jumpof\levyof{r})
\end{equation}
for any Borel set $B \subset \R^\vdim - \{0\}$. Thus, $\poisof{[\timealt, \time]}{B}$ counts the number of jumps of size in $B$ occurring in the time interval $[\timealt, \time]$. In particular, $(\poisof{\time}{B})_{\time \geq 0} \defeq (\poisof{[0, \time]}{B})_{\time \geq 0}$ is a Poisson process with intensity $\levymes(B) \time$, where $\levymes$ is the measure on $\R^\vdim - \{0 \}$ defined by
\begin{equation}
	\levymes(B) = \exof*{ \poisof{1}{B} } = \exof*{\sum_{0 < \time \leq 1} \one_{B}(\jumpof\levyof{\time})}.
\end{equation} 
This measure is known as the \emph{Lévy measure} associated with $\levy$, and it also satisfies the integrability condition
\begin{equation}
	\int_{\R^\vdim - \{0\}} (\norm{\jump}^2 \wedge 1) \levymes(d\jump) < \infty.
\end{equation}
An important consequence of this condition is that $\levymes$ is finite on any Borel set that is bounded away from $0$, which implies that a Lévy process admits only finitely many large jumps on any finite time interval. In contrast, $\levymes$ may be infinite at $0$, allowing for infinitely many jumps of small size. Another useful relation links the moments of $\levy$ to those of $\levymes$ \citep[Theorem 25.3]{Sat99}:
\begin{equation}
	\exof*{\norm{\levyof{\time}}^\momexp} < \infty \text{ for all } \time \geq 0 \quad \Longleftrightarrow \quad \int_{\norm{\jump} \geq \cjump} \norm{\jump}^\momexp \levymes(d\jump) < \infty \text{ for all } \cjump > 0.
\end{equation}
This result illustrates that the tail behavior of a Lévy process---namely, whether it is heavy-tailed or light-tailed---is entirely determined by the contribution of large jumps, and not by the small jumps nor by the continuous component.

\begin{example}[Brownian motion, cont'd]
	A Brownian motion $\brown$ has continuous sample paths and therefore admits no jumps; consequently, its Lévy measure is identically zero.
\end{example}

\begin{example}[Poisson process, cont'd]
	All jumps of a Poisson process with intensity $\lambda$ are of size $1$, and the expected number of jumps occurring over the unit time interval $[0, 1]$ is $\lambda$. Hence, its Lévy measure is given by $\levymes = \lambda \delta_1$, where $\delta_1$ denotes the Dirac measure at $1$.
\end{example}

\begin{example}[Compound Poisson process, cont'd]
	If $X$ is a compound Poisson process generated by a sequence of independent random variables with common distribution $\mu_Z$, then each jump of $X$ is conditionally distributed according to $\mu_Z$, and the expected number of jumps up to time $1$ is $\lambda$. As a result, the associated Lévy measure is $\levymes = \lambda \mu_Z$. 
\end{example}

\begin{example}[Stable processes, cont'd]
	If $\levy^\alpha$ is a rotationally invariant $\alpha$-stable process, then its Lévy measure takes the form $\levymes(d\jump) = C/\norm{\jump}^{n+\alpha}$ for some scaling constant $C > 0$. In contrast to compound Poisson processes, this Lévy measure explodes at $0$, implying that stable processes exhibit infinitely many jumps of small magnitude.
\end{example}

The random measure $\poisful$ is commonly referred to as a \emph{Poisson random measure} on $\R^+ \times (\R^\vdim - \{0\})$ with intensity $\levymes(d\jump)d\time$\footnote{In this section we have introduced Poisson random measures only through the specific construction arising from Lévy processes. More generally, Poisson random measures can be defined independently of Lévy processes and in much greater generality. We refer to \citep{App09} and \citep{Kal17} for more comprehensive treatments of this topic.}. Associated to $\poisful$, we also introduce the \emph{compensated Poisson random measure}
\begin{equation}
	\cpoisful = \poisful - \levymes(d\jump) d\time.
\end{equation} 
In contrast to $\poisful$, the compensated measure enjoys the property that $(\cpoisof*{\time}{B})_{\time \geq 0}$ is a martingale for every bounded Borel set $B \subset \R^\vdim - \{0\}$. This property makes $\cpoisful$ particularly well suited as an integrator for stochastic integrals, as discussed in the next subsection.

\subsection{Stochastic integration for jump processes}
\label{subsec:stochastic_integration}

To define stochastic integration of a stochastic process $(h(\time, \jump))_{\time \geq 0, \jump \in B}$ with respect to the measures $\poisful$ and $\cpoisful$, it is standard to require that $h(\cdot, \jump)$ be \emph{predictable}, meaning that $h(\time, \jump)$ is determined by $\filter(\time-)$\footnote{Rigorously, a stochastic process $(h(\time))_{\time \geq 0}$ is said to be \emph{predictable} if it is measurable with respect to the smallest $\sigma$-field generated by all adapted, left-continuous stochastic processes $(\psi(\time))_{\time \geq 0}$}. In particular, this condition is satisfied whenever the mapping $\time \mapsto h(\time, \jump)$ is \emph{left-continuous}. We then introduce the following class of stochastic processes:
\begin{equation}
	\begin{aligned}
	\Hlevyof{p}{T}{B}{\R^\vdim} = \bigg\{h \from [0, T] \times B \times \samples \to \R^\vdim : \,\, & h(\cdot, \jump) \text{ is } \text{predictable for all } \jump \in B\\
	&\text{and } \exof*{\int_0^{T} \int_B \norm{h(\timealt, \jump)}^p \levymes(d\jump) d\timealt} < \infty\bigg\}
	\end{aligned}
\end{equation}
for $p \in [1, 2]$, $T \geq 0$, and any Borel set $B \subset \R^\vdim - \{0\}$.

\begin{enumerate}
	\item \textbf{Integration with respect to $\poisful$:} For integration against the raw measure $\poisful$, the integral is a sum over the jumps of the process. Specifically, if $h(\cdot, z)$ is a predictable process, then 
	\begin{equation}
		\int_0^\time \int_B h(\timealt, \jump) \poisfulalt = \sum_{0 < \timealt \leq \time} h(\timealt, \jumpof\levyof{\timealt}) \one_B(\jumpof\levyof{\timealt}) \quad \text{for every } \time \leq T.
	\end{equation}
	
	\item \textbf{Integration with respect to $\cpoisful$:} There are two standard approaches to defining stochastic integrals with respect to the compensated measure $\cpoisful$. First, if $h \in \Hlevyof{1}{T}{B}{\R^\vdim}$, one can leverage the integration with respect to $\poisful$ to get
	\begin{align}
		M(\time)
			&\defeq \int_0^\time \int_B h(\timealt, \jump) \cpoisfulalt
		\notag\\
			&= \int_0^\time \int_B h(\timealt, \jump) \poisfulalt - \int_0^\time \int_B h(\timealt, \jump) \levymes(d\jump)d\timealt \quad \text{for every } \time \leq T.
	\end{align}
	This construction results to the process $M(\time)$ being a \emph{càdlàg martingale} \citep[Chapter II, p.62]{IW81}.
	Alternatively, if $h \in \Hlevyof{2}{T}{B}{\R^\vdim}$, then the stochastic integral $M(\time)$ can be defined as the $L^2$-limit of simple functions using the isometry
	\begin{equation}
		\exof*{ \norm*{\int_0^\time \int_B h(\timealt, \jump) \cpoisfulalt}^2 } = \exof*{ \int_0^\time \int_B \norm{h(\timealt, \jump)}^2 \levymes(d\jump) d\timealt }.
	\end{equation}
	In this case, $M(\time)$ is a \emph{càdlàg square-integrable martingale} \citep[Theorem 4.2.3]{App09}.
\end{enumerate}

Another class of stochastic processes that will be useful in the remainder of the paper is 
\begin{equation}
\txs
	\H2of{T}{\R^\vdim} = \setdef*{g \from [0, T] \times \samples \to \R^\vdim}{\text{$g$ is predictable and } \exof*{\int_0^\time \norm{g(\timealt)}^2 \dd\timealt} < \infty}, \quad T \geq 0.
\end{equation}
In particular, any $g \in \H2of{T}{\R^\vdim}$ can be integrated against a $\vdim$-dimensional Brownian motion $\brown$ as the $L^2$-limit of simple functions, using the standard Itô isometry formula
\begin{equation}
	\exof*{ \norm*{ \int_0^\time g(\timealt)^{\top} d\brownof{\timealt} }^2 } = \exof*{ \int_0^\time \norm{g(\timealt)}^2 d\timealt }.
\end{equation}
As in the case of integration with respect to $\cpoisful$, the resulting stochastic integral is a \emph{continuous square-integrable martingale}.

\subsection{The Lévy-Itô decomposition}
\label{subsec:LevyIto}

Any Lévy process $\levy$ can be uniquely represented as a sum of a continuous part and independent jump components \citep[Theorem 2.4.16]{App09}: for every constant $c > 0$,
\begin{align}
	\levyof*{\time} = \underbrace{b_\cjump \time + \diffmat \brownof*{\time}}_{\text{Continuous part}} + \underbrace{\int_0^\time \int_{\norm{z} < \cjump} \jump \cpoisfulalt}_{\text{Compensated small jumps}} + \underbrace{\int_0^\time \int_{\norm{z} \geq \cjump} \jump \poisfulalt}_{\text{Large jumps}},
\end{align}
where $b_c \in \R^\vdim$ is a deterministic drift, $\brown$ is a $d$-dimensional Brownian motion independent of $\pois$, and $\diffmat \in \R^{\vdim \times d}$ is a deterministic diffusion matrix. The constant $\cjump$ acts as a cutoff separating between bounded and unbounded jumps, and may be chosen arbitrarily. For example, if the jumps of $L$ are uniformly bounded by some $M > 0$, one may take $\cjump = M + 1$, in which case the large-jump component vanishes.

It is important to note that the small jumps must be integrated against the compensated measure $\cpoisful$, rather than the raw measure $\poisful$, to ensure that the stochastic integral is well defined even when $\int_{\norm{z} < \cjump} \norm{\jump} \levymes(d\jump) = \infty$.

In particular, if $\levy$ is an integrable centered Lévy process, then it admits the decomposition
\begin{align}
	\levyof{\time} &= - \parens*{\int_{\norm{\jump} \geq \cjump} \jump \levymes(d\jump)}\time + \diffmat \brownof*{\time} + \int_0^\time \int_{\norm{\jump} < \cjump} \jump \cpoisfulalt + \int_0^\time \int_{\norm{\jump} \geq \cjump} \jump \poisfulalt
	\notag\\
	&= \diffmat \brownof*{\time} + \int_0^\time \int_{\norm{\jump} < \cjump} \jump \cpoisfulalt + \int_0^\time \int_{\norm{\jump} \geq \cjump} \jump \cpoisfulalt
	\notag\\
	&= \itoof{\time} + \timed{\cadshort} + \timed{\cadlong}
\end{align}
where $\itoof{\time}$ is a continuous square-integrable martingale, $\timed{\cadshort}$ is a purely discontinuous square-integrable martingale with bounded jumps, and $\timed{\cadlong}$ is a purely discontinuous martingale with possibly unbounded jumps (see \cref{subsec:stochastic_integration}).

With this decomposition in mind, the intensity of each component of $\levy$ can then be quantified as follows:
\begin{enumerate}
[label=\upshape(\itshape\alph*\hspace*{.5pt}\upshape)]
\item \define{Diffusion intensity:} 
\begin{equation}
	\contbound^2 = \exof*{\norm{\itoof{1}}^2} = \norm{\diffmat}_F^2 = \tr[\diffmat^\top \diffmat].
\end{equation}
	
\item \define{Short jump intensity:}
\begin{equation}
	\shortbound^2 = \exof*{\norm{\cadshort(1)}^2} =  \int_{\norm{\jump}<\cjump} \norm{\jump}^2 \levymes(d\jump).
\end{equation}

\item \define{Long jump intensity:}
\begin{equation}
	\longbound^\momexp = \int_{\norm{\jump} \geq \cjump} \norm{\jump}^\momexp \levymes(d\jump) \geq C_\momexp \exof*{\norm{\cadlong(1)}^\momexp}, \quad \momexp \geq 0.
\end{equation}
\end{enumerate}

Owing to the relation between $\levy$ and its Lévy measure, the long jump intensity $\longbound^\momexp$ is finite if and only if $\levy(1)$ admits finite moments of order $\momexp$. Moreover, in general $\longbound^\momexp$ does not coincide with $\exof*{\norm{\cadlong(1)}^\momexp}$ unless $\momexp = 2$. We refer the interested reader to \cite{Nov75a} for a detailed discussion of this discrepancy and for estimates on the constant $C_\momexp$ relating the two quantities.

\subsection{Asymptotic behavior of discontinuous martingales}

In this subsection, we present two classical results from stochastic analysis---accordingly the \emph{law of large numbers} and the \emph{law of iterated logarithm}---adapted to the setting of discontinuous martingales. Unlike the continuous case, these results are less widely known for processes with jumps.

We begin with a strong law of large numbers for stochastic integrals with respect to compensated Poisson random measures. Throughout, we let $\poisful$ denote the random Poisson measure associated with a Lévy process $\levy$ having Lévy measure $\levymes$ (see \cref{subsec:random_measures}).

\begin{lemma}[Strong law of large numbers]
	Fix some index $p \in [1, 2]$ and let $M(\time) = \int_0^\time \int_B H(\timealt, \jump) \cpoisfulalt$ with $H \in \Hlevyof{p}{T}{B}{\R^\vdim}$ for all $T \geq 0$. If there is a predictable increasing process $A$ such that $\lim A(\time) = \infty$ \as and 
	\begin{equation}
		\int_0^\infty \int_B \dfrac{ \norm{H(\timealt, \jump)}^2 \wedge \norm{H(\timealt, \jump)}^p }{\bracks*{1 + A(\timealt)}^p} \levymes(d\jump) d\timealt < \infty \quad \as,
	\end{equation}
	then $M(\time) / A(\time) \to 0$ \as.
\label{lemma:SLLN}
\end{lemma}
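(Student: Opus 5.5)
We reduce the statement to a strong law of large numbers for $L^{\momexp}$-bounded discrete martingales, via a time discretization. Write $M(\time) = \sum_{\iCoord=1}^{\vdim} M_{\iCoord}(\time)$ coordinatewise, so it suffices to treat real-valued $H$. Split each integrand as $H = H\oneof{\abs{H}\le 1+A} + H\oneof{\abs{H} > 1+A}$, with the indicator evaluated at $(\timealt,\jump)$; both pieces remain predictable because $A$ is predictable. Call the corresponding integrals $M^{(1)}$ and $M^{(2)}$. The point of the split is that on $\{\abs{H}\le 1+A\}$ we have $\abs{H}^{2}/(1+A)^{2} \le \abs{H}^{p}/(1+A)^{p}$ and $\abs{H}^2\wedge\abs{H}^p$ is comparable to either power. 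On the complement $\abs{H}/(1+A) \le \abs{H}^{p}/(1+A)^{p}$ holds, with the analogous comparison. So the hypothesis controls $\int\!\!\int \abs{H}^{2}(1+A)^{-2}\oneof{\cdot}\,d\levymes\,d\timealt$ for the first piece and $\int\!\!\int \abs{H}(1+A)^{-1}\oneof{\cdot}\,d\levymes\,d\timealt$ for the second. The case $\abs{H}\le1$ versus $\abs{H}>1$ needs a small bookkeeping step, since $\abs{H}^2\wedge\abs{H}^p$ switches branch at $1$. I expect to handle it by a further split at $\abs{H}=1$, so that each of the resulting four regions is dominated by one of the two quantities above, using $1+A\ge1$.

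For the square-integrable piece, consider the transformed martingale $N^{(1)}(\time) = \int_0^\time\int_B H\oneof{\abs{H}\le1+A}(1+A(\timealt))^{-1}\,\cpoisfulalt$. Its predictable quadratic variation $\meyerof{N^{(1)}}_\infty$ is bounded by the finite integral above. Hence $N^{(1)}$ converges \as by the martingale convergence theorem for locally square-integrable martingales on $\{\meyerof{N^{(1)}}_\infty<\infty\}$, which requires localizing with stopping times since finiteness holds only \as. Then $M^{(1)} = \int_0^\time (1+A)\,dN^{(1)}$, and the stochastic Kronecker lemma (Liptser--Shiryaev) gives $M^{(1)}(\time)/(1+A(\time)) \to 0$ \as, using that $A\uparrow\infty$ is predictable and increasing.

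For the large-jump piece, I would use the raw measure. Write $M^{(2)} = \int\!\!\int H\oneof{\cdot}\,d\pois - \int\!\!\int H\oneof{\cdot}\,d\levymes\,d\timealt$. Since $\int\!\!\int \abs{H}(1+A)^{-1}\oneof{\cdot}\,d\levymes\,d\timealt<\infty$ \as, Lenglart's domination applied to the increasing process $\int\!\!\int \abs{H}(1+A)^{-1}\oneof{\cdot}\,d\pois$ shows that this process also has finite limit \as. Both $\int(1+A)^{-1}\,d(\text{each term})$ therefore converge absolutely. The deterministic Kronecker lemma, applied pathwise to these finite-variation integrals, then gives $M^{(2)}(\time)/(1+A(\time))\to0$.

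Combining the two pieces and using $A\to\infty$ gives $M(\time)/A(\time)\to0$ \as. The main obstacle is the localization needed to turn \as finiteness of the hypothesis into genuine martingale and Kronecker arguments. I would handle it with stopping times $\tau_k = \inf\{\time : \text{hypothesis integral up to } \time > k\}$ and the fact that $\bigcup_k\{\tau_k=\infty\}$ has full measure.
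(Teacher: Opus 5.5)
Your argument is correct, but it takes a genuinely different route from the paper's.

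The paper's proof is a direct reduction to \citep[Lemma~3]{LM89}. It notes that the jumps of $M$ are $H(s,\Delta L(s))$, so $W_p(t)=\sum_{s\le t}\|\Delta M(s)\|^2\wedge\|\Delta M(s)\|^p$ has compensator $\overline{W_p}(t)=\int_0^t\int_B\|H\|^2\wedge\|H\|^p\,\nu(dz)\,ds$. The hypothesis is then verbatim the condition $\int_0^\infty d\overline{W_p}/(1+A)^p<\infty$ of that lemma, which is used as a black box.

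You instead reprove the result. Truncating at the level $1+A$ shows that the stated integrand dominates, pointwise, the Liptser-type quantity $\bigl(\|H\|/(1+A)\bigr)^2\wedge\bigl(\|H\|/(1+A)\bigr)$; this is exactly where $1\le p\le 2$ and $1+A\ge 1$ are used. The bounded part then follows from convergence of $N^{(1)}$ on $\{\langle N^{(1)}\rangle_\infty<\infty\}$ plus the stochastic Kronecker lemma. The large part follows from the domination $\{\tilde X_\infty<\infty\}\subseteq\{X_\infty<\infty\}$ for an increasing process and its compensator, plus a pathwise Kronecker argument. The citation buys brevity. Your route is self-contained, uses only standard tools, and shows that the lemma already holds under the weaker Liptser-type condition.

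Two things to tidy up. First, your opening sentence announces a reduction to discrete-time martingales via time discretization that never happens; everything is done in continuous time, so drop it. Second, record why the pieces are legitimate integrals:
\begin{itemize}
\item $H\mathbf 1\{\|H\|>1+A\}\in\mathcal H^1$ because $\|H\|\le\|H\|^p$ there, which justifies writing $M^{(2)}$ as raw integral minus compensator;
\item the integrand $K$ of $N^{(1)}$ satisfies $\|K\|^2\le\|H\|^2\wedge\|H\|^p\le\|H\|^p$, so $N^{(1)}$ is a genuine square-integrable martingale on every $[0,T]$.
\end{itemize}
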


\begin{proof}
	This is a consequence of \citep[Lemma 3]{LM89}. Indeed, $M$ is a centered purely discontinuous martingale since $H \in \Hlevyof{p}{T}{B}{\R^\vdim}$ and, following the notations from \citep{LM89}, we can introduce the auxiliary process $W_p$ given by 
	\begin{equation}
		W_p(\time) = \sum_{0 < \timealt \leq \time} \norm{\jumpof M(\timealt)}^2 \wedge \norm{\jumpof M(\timealt)}^p = \int_0^\time \int_B \norm{H(\timealt, \jump)}^2 \wedge \norm{H(\timealt, \jump)}^p \poisfulalt.
	\end{equation}
	In particular, its integrand satisfies 
	\begin{equation}
		\exof*{ \int_0^\time \int_B \norm{H(\timealt, \jump)}^2 \wedge \norm{H(\timealt, \jump)}^p \levymes(d\jump) d\timealt} \leq \exof*{ \int_0^\time \int_B \norm{H(\timealt, \jump)}^p \levymes(d\jump) d\timealt} < \infty
	\end{equation}
	since $H \in \Hlevyof{p}{T}{B}{\R^\vdim}$, and thus 
	\begin{equation}
		\overline{W_p}(\time) = \int_0^\time \int_B \norm{H(\timealt, \jump)}^2 \wedge \norm{H(\timealt, \jump)}^p \levymes(d\jump) d\timealt
	\end{equation}
	is the compensator of $W_p$, \ie the unique predictable càdlàg process starting from $0$ with finite variation over bounded intervals and such that $W_p - \overline{W_p}$ is a (local) martingale. According to \citep[Lemma 3]{LM89}, the strong law of large number $M(\time) / A(\time) \to 0$ therefore holds true whenever 
	\begin{equation}
		\infty > \int_0^\infty \dfrac{d\overline{W_p}(\timealt)}{\bracks*{1 + A(\timealt)}^p} = \int_0^\infty \int_B \dfrac{ \norm{H(\timealt, \jump)}^2 \wedge \norm{H(\timealt, \jump)}^p }{\bracks*{1 + A(\timealt)}^p} \levymes(d\jump) d\timealt, 
	\end{equation}
	which is exactly the condition stated in the lemma.
\end{proof}

To state the law of iterated logarithm for discontinuous martingales, we first need to introduce another essential object from stochastic analysis: the \emph{predictable quadratic variation}. For any càdlàg square-integrable local martingale $M$, its \emph{predictable quadratic variation} (also sometimes known as \emph{Meyer's angle braket}) is the unique increasing predictable process $\meyerof{M}$ such that $M^2 - \meyerof{M}$ is a local martingale. In particular, when $M$ arises as a stochastic integral with respect to either a Brownian motion or a compensated Poisson measure, the following identities hold:
\begin{subequations}
\begin{align}
	\meyerof*{ \int_0^\time g(\timealt)^{\top} d\brownof{\timealt} } &= \int_0^\time \norm{g(\timealt)}^2 d\timealt, \quad g \in \H2of{T} {\R^\vdim};\\
	\meyerof*{ \int_0^\time \int_{B} h(\timealt, \jump) \cpoisof{d\timealt}{d\jump} } &= \int_0^\time \int_B \norm{h(\timealt, \jump)}^2 \levymes(d\jump)d\timealt, \quad h \in \Hlevyof{2}{T}{B}{\R^\vdim}.
\end{align}
\end{subequations}
We can now formulate the law of iterated logarithm, which characterizes the asymptotic fluctuations of a martingale in terms of its predictable quadratic variation:

\begin{lemma}[Law of iterated logarithm, {\citep[Theorem 3]{Lep78}}]
	Let $M$ be a square-integrable local martingale and assume that there exists a constant $C \geq 0$ such that $\abs{\jumpof M(\time)} \leq C$ \as for every $\time \geq 0$. Then, the trajectories of $M$ satisfy
	\begin{equation}
		\limsup_{\time \to \infty} \dfrac{\abs{M(\time)}}{\sqrt{ 2\meyerof{M}(\time) \log\log \meyerof{M}(\time) }} = 1 \quad \as
	\end{equation}
	on the event $\braces*{\lim_{\time \to \infty} \meyerof{M}(\time) = \infty}$.
\label{lemma:LIL}
\end{lemma}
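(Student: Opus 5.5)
We would follow the classical route for the law of the iterated logarithm: an exponential supermartingale gives the upper half, and a conditional Borel--Cantelli argument along a geometric grid of ``intrinsic times'' gives the lower half. Throughout we work on the event $\{\meyerof{M}(\time)\to\infty\}$. We use the stopping times $\tau_k = \inf\{\time : \meyerof{M}(\time) \geq \theta^{k}\}$ for a parameter $\theta>1$, which play the role of a deterministic time change; in the absence of a Dambis--Dubins--Schwarz embedding for jump processes, this is the substitute. In the Lévy-driven applications of this paper $\meyerof{M}$ is continuous (the integrators are quasi-left-continuous), so $\meyerof{M}(\tau_k)=\theta^{k}$. In general we would only use $\meyerof{M}(\tau_k)\geq\theta^k$ and control its overshoot by the bounded jumps of $\meyerof{M}$.

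\emph{Upper bound.} Since $\abs{\jumpof M}\leq C$, the stochastic exponential argument (Itô's formula for $e^{\lambda M}$ with jumps, then the elementary bound $e^{x}-1-x\leq \frac{x^{2}}{2}e^{\abs{x}}$) shows that
\begin{equation*}
Z_\lambda(\time) = \exp\parens*{\lambda M(\time) - \psi_C(\lambda)\meyerof{M}(\time)},
\qquad
\psi_C(\lambda) = \frac{e^{\lambda C}-1-\lambda C}{C^{2}} = \frac{\lambda^{2}}{2}\parens{1+\bigoh(\lambda C)},
\end{equation*}
is a positive supermartingale. We then proceed in three steps.
\begin{enumerate}
\item Choose $\lambda_k = \sqrt{2(1+\eps)\log\log\theta^{k}/\theta^{k}}\to0$.
\item Apply Doob's maximal inequality to $Z_{\lambda_k}$ on $[\tau_{k-1},\tau_k]$ and sum over $k$. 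This gives $\sup_{\time\leq\tau_k} M(\time) \leq (1+\eps')\sqrt{2\theta^{k}\log\log\theta^{k}}$ eventually, almost surely.
\item Interpolate between $\tau_{k-1}$ and $\tau_k$, then let $\theta\downarrow1$ and $\eps\downarrow0$.
\end{enumerate}
This yields $\limsup \leq 1$. Applying the same argument to $-M$ handles $\abs{M}$. The bounded-jump hypothesis enters exactly through $\psi_C(\lambda_k)\sim\lambda_k^{2}/2$.

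\emph{Lower bound.} This is the step we expect to be the main obstacle. Take $\theta$ large and consider the increments $D_k = M(\tau_k)-M(\tau_{k-1})$, which have conditional ``variance'' $\approx \theta^{k}(1-\theta^{-1})$. We need a conditional lower exponential tail bound
\begin{equation*}
\probof{D_k \geq (1-\eps)\sqrt{2\theta^{k}(1-\theta^{-1})\log\log\theta^{k}} \given \filter(\tau_{k-1})} \gtrsim k^{-(1-\eps'')}.
\end{equation*}
We would first try to obtain it by an exponential change of measure: use $Z_{\lambda}/\exof{Z_\lambda}$ with the complementary lower estimate $\psi_C(\lambda)\geq \frac{\lambda^{2}}{2}(1-\bigoh(\lambda C))$, and take $\lambda_k$ small enough that the jumps are negligible. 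Under the tilted measure, $D_k$ concentrates near $\lambda_k\theta^k(1-\theta^{-1})$ by a second-moment bound, which gives the lower tail estimate after tilting back. Lévy's conditional Borel--Cantelli lemma then shows that these events occur infinitely often. Combined with the upper bound applied to $M(\tau_{k-1})$, which is of order $\theta^{-1/2}$ relative to the main term, this gives $\limsup\geq 1-\bigoh(\theta^{-1/2})-\eps$. Letting $\theta\to\infty$ completes the argument.

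Since this is precisely \citep[Theorem~3]{Lep78}, in the paper we would cite it and only check that the hypotheses hold. Those hypotheses are square-integrability, bounded jumps, and $\meyerof{M}\to\infty$.
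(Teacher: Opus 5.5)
Your treatment matches the paper's: the paper states this lemma without proof and imports it directly from \citep[Theorem~3]{Lep78}, which is exactly what your closing paragraph proposes. Your accompanying sketch is the classical argument behind that theorem and is sound as an outline: the exponential supermartingale with $\psi_C(\lambda)\sim\lambda^2/2$ gives the upper half, and exponential tilting plus L\'evy's conditional Borel--Cantelli lemma along the $\tau_k$ gives the lower half. In a full write-up, two points would need care: you should tilt with the exact exponential local martingale rather than the supermartingale $Z_\lambda$, and you must handle the event $\{\tau_k=\infty\}$ in the conditional lower-tail bound (e.g.\ by continuing $M$ with an independent Brownian motion after $\langle M\rangle_\infty$).
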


\subsection{An Itô's formula for smooth convex function}
\label{subsec:weakIto}

Analogous to the chain rule in classical differential calculus, Itô's formula allows one to describe the evolution of a composition $\psi \circ \dstate$, where $\dstate$ is some $\vdim$-dimensional stochastic process and $\psi \from \R^\vdim \to \R$ is a deterministic function. Because of the way stochasticity is handled in integration with respect to martingales, Itô's formula typically includes an additional second-order term---commonly referred to as \emph{Itô's correction}---which requires $\psi$ to be at least twice continuously differentiable for the formula to hold.

However, in many applications, such as the analysis of stochastic mirror flows, the function $\psi$ of interest does not satisfy this regularity condition. The goal of this subsection is therefore to introduce a weak version of Itô's formula that apply to a more general class of functions: those that are only convex and $\lips$-smooth.

Recall that a function $\psi \from \R^\vdim \to \R$ is said to be \emph{$\lips$-smooth} if it satisfies
\begin{equation}
	\norm{\nabla \psi(\dpoint) - \nabla \psi(\dpoint')} \leq \lips \dnorm{\dpoint - \dpoint'} \quad \text{for all } \dpoint, \dpoint' \in \R^\vdim.
\end{equation}
In particular, $\lips$-smooth functions that are also convex enjoy the following property:

\begin{lemma}
	If $\psi$ is an $\lips$-smooth convex function, then $\psi$ is almost everywhere twice differentiable with Hessian $\hess \psi$ and 
	\begin{equation}
		0 \leq \hess \psi \leq L \eye \quad \text{Lebesgue-a.e.}
	\end{equation}
\label{lemma:L_smooth_convex}
\end{lemma}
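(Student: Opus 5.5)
The plan is to deduce both claims from Rademacher's theorem applied to $\nabla\psi$, together with Alexandrov's theorem for convex functions.

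\emph{Step 1: almost-everywhere differentiability of $\nabla\psi$.} Since $\psi$ is $\lips$-smooth, the map $\nabla\psi\from\R^\vdim\to\R^\vdim$ is Lipschitz. All norms on $\R^\vdim$ are equivalent, so it is Lipschitz for the Euclidean norm as well, with some constant. Rademacher's theorem then says that $\nabla\psi$ is differentiable Lebesgue-a.e. At every such point $\dpoint$, the function $\psi$ is twice differentiable with $\hess\psi(\dpoint) = D(\nabla\psi)(\dpoint)$. The symmetry of this matrix follows either from Alexandrov's theorem (convex functions admit a symmetric a.e. second-order Taylor expansion, and the two notions agree where both exist) or from the a.e. equality of mixed distributional derivatives.

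\emph{Step 2: lower bound $\hess\psi \geq 0$.} Fix a point $\dpoint$ of twice differentiability and a direction $u$. By convexity, the monotonicity of the gradient gives $\braket{\nabla\psi(\dpoint+tu)-\nabla\psi(\dpoint)}{u}\geq 0$ for every $t>0$. Dividing by $t$ and letting $t\to0^{+}$ yields $u^\top\hess\psi(\dpoint)u\geq0$.

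\emph{Step 3: upper bound $\hess\psi \leq \lips\eye$.} By Cauchy--Schwarz combined with the Lipschitz bound, $\braket{\nabla\psi(\dpoint+tu)-\nabla\psi(\dpoint)}{u}\leq \lips t\,\dnorm{u}\norm{u}$. Taking the same limit gives $u^\top\hess\psi(\dpoint)u\leq\lips\,\dnorm{u}\norm{u}$. In the Euclidean identification, where the statement $\leq \lips\eye$ is implicitly read, this is exactly $u^\top\hess\psi\, u\leq\lips\|u\|_2^2$. For a general norm, the constant $\lips$ must absorb the norm-equivalence factors.

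\emph{Main obstacle.} The only delicate point is this bookkeeping between the primal and dual norms in the definition of $\lips$-smoothness versus the Euclidean order $\leq \lips\eye$. I would fix the convention that the matrix inequality is understood with respect to the Euclidean structure (or the induced pairing), so that the bound in Step 3 reads exactly as stated. Everything else is standard.
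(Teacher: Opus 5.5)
Your argument is correct. The paper does not prove this lemma itself; it calls it standard and defers to \citep[Lemma~C.3]{MerSta18}. Your route is therefore a legitimate self-contained replacement for that citation: apply Rademacher's theorem to the Lipschitz map $\nabla\psi$, then take limits in the difference quotients $\braket{\nabla\psi(\dpoint+tu)-\nabla\psi(\dpoint)}{u}/t$, using monotonicity for the lower bound and the Lipschitz estimate for the upper bound.

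Two small refinements.

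\emph{Symmetry.} You need neither Alexandrov's theorem nor distributional derivatives. The classical symmetry theorem for second derivatives, in the form given by Dieudonné or Cartan, only requires $\psi$ to be differentiable near $\dpoint$ and $\nabla\psi$ to be differentiable at $\dpoint$. It then gives that $D(\nabla\psi)(\dpoint)$ is symmetric. In any case, the quadratic-form bounds only involve the symmetric part.

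\emph{Norms in your Step 3.} Under the paper's conventions, $\psi$ is defined on the dual space and $\nabla\psi$ is primal-valued. $\lips$-smoothness then reads $\norm{\nabla\psi(\dpoint)-\nabla\psi(\dpoint')}\leq\lips\dnorm{\dpoint-\dpoint'}$. Pairing this with $u$ yields $u^{\top}\hess\psi(\dpoint)\,u\leq\lips\dnorm{u}^{2}$, not $\lips\dnorm{u}\norm{u}$. This norm-intrinsic quadratic-form bound is what the proof of \cref{thm:weakIto} actually needs. Under the Euclidean identification it is exactly the matrix inequality $\hess\psi\leq\lips\eye$, so no norm-equivalence constants have to be absorbed into $\lips$.
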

\begin{proof}
	The proof of this result is standard, see \eg \citep[Lemma C.3]{MerSta18}.
\end{proof}

A weak Itô's formula for smooth convex functions was previously established by \citet{MerSta18} for Itô diffusions, that is, for processes of the form $\dstateof{\time} = \dstateof{0} + \int_0^\time f(\timealt) d\timealt + \int_0^\time g(\timealt) d\brownof{\timealt}$ where $\brown$ is a standard Brownian motion. In the following theorem, we extend their result to the broader class of  \emph{Lévy-type processes}, which are characterized as follows:

Let $\levy$ be a $k$-dimensional Lévy process with Lévy measure $\levymes$, and let $\poisful$ be the Poisson random measure controlling its jumps.  Then, a \emph{Lévy-type process} $\dstate$ is a $\vdim$-dimensional càdlàg stochastic process of the form
\begin{align}
\dstateof{\time}
	= \dstateof{0}
		&+ \underbrace{\int_0^\time f(\timealt) \dd\timealt + \int_0^\time g(\timealt) \dd\brownof{\timealt}}_{\text{Continuous part } \dstate^c(\time)}
		\notag\\
		&+ \underbrace{\int_0^\time \int_{\cjumpinf} H(\timealt, \jump) \cpoisfulalt + \int_0^\time \int_{\cjumpsup} G(\timealt, \jump) \poisfulalt}_{\text{Discontinuous part } \dstate^d(\time)}
\end{align}
for some constant $c > 0$, where $\dstateof{0} < \infty$ \as, $\brown$ is a standard $d$-dimensional Brownian motion independent of $\pois$, and, for every $T \geq 0$,
\begin{enumerate}
\setlength{\itemindent}{.3in}
		\item $f \from \R^+ \times \samples \to \R^\vdim$ is such that $\abs{f_i}^{1/2} \in \H2of{T}{\R}$;
		\item $g \from \R^+ \times \samples \to \R^{\vdim \times d}$ is such that $g_i \in \H2of{T}{\R^d}$;
		\item $H \from \R^+ \times \braces{\cjumpinf} \times \samples \to \R^\vdim$ is such that $H \in \Hlevyof{2}{T}{\braces{\cjumpinf}}{\R^\vdim}$;
		\item $G \from \R^+ \times \braces{\cjumpsup} \times \samples \to \R^\vdim$ is predictable.
\end{enumerate}

By virtue of the Lévy-Itô decomposition (\cf \cref{subsec:LevyIto}), Lévy-type processes therefore provide a general and flexible framework to construct stochastic integrals with respect to a Lévy process.

This general representation of Lévy-type processes sets the stage for extending Itô’s formula: we can now formulate a weak version of the formula that applies to convex $\lips$-smooth functions, encompassing both the continuous and discontinuous components of $\dstate$:

\begin{theorem}[Weak Itô's formula for smooth convex functions]
	Let $\dstate$ be a $\vdim$-dimensional Lévy-type process. If $\psi \from \R^\vdim \to \R$ is an $L$-smooth convex function, then 
	\begin{align}
		\psi(\dstateof{\time}) - \psi(\dstateof{0}) &\leq \int_0^\time \braket*{\nabla \psi(\dstateof{\timealt-})}{d\dstate^c(\timealt)}
		\notag\\
		&+ \int_0^\time \int_{\cjumpinf} \bracks*{\psi(\dstateof{\timealt-} + H(\timealt, \jump)) - \psi(\dstateof{\timealt-})} \cpoisfulalt
		\notag\\
		&+ \int_0^\time \int_{\cjumpsup} \bracks*{\psi(\dstateof{\timealt-} + G(\timealt, \jump)) - \psi(\dstateof{\timealt-})} \poisfulalt
		\notag\\
		&+ \dfrac{L}{2} \parens*{ \int_0^\time \norm{g(\timealt)}^2 d\timealt + \int_0^\time \int_{\cjumpinf} \norm{H(\timealt, \jump)}^2 \levymes(d\jump) d\timealt },
	\end{align}
	where $\dstateof{\timealt-}$ denotes the left limit of $\dstate$ at $\timealt$ \footnote{As discussed in \cref{subsec:stochastic_integration}, stochastic integrals with respect to a Poisson random measure require the integrands to be \emph{predictable}. Since $\dstate$ is right-continuous, its left limit $\dstateof{\time-}$ defines a left-continuous process which is predictable by definition, and thus all stochastic integrals appearing in \cref{thm:weakIto} are well-defined in this setting.}.
\label{thm:weakIto}
\end{theorem}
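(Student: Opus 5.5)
We will prove the weak Itô formula by mollification: approximate $\psi$ by smooth convex functions, apply the classical Itô formula for Lévy-type processes to each approximant, and pass to the limit.

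\textbf{Mollification.} Let $\rho_\eps$ be a standard smooth mollifier and set $\psi_\eps = \psi * \rho_\eps$. Convolution with a nonnegative kernel preserves convexity and $\lips$-smoothness, so $\psi_\eps$ is $C^\infty$, convex and $\lips$-smooth. Consequently $0 \leq \hess\psi_\eps \leq \lips\eye$ everywhere (consistent with \cref{lemma:L_smooth_convex}). Moreover $\psi_\eps \to \psi$ and $\nabla\psi_\eps \to \nabla\psi$ locally uniformly, with $\nabla\psi_\eps$ uniformly Lipschitz.

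\textbf{Classical Itô formula for $\psi_\eps$.} Applying the standard Itô formula for Lévy-type processes \citep[Theorem 4.4.7]{App09} gives
\begin{align*}
\psi_\eps(\dstateof{\time}) - \psi_\eps(\dstateof{0})
	&= \int_0^\time \braket{\nabla\psi_\eps(\dstateof{\timealt-})}{d\dstate^c(\timealt)}
	+ \tfrac12\int_0^\time \tr\bracks{g^\top \hess\psi_\eps\, g}\,d\timealt
	\\
	&\quad+ \int_0^\time\int_{\cjumpinf}\bracks{\psi_\eps(\dstateof{\timealt-}+H)-\psi_\eps(\dstateof{\timealt-})}\cpoisfulalt
	\\
	&\quad+ \int_0^\time\int_{\cjumpinf}\bracks{\psi_\eps(\dstateof{\timealt-}+H)-\psi_\eps(\dstateof{\timealt-})-\braket{\nabla\psi_\eps(\dstateof{\timealt-})}{H}}\levymes(d\jump)\,d\timealt
	\\
	&\quad+ \int_0^\time\int_{\cjumpsup}\bracks{\psi_\eps(\dstateof{\timealt-}+G)-\psi_\eps(\dstateof{\timealt-})}\poisfulalt .
\end{align*}
The two second-order terms are bounded using smoothness. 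First, $\hess\psi_\eps \leq \lips\eye$ gives $\tr[g^\top\hess\psi_\eps g] \leq \lips\norm{g}^2$. Second, the descent lemma gives that the integrand of the compensator term is at most $\tfrac{\lips}{2}\norm{H}^2$. Together these yield exactly the inequality claimed in \cref{thm:weakIto}, with $\psi_\eps$ in place of $\psi$. Convexity enters only in guaranteeing that these correction terms are nonnegative, and that is why the result holds as an inequality rather than an equality.

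\textbf{Limit $\eps\to0$.} The left-hand side converges pointwise. For the $dt$-part of the continuous integral, $\nabla\psi_\eps(\dstateof{\timealt-})$ converges uniformly on the (a.s. bounded) range of the path on $[0,\time]$. This path is compact since a càdlàg path is bounded on compact intervals, so dominated convergence applies.

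\textbf{Main obstacle: the stochastic-integral terms.} These converge only in probability, not pathwise.
\begin{itemize}
\item \emph{Brownian integral.} I will localize with stopping times $\tau_R = \inf\{\timealt : \dnorm{\dstateof{\timealt}} > R\}$. Before $\tau_R$ the integrands are uniformly bounded and converge uniformly, so Itô's isometry gives $L^2$ convergence, which extends to convergence in probability.
\item \emph{Compensated small-jump integral.} The integrand $\psi_\eps(\dstateof{\timealt-}+H)-\psi_\eps(\dstateof{\timealt-})$ is bounded by $C_R\norm{H} + \tfrac{\lips}{2}\norm{H}^2$ on $\{\timealt\leq\tau_R\}$, since the gradient is bounded on the ball of radius $R$. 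It converges pointwise. Because $H\in\Hlevyof{2}{\time}{\braces{\cjumpinf}}{\R^\vdim}$ and $\norm{H}^2$-type bounds need control near the origin, I would use the bound $\abs{\psi_\eps(y+h)-\psi_\eps(y)} \leq C_R\norm{h}+\tfrac{\lips}{2}\norm{h}^2$. The difference between the $\eps$-approximant and the limit is at most $\sup_{B_R}\dnorm{\nabla\psi_\eps-\nabla\psi}\,\norm{H}$, so the $L^2$ isometry with dominated convergence gives convergence. A further truncation argument may be needed if $\norm{H}$ is large, because $\dstateof{\timealt-}+H$ can leave the ball.
\item \emph{Large-jump integral.} This is a finite sum a.s., so it converges pathwise.
\end{itemize}

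Passing to an a.s.-convergent subsequence and letting $R\to\infty$ then concludes the proof.
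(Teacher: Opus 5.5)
Your proposal is essentially the paper's argument. You mollify, apply Applebaum's Itô formula, and bound the Hessian and compensator terms by $\tfrac{L}{2}\norm{g}^{2}$ and $\tfrac{L}{2}\norm{H}^{2}$ using only $L$-smoothness; the inequality comes from these upper bounds, not from convexity or nonnegativity of the correction terms. You then pass to an a.s.\ convergent subsequence. The one structural difference is that you put the large jumps directly into the Itô formula. This is legitimate, since Applebaum's theorem already contains the large-jump integral against $N$ and the resulting finite sum converges pathwise; the paper instead first proves the case $G\equiv 0$ and then interlaces the finitely many large jumps by hand.

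The extra truncation you anticipate is unnecessary. Because $\nabla\psi$ is $L$-Lipschitz, $\norm{\nabla\psi_\eps-\nabla\psi}\le L\eps$ on all of $\R^{\vdim}$, not merely locally. So the differences of all integrands are bounded by $L\eps\norm{f}$, $L\eps\norm{g}$ and $L\eps\norm{H}$ regardless of where $Y(s-)+H$ lands, which is exactly how the paper obtains $L^{1}$ convergence of the right-hand side with no localization.
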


We first prove this theorem under the assumption $G \equiv 0$, that is, when the jumps of $\dstate$ are of bounded magnitude. This condition is convenient because, in this case, the discontinuous component of $\dstate$ reduces to a square-integrable martingale, which significantly simplifies the setting. 

\begin{proof}[Proof of \cref{thm:weakIto} (case $G \equiv 0$)]
	Let $\rho \from \R^\vdim \to \R$ be the unit mollifier given by
	\begin{equation}
		\rho(u) = \begin{cases}
		c \exp\parens*{- \dfrac{1}{1 - \norm{u}^2}} & \text{if } \norm{u} \leq 1,\\
		0 & \text{otherwise};
		\end{cases}
	\end{equation}
	where $c$ is a positive normalizing constant such that $\int \rho(u) du = 1$. For every $\eps > 0$, we also define
	\begin{equation}
		\rho_\eps(w) = \eps^{-\vdim} \rho(w/\eps)
	\end{equation}
	and
	\begin{equation}
		\psi_\eps(\dpoint) = (\psi \ast \rho_\eps)(\dpoint) = \int \psi(\dpoint - w)\rho_\eps(w) dw.
	\end{equation}
	From classical smoothing arguments, we note that $\psi_\eps$ is twice continuously differentiable, and so we can apply the classical Itô's formula for Lévy-type stochastic integrals (\cf \citep[Theorem 4.4.7]{App09}) to get
	\begin{subequations}
	\begin{align}
		\psi_\eps(\dstateof{\time}) - \psi_\eps(\dstateof{0})
		&= \int_0^\time \braket*{\nabla \psi_\eps(\dstate)}{d\dstate^c}\label{eq:weak_ito_d1}
		\\
		&+ \int_0^\time \int_{\cjumpinf} \bracks*{ \psi_\eps(\dstate + H) - \psi_\eps(\dstate) } \cpoisfulalt
		\label{eq:weak_ito_d2}\\
		&+ \dfrac{1}{2} \int_0^\time \tr\bracks*{ \hess\psi_\eps(\dstate) gg^{\top} } d\timealt
		\label{eq:weak_ito_d3}\\
		&+ \int_0^\time \int_{\cjumpinf} \bracks*{ \psi_\eps(\dstate + H) - \psi_\eps(\dstate) - \braket*{ \nabla \psi_\eps(\dstate) }{H} } \levymes(d\jump) d\timealt,
		\label{eq:weak_ito_d4}
	\end{align}
	\end{subequations}
	where we have suppressed the dependence on time for notational convenience.
	To bound \cref{eq:weak_ito_d3}, note that the definition of $\psi_\eps$ and \cref{lemma:L_smooth_convex} can be used to obtain
	\begin{equation}
		\tr\bracks*{\hess \psi_\eps(y) zz^{\top}} = \int_{\R^\vdim} \tr\bracks*{ \hess\psi (w) zz^{\top}} \rho_\eps(y - w)dw \leq L \norm{z}^2
	\end{equation}
	for every $y, z \in \R^{\vdim}$, which leads to 
	\begin{equation}
		\text{\eqref{eq:weak_ito_d3}} \leq \dfrac{L}{2} \int_0^\time \norm{g(\timealt)}^2d\timealt.
	\end{equation}
	Furthermore, by the classical descent lemma for $L$-smooth functions (see \eg \citep[Theorem 2.1.5]{Nes04}), we get
	\begin{equation}
		\psi(y+h) - \psi(y) - \braket*{\nabla \psi(y)}{h} \leq \dfrac{L}{2} \norm{h}^2
	\end{equation}
	for all $y, h \in \R^\vdim$, so that
	\begin{equation}
		\text{\eqref{eq:weak_ito_d4}} \leq \dfrac{L}{2} \int_0^\time \int_{\cjumpinf} \norm{H(\timealt, \jump)}^2 \levymes(d\jump) d\timealt,
	\end{equation}
	where the right-hand side is almost-surely finite by assumption on $H$. We therefore obtain that
	\begin{align}
		\psi_\eps(\dstateof{\time}) - \psi_\eps(\dstateof{0})
		&\leq \int_0^\time \braket*{\nabla \psi_\eps(\dstate)}{d\dstate^c}
		\notag\\
		&+ \int_0^\time \int_{\cjumpinf} \bracks*{ \psi_\eps(\dstate + H) - \psi_\eps(\dstate) } \cpoisfulalt
		\notag\\
		&+\dfrac{L}{2} \parens*{ \int_0^\time \norm{g}^2d\timealt + \int_0^\time \int_{\cjumpinf} \norm{H}^2 \levymes(d\jump) d\timealt}.
		\label{eq:weak_ito_dbis}
	\end{align}
	To prove the weak Itô's formula, it only remains to show that we can replace $\psi_\eps$ by $\psi$ when taking the almost-sure limit as $\eps \to 0^+$ on both sides of the inequality. For the sake of readibility, let us denote $L(\psi_\eps)$ the left-hand side of \cref{eq:weak_ito_dbis} and $R(\psi_\eps)$ its right-hand side. We begin by establishing some groundwork that will become quite useful in the remaining of the proof.
	First, note that for every $y,y' \in \R^\vdim$ and every differentiable function $\phi \from \R^\vdim \to \R$, we can write $\phi(y) - \phi(y') = \int_0^1 \braket*{\nabla \phi(y_r)}{y - y'} dr$ where $y_r = y + r(y' - y)$. Applying this equality to $\psi$ and $\psi_\eps$, we get
	\begin{align}
		\abs*{ \psi(y) - \psi(y') - \psi_\eps(y) + \psi_\eps(y') }
			&= \abs*{ \int_0^1 \braket*{ \nabla \psi(y_r) - \nabla \psi_\eps(y_r)}{y - y'}dr }
			\notag\\
			&\leq \norm{y-y'} \int_0^1 \norm{\nabla \psi(y_r) - \nabla \psi_\eps(y_r)}dr.
	\label{eq:diff_of_psi_1}
	\end{align}
	Now, by using the definition of $\psi_\eps$ and the $L$-smoothness of $\psi$, we can further derive
	\begin{align}
		\norm{\nabla \psi(y_r) - \nabla \psi_\eps(y_r)}
			&\leq \int_{\R^\vdim} \norm{\nabla \psi(y_r) - \nabla \psi(y_r - w)} \rho_\eps(w) dw
			\notag\\
			&\leq L \int_{\R^\vdim} \norm{w} \eps^{-\vdim}\rho(w/\eps) dw
			\notag\\
			&= L \eps \int_{\norm{u}<1} \norm{u} \rho(u)du
			\notag\\ 
			&\leq L \eps,
	\label{eq:diff_of_psi_2}
	\end{align}
	where the second-to-last inequality comes from the substitution $u = w/\eps$ and the fact that $\rho(u)$ is zero outside of $\norm{u} < 1$. In particular, it means that $\nabla \psi_\eps$ converges uniformly to $\nabla \psi$ on all $\R^\vdim$ and not only on compact subsets. Coming back to \cref{eq:diff_of_psi_1}, the previous estimate therefore yields
	\begin{equation}
		\abs*{\psi(y) - \psi(y') - \psi_\eps(y) + \psi_\eps(y')} \leq L \eps \norm{y - y'}
	\end{equation}
	for every $y, y' \in \R^\vdim$. Applying this inequality to the left-hand side $L$, we then readily get
	\begin{equation}
		\abs*{L(\psi) - L(\psi_\eps)} \leq L \eps \norm{\dstateof{\time} - \dstateof{0}},
	\end{equation}
	which implies that $L(\psi_\eps) \to L(\psi)$ almost-surely as $\eps \to 0^+$ since $\dstateof{\time}$ and $\dstateof{0}$ are both finite by assumption. Now, to derive the limit of $R(\psi_\eps)$, we use the $L^1$-norm and expend $d\dstate^c$ to get
	\begin{align}
		&\exof*{\abs{R(\psi) - R(\psi_\eps)}} \notag\\
		&\leq \int_0^\time \exof*{ \abs*{ \braket*{\nabla \psi(\dstate) - \nabla \psi_\eps(\dstate)}{f} }  } d\timealt + \exof*{\abs*{ \int_0^\time \braket*{\nabla \psi(\dstate) - \nabla \psi_\eps(\dstate)}{g d\brown}} }
		\notag\\
		+ \,\,
		& \exof*{ \abs*{ \int_0^\time \int_{\cjumpinf} \bracks*{ \psi(\dstate + H) - \psi(\dstate) - \psi_\eps(\dstate+H) + \psi_\eps(\dstate) } \cpoisfulalt } }
		\notag\\
		&\leq \int_0^\time \exof*{ \norm{\nabla \psi(\dstate) - \nabla \psi_\eps(\dstate)}\norm{f} } d\timealt
			+ \sqrt{\!\int_0^\time \exof*{\norm{\nabla \psi(\dstate) - \nabla  \psi_\eps(\dstate)}^2 \norm{g}^2} d\timealt}
		\notag\\
		+ \,\,
		& \sqrt{\! \int_0^\time \int_{\cjumpinf} \exof*{\abs*{ \psi(\dstate + H) - \psi(\dstate) - \psi_\eps(\dstate+H) + \psi_\eps(\dstate) }^2} \levymes(d\jump) d\timealt }
	\end{align}
	where the second inequality comes from Jensen's inequality and Itô's isometry formula for stochastic integrals. Thanks to the estimates \cref{eq:diff_of_psi_1,eq:diff_of_psi_2} that we have previously established, it follows that 
	\begin{align}
		&\exof*{\abs*{R(\psi) - R(\psi_\eps)}} \\
		\leq \,\, & L \eps 
			\parens*{
				\exof*{\int_0^\time \norm{f} d\timealt}
					+ \sqrt{\!\int_0^\time \exof*{\norm{g}^2 } d\timealt}
					+ \sqrt{\!\int_0^\time \int_{\cjumpinf} \exof*{\abs*{H}^2} \levymes(d\jump) d\timealt }
			}
	\end{align}
	where each integral is finite by assumption on the coefficients. We therefore get $R(\psi_\eps) \to R(\psi)$ in the $L^1$-norm as $\eps \to 0^+$, and thus there exists a subsequence $\eps(i) \to 0$ such that $R(\psi_{\eps(i)}) \to R(\psi)$ \as. Along this subsequence we also get $L(\psi_{\eps(i)}) \to L(\psi)$ \as, which proves that $L(\psi) \leq R(\psi)$ \as, hence finishing the proof.
\end{proof}

Having established \cref{thm:weakIto} for Lévy-type processes with bounded jumps, we now extend the result to the general case, where jumps of unbounded magnitude may occur. A key property of Lévy processes---and their associated Lévy measure---is that only finitely many large jumps can happen over any finite time interval. Exploiting this fact, we can apply the previously proven case $G\equiv 0$ on each subinterval between large jumps, and then account for the large jumps separately.

\begin{proof}[Proof of \cref{thm:weakIto} (general case)]
	Let $(T_i)_{i \geq 1}$ denote the arrival times of the Poisson process $\parens*{\poisof{\time}{\braces{\cjumpsup}}}_{0 \leq \time \leq T}$ controlling the size of large jumps of $\dstate$ up to time $T$. Since the intensity measure $\levymes$ is assumed to be a Lévy measure, it is in particular finite on $\braces{\cjumpsup}$ and so there is only a finite number of jumps occurring in the interval $[0, T]$. Let $N$ denotes this (random) number of jumps. We further define $T_0 = 0$ and $T_{N+1} = T$ for notational convenience. Fix $\time \leq T$ and let $i$ be the random integer such that $\time \in [T_i, T_{i+1})$. At the time $\time$, we can therefore write
	\begin{equation}
		\dstateof{\time} = \dstateof{T_i} + \int_{T_i}^\time f(\timealt) d\timealt + \int_{T_i}^\time g(\timealt) d\brownof{\timealt} + \int_{T_i}^\time \int_{\cjumpinf} H(\timealt, \jump) \cpoisfulalt,
	\end{equation}
	which falls into the setup of \cref{thm:weakIto} with $G \equiv 0$ since there are no large jumps during this time interval (except at the endpoints), $T_i \leq T < \infty$ by construction, and thanks to the independence of the Poisson process $\poisof{\cdot}{\braces{\cjumpsup}}$ to $\poisof{\cdot}{\braces{\cjumpinf}}$ and $\brown$. It then follows from the proof of case $G \equiv 0$ that
	\begin{align}
		\psi(\dstateof{\time})
			\leq \psi(\dstateof{T_i})
			&+ \int_{T_i}^\time \braket*{\nabla \psi(\dstate)}{d\dstate^c} + \int_{T_i}^\time \int_{\cjumpinf} \bracks*{ \psi(\dstate + H) - \psi(\dstate) } \cpoisfulalt
		\notag\\
		&+ \dfrac{L}{2} \parens*{ \int_{T_i}^\time \norm{g}^2 d\timealt + \int_{T_i}^\time \int_{\cjumpinf} \norm{H}^2 \levymes(d\jump)d\timealt }.
	\end{align}	
	On the other hand, if we define the Lévy process $\levyof{\time} = \int_{\cjumpsup} \jump \poisof{\time}{d\jump}$, we also have 
	\begin{align}
		\psi(\dstateof{T_i})
			&= \psi(\dstateof{T_i-}) + \jumpof \psi(\dstateof{T_i})
			\notag\\
			&= \psi(\dstateof{T_i-}) + \bracks*{ \psi(\dstateof{T_i-} + G(T_i, \jumpof \levyof{T_i}) - \psi(\dstateof{T_i-}) }.
	\end{align}
	Since $\time \mapsto \psi(\dstateof{\time-})$ is left-continuous, applying the case $G \equiv 0$ on each time interval $[T_0, T_1), \dots, [T_{i-2}, T_{i-1})$ therefore yields
	\begin{alignat}{3}
	\psi(\dstateof{\time})
		&&\leq \psi(\dstateof{T_0})
			&+ \int_{T_0}^\time \braket*{\nabla \psi(\dstate)}{d\dstate^c}
			+ \int_{T_0}^\time \int_{\cjumpinf} \bracks*{ \psi(\dstate + H) - \psi(\dstate) } \cpoisfulalt
			\notag\\
		&&
			&+ \dfrac{L}{2} \parens*{ \int_{T_0}^\time \norm{g}^2 d\timealt + \int_{T_0}^\time \int_{\cjumpinf} \norm{H}^2 \levymes(d\jump)d\timealt }
		\notag\\
		&&
			&+ \sum_{j=1}^i \bracks*{ \psi(\dstateof{T_j-} + G(T_j, \jumpof \levyof{T_j}) - \psi(\dstateof{T_j-}) }
		\notag\\
		&&= \psi(\dstateof{0})
			&+ \int_{0}^\time \braket*{\nabla \psi(\dstate)}{d\dstate^c}
			+ \int_{0}^\time \int_{\cjumpinf} \bracks*{ \psi(\dstate + H) - \psi(\dstate) } \cpoisfulalt
		\notag\\
		&&
			&+ \dfrac{L}{2} \parens*{ \int_{0}^\time \norm{g}^2 d\timealt + \int_{0}^\time \int_{\cjumpinf} \norm{H}^2 \levymes(d\jump)d\timealt }
		\notag\\
		&&
			&+ \int_{0}^\time \int_{\cjumpsup} \bracks*{ \psi(\dstate + G) - \psi(\dstate) } \poisfulalt
	\end{alignat}
	by definition of the stochastic integration with respect to a Poisson random measure (\cf \cref{app:levy}). This is exactly the upper bound stated in \cref{thm:weakIto}, thus concluding the proof.
\end{proof}

\section{Results in continuous time: Omitted proofs from \cref{sec:continuous}}
\label{app:continuous}

In this section, we study the convergence properties of the \emph{Lévy mirror flow} \eqref{eq:LMF}, defined as the system of stochastic differential equations given by
\begin{equation}
\tag{LMF}
\begin{aligned}
d\dstateof{\time}
	&= -\nabla\obj(\stateof{\time}) \dd\time
		+ \dd\levyof{\time}
	\notag\\
\stateof{\time}
	&= \mirror(\learn(\time)\dstateof{\time})
\end{aligned}
\end{equation}
where $\levy$ is a $\dpoints$-valued centered Lévy process with $\exof*{\dnorm{\levyof{\time}}^\momexp} < \infty$ for some $\momexp \in (1, 2]$, and $\learn \from [0, \infty) \to (0, \infty)$ is a nonincreasing differentiable learning rate function.

To simplify notations, throughout this section we identify the dual space $\dpoints$ with $\R^n$, which allows us to define Lévy processes, Brownian motions and Poisson random measures in the standard way presented in \cref{app:levy}. In particular, $\levy$ admits the Lévy-Itô decomposition
\begin{equation}
	\levyof{\time} = - \parens*{\int_{\cjumpsup} \!\! \jump \levymes(d\jump)} \time + \diffmat \brownof{\time} +  \int_0^\time \int_{\cjumpinf} \!\! \jump \cpoisfulalt + \int_0^\time \int_{\cjumpsup} \!\! \jump \poisfulalt
\end{equation}
where $\cjump \in (0, \infty)$ is an arbitrary cutoff constant (\cf \cref{subsec:LevyIto}). Under this decomposition, the quantities introduced in the main text to quantify the noise intensity are given by:
\begin{enumerate}
[label=\upshape(\itshape\alph*\hspace*{.5pt}\upshape)]
\item \define{Diffusion intensity:} 
\begin{equation}
	\contbound^2 = \norm{\diffmat}_F^2 = \tr[\diffmat^\top \diffmat].
\end{equation}
	
\item \define{Short jump intensity:}
\begin{equation}
	\shortbound^2 = \int_{\norm{\jump}<\cjump} \norm{\jump}^2 \levymes(d\jump).
\end{equation}

\item \define{Long jump intensity:}
\begin{equation}
	\longbound^\momexp = \int_{\norm{\jump} \geq \cjump} \norm{\jump}^\momexp \levymes(d\jump).
\end{equation}
\end{enumerate}

Moreover, to highlight that $\contbound^2$ and $\shortbound^2$ correspond to the components of $\levy$ that admits moments of all orders, whereas $\longbound^\momexp$ captures the intensity of the component that may exhibit infinite variance, we also set
\begin{equation}
	\tamebound^2 = \contbound^2 + \shortbound^2 \quad \text{and} \quad \heavybound^\momexp = \longbound^\momexp
\end{equation}
for the tame and heavy part of the noise respectively.

\subsection{Existence and uniqueness of solutions}

We begin this section by addressing an important aspect that was largely left implicit in the main text, namely the well-posedness of the system \eqref{eq:LMF}. Specifically, under which conditions \eqref{eq:LMF} admits a unique global solution for every initial condition.

In order to maintain a sufficiently general framework---one under which the various assumptions considered in the main text can coexist---we study the existence and uniqueness of solutions to \eqref{eq:LMF} under the following conditions on the objective function $\obj$:

\begin{assumption}
	$\nabla \obj \circ \mirror$ is locally Lipschitz on $\dpoints$: for every $r > 0$, there exist a nonnegative constant $L_r \geq 0$ such that 
	\begin{equation}
		\dnorm{(\nabla \obj \circ \mirror)(\dpoint) - (\nabla \obj \circ \mirror)(\dpoint')} \leq L_r \dnorm{\dpoint - \dpoint'} \quad \text{for all } \dpoint, \dpoint' \in \ball_r(0).
	\tag{$\mathcal{H}_1$}
	\label{assumption:H1}
	\end{equation}
\end{assumption}

\begin{assumption}
	$\nabla \obj \circ \mirror$ satisfies a global growth condition on $\dpoints$: there exists $M > 0$ such that 
	\begin{equation}
		\dnorm{(\nabla \obj \circ \mirror)(\dpoint)} \leq M(1 + \dnorm{\dpoint}) \quad \text{for all } \dpoint \in \dpoints.
	\tag{$\mathcal{H}_2$}
	\label{assumption:H2}
	\end{equation}
\end{assumption}

Assumption \eqref{assumption:H1} guarantees local existence and uniqueness of solutions, while \eqref{assumption:H2} prevents finite-time explosion and thus ensures global well-posedness. Together, these conditions yield the following result:

\begin{proposition}[Existence and uniqueness of solutions]
	Under assumptions \eqref{assumption:H1} and \eqref{assumption:H2}, the stochastic differential equation \eqref{eq:LMF} admits a unique càdlàg strong solution $\state$ staying in $\points$ at all times. Moreover, if $\hreg$ is steep, then $\stateof{\time} \in \intpoints$ \as whenever $\stateof{0} \in \intpoints$.
\end{proposition}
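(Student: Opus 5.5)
Because the noise in \eqref{eq:LMF} enters additively, I would argue pathwise rather than invoke a general existence theorem for Lévy-driven SDEs. Write $b(\time,\dpoint) \defeq -(\nabla\obj\circ\mirror)(\timed{\learn}\dpoint)$ and make the change of variables $\timed{Z} \defeq \dstateof{\time} - \levyof{\time}$. Then \eqref{eq:LMF} is equivalent to the random ordinary integral equation
\begin{equation}
\timed{Z} = \dstateof{0} + \int_{\tstart}^{\time} b\parens*{\timealt, Z(\timealt) + \levyof{\timealt}} \dd\timealt .
\end{equation}
The plan is to solve this equation for each fixed sample path $\sample$, using only that $\time\mapsto\levyof{\time}$ is càdlàg. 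Such a path is bounded on every compact time interval and has at most countably many discontinuities.

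\textbf{Step 1 (regularity of the random vector field).} For $\time\in[0,\horizon]$ we have $0<\timed{\learn}\leq\learn(0)$, since $\learn$ is positive and nonincreasing. Set $R \defeq \sup_{\timealt\leq\horizon}\dnorm{\levyof{\timealt}} < \infty$ and $\tilde b(\time,z) \defeq b(\time, z+\levyof{\time})$.
\begin{itemize}
\item By \eqref{assumption:H1}, on $\{\dnorm{z}\leq r\}$ the map $z\mapsto \tilde b(\time,z)$ is Lipschitz with constant $\learn(0)\,L_{\learn(0)(r+R)}$, uniformly in $\time\leq\horizon$.
\item By \eqref{assumption:H2}, $\dnorm{\tilde b(\time,z)} \leq M\bigl(1+\learn(0)(\dnorm{z}+R)\bigr)$.
\item In $\time$, the map $\time\mapsto\tilde b(\time,z)$ is Borel measurable and bounded on $[0,\horizon]$. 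This holds because $\nabla\obj\circ\mirror$ is continuous, $\learn$ is continuous, and $L$ is càdlàg.
\end{itemize}
So $\tilde b$ satisfies the Carathéodory conditions: it is measurable in $\time$, locally Lipschitz in $z$, and has linear growth.

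\textbf{Step 2 (pathwise existence and uniqueness).} I would build the solution by Picard iteration on small intervals, which gives a unique local absolutely continuous solution. Grönwall's lemma applied with the linear growth bound then gives $\sup_{\time\leq\horizon}\dnorm{\timed{Z}}\leq C(\horizon,R)\,e^{C'\horizon}$. This rules out explosion and lets the local solution be extended to $[0,\horizon]$ for every $\horizon$. Uniqueness follows from local Lipschitzness together with Grönwall, since any two solutions stay in a common ball on $[0,\horizon]$. Setting $\dstate = Z+\levy$ gives a càdlàg solution, and every solution has this form, so pathwise uniqueness holds.

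\textbf{Step 3 (strong solution).} The Picard iterates are adapted by induction: each is a Lebesgue integral of adapted processes with respect to $d\timealt$. Their pathwise limit $Z$ is therefore adapted, and hence $\dstate$ is a strong solution.

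\textbf{Step 4 (staying in $\points$ and steepness).} We have $\stateof{\time} = \mirror(\timed{\learn}\dstateof{\time}) \in \im\mirror = \proxdom \subseteq \points$ by \cref{prop:mirror}, so the solution stays in $\points$ at all times. If $\hreg$ is steep, then $\proxdom = \relint\points$. In that case $\stateof{\time}\in\intpoints$ for all $\time$ and every sample path, not merely almost surely, regardless of the initial condition.

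The only delicate point I expect is Step 2, where the vector field is discontinuous in time because $\levy$ jumps. This is exactly why I would work in the Carathéodory framework with integral equations instead of classical ODE theory: jumps of $\levy$ do not affect absolute continuity of $Z$, and the possibly unbounded jump sizes are harmless because only $\sup_{[0,\horizon]}\dnorm{\levy}$ enters the estimates. If one prefers to stay within stochastic-analysis references, an alternative is to use standard Lévy SDE well-posedness under locally Lipschitz and linear growth coefficients, with interlacing at the finitely many large jumps. I would avoid that route because \eqref{eq:LMF} has only $\exof{\dnorm{\levyof{\time}}^{\momexp}}<\infty$ and no second moments.
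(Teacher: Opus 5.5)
Your proof is correct, but it takes a different route from the paper. The paper also rewrites \eqref{eq:LMF} as the dual-space equation $dY = -(\nabla\obj\circ\mirror)(\learn(t)Y)\,dt + dL$. It proves your Step~1 estimates for the unshifted drift: a time-uniform local Lipschitz constant $\learn(0)L_{\learn(0)r}$ and linear growth, both using that $\learn$ is nonincreasing. It then cites a general existence--uniqueness theorem for semimartingale-driven SDEs \citep[Theorem~1.3.4]{Mao91}, with a footnote explaining why using $Y(t-)$ rather than $Y(t)$ in the drift is harmless. The claims about $\points$ and steepness are read off from $\im\mirror = \proxdom$, exactly as in your Step~4.

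Your substitution $Z = Y - L$ exploits the additivity of the noise and reduces everything to a random Carathéodory ODE. This makes the argument self-contained and uses nothing about $L$ beyond càdlàg paths, with no moment assumptions at all. It also gives a genuinely pathwise statement for every $\omega$, and it sidesteps the predictability footnote because the drift is only integrated against $ds$. The paper's route is shorter and would survive multiplicative noise, e.g.\ a state-dependent diffusion coefficient, where shifting by $L$ is no longer available.

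Two small remarks.
\begin{itemize}
\item In Step~2 you run Picard iteration on $\omega$-dependent small intervals, which makes the adaptedness argument in Step~3 a bit awkward. It is cleaner to note that the linear-growth bound controls \emph{all} Picard iterates on $[0,T]$ by the same comparison function $v$, solving $v' = A + Bv$ with $v(0)=\dnorm{Y(0)}$. A single global Picard scheme then converges by the factorial estimate, using the Lipschitz constant on that fixed ball, and each iterate is adapted by induction.
\item Your closing justification is off. Interlacing at the finitely many large jumps on compact intervals is precisely the device that removes any moment requirement on those jumps. So the stochastic-analysis route would also work without second moments, although this does not affect your argument.
\end{itemize}
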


\begin{proof}
	Replacing $\stateof{\time}$ by $\mirror(\learn(\time)\dstateof{\time})$, the system \eqref{eq:LMF} can be written as the stochastic differential equation
	\begin{equation}
		d\dstateof{\time} = - (\nabla \obj \circ \mirror)(\learn(\time) \dstateof{\time}) d\time + d\levyof{\time}
	\end{equation}
	evolving in $\dpoints$. Let $r > 0$ and $\dpoint, \dpoint' \in \ball_r(0)$. For every $\time \geq 0$, we have $\learn(\time) \dpoint, \learn(\time) \dpoint' \in \ball_{\learn(\time) r}(0) \subseteq \ball_{\learn(0) r}(0)$ thanks to the fact that $\learn$ is a nonincreasing positive function. In particular, the local Lipschitzness \eqref{assumption:H1} of $\nabla \obj \circ \mirror$ yields 
	\begin{equation}
		\dnorm{(\nabla \obj \circ\mirror)(\learn(\time) \dpoint) - (\nabla \obj \circ \mirror)(\learn(\time) \dpoint')} \leq L_{\learn(0) r} \dnorm{\learn(\time)(\dpoint - \dpoint')} \leq \learn(0) L_{\learn(0) r} \dnorm{\dpoint - \dpoint'}
	\label{eq:local_lipschitz_proof}
	\end{equation}
	for all $\dpoint, \dpoint' \in \ball_r(0)$ and every $r > 0$. Moreover, we also get
	\begin{equation}
		\dnorm{(\nabla \obj \circ \mirror)(\learn(\time) \dpoint)} \leq M(1 + \dnorm{\learn(\time) \dpoint}) \leq M(1+\learn(0) \dnorm{\dpoint}) \leq \max(1, \learn(0)) M (1+ \dnorm{\dpoint})
	\label{eq:global_growth_proof}
	\end{equation}
	by the global growth condition \eqref{assumption:H2}. Since the upper bounds of both \cref{eq:local_lipschitz_proof,eq:global_growth_proof} are uniform in $\time$, we can therefore use a standard existence theorem for solutions to SDEs driven by semimartingales (see \eg \citep[Theorem 1.3.4]{Mao91}) to obtain that \eqref{eq:LMF} admits a unique strong solution $\dstate$ having càdlàg paths.\footnote{To be rigorous, the argument above only establishes existence and uniqueness for the SDE $dY'(\time) = - (\nabla \obj \circ \mirror)(\learn(\time-) Y'(\time-)) d\time + d\levyof{\time}$, where the left limit of the drift is taken. This is due to the fact that stochastic processes are required to be predictable---and therefore almost left-continuous---for being used as integrands, see \cref{subsec:stochastic_integration}. However, in our case, $Y'$ has càdlàg paths and thus it can only have at most countably many jumps. Consequently, $Y'(\time-) = Y'(\time)$ Lebesgue almost everywhere. Since $\learn$ and $\nabla \obj \circ \mirror$ are continuous, it follows that $\int_0^\time (\nabla \obj \circ \mirror)(\learn(\timealt-)Y'(\timealt-)) d\timealt = \int_0^\time (\nabla \obj \circ \mirror)(\learn(\timealt)Y'(\timealt)) d\timealt$, which implies that $Y'$ is in fact also the unique solution to \eqref{eq:LMF}.}
	Since $\stateof{\time} = \mirror(\learn(\time) \dstateof{\time}) \in \points$ by definition of \eqref{eq:LMF} and of the mirror map $\mirror$, it follows that the primal trajectories are also càdlàg and remain in $\points$ at any times. Furthermore, if $\hreg$ is steep, then $\im(Q) = \intpoints$ (\cf \cref{prop:mirror}), which immediately yields the second claim.
\end{proof}

\begin{remark}
	It is worth noting that, by standard arguments, \eqref{assumption:H2} follows directly from \eqref{assumption:H1} if the local Lipschitz condition on $\nabla \obj \circ \mirror$ is strengthened to a \emph{global} Lipschitz condition. This specific kind of global condition was already employed in the analysis of   \emph{Mirror Langevin dynamics} for sampling problems, \cf \citet{ZPFP20}.
\end{remark}

In the next two lemmas, we discuss standard settings under which the objective function $\obj$ satisfies the above assumptions ensuring existence and uniqueness of solution.

\begin{lemma}
	If $\nabla \obj$ is globally $L$-Lipschitz continuous, then both \eqref{assumption:H1} and \eqref{assumption:H2} hold true.
\label{lemma:globally_Lips}
\end{lemma}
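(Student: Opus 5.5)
The plan is to verify \eqref{assumption:H1} and \eqref{assumption:H2} separately, using only that $\mirror$ is $(1/\hstr)$-Lipschitz (\cref{prop:mirror}) and that $\points$ is compact. Here "globally $L$-Lipschitz" is read as $\dnorm{\nabla\obj(\point) - \nabla\obj(\pointalt)} \leq L\norm{\point - \pointalt}$ for all $\point,\pointalt$ in the domain of $\nabla\obj$. That domain contains $\im\mirror = \proxdom$, so $\nabla\obj\circ\mirror$ is well defined.

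\textbf{Local Lipschitz condition.} For any $\dpoint,\dpointalt\in\dpoints$, compose the two Lipschitz estimates:
\begin{equation*}
\dnorm{\nabla\obj(\mirror(\dpoint)) - \nabla\obj(\mirror(\dpointalt))}
	\leq L\norm{\mirror(\dpoint) - \mirror(\dpointalt)}
	\leq \frac{L}{\hstr}\dnorm{\dpoint - \dpointalt}.
\end{equation*}
This is in fact a global Lipschitz bound, so \eqref{assumption:H1} holds with $L_{r} = L/\hstr$ for every $r>0$.

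\textbf{Growth condition.} Fix a reference point, say the prox-center $\point_{c} = \mirror(0)$. For every $\dpoint$, since $\mirror(\dpoint)\in\points$, the Lipschitz bound gives
\begin{equation*}
\dnorm{\nabla\obj(\mirror(\dpoint))}
	\leq \dnorm{\nabla\obj(\point_{c})} + L\norm{\mirror(\dpoint) - \point_{c}}
	\leq \dnorm{\nabla\obj(\point_{c})} + L\diamX.
\end{equation*}
Hence $\nabla\obj\circ\mirror$ is uniformly bounded on $\dpoints$. Taking $M = \max\{\dnorm{\nabla\obj(\point_{c})} + L\diamX,\, 1\}$ yields \eqref{assumption:H2}, since $M \leq M(1+\dnorm{\dpoint})$. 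If compactness were unavailable, one would instead use $\norm{\mirror(\dpoint) - \mirror(0)} \leq \dnorm{\dpoint}/\hstr$, which gives linear growth directly. This mirrors the remark that global Lipschitzness implies \eqref{assumption:H2}.

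The only point needing care is ensuring the composition is defined on all of $\dpoints$. This requires $\nabla\obj$ to be defined, and Lipschitz, on $\im\mirror = \proxdom$. When $\hreg$ is steep, $\proxdom = \relint\points \subseteq \dom\subd\obj$, and the Lipschitz hypothesis is understood there. Beyond this, the proof is immediate.
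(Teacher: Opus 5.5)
Your proof is correct and follows the paper's argument: \eqref{assumption:H1} comes from composing the $L$-Lipschitz bound on $\nabla\obj$ with the $(1/\hstr)$-Lipschitz continuity of $\mirror$, so that $\nabla\obj\circ\mirror$ is globally $(L/\hstr)$-Lipschitz. For \eqref{assumption:H2}, the paper simply notes that this global Lipschitz property implies linear growth, which is your fallback remark; your main argument instead uses compactness of $\points$ to get the stronger uniform bound $\dnorm{\nabla\obj(\mirror(\dpoint))} \leq \dnorm{\nabla\obj(\point_{c})} + L\diamX$, and both routes are valid.
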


\begin{proof}
	The mirror map $\mirror$ is $(1/\hcst)$-Lipschitz continuous since $\hreg$ is $\hcst$-strongly convex (\cf \cref{prop:mirror}), and thus
	\begin{equation}
		\dnorm{(\nabla \obj \circ \mirror)(\dpoint) - (\nabla \obj \circ \mirror)(\dpoint')} \leq L \norm{\mirror(\dpoint) - \mirror(\dpoint')} \leq \frac{L}{\hcst} \dnorm{\dpoint - \dpoint'}.
	\end{equation}
	This leads to $\nabla \obj \circ \mirror$ being globally Lipschitz continuous on $\dpoints$, which directly implies \eqref{assumption:H1} and \eqref{assumption:H2}.
\end{proof}

\begin{remark}
	The globally Lipschitz setting of \cref{lemma:globally_Lips} coincide with the framework commonly adopted in prior analyses of stochastic mirror flows---and, more specifically, of the stochastic mirror flow with diffusive noise \eqref{eq:SMF}; see for instance \citep{RB13,MerSta18,XWG18}. By contrast, relaxing this requirement  to \eqref{assumption:H1} and \eqref{assumption:H2} allows the use of objective functions whose gradients may blow up at the boundary of the state $\points$, a feature that is essential when $\hreg$ is steep and $\obj$ is strongly convex relative to $\hreg$.
\end{remark}

\begin{lemma}
	If $\hreg$ is steep and $\nabla \obj$ is locally Lipschitz on $\intpoints$, then $\obj$ satisfies \eqref{assumption:H1}.
\label{lemma:local_lips}
\end{lemma}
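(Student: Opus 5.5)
The plan is to reduce \eqref{assumption:H1} to two facts. First, $\mirror$ is globally $(1/\hcst)$-Lipschitz by \cref{prop:mirror}. Second, a locally Lipschitz map is uniformly Lipschitz on every compact subset of its (open) domain. The link between them is that, when $\hreg$ is steep, $\mirror$ sends bounded sets of $\dpoints$ into compact subsets of $\intpoints$. Here I read $\intpoints$ as the relative interior, consistently with \cref{prop:mirror}; all norms on $\points$ are then taken in the affine hull.

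\emph{Step 1: compactness in the interior.} Fix $r>0$ and let $\bar\ball_r(0)$ be the closed dual ball, which is compact since $\dpoints$ is finite-dimensional. Since $\mirror$ is continuous, $C_r \defeq \mirror(\bar\ball_r(0))$ is compact. Since $\hreg$ is steep, \cref{prop:mirror} gives $\im\mirror = \proxdom = \intpoints$, so $C_r\subseteq\intpoints$. Compactness then gives $\delta_r \defeq \dist(C_r,\bd\points) > 0$ in the relative topology.

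\emph{Step 2: Lipschitz bound for $\nabla\obj$ on $C_r$.} For each $\point\in C_r$, pick an open ball $U_\point$ around $\point$, contained in $\intpoints$, on which $\nabla\obj$ is $L_\point$-Lipschitz. Extract a finite subcover $U_{\point_1},\dots,U_{\point_m}$ and let $\lambda>0$ be a Lebesgue number of this cover. Set $\Lambda=\max_i L_{\point_i}$ and $G=\max_{C_r}\dnorm{\nabla\obj}$, which is finite because $\nabla\obj$ is continuous on the compact set $C_r$. Now take $\point,\pointalt\in C_r$ and split into two cases.
\begin{enumerate}
\item If $\norm{\point-\pointalt}<\lambda$, then both points lie in a single $U_{\point_i}$. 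Hence $\dnorm{\nabla\obj(\point)-\nabla\obj(\pointalt)}\leq\Lambda\norm{\point-\pointalt}$.
\item Otherwise, $\dnorm{\nabla\obj(\point)-\nabla\obj(\pointalt)}\leq 2G\leq (2G/\lambda)\norm{\point-\pointalt}$.
\end{enumerate}
Thus $\nabla\obj$ is $L_{C_r}$-Lipschitz on $C_r$, with $L_{C_r}=\max\{\Lambda,2G/\lambda\}$. This case split avoids any need for segments between $\point$ and $\pointalt$ to stay inside a region where the local constants apply.

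\emph{Step 3: composition.} For $\dpoint,\dpoint'\in\ball_r(0)$, both $\mirror(\dpoint)$ and $\mirror(\dpoint')$ lie in $C_r$. Therefore
\[
\dnorm{\nabla\obj(\mirror(\dpoint))-\nabla\obj(\mirror(\dpoint'))}\leq L_{C_r}\norm{\mirror(\dpoint)-\mirror(\dpoint')}\leq (L_{C_r}/\hcst)\dnorm{\dpoint-\dpoint'}.
\]
This is \eqref{assumption:H1} with $L_r = L_{C_r}/\hcst$.

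The main obstacle is Step 2, which turns the local Lipschitz property into a uniform constant on $C_r$. That step, in turn, relies on Step 1, namely that $\mirror(\bar\ball_r(0))$ stays a positive distance away from the boundary. Steepness is exactly what guarantees this; without it, $C_r$ could touch $\bd\points$, where $\nabla\obj$ may blow up.
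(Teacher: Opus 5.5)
Your proof is correct, and it uses the same ingredients as the paper's but organizes them differently.

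The paper localizes in the dual. For each $\dpoint_0\in\dpoints$ it picks a relatively compact $\cpt\subset\intpoints$ containing $\mirror(\dpoint_0)$ and asserts that $\nabla\obj$ is Lipschitz on $\cpt$. It then uses continuity of $\mirror$ to find a neighborhood $\nhd$ of $\dpoint_0$ with $\mirror(\nhd)\subseteq\cpt$, and concludes that $\nabla\obj\circ\mirror$ is Lipschitz on $\nhd$ with constant $L(\cpt)/\hcst$. In other words, it shows $\nabla\obj\circ\mirror$ is Lipschitz on a neighborhood of every dual point.

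You instead apply compactness once, in the primal, to the image $C_r=\mirror(\bar\ball_r(0))$ of a whole dual ball. Your Lebesgue-number case split then makes explicit the step ``locally Lipschitz implies Lipschitz on compact sets,'' which the paper takes for granted when it claims a constant $L(\cpt)$. Your split also handles correctly the fact that $C_r$ need not be convex.

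What your route buys is that it lands exactly on \eqref{assumption:H1} as stated: a single constant $L_r=\max\{\Lambda,2G/\lambda\}/\hcst$ valid on all of $\ball_r(0)$. The paper's argument stops at neighborhood-wise Lipschitzness of $\nabla\obj\circ\mirror$. Upgrading that to a uniform constant on each ball still needs a covering argument, which the paper leaves implicit. That argument is easy there, because $\bar\ball_r(0)$ is compact and convex, so segments can be subdivided. The paper's version is shorter; yours is self-contained and gives an explicit constant.

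One minor point: the positive distance $\delta_r$ in your Step 1 is never used. The balls $U_\point\subseteq\intpoints$ in Step 2 come directly from the local Lipschitz hypothesis; all you need from Step 1 is $C_r\subseteq\intpoints$ and compactness.
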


\begin{proof}
	Fix $\dpoint_0 \in \dpoints$. Since $\hreg$ is steep, we know that $\point_0 \defeq \mirror(\dpoint_0)$ belongs to $\intpoints$. In particular, there exists a relatively compact subset $\cpt \subset \intpoints$ such that $\point_0 \in \cpt$. By the local Lipschitzness of $\nabla \obj$, it follows that it is Lipschitz continuous on $\cpt$ with some finite constant $L(\cpt) \geq 0$. Now, due to $\mirror$ being continuous on $\dpoints$, we can find a neighborhood $\nhd$ of $\dpoint_0$ such that $\mirror(\dpoint) \in \cpt$ whenever $\dpoint \in \nhd$. We then proceed to show that $\nabla \obj \circ \mirror$ is Lipschitz continuous on $\nhd$. To do so, let $\dpoint, \dpoint' \in \nhd$. By construction of $\nhd$, we have $\mirror(\dpoint), \mirror(\dpoint') \in \cpt$, and thus 
	\begin{equation}
		\dnorm{(\nabla f \circ \mirror)(\dpoint) - (\nabla f \circ \mirror)(\dpoint')} \leq L(\cpt) \norm{\mirror(\dpoint) - \mirror(\dpoint')} \leq \frac{L(\cpt)}{\hcst} \dnorm{\dpoint - \dpoint'},
	\end{equation}
	where we have used that $\mirror$ is $(1/\hcst)$-Lipschitz continuous on $\dpoints$ (see \cref{prop:mirror}). It follows that, for every $\dpoint_0 \in \dpoints$, $\nabla \obj \circ \mirror$ is Lipschitz continuous on a neighborhood of $\dpoint_0$, and thus $\nabla \obj \circ \mirror$ is locally Lipschitz on $\dpoints$ as needed.
\end{proof}

\begin{remark}
	If $\hreg$ is not steep, the conclusion of \cref{lemma:local_lips} remains valid provided that $\nabla \obj$ is locally Lipschitz on $\points_\hreg \equiv \dom \subd \hreg$. 
\end{remark}

Beyond providing existence and uniqueness of solutions to \eqref{eq:LMF}, assumptions \eqref{assumption:H1} and \eqref{assumption:H2} also allows us to obtain the finiteness of the moments of $\dstateof{\time}$ up to order $\momexp$.

\begin{lemma}[Moments of the dual process]
\label{lemma:moments_dual}
	Under assumptions \eqref{assumption:H1} and \eqref{assumption:H2}, the solution $\dstateof{\time}$ to \eqref{eq:LMF} satisfies $\exof{\sup_{\timealt \leq \time} \dnorm{\dstateof{\timealt}}^\momexp} < \infty$ for every $\time \geq 0$.
\end{lemma}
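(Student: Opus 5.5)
We will bound the moment through the integral form of the dual equation. Write
\begin{equation}
\dstateof{\time}
	= \dstateof{0}
		- \int_0^\time (\nabla\obj\circ\mirror)(\learn(\timealt)\dstateof{\timealt}) \dd\timealt
		+ \levyof{\time}.
\end{equation}
Set $\phi(\time) \defeq \exof{\sup_{\timealt\leq\time}\dnorm{\dstateof{\timealt}}^\momexp}$. Because $\phi$ might a priori be infinite, we first localize with the stopping times $\tau_R = \inf\setdef{\time\geq0}{\dnorm{\dstateof{\time}} \geq R}$. We then work with $\phi_R(\time) = \exof{\sup_{\timealt\leq\time\wedge\tau_R}\dnorm{\dstateof{\timealt}}^\momexp}$.

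Before the jump at $\tau_R$ the process is bounded by $R$, but the jump itself may be unbounded. To handle this, we take the supremum over $[0,\time\wedge\tau_R)$ and add the endpoint value separately, using the same integral identity evaluated at $\time\wedge\tau_R$. This keeps $\phi_R$ finite.

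Using the elementary inequality $(a+b+c)^\momexp \leq 3^{\momexp-1}(a^\momexp+b^\momexp+c^\momexp)$ and the growth condition \eqref{assumption:H2}, in the form $\dnorm{(\nabla\obj\circ\mirror)(\learn\dpoint)} \leq \max(1,\learn(0))M(1+\dnorm{\dpoint})$ as in the existence proof, we get
\begin{equation}
\sup_{\timealt\leq\time\wedge\tau_R}\dnorm{\dstateof{\timealt}}^\momexp
	\leq 3^{\momexp-1}\Big[\dnorm{\dstateof{0}}^\momexp
		+ C\time^{\momexp-1}\int_0^\time\big(1+\sup_{r\leq\timealt\wedge\tau_R}\dnorm{\dstateof{r}}^\momexp\big)\dd\timealt
		+ \sup_{\timealt\leq\time}\dnorm{\levyof{\timealt}}^\momexp\Big].
\end{equation}
The drift term is handled by Hölder's inequality. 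Taking expectations then gives
\begin{equation}
\phi_R(\time) \leq a(\time) + b(\time)\int_0^\time\phi_R(\timealt)\dd\timealt,
\end{equation}
with constants $a(\time)$ and $b(\time)$ that are independent of $R$. Grönwall's lemma yields $\phi_R(\time) \leq a(\time)e^{b(\time)\time}$. Letting $R\to\infty$, monotone convergence gives the claim, since $\tau_R\to\infty$ a.s. by the nonexplosion part of the existence result. For deterministic initial conditions, $\dnorm{\dstateof{0}}^\momexp$ is trivially integrable.

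The main obstacle is the maximal moment of the Lévy noise, $\exof{\sup_{\timealt\leq\time}\dnorm{\levyof{\timealt}}^\momexp} < \infty$. We will use the Lévy–Itô decomposition $\levy = \ito + \cadshort + \cadlong$ of \cref{subsec:LevyIto}.

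The components $\ito$ and $\cadshort$ are square-integrable martingales. Doob's $L^2$ inequality together with Jensen's inequality gives $\exof{\sup\dnorm{\ito+\cadshort}^\momexp} \leq \big(4\time\tamebound^2\big)^{\momexp/2}$.

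The component $\cadlong$ is a martingale but need not be square integrable. Here we apply Doob's $L^\momexp$ inequality, valid since $\momexp>1$, which reduces the task to $\exof{\dnorm{\cadlong(\time)}^\momexp} < \infty$. This follows from Theorem 25.3 of Sato, quoted in \cref{subsec:random_measures}, because $\longbound^\momexp<\infty$.

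A more quantitative alternative is the Burkholder–Davis–Gundy inequality with the bound $\big(\sum\dnorm{\Delta\cadlong}^2\big)^{\momexp/2} \leq \sum\dnorm{\Delta\cadlong}^\momexp$, valid since $\momexp\leq2$. Its expectation equals $\time\longbound^\momexp$ by the compensation formula for $\pois$.

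The norm equivalence between $\dnorm{\cdot}$ and the Euclidean norm only affects the constants.
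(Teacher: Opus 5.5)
Your proposal is correct and follows essentially the same route as the paper: you localize with the first exit time $\tau_R$, control the overshoot at $\tau_R$ through the maximal moment of the noise, apply H\"older's inequality and $(\mathcal{H}_2)$, use Gr\"onwall with $R$-independent constants, and conclude by monotone convergence. The only difference is the noise term: the paper applies Doob's $L^p$ maximal inequality directly to the nonnegative submartingale $\|L(r)\|$, using the standing assumption $\mathbb{E}[\|L(T)\|^p]<\infty$, so your L\'evy--It\^o splitting (and the BDG alternative) is not needed for finiteness, though it does give explicit constants in terms of $\sigma_{\textup{tame}}$ and $\sigma_{\textup{heavy}}$.
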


\begin{proof}
	Fix an horizon $\horizon > 0$ and let $0 \leq \time \leq \horizon$. Applying the standard inequality $(a + b + c)^\momexp \leq 3^{\momexp-1} (a^\momexp + b^\momexp + c^\momexp)$ to the definition of \eqref{eq:LMF}, we obtain
	\begin{align}
		\dnorm{\dstateof{\time}}^\momexp 
		&\leq 3^{\momexp-1} \bracks*{ \dnorm{\dstateof{0}}^\momexp + \dnorm*{ \int_0^\time (\nabla \obj \circ \mirror)(\learn(\timealt) \dstateof{\timealt}) d\timealt}^\momexp + \dnorm{\levyof{\time}}^\momexp }\notag\\
		&\leq 3^{\momexp-1} \bracks*{ \dnorm{\dstateof{0}}^\momexp + \time^{\momexp-1} \int_0^\time \dnorm{ (\nabla \obj \circ \mirror)(\learn(\timealt) \dstateof{\timealt}) }^\momexp d\timealt + \sup_{r \leq \horizon} \dnorm{\levyof{r}}^\momexp }\notag\\
		&\leq 3^{\momexp-1} \bracks*{ \dnorm{\dstateof{0}}^\momexp + \sup_{r \leq \horizon} \dnorm{\levyof{r}}^\momexp + \time^{\momexp-1} \int_0^\time M^\momexp (1 + \dnorm{\learn(\timealt) \dstateof{\timealt)}})^\momexp d\timealt }\notag\\
		&\leq 3^{\momexp-1} \bracks*{ \dnorm{\dstateof{0}}^\momexp + \sup_{r \leq \horizon} \dnorm{\levyof{r}}^\momexp + 
		\frac{\big( 2M\time \big)^{\momexp}}{2\time} + \frac{\big( 2M\learn(0)\time \big)^{\momexp}}{2\time} \int_0^\time \dnorm{\dstateof{\timealt}}^\momexp d\timealt}\notag\\
		&\leq 3^{\momexp-1} \bracks*{ \dnorm{\dstateof{0}}^\momexp + \sup_{r \leq \horizon} \dnorm{\levyof{r}}^\momexp + 
		\frac{\big( 2M\horizon \big)^{\momexp}}{2\horizon} + \frac{\big( 2M\learn(0)\horizon \big)^{\momexp}}{2\horizon} \int_0^\time \dnorm{\dstateof{\timealt}}^\momexp d\timealt}
	\end{align}
	where the second line comes from Hölder's inequality and the third one from the global growth condition \eqref{assumption:H2}. 
	For notational convenience, let
	\begin{equation}
		Z(\horizon) = 3^{\momexp-1} \braces*{ \dnorm{\dstateof{0}}^\momexp + \sup_{r \leq \horizon} \dnorm{\levyof{r}}^\momexp + \frac{\big( 2M\horizon \big)^{\momexp}}{2\horizon} } \quad \text{and} \quad C(\horizon) = \frac{\big( 6M\learn(0)\horizon \big)^{\momexp}}{6\horizon}.
	\end{equation}
	Taking the supremum over $\timealt \leq \time$ therefore yields
	\begin{equation}
		\sup_{\timealt \leq \time} \dnorm{\dstateof{\timealt}}^\momexp \leq Z(\horizon) + C(\horizon) \int_0^\time \sup_{u \leq \timealt} \dnorm{\dstateof{u}}^\momexp d\timealt.
	\label{eq:supremum_before_gronwall}
	\end{equation}
	Given the particular form of the above inequality, one would be tempted to take the expectation and proceed by applying Grönwall's lemma. However, at this stage we do not know if the left-hand side of \cref{eq:supremum_before_gronwall} admits a finite expectation since this is the result we aim to prove, hence rendering the argument slightly more intricate.
	 To proceed, we instead need to localize our bound using the stopping times $\hit_n = \inf\setdef{\time \geq 0}{\dnorm{\dstateof{\time}} \geq n}$. 
	 We first show that $\exof*{\sup_{\timealt \leq \hit_n \wedge \time} \dnorm{\dstateof{\timealt}}^\momexp}$ is finite. Indeed, note that if $\timealt < \hit_n \wedge \time$ then $\dnorm{\dstateof{\timealt}} < n$ by definition. On the other hand, for $\timealt = \hit_n \wedge \time$, 
	 \begin{align}
	 	\dnorm{ \dstateof{\hit_n \wedge \time}} 
	 	&= \dnorm{ \dstateof{\hit_n \wedge \time -} + \jumpof \dstateof{\hit_n \wedge \time} } 
	 	= \dnorm{ \dstateof{\hit_n \wedge \time-} + \jumpof \levyof{\hit_n \wedge \time} }
		\notag\\
	 	&\leq \dnorm{ \dstateof{\hit_n \wedge \time-} } + \dnorm{\levyof{\hit_n \wedge \time}} + \dnorm{\levyof{\hit_n \wedge \time -}}
		\notag\\
	 	&\leq n + 2 \sup_{r \leq \horizon} \dnorm{\levyof{r}}.
	 \label{eq:norm_of_dstate_localized}
	 \end{align}
	 But $\levyof{r}$ is a martingale with finite $\momexp$-th moment, so Doob's $L^\momexp$-maximal inequality (see \eg \citet{KS98}) can be applied to the positive submartingale $\dnorm{\levyof{r}}$ to obtain
	 \begin{equation}
	 	\exof{\sup_{r \leq \horizon} \dnorm{\levyof{r}}^\momexp} \leq \parens*{\dfrac{\momexp}{\momexp-1}}^\momexp \exof{\dnorm{\levyof{\horizon}}^\momexp} < \infty.
	 \end{equation}
	 Combined with \cref{eq:norm_of_dstate_localized}, this yields
	 \begin{equation}
	 	\exof{\sup_{\timealt \leq \hit_n \wedge \time} \dnorm{\dstateof{\timealt}}^\momexp} \leq 2^{\momexp-1} \braces*{ n^\momexp + 2 \exof{ \sup_{r \leq \horizon} \dnorm{\levyof{r}}^\momexp } } < \infty,
	 \label{eq:sup_levy_p_moment}
	 \end{equation}
	 which proves that $g_n(\time) \defeq \exof{\sup_{\timealt \leq \hit_n \wedge \time} \dnorm{\dstateof{\timealt}}^\momexp}$ is a well-defined function on $[0, \horizon]$. We can therefore apply \cref{eq:supremum_before_gronwall} with $\time \leftarrow \hit_n \wedge \time$ and take the expectation on both sides, leading to
	 \begin{equation}
	 	g_n(\time) \leq \exof{Z(\horizon)} + C(\horizon) \exof*{ \int_0^{\hit_n \wedge \time} \sup_{u \leq \timealt} \dnorm{\dstateof{u}}^\momexp d\timealt } \leq \exof{Z(\horizon)} + C(\horizon) \int_0^\time g_n(\timealt) d\timealt,
	 \end{equation}
	 where $\exof{Z(\horizon)}$ is finite thanks to \cref{eq:sup_levy_p_moment}. Invoking Grönwall's lemma then provides the estimate
	 \begin{equation}
	 	g_n(\time) \leq \exof{Z(\horizon)} e^{C(\horizon) \time} \leq \exof{Z(\horizon)} e^{C(\horizon) \horizon}.
	 \end{equation}
	 Note that this upper bound does not depend on $n$ and that $\sup_{\timealt \leq \hit_n \wedge \time} \dnorm{\dstateof{\timealt}}^\momexp$ increases to $\sup_{\timealt \leq \time} \dnorm{\dstateof{\timealt}}^\momexp$ as $n \to \infty$, so the monotone convergence theorem yields
	 \begin{equation}
	 	\exof{\sup_{\timealt \leq \time} \dnorm{\dstateof{\timealt}}^\momexp} \leq \exof{Z(\horizon)} e^{C(\horizon) \horizon} < \infty,
	 \end{equation}
	 hence concluding the proof.
\end{proof}

In particular, \cref{lemma:moments_dual} implies that the drift term of \eqref{eq:LMF} admits finite moments of order $\momexp$ even when the gradient of the objective function explodes at the boundary of $\points$.

\begin{corollary}
\label{cor:moments_drift}
	Under assumptions \eqref{assumption:H1} and \eqref{assumption:H2}, 
	\begin{equation}
		\exof*{ \int_0^\time \dnorm*{\nabla \obj(\stateof{\timealt})}^\momexp d\timealt} < \infty \quad \text{for every } \time \geq 0.
	\end{equation}
\end{corollary}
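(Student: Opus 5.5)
The plan is to reduce the claim directly to \cref{lemma:moments_dual} via the growth condition \eqref{assumption:H2}.

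First, since $\stateof{\timealt} = \mirror(\learn(\timealt)\dstateof{\timealt})$, we can write $\nabla\obj(\stateof{\timealt}) = (\nabla\obj\circ\mirror)(\learn(\timealt)\dstateof{\timealt})$. Then \eqref{assumption:H2}, together with the fact that $\learn$ is positive and nonincreasing (so $\learn(\timealt)\leq\learn(0)$), gives the pointwise bound
\begin{equation}
\dnorm{\nabla\obj(\stateof{\timealt})}
	\leq M\bigl(1 + \learn(0)\dnorm{\dstateof{\timealt}}\bigr)
	\leq \max(1,\learn(0))\, M \bigl(1 + \dnorm{\dstateof{\timealt}}\bigr).
\end{equation}
This is exactly the estimate \eqref{eq:global_growth_proof} already used in the existence proof.

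Next, raise this bound to the power $\momexp$. By convexity of $u\mapsto u^{\momexp}$ we have $(1+a)^{\momexp}\leq 2^{\momexp-1}(1+a^{\momexp})$. Bounding $\dnorm{\dstateof{\timealt}}^{\momexp}\leq\sup_{u\leq\time}\dnorm{\dstateof{u}}^{\momexp}$ for every $\timealt\leq\time$ and integrating over $[0,\time]$ yields
\begin{equation}
\int_0^\time \dnorm{\nabla\obj(\stateof{\timealt})}^{\momexp} d\timealt
	\leq 2^{\momexp-1}\max(1,\learn(0))^{\momexp} M^{\momexp}\, \time\,\Bigl(1 + \sup_{u\leq\time}\dnorm{\dstateof{u}}^{\momexp}\Bigr).
\end{equation}
The integrand is measurable because $\dstate$ is càdlàg and $\nabla\obj\circ\mirror$ is continuous. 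Taking expectations (Tonelli applies since everything is nonnegative), the right-hand side is finite by \cref{lemma:moments_dual}, and the claim follows.

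There is no real obstacle here. The only point deserving care is that $\nabla\obj$ itself may be unbounded on $\points$, so one must route the bound through the dual variable via \eqref{assumption:H2} rather than through any bound on $\stateof{\timealt}$.
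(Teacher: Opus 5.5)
Your proposal is correct and follows essentially the same route as the paper: rewrite $\nabla f(X(s))$ as $(\nabla f\circ Q)(\eta(s)Y(s))$, apply \eqref{assumption:H2} with $\eta(s)\le\eta(0)$, use $(1+a)^p\le 2^{p-1}(1+a^p)$, and conclude with the bound on $\mathbb{E}\bigl[\sup_{s\le t}\|Y(s)\|_*^p\bigr]$ from \cref{lemma:moments_dual}. Your version is in fact slightly more careful, since it keeps the factor $t$ from integrating over $[0,t]$; the paper's final displayed bound drops it, which is harmless because $t$ is fixed.
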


\begin{proof}
	Thanks to the growth condition \eqref{assumption:H2}, we can write
	\begin{align}
		\exof*{ \int_0^\time \dnorm{\nabla \obj(\stateof{\timealt})}^\momexp d\timealt} 
		&= \exof*{ \int_0^\time \dnorm{ (\nabla \obj \circ \mirror)(\learn(\timealt) \dstateof{\timealt})}^\momexp d\timealt }
		\notag\\
		&\leq \exof*{ \int_0^\time M^\momexp(1 + \dnorm{\learn(\timealt) \dstateof{\timealt}})^\momexp d\timealt}
		\notag\\
		&\leq 2^{\momexp-1} M^\momexp \parens*{ 1 + \learn(0)^\momexp \exof{ \sup_{\timealt \leq \time} \dnorm{\dstateof{\timealt}}^\momexp } }
	\end{align}
	where the right-hand side is finite due to \cref{lemma:moments_dual}, therefore proving the claim.
\end{proof}

\subsection{Convex setting}

Inspired by the continuous-time analysis of \citet{MerSta18}, in this section we study the convergence of \eqref{eq:LMF} for convex functions by analyzing the behavior of the \emph{$\learn$-deflated} Fenchel coupling
\begin{equation}
	\energy(\basealt, \dpoint; \time) = \dfrac{1}{\learn(\time)} \fench(\basealt, \learn(\time) \dpoint), \quad \basealt \in \points, \dpoint \in \dpoints
\end{equation}
along trajectories of $\eqref{eq:LMF}$.
This analysis naturally calls for Itô's formula, the stochastic counterpart of the standard chain rule of differential calculus. Unfortunately, the classical version of Itô's formula only applies to functions that are twice continuously differentiable, which is generally not the case for $\dpoint \mapsto \energy(\basealt, \dpoint; \time)$ unless $\hreg$ is steep. 
Nevertheless, as we show in \cref{subsec:weakIto}, Itô's formula can be extended to functions that are merely $\lips$-smooth and convex. Since $\energy$ satisfies these conditions by \cref{lem:dFench}, we may apply \cref{thm:weakIto} to derive the following result:

\begin{lemma}[Stochastic descent lemma]
	For every fixed $\basealt \in \points$, the process $\energy(\time) \equiv \energy(\basealt, \dstateof{\time}; \time)$ satisfies 
	\begin{align}
		d\energy \leq - \dfrac{\dot\learn}{\learn^2}\parens*{\hreg(\basealt) - \hreg(\state)} d\time - \braket*{\state - \basealt}{\nabla \obj(\state)} d\time + \dfrac{\learn \tamebound^2}{2\hcst} d\time + dI + d\firstmart + d\secondmart,
	\end{align}
	where:
	\begin{enumerate}
	[\upshape(1)]
		\item
		$\displaystyle I(\time) = \!\int_0^\time \! \int_{\cjumpsup} \dfrac{1}{\learn} \bracks*{\fench(\basealt, \learn \dstate + \learn \jump) - \fench(\basealt, \learn \dstate)} \levymes(d\jump) d\timealt - \!\int_0^\time \! \int_{\cjumpsup} \! \braket*{\state - \basealt}{\jump} \levymes(d\jump) d\timealt$,
		\item
		$\displaystyle \timed{\firstmart} = \int_0^\time \braket*{\state - \basealt}{\diffmat d\brown} + \int_0^\time \int_{\cjumpinf} \dfrac{1}{\learn} \bracks*{ \fench(\basealt, \learn \dstate + \learn \jump) - \fench(\basealt, \learn \dstate) } \cpoisfulalt$,
		\item $\displaystyle \timed{\secondmart} = \int_0^\time \int_{\cjumpsup} \dfrac{1}{\learn} \bracks*{\fench(\basealt, \learn \dstate + \learn \jump) - \fench(\basealt, \learn \dstate)} \cpoisfulalt$.
	\end{enumerate}
\label{lemma:dV_1}
\end{lemma}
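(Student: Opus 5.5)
We apply \cref{thm:weakIto} to the time-dependent function $\psi_\time(\dpoint) = \energy(\basealt,\dpoint;\time) = \learn(\time)^{-1}\fench(\basealt,\learn(\time)\dpoint)$. The time dependence is handled as an extra deterministic drift, either by treating $(\time,\dstateof{\time})$ jointly or by mollifying in $\dpoint$ only and using the classical time-dependent Itô formula. This is legitimate because $\psi_\time$ is $C^1$ in $\time$: $\learn$ is differentiable and $\hconj$ is $C^1$.

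The first step is the structure of $\psi_\time$ in $\dpoint$. By \cref{lem:dFench}, $\nabla_\dpoint\psi_\time(\dpoint) = \mirror(\learn\dpoint) - \basealt$. Since $\mirror$ is $(1/\hcst)$-Lipschitz by \cref{prop:mirror}, $\psi_\time$ is convex and $(\learn/\hcst)$-smooth.

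The second step is the time derivative. We have
\[
\partial_\time\psi_\time(\dpoint) = -\frac{\dot\learn}{\learn^2}\fench(\basealt,\learn\dpoint) + \frac{\dot\learn}{\learn}\braket{\dpoint}{\mirror(\learn\dpoint)-\basealt}.
\]
Writing $\point=\mirror(\learn\dpoint)$, we have $\fench(\basealt,\learn\dpoint) = \hreg(\basealt)-\hreg(\point)-\learn\braket{\dpoint}{\basealt-\point}$. The $\braket{\dpoint}{\cdot}$ terms then cancel, leaving exactly $-\frac{\dot\learn}{\learn^2}[\hreg(\basealt)-\hreg(\point)]$.

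The third step is to write $\dstate$ as a Lévy-type process using the centered Lévy--Itô decomposition:
\[
f = -\nabla\obj(\state) - \int_{\cjumpsup}\jump\,\levymes(d\jump), \qquad g=\diffmat, \qquad H=G=\jump.
\]
The integrability hypotheses on $f$ follow from \cref{cor:moments_drift} and from $\momexp>1$, which makes $\int_{\cjumpsup}\dnorm{\jump}\levymes(d\jump)$ finite.

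\cref{thm:weakIto} then yields the following terms.
\begin{itemize}
\item The drift contributes $\braket{\state-\basealt}{-\nabla\obj(\state)}$ together with $-\int_{\cjumpsup}\braket{\state-\basealt}{\jump}\levymes(d\jump)$; the latter is the second part of $I$.
\item The Brownian part gives the first piece of $\firstmart$.
\item The compensated small jumps give the second piece of $\firstmart$.
\item The correction term is $\frac{\learn}{2\hcst}(\contbound^2+\shortbound^2) = \frac{\learn\tamebound^2}{2\hcst}$.
\item The raw large-jump integral against $\poisful$ is split as $\cpoisful + \levymes(d\jump)d\time$. This produces $\secondmart$ plus the first part of $I$.
\end{itemize}

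The main obstacle is justifying that $\secondmart$ is a genuine (not merely local) martingale, i.e.\ that its integrand lies in $\Hlevyof{1}{T}{\{\cjumpsup\}}{\R}$. By \cref{lemma:F_Lipschitz}, the integrand is bounded by $\diamX\dnorm{\jump}$, and $\int_{\cjumpsup}\dnorm{\jump}\levymes(d\jump)<\infty$ because $\momexp>1$ and $\levymes$ is finite away from zero. This gives $\Hlevyof{1}{}{}{}$-integrability and hence the martingale property of the compensated integral.

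A second point to check is the time-dependent version of the weak Itô formula. We handle it by applying the mollification argument to $\psi_\eps(\time,\dpoint)$. Since $\partial_\time$ commutes with convolution in $\dpoint$, $\partial_\time\psi_\eps\to\partial_\time\psi$ locally uniformly; this is where the continuity of $\mirror$ and $\hreg$ on compact $\points$ enters. The rest of the limit argument proceeds exactly as in the proof of \cref{thm:weakIto}.
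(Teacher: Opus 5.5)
Your argument is correct, but it handles the time dependence of $\learn$ differently from the paper.

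\emph{The paper's route.} It never differentiates $\psi_\time$ in $\time$. It sets $Z=\learn\dstate$ and uses semimartingale integration by parts to write $dZ=\dot\learn\,\dstate\,d\time+\learn\,d\dstate$, so $Z$ is itself a Lévy-type process with coefficients $\learn\diffmat$ and $\learn\jump$. It then applies \cref{thm:weakIto} verbatim to the \emph{time-independent} convex $(1/\hcst)$-smooth function $\psi=\fench(\basealt,\cdot)$. Finally, it integrates by parts once more in $\energy=\psi(Z)/\learn$. The term $-\frac{\dot\learn}{\learn^{2}}[\hreg(\basealt)-\hreg(\state)]$ comes from the same cancellation you perform: $-\frac{\dot\learn}{\learn^{2}}\psi(Z)$ cancels against the contribution $\frac{\dot\learn}{\learn}\braket{\state-\basealt}{\dstate}$ of the drift of $Z$.

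\emph{What each route buys.} The paper's route reuses \cref{thm:weakIto} exactly as proved. Yours is more direct: the inequality is obtained for $\energy$ itself, and the time-varying smoothness constant $\learn(\time)/\hcst$ appears naturally. The price is a time-dependent weak Itô formula. Your mollify-in-$\dpoint$ sketch does supply it, because $\partial_\time\psi_\time$ is bounded and $\hreg\circ\mirror$ is uniformly continuous on $\dpoints$. Hence the extra drift term passes to the limit by dominated convergence, while the Hessian and descent-lemma bounds hold at each time with constant $\learn(\time)/\hcst$.

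Two minor remarks:
\begin{enumerate}
\item The classical $C^{1,2}$ Itô formula you invoke needs $\dot\learn$ to be continuous, not just $\learn$ differentiable. This is harmless for the schedules used in the paper.
\item Your first option, treating $(\time,\dstate)$ jointly, does not fit \cref{thm:weakIto} as stated. The map $(\time,\dpoint)\mapsto\psi_\time(\dpoint)$ is in general neither jointly convex nor jointly Lipschitz-smooth, so mollifying in $\dpoint$ alone is the option that actually works.
\end{enumerate}
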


\begin{proof}
	First, let us define the auxiliary stochastic process $Z(\time) = \learn(\time) \dstateof{\time}$. Due to the integration by parts theorem for semi-martingales and thanks to the fact that $\learn$ is of finite variation, we get (\cf \citep[Lemma 20.10]{Kal21})
	\begin{align}
		dZ(\time) &= \dstateof{\time-} d\learn(\time) + \learn(\time-) d\dstateof{\time}
		\notag\\
		&= \dot\learn(\time) \dstateof{\time-} d\time + \learn(\time) d\dstateof{\time}
		\notag\\
		&= \dot\learn \dstate d\time
			- \learn \bracks*{ \nabla \obj(\state) + \int_{\cjumpsup} \jump \levymes(d\jump) } d\time
		\notag\\
		&+ \learn \diffmat d\brown
			+ \learn \int_{\cjumpinf} \jump \cpoisful
			+ \learn \int_{\cjumpsup} \jump \poisful,
	\end{align}
	where we have suppressed the dependence on time in the last equality to lighten the notations. Note that $Z(\time)$ is evidently a Lévy-type process thanks to \cref{lemma:moments_dual,cor:moments_drift}.
	Next, we consider the function $\psi(z) = \fench(\basealt, z)$, which is obviously convex as the sum of convex functions, and which satisfies $\nabla \psi(z) = \nabla \hconj(z) - \basealt = \mirror(z) - \base$ and thus is also $(1/\hcst)$-smooth by \cref{prop:mirror}.
	Consequently, we can again use the integration by parts formula in tandem with the weak Itô's formula of \cref{thm:weakIto} to obtain
	\begin{align}
		d\energy &= -\dfrac{\dot\learn}{\learn^2} \psi(Z) d\time + \dfrac{1}{\learn} d\psi(Z)
		\notag\\
		&\leq -\dfrac{\dot\learn}{\learn^2} \psi(Z) d\time + \frac{1}{\learn} \braket*{\nabla \psi(Z)}{ dZ^c} + \dfrac{\learn\tamebound^2}{2\hcst} d\time
		\notag\\
		+& \int_{\cjumpinf} \! \dfrac{1}{\learn} \bracks*{ \psi(Z + \learn \jump) - \psi(Z) } \cpoisful + \int_{\cjumpsup} \! \dfrac{1}{\learn} \bracks*{ \psi(Z + \learn \jump) - \psi(Z) } \poisful,
	\label{eq:dV1}
	\end{align}
	where $dZ^c$ denotes the continuous part of $Z$. Note that by definition of \eqref{eq:LMF}, we have $\nabla \psi(Z) = \mirror(Z) - \basealt = \state - \base$. Moreover, by optimality of $\state$, we also have $\hconj(Z) = \braket{\state}{Z} - \hreg(\state)$ and thus
	\begin{equation}
		- \dfrac{\dot\learn}{\learn^2} \psi(Z) = - \dfrac{\dot\learn}{\learn^2} \fench(\basealt, Z) = - \dfrac{\dot\learn}{\learn^2} \parens*{ \hreg(\basealt) - \hreg(\state) + \braket*{\state - \basealt}{Z} }.
	\end{equation}
	This leads to 
	\begin{align}
		- \dfrac{\dot\learn}{\learn^2} \psi(Z) d\time + \frac{1}{\learn} \braket*{\nabla \psi(Z)}{dZ^c} 
		&= \bracks*{ - \dfrac{\dot\learn}{\learn^2} \psi(Z) + \dfrac{\dot\learn}{\learn} \braket*{\state - \basealt}{\dstate} } d\time
		\notag\\
			- & \braket*{\state - \basealt}{\nabla \obj(\state)} d\time - \int_{\cjumpsup} \braket*{\state - \basealt}{\jump}\levymes(d\jump)d\time
		\notag\\
			+ & \braket*{\state - \basealt}{\diffmat d\brown}
		\notag\\
		&= -\dfrac{\dot\learn}{\learn^2}\parens*{ \hreg(\basealt) - \hreg(\state) } d\time
		\notag\\
			- & \braket*{\state - \basealt}{\nabla \obj(\state)} d\time - \int_{\cjumpsup} \braket*{\state - \basealt}{\jump}\levymes(d\jump)d\time
		\notag\\
			+ & \braket*{\state - \basealt}{\diffmat d\brown},
	\end{align}
	which, when combined with \cref{eq:dV1} and after rearranging the terms, yields exactly the upper bound on $d\energy$ stated in the lemma.
\end{proof}

\begin{remark}
	When $\levy$ is continuous---that is, when it reduces to a Brownian motion---both $I(\time)$ and $\timed{\secondmart}$ are identically zero, and $\xi(\time)$ simplifies to $\int_0^\time \braket*{\state - \basealt}{\diffmat d\brown}$. In this case, the analysis is considerably simpler, since the noise process affecting $\energy$ is just the scalar product between $\stateof{\time} - \basealt$ (which is always bounded) and the Brownian motion $\diffmat \brownof{\time}$ (which has exponentially light tails). Moreover, it then also recovers the analogous result proved for \eqref{eq:SMF} by \citet{MerSta18} under diffusion-like random perturbations.
\end{remark}

In the next three lemmas, we analyze the long-run behavior of the stochastic processes $I$, $\firstmart$ and $\secondmart$. We begin by showing that $I$ can be upper bounded in terms of the heavy noise intensity $\heavybound^\momexp$ and a specific integral involving the learning rate.

\begin{lemma}[Estimation of $I$]
	$\displaystyle I(\time) \leq \dfrac{\diamX^{2-\momexp} \heavybound^\momexp}{\hcst^{\momexp-1}} \int_0^\time \learn(\timealt)^{\momexp-1} d\timealt$, where $\diamX \defeq \max_{\point,\pointalt} \norm{\pointalt - \point}$ is the diameter of $\points$.
\label{lemma:I_upper_bound}
\end{lemma}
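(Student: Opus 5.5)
The plan is to bound the integrand of $I$ pointwise, for each fixed $\timealt$ and each jump $\jump$ with $\dnorm{\jump}\geq\cjump$, and then integrate against $\levymes(d\jump)\,d\timealt$. Concretely, the integrand is
\begin{equation*}
\frac{1}{\learn}\bracks*{\fench(\basealt,\learn\dstate+\learn\jump)-\fench(\basealt,\learn\dstate)} - \braket{\state-\basealt}{\jump},
\end{equation*}
with $\state=\mirror(\learn\dstate)$. I will apply \cref{lemma:diffF_upperbound} with $\dpoint\gets\learn\dstate$, $\dvec\gets\learn\jump$ and exponent $r\gets\momexp\in(1,2]$. This gives
\begin{equation*}
\fench(\basealt,\learn\dstate+\learn\jump)-\fench(\basealt,\learn\dstate)
	\leq \learn\braket{\jump}{\state-\basealt}
		+ \frac{\diamX^{2-\momexp}}{\hcst^{\momexp-1}}\learn^{\momexp}\dnorm{\jump}^{\momexp}.
\end{equation*}

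Dividing by $\learn$, the linear term $\braket{\jump}{\state-\basealt}$ cancels exactly against the compensator term $-\braket{\state-\basealt}{\jump}$ in $I$. What remains is the pointwise bound
\begin{equation*}
\frac{\diamX^{2-\momexp}}{\hcst^{\momexp-1}}\,\learn(\timealt)^{\momexp-1}\dnorm{\jump}^{\momexp}.
\end{equation*}
Integrating over $\braces{\cjumpsup}$ against $\levymes$ yields $\longbound^{\momexp}=\heavybound^{\momexp}$ by definition, and integrating in $\timealt$ from $0$ to $\time$ gives the claimed bound.

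The only point needing care is well-definedness: each piece of $I$ should be finite, so that splitting the integral into the two terms is legitimate. The compensator term is finite because $\points$ is bounded and $\int_{\cjumpsup}\dnorm{\jump}\,\levymes(d\jump)<\infty$; this follows from $\momexp>1$ together with $\levymes$ being finite away from $0$. The Fenchel difference term is controlled by \cref{lemma:F_Lipschitz}, which bounds it by $\diamX\dnorm{\jump}$. So both integrals converge absolutely and the pointwise inequality integrates cleanly. I do not expect a real obstacle here. The substantive work, trading the quadratic Fenchel bound for the $\momexp$-th power interpolation via the $(1/\hcst)$-Lipschitz mirror map and the diameter, has already been done in \cref{lemma:diffF_upperbound}.
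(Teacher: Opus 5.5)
Your proposal is correct and follows the paper's proof: it applies \cref{lemma:diffF_upperbound} with $\dvec \gets \learn\jump$ and $r \gets \momexp$, cancels the linear term against the compensator in $I$, and integrates the remaining $\learn^{\momexp-1}\dnorm{\jump}^{\momexp}$ term against $\levymes$ to obtain $\heavybound^{\momexp}$. Your well-definedness check via \cref{lemma:F_Lipschitz} and $\int_{\cjumpsup}\dnorm{\jump}\,\levymes(d\jump)<\infty$ is a sound addition that the paper leaves implicit.
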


\begin{proof}
	To bound this quantity, we readily obtain from \cref{lemma:diffF_upperbound}
	\begin{align}
		\int_{\cjumpsup}
			&\dfrac{1}{\learn} \bracks*{ \fench(\basealt, Z + \learn \jump) - \fench(\basealt, Z) } \levymes(d\jump)
		\notag\\
		&\leq \int_{\cjumpsup} \dfrac{1}{\learn} \bracks*{ \braket*{Z + \learn \jump - Z}{\mirror(Z) - \basealt} + \dfrac{\diamX^{2-\momexp}}{\hcst^{\momexp-1}} \dnorm{Z + \learn \jump - Z}^\momexp } \levymes(d\jump)
		\notag\\
		&=\int_{\cjumpsup} \braket*{\state - \basealt}{\jump} \levymes(d\jump) + \dfrac{\diamX^{2-\momexp}}{\hcst^{\momexp-1}} \learn^{\momexp-1} \int_{\cjumpsup} \dnorm{\jump}^\momexp \levymes(d\jump),
	\end{align}
	where the first term compensates the second term of $I$, thus finishing the proof. 
\end{proof}

Next, we use the stochastic integration tools introduced in \cref{subsec:stochastic_integration} to show that $\firstmart$ is a square-integrable martingale, and to characterize its asymptotic long-run behavior via the law of iterated logarithm. 

\begin{lemma}[Asymptotic growth of $\firstmart$]
\label{lem:firstmart-bound}
$\firstmart$ is a square-integrable martingale satisfying $\timed{\firstmart} = \bigoh\parens*{\tamebound\diamX \sqrt{\time \log\log \time}}$ with probability $1$.
\label{lemma:firstmart-bound}
\end{lemma}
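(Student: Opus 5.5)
We treat the two pieces of $\firstmart$ separately: the Brownian integral $\firstmart_{c}(\time) = \int_0^\time \braket{\stateof{\timealt} - \basealt}{\diffmat\,d\brownof{\timealt}}$ and the compensated small-jump integral
\begin{equation}
\firstmart_{d}(\time) = \int_0^\time \int_{\cjumpinf} \frac{1}{\learn}\bracks*{\fench(\basealt, \learn\dstate + \learn\jump) - \fench(\basealt, \learn\dstate)} \cpoisfulalt .
\end{equation}
We check that each lies in the appropriate $L^{2}$ integrand class of \cref{subsec:stochastic_integration}, compute its predictable quadratic variation, and then apply the law of iterated logarithm (\cref{lemma:LIL}) to the sum.

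\textbf{Square-integrability.} For the continuous part, the integrand $\diffmat^{\top}(\stateof{\timealt-} - \basealt)$ is predictable. Since $\stateof{\timealt}, \basealt \in \points$, it satisfies $\norm{\diffmat^{\top}(\state - \basealt)}^{2} \leq \matnorm{\diffmat}^{2}\diamX^{2} = \contbound^{2}\diamX^{2}$. It therefore belongs to $\H2of{\horizon}{\R^{m}}$ for every $\horizon$, and $\firstmart_{c}$ is a continuous square-integrable martingale.

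For the jump part, we use \cref{lemma:F_Lipschitz}, which states that $\fench(\basealt,\argdot)$ is $\diamX$-Lipschitz in the dual norm. Hence the integrand is bounded by $\frac{1}{\learn}\diamX\,\learn\dnorm{\jump} = \diamX\dnorm{\jump}$, and its square has $\levymes$-integral over $\{\cjumpinf\}$ at most $\diamX^{2}\shortbound^{2}$ per unit time. This places the integrand in $\Hlevyof{2}{\horizon}{\{\cjumpinf\}}{\R}$, evaluated at left limits so that it is predictable, and $\firstmart_{d}$ is a càdlàg square-integrable martingale. So $\firstmart$ is square-integrable.

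\textbf{Quadratic variation.} The two parts are orthogonal: one is continuous and the other purely discontinuous. By the angle-bracket identities,
\begin{equation}
\meyerof{\firstmart}(\time) \leq (\contbound^{2} + \shortbound^{2})\diamX^{2}\time = \tamebound^{2}\diamX^{2}\time .
\end{equation}
The jumps of $\firstmart$ come only from $\firstmart_{d}$, and are bounded by $\diamX\dnorm{\jump} < \diamX\cjump$. So the bounded-jump hypothesis of \cref{lemma:LIL} holds with $C = \diamX\cjump$.

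\textbf{Applying the LIL.} The main obstacle is that \cref{lemma:LIL} only controls $\firstmart$ on the event $\{\meyerof{\firstmart}(\infty) = \infty\}$. On that event it gives
\begin{equation}
\abs{\firstmart(\time)} = \bigoh\parens*{\sqrt{\meyerof{\firstmart}(\time)\log\log\meyerof{\firstmart}(\time)}} = \bigoh\parens*{\tamebound\diamX\sqrt{\time\log\log\time}},
\end{equation}
using the linear bound on $\meyerof{\firstmart}$ and the monotonicity of $x \mapsto x\log\log x$ for large $x$.

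On the complementary event $\{\meyerof{\firstmart}(\infty) < \infty\}$, a square-integrable martingale with bounded quadratic variation converges almost surely. (To keep this self-contained, one can localize with the stopping times $\inf\{\time : \meyerof{\firstmart}(\time) > k\}$ and use $L^{2}$-boundedness of the stopped martingales.) Hence $\firstmart(\time)$ is bounded along the path, which is trivially $\bigoh(\sqrt{\time\log\log\time})$.

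Alternatively, one can avoid the case split by adding an independent Brownian motion $B$ to $\firstmart$, so that the angle bracket diverges deterministically. Applying the LIL to $\firstmart + B$ and subtracting the LIL bound for $B$ gives the same conclusion. Combining the two events yields the almost sure bound $\firstmart(\time) = \bigoh(\tamebound\diamX\sqrt{\time\log\log\time})$.
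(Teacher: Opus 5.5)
Your proposal is correct and follows the paper's proof essentially step for step. Both pieces are shown square-integrable via the bound $\diamX^{2}\contbound^{2}$ and \cref{lemma:F_Lipschitz}; the bracket is estimated as $d\meyerof{\firstmart} \leq \tamebound^{2}\diamX^{2}\,d\time$; jumps are bounded by $\cjump\diamX$; and \cref{lemma:LIL} is applied on $\{\meyerof{\firstmart}(\infty) = \infty\}$. The only difference is on the complementary event: the paper simply cites Lépingle's Proposition~1 for almost-sure convergence, whereas you sketch the localization argument yourself. Your independent-Brownian-motion alternative also works, provided you scale the added motion by $\tamebound\diamX$ so that this factor is preserved in the bound.
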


\begin{proof}
	First, note that
	\begin{equation}
		\norm{\diffmat^{\top} (\state - \basealt)}^2 \leq \norm{\state - \basealt)}^2 \norm{\diffmat}_F^2 \leq \diamX^2 \contnoise^2
	\end{equation}
	so $\int_0^\time \braket*{\state - \basealt}{\diffmat d\brown}$ is a square-integrable martingale.
	On the other hand, using \cref{lemma:F_Lipschitz}, we also get
	\begin{align}
		&\int_0^\time \int_{\cjumpinf} \dfrac{1}{\learn^2} \abs*{ \fench(\basealt, Z + \learn\jump) - \fench(\basealt, Z) } \levymes(d\jump) d\timealt
		\\
		\leq \,\, & \diamX^2 \int_0^\time \int_{\cjumpinf} \dfrac{1}{\learn^2} \dnorm{Z + \learn\jump - Z}^2 \levymes(d\jump)d\timealt\\
		= \,\, & \diamX^2 \smalljumps^2 \time,
	\end{align}
	where $\smalljumps^2$ is finite since $\levymes$ is a Lévy measure. As a result, 
	\begin{equation}
		G(\time, \jump) \defeq \dfrac{1}{\learn} \bracks*{\fench(\basealt, Z + \learn\jump) - \fench(\basealt, Z)} \in \Hlevyof{2}{T}{\braces{\cjumpinf}}{\R} \quad \text{for every } T \geq 0,
	\end{equation}
	and thus $\int_0^\time \int_{\cjumpinf} G(\timealt, \jump) \cpoisfulalt$ is also a square-integrable martingale. It then immediately follows that $\firstmart$ is itself a square-integrable martingale.
	To obtain the asymptotic growth of $\firstmart$, we use the law of iterated logarithm for square-integrable martingales (\cf \cref{lemma:LIL}). Using standard stochastic calculus rules and the previous computations, we obtain that the predictable quadratic variation of $\firstmart$ satisfies
	\begin{align}
		d\meyerof{\firstmart} &= \norm{\diffmat^{\top} (\state - \basealt)}^2 d\time + \int_{\cjumpinf} \dfrac{1}{\learn^2} \abs{ \fench(\basealt, Z + \learn \jump) - \fench(\basealt, Z) }^2 \levymes(d\jump) d\time\\
		&\leq \tamebound^2 \diamX^2 d\time.
	\label{eq:meyer_of_xi1}
	\end{align}
	Moreover, the jumps of $\firstmart$ are bounded as $\abs{\jumpof \timed{\firstmart}} \leq c\diamX$ \as thanks to \cref{lemma:F_Lipschitz} and the fact that $\dnorm{\jump} \leq c$.
	Assume first that $\lim_{\time \to \infty} \meyerof{\xi}(\time) = \infty$. Then, the law of iterated logarithm of \cref{lemma:LIL} applies to $\firstmart$ and yields
	\begin{align}
		\limsup_{\time \to \infty} \dfrac{\abs{\firstmart}}{\tamebound\diamX \sqrt{\time\log\log\time}} 
		&= \limsup_{\time\to\infty} \dfrac{\abs{\firstmart}}{\sqrt{ 2\meyerof{\firstmart} \log\log \meyerof{\firstmart}}} \dfrac{\sqrt{2\meyerof{\firstmart} \log \log \meyerof{\firstmart}}}{\tamebound\diamX \sqrt{\time\log\log\time}}
		\notag\\
		&\leq \limsup_{\time\to\infty} \dfrac{\abs{\firstmart}}{\sqrt{ 2\meyerof{\firstmart} \log\log \meyerof{\firstmart}}} \dfrac{2\tamebound\diamX \sqrt{\time\log\log\time}}{\tamebound\diamX \sqrt{\time\log\log\time}} 
		\leq 2,
	\end{align}
	where the second-to-last inequality holds thanks to \cref{eq:meyer_of_xi1} and the fact that $\time$ can be taken as large as needed. It follows that $\timed{\firstmart} = \bigoh\parens*{ \tamebound\diamX \sqrt{\time\log\log\time} }$ on the event $\braces{\lim_{\time \to \infty} \meyerof{\firstmart}(\time) = \infty}$. 
	On the other hand, when $\lim_{\time \to \infty} \meyerof{\firstmart}(\time) < \infty$, \citep[Proposition 1]{Lep78} implies that $\timed{\firstmart}$ converges to an almost-surely finite random variable. For this reason, it holds that $\timed{\firstmart} = o\parens*{ \tamebound\diamX \sqrt{\time\log\log\time} }$ on the event $\braces{\lim_{\time \to \infty} \meyerof{\firstmart}(\time) < \infty}$, and thus $\timed{\firstmart} = \bigoh\parens*{ \tamebound\diamX \sqrt{\time\log\log\time} }$ \as, which concludes the proof.
\end{proof}

Finally, we establish that $\secondmart$ is also a martingale using standard stochastic integration arguments. However, $\secondmart$ is not square-integrable in general, and we therefore rely on a refined version of the law of large numbers for discontinuous martingales to control its growth rate.

\begin{lemma}[Asymptotic growth of $\secondmart$]
\label{lem:secondmart-bound}
$\secondmart$ is a martingale satisfying $\timed{\secondmart} = \baroh\parens*{\time^{1/\momexp}}$ with probability $1$.\footnote{Recall that $g(t) = \baroh(t^\alpha)$ means that $g(t) = \bigoh\parens{\time^{\alpha + o(1)}}$, that is, $g(t) = \bigoh(l(t) t^\alpha)$ where $l$ is a function of subpolynomial growth such that $l(t)/t^\delta \to 0$ for all $\delta > 0$.}
\label{lemma:secondmart-bound}
\end{lemma}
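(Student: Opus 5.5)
The plan is to split the statement into two parts: first that $\secondmart$ is a genuine martingale, then its almost-sure growth rate via the strong law of large numbers of \cref{lemma:SLLN} with exponent $p=\momexp$. Set
\[
H(\timealt,\jump) \defeq \frac{1}{\learn(\timealt)}\bracks*{\fench(\basealt,\learn(\timealt)\dstateof{\timealt-}+\learn(\timealt)\jump)-\fench(\basealt,\learn(\timealt)\dstateof{\timealt-})}
\]
on the region $\{\dnorm{\jump}\geq\cjump\}$. This integrand is predictable because it depends on $\dstateof{\timealt-}$. By the Lipschitz estimate of \cref{lemma:F_Lipschitz}, $\abs{H(\timealt,\jump)}\leq \diamX\dnorm{\jump}$ deterministically, uniformly in $\timealt$ and in $\learn$.

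\textbf{Martingale property.} From the bound on $H$,
\[
\exof*{\int_0^T\int_{\cjumpsup}\abs{H}^{\momexp}\levymes(d\jump)d\timealt}\leq \diamX^{\momexp}\heavybound^{\momexp}T<\infty .
\]
Since $\levymes$ is finite on $\{\dnorm{\jump}\geq\cjump\}$ and $\momexp\geq1$, this also yields $\exof{\int\int\abs{H}\,\levymes\,d\timealt}<\infty$ (bound $\dnorm{\jump}\leq \dnorm{\jump}^{\momexp}/\cjump^{\momexp-1}$ there). Hence $H\in\Hlevyof{1}{T}{\braces{\cjumpsup}}{\R}\cap\Hlevyof{\momexp}{T}{\braces{\cjumpsup}}{\R}$ for every $T$. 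By the construction recalled in \cref{subsec:stochastic_integration}, $\secondmart$ is therefore a càdlàg martingale.

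\textbf{Growth rate.} Fix $\delta>0$ and apply \cref{lemma:SLLN} with $p=\momexp$ and the deterministic increasing process $A(\time)=\time^{1/\momexp+\delta}$. The condition of the lemma is bounded by
\[
\int_0^\infty\int_{\cjumpsup}\frac{\abs{H}^{\momexp}}{(1+\timealt^{1/\momexp+\delta})^{\momexp}}\levymes(d\jump)d\timealt
\leq \diamX^{\momexp}\heavybound^{\momexp}\int_0^\infty\frac{d\timealt}{(1+\timealt^{1/\momexp+\delta})^{\momexp}} .
\]
Here we used $\abs{H}^2\wedge\abs{H}^{\momexp}\leq\abs{H}^{\momexp}$. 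The last integral is finite because its integrand decays like $\timealt^{-1-\momexp\delta}$. Consequently $\secondmart(\time)/\time^{1/\momexp+\delta}\to0$ almost surely for each $\delta$.

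Taking $\delta=1/k$ over a countable sequence $k\in\N$ gives a single almost-sure event on which $\secondmart(\time)=\littleoh(\time^{1/\momexp+\delta})$ for every $\delta>0$. This is exactly $\baroh(\time^{1/\momexp})$. If the subpolynomial function $l$ has to be exhibited explicitly, it can be built from these countably many rates by a standard diagonal argument.

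\textbf{Main obstacle.} The only delicate step is checking that \cref{lemma:SLLN}, which is borrowed from \citep{LM89}, really applies for $p<2$ when $\secondmart$ is not square-integrable. This is where the pathwise bound $\abs{H}\leq\diamX\dnorm{\jump}$ matters: it makes the compensator of $\sum\abs{\jumpof\secondmart}^{\momexp}$ deterministic-dominated, so the required summability holds uniformly in $\omega$ and no moment estimates on the trajectory are needed.
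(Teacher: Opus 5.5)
Your proposal is correct and follows essentially the same route as the paper: the pathwise bound $\abs{H}\leq \diamX\dnorm{\jump}$ from \cref{lemma:F_Lipschitz} makes the integrand integrable against the compensated measure, which gives the martingale property, and \cref{lemma:SLLN} applied with $A(\time)=\time^{1/\momexp+\precval}$ then yields $\secondmart(\time)/\time^{1/\momexp+\precval}\to0$ almost surely for every $\precval>0$. Two points of extra care that the paper leaves implicit are your explicit check that $H$ also belongs to the $\momexp$-integrable class required by \cref{lemma:SLLN}, and your countable intersection over $\precval=1/k$.
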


\begin{proof}
	Similarly to the proof of \cref{lemma:firstmart-bound}, we have 
	\begin{equation}
		\int_0^\time \int_{\cjumpsup} \dfrac{1}{\learn} \abs*{ \fench(\basealt, Z + \learn \jump) - \fench(\base, Z) } \levymes(d\jump) \leq \diamX \parens*{\int_{\cjumpsup} \dnorm{\jump} \levymes(d\jump)} \time,
	\end{equation}
	where $\int_{\cjumpsup} \dnorm{\jump} \levymes(d\jump)$ is finite since $\levy$ is an integrable process. This means that
	 \begin{equation}
	 	H(\time, \jump) \defeq \frac{1}{\learn} \bracks*{ \fench(\basealt, Z + \learn \jump) - \fench(\basealt, Z)} \in \Hlevyof{1}{T}{\braces{\cjumpsup}}{\R} \quad \text{for all } T \geq 0,
	 \end{equation} which immediately leads to $\secondmart$ being a martingale. Letting $A(\time) = \time^{\frac{1}{\momexp} + \precval}$ for some $\precval > 0$, we also have that
	\begin{align}
		\phi(\time) \defeq \int_{\cjumpsup} \dfrac{\abs*{H(\time, \jump)}^2 \wedge \abs*{H(\time, \jump)}^\momexp}{(1 + A(\time))^\momexp} \levymes(d\jump) &\leq \dfrac{\diamX^\momexp}{\time^{1 + \precval \momexp}} \int_{\cjumpsup} \dnorm{\jump}^\momexp \levymes(d\jump)\\
		&= \dfrac{\diamX^\momexp \heavybound^\momexp}{\time^{1 + \precval \momexp}} 
	\end{align}
	for $\time$ big enough. It follows that $\phi$ is almost-surely integrable on $[0, \infty)$, and thus $\timed{\secondmart}/\time^{\frac{1}{\momexp} + \precval} \to 0$ \as for every $\precval > 0$ by the strong law of large number (\cf \cref{lemma:SLLN}). It follows that $\timed{\secondmart} = \baroh\parens*{\time^{1/\momexp}}$, hence finishing the proof.
\end{proof}

With the preparatory estimates on the drift term $I$ and the martingales $\firstmart$ and $\secondmart$ in hand, we are now in a position to prove \cref{prop:energy-Ito} from the main text (restated below for convenience):

\ENERGY*

\begin{proof}[Proof of \cref{prop:energy-Ito}]
	The result follows directly from \cref{lemma:dV_1} by integrating over the interval $[0, \time]$ and invoking the bound on $I$ established in \cref{lemma:I_upper_bound}. The martingale properties of $\firstmart$ and $\secondmart$---the former being square-integrable and the latter integrable---is then a consequence of \cref{lemma:firstmart-bound,lemma:secondmart-bound}.
\end{proof}

On the other hand, the previous result also allows us to derive the following theorem, which will serve as a baseline for many of the convergence rates established in the sequel.

\begin{theorem}
	For every fixed $\basealt \in \points$, \eqref{eq:LMF} enjoys the bounds
	\begin{align}
		\exof*{\int_0^\time \braket*{\stateof{\timealt} - \basealt}{\nabla \obj(\stateof{\timealt})} d\timealt} \leq \time \timed{\cvxrate_0}
	\label{eq:L2_bound_convex}
	\end{align}
	and, with probability one, 
	\begin{align}
		\int_0^\time \braket*{\stateof{\timealt} - \basealt}{\nabla \obj(\stateof{\timealt})} d\timealt \leq \time\timed{\cvxrate_0} + \baroh\parens*{\time^{1/\momexp}} + \bigoh\parens*{\tamebound\diamX \sqrt{\time \log\log \time}},
	\label{eq:almost_sure_bound_convex}
	\end{align}
	where 
	\begin{align}
		\timed{\cvxrate_{0}}
	= \frac{\braket*{\stateinit - \basealt}{\dstateinit}}{\time} 
	+ \frac{\hrange}{\time\timed{\learn}}
		+ \frac{\tamebound^{2}}{2\hstr\time}
				\int_{\tstart}^{\time} \eval{\learn}{\timealt} \dd\timealt
		+ \frac{\diamX^{2-\momexp}\heavybound^{\momexp}}{\hstr^{\momexp-1}\time}
			\int_{\tstart}^{\time} \eval{\learn}{\timealt}^{\momexp-1} \dd\timealt
	\end{align}
	and $\hrange \defeq \max\hreg - \min\hreg$.

\label{thm:convex_convergence_continuous}
\end{theorem}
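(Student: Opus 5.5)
Integrating the stochastic descent lemma (\cref{lemma:dV_1}) over $[0,\time]$ and rearranging gives the estimate directly, since that lemma contains the term $-\braket{\state-\basealt}{\nabla\obj(\state)}\dd\time$. Writing $\energy(\time)=\energy(\basealt,\dstateof{\time};\time)$, we get
\begin{equation*}
\int_0^\time \braket{\stateof{\timealt}-\basealt}{\nabla\obj(\stateof{\timealt})}\dd\timealt
\leq \energy(0)-\energy(\time)
-\int_0^\time \frac{\dot\learn}{\learn^2}\bracks{\hreg(\basealt)-\hreg(\stateof{\timealt})}\dd\timealt
+\frac{\tamebound^2}{2\hstr}\int_0^\time\learn\dd\timealt
+I(\time)+\firstmart(\time)+\secondmart(\time).
\end{equation*}
We then bound each term on the right. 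The term $I(\time)$ is bounded by \cref{lemma:I_upper_bound}, which produces exactly the heavy-noise integral in $\cvxrate_0$.

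The learning-rate term is handled by monotonicity. Since $\dot\learn\leq0$ and $\hreg(\basealt)-\hreg(\stateof{\timealt})\leq\hrange$, we have $-\int_0^\time(\dot\learn/\learn^2)[\hreg(\basealt)-\hreg(\state)]\leq\hrange\int_0^\time(-\dot\learn/\learn^2)=\hrange(1/\learn(\time)-1/\learn(0))$. For the energy terms, $\energy(\time)\geq0$ by \cref{prop:Fench}, so $-\energy(\time)\leq0$. The initial energy is $\energy(0)=\frac{1}{\learn(0)}\fench(\basealt,\learn(0)\dstateinit)=\frac{1}{\learn(0)}[\hreg(\basealt)-\hreg(\stateinit)]+\braket{\stateinit-\basealt}{\dstateinit}$, using $\hconj(\learn(0)\dstateinit)=\braket{\learn(0)\dstateinit}{\stateinit}-\hreg(\stateinit)$ with $\stateinit=\mirror(\learn(0)\dstateinit)$. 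The first part is at most $\hrange/\learn(0)$, which cancels the $-\hrange/\learn(0)$ from the previous bound. Collecting everything, the deterministic part is at most $\braket{\stateinit-\basealt}{\dstateinit}+\hrange/\learn(\time)+\frac{\tamebound^2}{2\hstr}\int\learn+\frac{\diamX^{2-\momexp}\heavybound^\momexp}{\hstr^{\momexp-1}}\int\learn^{\momexp-1}=\time\,\cvxrate_0(\time)$.

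To obtain \eqref{eq:L2_bound_convex} we take expectations. Both $\firstmart$ and $\secondmart$ are martingales started at $0$ (\cref{lemma:firstmart-bound,lemma:secondmart-bound}), so their expectations vanish. This step needs the left-hand side to be integrable, which holds because $\norm{\state-\basealt}\leq\diamX$ and \cref{cor:moments_drift} gives $\exof{\int_0^\time\dnorm{\nabla\obj}\dd\timealt}<\infty$. For the almost-sure statement \eqref{eq:almost_sure_bound_convex}, we keep the martingales pathwise and insert $\firstmart(\time)=\bigoh(\tamebound\diamX\sqrt{\time\log\log\time})$ and $\secondmart(\time)=\baroh(\time^{1/\momexp})$ from the same two lemmas.

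The main obstacle is rigor rather than new estimates. The descent lemma is an inequality between differentials, so we must make sure its integrated form holds pathwise almost surely for all $\time$ simultaneously, which is what \cref{thm:weakIto} actually delivers. We must also check that the integrability hypotheses, namely that $Z=\learn\dstate$ is a Lévy-type process, are in force so that the expectation step is legitimate. Everything else is direct bookkeeping.
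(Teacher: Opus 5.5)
Your proposal is correct and follows the paper's proof essentially step for step. You integrate \cref{lemma:dV_1}, bound $I$ via \cref{lemma:I_upper_bound}, control the $-\dot\learn/\learn^{2}$ term by monotonicity, and use $\energy(\time)\geq 0$ together with the explicit bound on $\energy(0)$ so that the $\hrange/\learn(0)$ terms cancel; you then either take expectations of the martingales or insert their almost-sure growth rates. The one detail the paper makes explicit that you leave implicit is that the weak It\^o formula actually yields the integrand $\braket{\stateof{\timealt-}-\basealt}{\nabla\obj(\stateof{\timealt})}$, which agrees with $\braket{\stateof{\timealt}-\basealt}{\nabla\obj(\stateof{\timealt})}$ for Lebesgue-a.e.\ $\timealt$ because the process has at most countably many jumps.
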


\begin{proof}
	First, note that $\braket*{\stateof{\timealt-} - \basealt}{\nabla \obj(\stateof{\timealt})} = \braket*{\stateof{\timealt} - \basealt}{\nabla \obj(\stateof{\timealt})}$ Lebesgue-a.e. since the jumps of $\levy$ are countable, and hence 
	\begin{equation}
		\int_0^\time \braket*{\stateof{\timealt-} - \basealt}{\nabla \obj(\stateof{\timealt})} d\timealt = \int_0^\time \braket*{\stateof{\timealt} - \basealt}{\nabla \obj(\stateof{\timealt})} d\timealt \quad \as.
	\end{equation}
	Moreover, the monotony of $\learn$ and the compactness of $\points$ allows us to write
	\begin{equation}
		-\int_0^\time \dfrac{\dot\learn}{\learn^2} \parens*{ \hreg(\basealt) - \hreg(\state) } d\timealt \leq \diamh \int_0^\time  - \dfrac{\dot\learn}{\learn^2} d\timealt = \dfrac{\diamh}{\learn(\time)} - \dfrac{\diamh}{\learn(0)},
	\end{equation}
	where we recall that $\diamh = \max \hreg - \min \hreg$. On the other hand, from the definition of $\energy$ and by optimality of $\stateof{0} = \stateinit$, we also get
	\begin{align}
		\energy(0)
			= \dfrac{1}{\learn(0)} \fench(\basealt, \learn(0) \dstateinit)
			&= \dfrac{1}{\learn(0)}\bracks*{ \hreg(\basealt) - \hreg(\stateinit) + \braket*{\stateinit - \basealt}{\learn(0)\dstateinit} }
			\notag\\
			&\leq \dfrac{\diamh}{\learn(0)} + \braket*{\stateinit - \basealt}{\dstateinit}.
	\end{align}
	Combining these estimates with those of \cref{lemma:dV_1,lemma:I_upper_bound,lemma:firstmart-bound,lemma:secondmart-bound}, rearranging the terms, and using that $\energy(\time) \geq 0$ finally lead to the upper bound \eqref{eq:almost_sure_bound_convex}. The in-expectation inequality \eqref{eq:L2_bound_convex} also immediately follows since $\exof{\timed{\firstmart}} = \exof{\timed{\secondmart}} = 0$ in virtue of the fact that they are martingales starting from zero.
\end{proof}

\begin{remark}
	Note that, compared to the definition of $\timed{\cvxrate_0}$ given in the main text, the version presented here includes the additional term $\frac{1}{\time}\braket*{\stateinit - \basealt}{\dstateinit}$. This accounts for the fact that \eqref{eq:LMF} may be initialized at any point $\stateinit \in \points_\hreg$, rather than only at the prox-center $\point_c = \argmin \hreg$ of $\points$, which was assumed in the main text for simplicity.
\end{remark}

With \cref{thm:convex_convergence_continuous} in hand, we can therefore establish \cref{thm:cvx-LMF} as a direct application of the convexity of $\obj$. The following result presents a slightly more general version of \cref{thm:cvx-LMF}, covering the case where \eqref{eq:LMF} is not necessarily initialized at the prox-center of $\points$:

\begin{theorem}[Convergence rate for convex functions]
Under \eqref{eq:LMF}, the time-averaged process $\timed{\avg\state} = (1/\time) \int_{\tstart}^{\time} \stateof{\timealt} \dd\timealt$ enjoys the bounds
\begin{align}
\exof{\obj(\timed{\avg\state})}
	\leq \min\obj
		+ \cvxrate_{0}(\time)
\end{align}
and, \acl{wp1},
\begin{align}
\obj(\timed{\avg\state})
	\leq \min\obj
		+ \cvxrate_{0}(\time)
		+ \baroh\parens*{\time^{-\frac{\momexp-1}{\momexp}}}
	+ \bigoh\parens*{\tamebound \diamX \sqrt{\frac{\log\log \time}{\time}}}
\end{align}
In particular, if \eqref{eq:LMF} is run with learning rate $\timed{\learn} = 1/\time^{1/\momexp}$, we have
\begin{align}
\cvxrate_{0}(\time)
	&= \frac{\braket*{\stateinit - \basealt}{\dstateinit}}{\time} 
	+ \frac{\momexp}{\momexp-1} \frac{\tamebound^{2}}{2\hstr\time^{1/\momexp}}
		+ \frac{\hstr^{\momexp-1}\hrange + \momexp \diamX^{2-\momexp} \heavybound^{\momexp}}{\hstr^{\momexp-1}\time^{(\momexp-1)/\momexp}}
	= \bigoh(1/\time^{(\momexp-1)/\momexp})
	\eqstop
\end{align}
\label{thm:convex_convergence_continuous_explicit}
\end{theorem}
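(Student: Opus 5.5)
The plan is to derive the statement directly from \cref{thm:convex_convergence_continuous}, taking $\basealt = \sol \in \argmin\obj$. The starting point is the convexity of $\obj$, which gives for every $\timealt$
\begin{equation}
\obj(\stateof{\timealt}) - \obj(\sol)
	\leq \braket*{\stateof{\timealt} - \sol}{\nabla\obj(\stateof{\timealt})}
	\eqstop
\end{equation}
This inequality uses the standing assumption that $\nabla\obj$ is a continuous selection of $\subd\obj$. When $\stateof{\timealt}$ lies outside $\dom\subd\obj$, I would argue as in the remark of \cref{sec:setup}: the directional derivative along $\sol - \stateof{\timealt}$ equals the pairing with $\nabla\obj$. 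Alternatively, I would note that $\stateof{\timealt}\in\proxdom$ and that $\nabla\obj\circ\mirror$ is defined along trajectories.

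Next I would apply Jensen's inequality to the time average. Since $\obj$ is convex and continuous on the compact convex set $\points$, and $\stateof{\cdot}$ is càdlàg, hence measurable and bounded,
\begin{equation}
\obj(\timed{\avg\state}) - \min\obj
	\leq \frac{1}{\time}\int_{\tstart}^{\time} \bracks*{\obj(\stateof{\timealt}) - \obj(\sol)} \dd\timealt
	\leq \frac{1}{\time}\int_{\tstart}^{\time} \braket*{\stateof{\timealt} - \sol}{\nabla\obj(\stateof{\timealt})} \dd\timealt
	\eqstop
\end{equation}
Taking expectations and invoking \eqref{eq:L2_bound_convex} gives the in-mean bound $\exof{\obj(\timed{\avg\state})} \leq \min\obj + \timed{\cvxrate_0}$. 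Dividing \eqref{eq:almost_sure_bound_convex} by $\time$ gives the almost-sure bound. There, $\baroh(\time^{1/\momexp})/\time = \baroh(\time^{-(\momexp-1)/\momexp})$, and $\tamebound\diamX\sqrt{\time\log\log\time}/\time = \tamebound\diamX\sqrt{\log\log\time/\time}$.

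For the explicit rate I would substitute $\timed{\learn} = \time^{-1/\momexp}$ into $\timed{\cvxrate_0}$ and compute each term. The first term gives $\hrange/(\time\timed{\learn}) = \hrange\,\time^{-(\momexp-1)/\momexp}$. The second uses $\int_0^\time \timealt^{-1/\momexp}\dd\timealt = \frac{\momexp}{\momexp-1}\time^{(\momexp-1)/\momexp}$, so that
\begin{equation}
\frac{\tamebound^{2}}{2\hstr\time}\int_0^\time \learn = \frac{\momexp}{\momexp-1}\frac{\tamebound^{2}}{2\hstr\time^{1/\momexp}}
	\eqstop
\end{equation}
The third uses $\int_0^\time \timealt^{-(\momexp-1)/\momexp}\dd\timealt = \momexp\time^{1/\momexp}$, which yields $\momexp\diamX^{2-\momexp}\heavybound^{\momexp}/(\hstr^{\momexp-1}\time^{(\momexp-1)/\momexp})$. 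Combining the first and third terms over the common denominator $\hstr^{\momexp-1}$ gives the stated expression. Since $1/\momexp \geq (\momexp-1)/\momexp$ for $\momexp\leq 2$, the total is $\bigoh(\time^{-(\momexp-1)/\momexp})$.

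The main obstacle is a technicality: $\learn(0)=\infty$ for this schedule, whereas \eqref{eq:LMF} assumes a finite differentiable learning rate. I would handle it in one of two ways. The first is to run the estimates of \cref{thm:convex_convergence_continuous} from a small time $\tstart'>0$, or with $(1+\time)^{-1/\momexp}$ instead. The second is simply to observe that the derivation only used $\hrange/\learn(\time)-\hrange/\learn(0) \leq \hrange/\learn(\time)$ and the integrability of $\learn$ and $\learn^{\momexp-1}$ near $0$, which holds here. Either route recovers the bound with the same constants.
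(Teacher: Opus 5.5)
Your proposal is correct and is the paper's argument: convexity plus Jensen's inequality give $t\,[f(\bar X(t)) - \min f] \le \int_0^t \langle \nabla f(X(s)), X(s) - x^\ast\rangle\,ds$, both bounds then follow from \cref{thm:convex_convergence_continuous} (divide the almost-sure estimate by $t$), and your evaluation of $R_0(t)$ for $\eta(t)=t^{-1/p}$ is right. Your remark that this schedule has $\eta(0)=\infty$ is a fair point the paper passes over; strictly, one should start the flow at some $t_0>0$ or use $(1+t)^{-1/p}$, which changes the $H$-term only negligibly rather than leaving every constant exactly the same.
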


\begin{proof}
	Since $\obj$ is convex,
	\begin{equation}
		\int_0^\time \braket*{\stateof{\timealt} - \sol}{\nabla \obj(\stateof{\timealt})} d\timealt \geq \int_0^\time \bracks*{ \obj(\stateof{\timealt}) - \obj(\sol) } d\timealt \geq \time \bracks*{ \obj(\avg{\state}(\time)) - \min \obj },
	\end{equation}
	and the results therefore follow directly from \cref{thm:convex_convergence_continuous}.
\end{proof}


\begin{remark}
	By convexity, we also get 
	\begin{align}
		\int_0^\time \braket*{\stateof{\timealt} - \sol}{\nabla \obj(\stateof{\timealt})} d\timealt &\geq \int_0^\time \bracks*{ \obj(\stateof{\timealt}) - \obj(\sol) } d\timealt\\
		&\geq \time \bracks*{ \min_{0 \leq \timealt \leq \time} \obj(\stateof{\timealt}) - \min \obj }.
	\end{align}
	Consequently, the upper bounds of \cref{thm:convex_convergence_continuous_explicit} remain valid when $\obj(\avg{\state}(\time))$ is replaced by $\min_{0 \leq \timealt \leq \time} \obj(\stateof{\timealt})$.
\end{remark}


\subsection{Strongly convex setting}

We now specialize our results to functions that are strongly convex, either with respect to the Euclidean norm $\norm{\cdot}$ or relative to the regularizer $\hreg$.

First, assume that $\obj$ is $\strong$-strongly convex in the usual sense, that is, 
\begin{equation}
\obj(\pointalt)
	\geq \obj(\point)
		+ \braket{\subsel\obj(\point)}{\pointalt - \point}
		+ \frac{\strong}{2} \norm{\pointalt - \point}^{2}
\end{equation}
for all $\point \in \proxdom$, $\pointalt\in\points$. Applying the convergence guarantees of \cref{thm:convex_convergence_continuous} under this strong convexity assumption then yields the following rate of convergence of the time-averages toward the minimum of $\obj$:

\begin{theorem}[Convergence rate for strongly convex functions]

If $\obj$ is $\strong$-strongly convex with minimum $\sol \in \points$, then the time-averaged process $\timed{\avg\state}$ of \eqref{eq:LMF} enjoys the convergence rates
\begin{align}
\exof{\norm{\timed{\avg\state} - \sol}^2}
	\leq \dfrac{2}{\strong}\cvxrate_{0}(\time)
\end{align}
and, \acl{wp1},
\begin{align}
\norm{\timed{\avg\state} - \sol}^2
	\leq \dfrac{2}{\strong}\cvxrate_{0}(\time)
		+ \baroh\parens*{\time^{-\frac{\momexp-1}{\momexp}}}
	+ \bigoh\parens*{\tamebound \diamX \sqrt{\frac{\log\log \time}{\time}}}
\end{align}
In particular, if \eqref{eq:LMF} is run with learning rate $\timed{\learn} = 1/\time^{1/\momexp}$, we have
\begin{align}
\cvxrate_{0}(\time)
	&= \frac{\braket*{\stateinit - \basealt}{\dstateinit}}{\time} 
	+ \frac{\momexp}{\momexp-1} \frac{\tamebound^{2}}{2\hstr\time^{1/\momexp}}
		+ \frac{\hstr^{\momexp-1}\hrange + \momexp \diamX^{2-\momexp} \heavybound^{\momexp}}{\hstr^{\momexp-1}\time^{(\momexp-1)/\momexp}}
	= \bigoh(1/\time^{(\momexp-1)/\momexp})
	\eqcomma
\end{align}
and $\timed{\avg\state} \to \sol$ both in mean square and with probability $1$ as $\time \to \infty$.
\label{thm:strongly_convex_continuous}
\end{theorem}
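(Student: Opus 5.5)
The plan is to reduce everything to \cref{thm:convex_convergence_continuous} with the comparison point chosen as $\basealt = \sol$, and then turn the bound on the integrated gap $\int_0^\time \braket{\stateof{\timealt} - \sol}{\nabla\obj(\stateof{\timealt})}\dd\timealt$ into a bound on $\norm{\timed{\avg\state} - \sol}^{2}$ via strong convexity and Jensen's inequality.

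The first step uses strong convexity at the pair $(\point,\pointalt) = (\stateof{\timealt},\sol)$:
\[
\obj(\sol) \geq \obj(\stateof{\timealt}) + \braket{\nabla\obj(\stateof{\timealt})}{\sol - \stateof{\timealt}} + \tfrac{\strong}{2}\norm{\stateof{\timealt} - \sol}^{2}.
\]
This step is only needed for Lebesgue-a.e.\ $\timealt$ with $\stateof{\timealt}\in\proxdom$. That holds because $\stateof{\timealt} = \mirror(\cdot) \in \im\mirror = \proxdom$ by \cref{prop:mirror}, so $\nabla\obj(\stateof{\timealt})$ is meaningful wherever the dynamics are defined. Since $\obj(\stateof{\timealt}) \geq \obj(\sol)$, rearranging gives
\[
\tfrac{\strong}{2}\norm{\stateof{\timealt} - \sol}^{2} \leq \braket{\nabla\obj(\stateof{\timealt})}{\stateof{\timealt} - \sol}.
\]

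Next, I integrate over $[0,\time]$ and apply convexity of $\point\mapsto\norm{\point - \sol}^{2}$ (Jensen for the time average). This yields
\[
\tfrac{\strong}{2}\,\time\,\norm{\timed{\avg\state} - \sol}^{2} \leq \tfrac{\strong}{2}\int_0^\time \norm{\stateof{\timealt} - \sol}^{2}\dd\timealt \leq \int_0^\time \braket{\stateof{\timealt} - \sol}{\nabla\obj(\stateof{\timealt})}\dd\timealt .
\]
Taking expectations and invoking \eqref{eq:L2_bound_convex} with $\basealt=\sol$ gives the mean-square bound $\exof{\norm{\timed{\avg\state}-\sol}^{2}} \leq (2/\strong)\cvxrate_{0}(\time)$. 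Invoking \eqref{eq:almost_sure_bound_convex} instead, and dividing by $\time\strong/2$, gives the almost-sure bound. Here $\baroh(\time^{1/\momexp})/\time = \baroh(\time^{-(\momexp-1)/\momexp})$ and $\bigoh(\sqrt{\time\log\log\time})/\time = \bigoh(\sqrt{\log\log\time/\time})$, with the factor $2/\strong$ absorbed into the constants.

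For the explicit schedule $\timed{\learn} = 1/\time^{1/\momexp}$, the expression for $\cvxrate_{0}$ is exactly the one already computed in \cref{thm:convex_convergence_continuous_explicit}, obtained by integrating $\int_0^\time \timealt^{-1/\momexp}\dd\timealt = \frac{\momexp}{\momexp-1}\time^{(\momexp-1)/\momexp}$ and $\int_0^\time \timealt^{-(\momexp-1)/\momexp}\dd\timealt = \momexp\,\time^{1/\momexp}$. Note $\learn(0)$ is infinite for this schedule; I would handle this either by shifting to $(1+\time)^{-1/\momexp}$ or by accepting the formula as stated, as in \cref{thm:convex_convergence_continuous_explicit}.

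Convergence $\timed{\avg\state}\to\sol$ then follows in mean square since $\cvxrate_{0}(\time) = \bigoh(\time^{-(\momexp-1)/\momexp})\to0$. It follows almost surely since every remainder term vanishes: $\time^{-(\momexp-1)/\momexp + o(1)}\to 0$ because $\momexp>1$, and $\sqrt{\log\log\time/\time}\to0$.

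There is no genuinely hard step; the argument is a corollary. The only point needing care is justifying the strong-convexity inequality along the trajectory, i.e.\ that $\stateof{\timealt}\in\proxdom$ so that $\nabla\obj(\stateof{\timealt})$ is a valid subgradient. This is the step I would check first, and the inclusion $\im\mirror = \proxdom$ from \cref{prop:mirror} settles it.
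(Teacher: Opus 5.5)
Your proposal is correct and matches the paper's proof. The paper also takes $\basealt=\sol$, uses strong convexity to get $\int_0^\time \braket{\stateof{\timealt}-\sol}{\nabla\obj(\stateof{\timealt})}\dd\timealt \geq \frac{\strong}{2}\int_0^\time \norm{\stateof{\timealt}-\sol}^{2}\dd\timealt \geq \frac{\strong\time}{2}\norm{\timed{\avg\state}-\sol}^{2}$, and then reads off both bounds from \cref{thm:convex_convergence_continuous}. Your two extra checks address points the paper leaves implicit: that $\stateof{\timealt}\in\im\mirror=\proxdom$ (exactly where the appendix's strong-convexity hypothesis is assumed), and that $\timed{\learn}=1/\time^{1/\momexp}$ is singular at $\time=0$, so shifting to $(1+\time)^{-1/\momexp}$ is the clean fix.
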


\begin{proof}
	By strong convexity of $\obj$, 
	\begin{align}
	\int_0^\time \braket*{\stateof{\timealt} - \sol}{\nabla \obj(\stateof{\timealt})}
		&\geq \int_0^\time \bracks*{\obj(\stateof{\timealt}) - \min \obj + \dfrac{\strong}{2} \norm{\stateof{\timealt} - \sol}^2} d\timealt 
		\notag\\
		&\geq \dfrac{\strong}{2} \int_0^\time \norm{\stateof{\timealt} - \sol}^2 d\timealt
		\geq \dfrac{\strong\time}{2} \norm{\avg{\state}(\time) - \sol}^2,
	\end{align}
	and \cref{thm:convex_convergence_continuous} therefore yields the desired upper bounds.
\end{proof}

\begin{remark}
	Similarly to the convex case, \cref{thm:strongly_convex_continuous} also yields the following complexity estimate: for any precision $\precval > 0$, choosing a constant learning rate $\learn = \bigoh\parens*{(\precval/\bar{\sdev})^{\frac{1}{\momexp-1}}}$ ensures that $\exof*{\norm{\avg{\state}(\time) - \sol}^2} \leq \precval$ for every times $\time = \Omega\parens*{ \bar{\sdev}^{\frac{1}{\momexp-1}} \precval^{-\frac{\momexp}{\momexp-1}} }$.
\label{remark:complexity_rate_strongly_convex_continuous}
\end{remark}

As in the main text, we now estimate the amount of time that $\stateof*{\time}$ spends in a neighborhood of $\argmin\obj$. The following theorem serves as an extension of \cref{thm:occmeas} to the setting where \eqref{eq:LMF} is initialized at an arbitrary point of $\points$, rather than at its prox-center.


\begin{theorem}[Concentration around global minimum]
Assume that $\obj$ is $\strong$-strongly convex with minimum $\sol$, and let
\begin{equation}
\meas_{\time}(\nhd_{\precdist})
	= \frac{1}{\time}
		\int_{\tstart}^{\time} \oneof{\stateof{\timealt} \in \nhd_{\precdist}} \dd\timealt
\end{equation}
be the fraction of time that $\stateof{\time}$ spends in $\nhd_{\precdist} = \setdef{\point \in \points}{\norm{\point - \sol} \leq \precdist}$ up to time $\time$.
If \eqref{eq:LMF} is run with constant learning rate $\timed{\learn} \equiv \learn$, we have
\begin{subequations}
\begin{equation}
\exof{\meas_{\time}(\nhd_{\precdist})}
	\geq 1 - \occrate_{\time}(\precdist)
\end{equation}
and, \acl{wp1},
\begin{align}
\meas_{\time}(\nhd_{\precdist})
	&\geq 1 - \occrate_{\time}(\precdist)
		+ \baroh\parens*{\time^{-\frac{\momexp-1}{\momexp}}}
	+ \bigoh\parens*{\tamebound \diamX \sqrt{\frac{\log\log \time}{\time}}}
\end{align}
\end{subequations}
where
\begin{equation}
\occrate_{\time}(\precdist)
	= \frac{2\braket*{\stateinit - \sol}{\dstateinit}}{\strong \precdist^{2} \time}
	+ \frac{2\hrange}{\learn\strong\precdist^{2}\time}
	+ \frac{\learn \tamebound^{2}}{\strong\hstr\precdist^{2}}
	+ \frac{2\learn^{\momexp-1}\norm{\points}^{2-\momexp} \heavybound^{\momexp}}
		{\strong\hstr^{\momexp-1}\precdist^{2}}
	\eqstop
\end{equation}
\label{thm:occupation_measure}
\end{theorem}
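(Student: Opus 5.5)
The proof will follow the same route as \cref{thm:strongly_convex_continuous}, with one extra step that turns the integrated squared distance into a bound on occupation time. Everything rests on \cref{thm:convex_convergence_continuous}, applied with $\basealt = \sol$ and a constant learning rate $\learn$.

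First, with $\learn$ constant we have $\dot\learn = 0$. The drift term in $\cvxrate_0$ involving $\hrange$ then contributes only through $\energy(0)$, which gives $\hrange/(\learn\time)$. The two noise integrals become $\learn\time$ and $\learn^{\momexp-1}\time$. Hence
\begin{equation*}
\time\,\cvxrate_0(\time) = \braket{\stateinit - \sol}{\dstateinit} + \frac{\hrange}{\learn} + \frac{\learn\tamebound^{2}}{2\hstr}\time + \frac{\learn^{\momexp-1}\diamX^{2-\momexp}\heavybound^{\momexp}}{\hstr^{\momexp-1}}\time .
\end{equation*}
Second, I will use $\strong$-strong convexity together with $\obj(\stateof{\timealt}) \geq \min\obj$, exactly as in the proof of \cref{thm:strongly_convex_continuous}. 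This gives
\begin{equation*}
\frac{\strong}{2}\int_0^\time \norm{\stateof{\timealt}-\sol}^2 \dd\timealt \leq \int_0^\time \braket{\stateof{\timealt}-\sol}{\nabla\obj(\stateof{\timealt})}\dd\timealt .
\end{equation*}

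Third comes the Markov/Chebyshev-type step. Pointwise, $\norm{\stateof{\timealt}-\sol}^2 \geq \precdist^2 \oneof{\stateof{\timealt}\notin\nhd_\precdist}$. Integrating over $[0,\time]$ yields $\precdist^2 \time\,(1-\meas_\time(\nhd_\precdist)) \leq \int_0^\time \norm{\stateof{\timealt}-\sol}^2 \dd\timealt$. Combining the three steps gives
\begin{equation*}
1 - \meas_\time(\nhd_\precdist) \leq \frac{2}{\strong\precdist^2\time}\int_0^\time \braket{\stateof{\timealt}-\sol}{\nabla\obj(\stateof{\timealt})}\dd\timealt .
\end{equation*}
Taking expectations and using \eqref{eq:L2_bound_convex}, the right-hand side is at most $2\cvxrate_0(\time)/(\strong\precdist^2)$. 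Expanding this with the expression above reproduces $\occrate_\time(\precdist)$ term by term: the $\tamebound^2$ term is $\learn\tamebound^2/(\strong\hstr\precdist^2)$, and the heavy term carries the factor $2$. For the almost-sure statement I will instead use \eqref{eq:almost_sure_bound_convex} and divide by $\strong\precdist^2\time/2$. The martingale remainders $\baroh(\time^{1/\momexp})$ and $\bigoh(\tamebound\diamX\sqrt{\time\log\log\time})$ then become $\baroh(\time^{-(\momexp-1)/\momexp})$ and $\bigoh(\tamebound\diamX\sqrt{\log\log\time/\time})$. The constants $2/(\strong\precdist^2)$ are absorbed into the $\bigoh$'s, and the sign convention in the statement is read loosely as "up to a term of that order".

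The argument is routine. The only point that needs care is checking that \cref{thm:convex_convergence_continuous} remains valid for constant $\learn$, in particular that the $\hrange/\learn(\time) - \hrange/\learn(0)$ cancellation still leaves $\hrange/\learn$ from $\energy(0)$. It also needs the same $\hrange/\learn$ term to hold for an arbitrary initial point. Both follow directly from the proof given there.
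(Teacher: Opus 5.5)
Your proposal is correct and is essentially the paper's own proof. The paper likewise bounds $\oneof{\norm{\stateof{\timealt}-\sol}>\precdist}$ by $\precdist^{-2}\norm{\stateof{\timealt}-\sol}^{2}$, passes to $\frac{2}{\strong}\braket{\stateof{\timealt}-\sol}{\nabla\obj(\stateof{\timealt})}$ via strong convexity, and reads off both the in-expectation and almost-sure bounds from \cref{thm:convex_convergence_continuous} with $\basealt=\sol$ and constant $\learn$; your term-by-term expansion of $2\cvxrate_{0}(\time)/(\strong\precdist^{2})$ into $\occrate_{\time}(\precdist)$ also matches.
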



\begin{proof}
	Since $\one\braces{\norm{\stateof{\time} - \sol} > \precdist} \leq \precdist^{-2} \norm{\stateof{\time} - \sol}^2$, the occupation measure $\meas_\time$ satisfies
	\begin{align}
		\meas_\time(\points \setminus \nhd_\precdist)
			&= \dfrac{1}{\time} \int_0^\time \one\braces{\norm{\stateof{\timealt}-\sol} > \precdist} d\timealt
			\notag\\
			&\leq \dfrac{1}{\precdist^2 \time} \int_0^\time \norm{\stateof{\timealt} - \sol}^2 d\timealt
			\notag\\
			&\leq \dfrac{2}{\strong \precdist^2 \time} \int_0^\time \braket*{\stateof{\timealt} - \sol}{\nabla \obj(\stateof{\timealt})} d\timealt,
	\end{align}
	and \cref{thm:occupation_measure} therefore follows from \cref{thm:convex_convergence_continuous}.
\end{proof}

\begin{remark}
	If the stochastic process $\state$ is \emph{positive recurrent} on $\intpoints$---that is, if any relatively compact subset of $\intpoints$ is reached by $\state$ in finite expected time---then the occupation measure $\meas_\time$ converges to the so-called \emph{invariant measure} $\meas$ of $\state$. This means that $\meas$ is the unique probability measure such that $\stateof{\time} \sim \meas$ whenever $\stateof{0} \sim \meas$. Under ergodicity of the dynamics, $\meas$ is also the measure towards which the law of $\stateof{\time}$ converges from \emph{any} initial condition. In this regards, \cref{thm:occupation_measure} therefore provides a non-asymptotic estimate on the concentration of the law of $\stateof{\time}$ around neighborhoods of the objective's minimum $\sol$. For the stochastic mirror flow \eqref{eq:SMF} with constant learning rate, \citet{MerSta18} showed that the primal trajectories are indeed positive recurrent---and hence admit a unique probability invariant measure---under Brownian-like perturbations. Extending this result to \eqref{eq:LMF} is, however, substantially more challenging: Lévy-driven SDEs exhibit more intricate dynamical properties, and establishing positive recurrence in this setting thus requires additional care. We therefore leave this question open for future works.
\end{remark}

%

As illustrated in \cref{remark:complexity_rate_strongly_convex_continuous}, \cref{thm:strongly_convex_continuous} provides a lower bound on the time beyond which the ergodic averages remain within at most $\precval$ of the objective's minimum in the mean square norm. The following theorem complements this result by giving an estimate on the average time required for the \emph{last iterate} itself to reach a $\precdist$-neighborhood of the minimum.

\begin{theorem}[Hitting time estimate]
	Assume that $\obj$ is $\strong$-strongly convex with minimum $\sol \in \points$, and let
	\begin{equation}
		\hit_\precdist = \inf\setdef{\time \geq 0}{\norm{\stateof{\time} - \sol} \leq \precdist}
	\end{equation}
	denote the first time that $\stateof*{\time}$ gets within $\precdist$ of $\sol$ for some precision $\delta > 0$. If \eqref{eq:LMF} is run with constant learning rate 
	\begin{equation}
		\learn < \min\parens*{ \dfrac{\strong \hcst}{2\tamebound^2} \precdist^2, \, \parens*{ \dfrac{\strong \hcst}{4 \hcst^{2-\momexp} \diamX^{2-\momexp} \heavybound^{\momexp}} }^{\frac{1}{\momexp-1}} \precdist^{\frac{2}{\momexp-1}} },
	\label{eq:learning_rate_cdt}
	\end{equation}
	then the averaged value of $\hit_\precdist$ is bounded as
	\begin{equation}
		\exof*{\hit_\precdist} \leq \dfrac{2 \hcst \fench(\sol, \learn \dstateinit)}{\strong \hcst \learn \precdist^2 - \learn^2 \tamebound^2 - 2 \learn^\momexp \hcst^{2-\momexp} \diamX^{2-\momexp} \heavybound^\momexp }.
	\label{eq:hitting_time_bound_1}
	\end{equation}
	
\label{thm:hittingtime}
\end{theorem}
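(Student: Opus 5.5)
We run the stochastic descent argument of \cref{lemma:dV_1} up to the stopping time $\hit_\precdist$ and then apply optional stopping (Dynkin-type) to the energy $\energy(\time) = \fench(\sol,\learn\dstateof{\time})/\learn$. With $\learn$ constant, the term $-\dot\learn/\learn^{2}$ vanishes. Integrating \cref{lemma:dV_1} between $0$ and $\hit_\precdist\wedge\time$, with $\base=\sol$, and bounding $I$ by \cref{lemma:I_upper_bound}, gives
\begin{equation*}
\energy(\hit_\precdist\wedge\time)
	\leq \energy(0)
	- \int_0^{\hit_\precdist\wedge\time}\braket{\stateof{\timealt}-\sol}{\nabla\obj(\stateof{\timealt})}\dd\timealt
	+ \bracks*{\frac{\learn\tamebound^{2}}{2\hstr} + \frac{\diamX^{2-\momexp}\heavybound^{\momexp}}{\hstr^{\momexp-1}}\learn^{\momexp-1}}(\hit_\precdist\wedge\time)
	+ \firstmart(\hit_\precdist\wedge\time) + \secondmart(\hit_\precdist\wedge\time).
\end{equation*}
On $[0,\hit_\precdist)$ we have $\norm{\stateof{\timealt}-\sol}>\precdist$. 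Strong convexity together with optimality of $\sol$ gives $\braket{\point-\sol}{\nabla\obj(\point)} \geq \obj(\point)-\obj(\sol)+\frac{\strong}{2}\norm{\point-\sol}^{2} \geq \frac{\strong}{2}\norm{\point-\sol}^{2}$. So the drift integral is at least $\frac{\strong\precdist^{2}}{2}(\hit_\precdist\wedge\time)$. The left-limit versus value issue at jump times is harmless, since it only affects a Lebesgue-null set, as in the proof of \cref{thm:convex_convergence_continuous}.

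Next we take expectations. Because $\firstmart$ and $\secondmart$ are martingales starting at $0$ (\cref{lemma:firstmart-bound,lemma:secondmart-bound}) and $\hit_\precdist\wedge\time$ is a bounded stopping time, optional stopping kills both martingale terms. Using $\energy\geq0$ (\cref{prop:Fench}) and rearranging, we get
\begin{equation*}
\bracks*{\frac{\strong\precdist^{2}}{2} - \frac{\learn\tamebound^{2}}{2\hstr} - \frac{\diamX^{2-\momexp}\heavybound^{\momexp}\learn^{\momexp-1}}{\hstr^{\momexp-1}}}\exof{\hit_\precdist\wedge\time}
	\leq \frac{\fench(\sol,\learn\dstateinit)}{\learn}.
\end{equation*}
Multiplying through by $2\hstr\learn$ turns the bracket into $\strong\hstr\learn\precdist^{2}-\learn^{2}\tamebound^{2}-2\learn^{\momexp}\hstr^{2-\momexp}\diamX^{2-\momexp}\heavybound^{\momexp}$ (noting $\hstr\cdot\hstr^{1-\momexp}=\hstr^{2-\momexp}$). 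This matches the denominator of \eqref{eq:hitting_time_bound_1}.

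It remains to check that this coefficient is positive. The two conditions in \eqref{eq:learning_rate_cdt} give, respectively, $\learn^{2}\tamebound^{2}<\frac12\strong\hstr\learn\precdist^{2}$ and $2\learn^{\momexp}\hstr^{2-\momexp}\diamX^{2-\momexp}\heavybound^{\momexp}<\frac12\strong\hstr\learn\precdist^{2}$. I expect the constants to line up as in the stated condition, and this needs to be verified when writing the argument out. Summing the two inequalities shows the denominator is strictly positive. Finally, letting $\time\to\infty$ and invoking monotone convergence gives the bound on $\exof{\hit_\precdist}$, which in particular shows $\hit_\precdist<\infty$ a.s.

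The main obstacle is justifying optional stopping for $\secondmart$, which is only an integrable (not square-integrable) martingale driven by the heavy jumps. I would handle it by noting that the stopped integral is $\int_0^{\time}\int\one\{\timealt\leq\hit_\precdist\}H\,\cpoisfulalt$ with a predictable integrand still in $\Hlevyof{1}{\time}{\braces{\cjumpsup}}{\R}$. This integral therefore has mean zero by the $\Hlevyof{1}{}{}{}$ construction of \cref{subsec:stochastic_integration}, so no uniform integrability argument is needed.
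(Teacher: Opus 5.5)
Your proposal is correct and follows the paper's proof essentially step for step. Both integrate \cref{lemma:dV_1} (with \cref{lemma:I_upper_bound}) up to the bounded stopping time $\hit_\precdist\wedge\time$, remove $\firstmart+\secondmart$ by optional stopping, lower-bound the drift by $\frac{\strong\precdist^{2}}{2}(\hit_\precdist\wedge\time)$ via strong convexity, use $\energy\geq0$, rearrange, and let $\time\to\infty$ by monotone convergence. Your two constant checks in the positivity step are indeed right, and your stopped-integrand argument for $\secondmart$ is valid though not needed, since optional sampling at bounded stopping times already holds for any c\`adl\`ag martingale, which is all the paper invokes.
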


\begin{proof}
	Since the learning $\learn(\time)$ is assumed to be constant, \cref{lemma:dV_1,lemma:I_upper_bound,lemma:firstmart-bound,lemma:secondmart-bound} yield
	\begin{equation}
		\energy(\time) \leq \energy(0) - \int_0^\time \braket*{\state - \sol}{\nabla \obj(\state)}d\timealt + \dfrac{\learn}{2\hcst}\tamebound^2 \time + \dfrac{\learn^{\momexp-1} \diamX^{2-\momexp}}{\hcst^{\momexp-1}} \heavybound^\momexp \time + \xi(\time)
	\end{equation}
	where $\xi(\time) \defeq \timed{\firstmart} + \timed{\secondmart}$ is a martingale. Moreover, $\obj$ is assumed to be $\strong$-strongly convex and so $\braket*{\state - \sol}{\nabla \obj(\state)} \geq \frac{\strong}{2}\norm{\state - \sol}^2$. In particular, for any fixed $\time \geq 0$, we get
	\begin{align}
		0
			&\leq \exof*{\energy(\hit_\precdist \wedge \time)}
			\notag\\
			&\leq \energy(0)
				- \dfrac{\strong}{2} \exof*{ \int_0^{\hit_\precdist \wedge \time} \norm{\stateof{\timealt-} - \sol}^2 d\timealt }
				+ \bracks*{ \dfrac{\learn}{2\hcst}\tamebound^2 + \dfrac{\learn^{\momexp-1} \diamX^{2-\momexp}}{\hcst^{\momexp-1}} \heavybound^\momexp } \exof*{\hit_\precdist \wedge \time},
	\end{align}
	where we have used that $\xi$ is a martingale starting from $0$ and that $\hit_\precdist \wedge \time$ is a bounded stopping time to get $\exof*{\xi(\hit_\precdist \wedge \time)} = 0$.
	Now, note that by definition of $\hit_\precdist$, we have $\norm{\stateof*{\timealt-} - \sol}^2 \geq \precdist^2$ for every $\timealt \leq \hit_\precdist \wedge \time$ due to $\state$ having càdlàg paths. It follows that 
	\begin{equation}
		0 \leq \energy(0) - \parens*{ \dfrac{\strong}{2} \precdist^2 - \dfrac{\learn}{2\hcst}\tamebound^2 - \dfrac{\learn^{\momexp-1} \diamX^{2 - \momexp} \heavybound^{\momexp}}{\hcst^{\momexp-1}} } \exof*{\hit_\precdist \wedge \time}.
	\end{equation}
	Now, under the assumption \cref{eq:learning_rate_cdt} made on the size of $\learn$, it is clear that the factor in front of $\exof*{\hit_\precdist \wedge \time}$ is strictly positive, which leads after rearranging the terms to 
	\begin{equation}
		\exof*{\hit_\precdist \wedge \time} \leq \dfrac{2 \hcst \energy(0)}{ \strong \hcst \precdist^2 - \learn\tamebound^2 - 2\learn^{\momexp-1} \hcst^{2 - \momexp} \diamX^{2-\momexp} \heavybound^\momexp }.
	\label{eq:bound_hitting_proof}
	\end{equation}
	Since the function $\time \mapsto \exof*{\hit_\precdist \wedge \time}$ is nondecreasing and since the right-hand side of \cref{eq:bound_hitting_proof} is uniform on $\time$, the monotone convergence theorem can be then invoked to prove the upper bound claimed in the theorem.
\end{proof}

In particular, for a sufficiently small target radius $\precdist$, the hitting time estimate of \cref{thm:hittingtime} can be optimized by an appropriate choice of learning rate $\learn$, making its dependence on $\precdist$ explicit. This leads to \cref{thm:hitting}, whose statement is here split in two parts into order to provide precise conditions on the target $\precdist$ depending on the nature of the noise. We first consider the case $\momexp < 2$, for which the upper bound is determined solely by the heavy-tailed intensity of the noise.

\begin{corollary}[Hitting time estimate, case $\momexp<2$]
	Under the same assumptions as in \cref{thm:hittingtime}, further assume that $\dstateinit = 0$ and $\momexp < 2$. Then, for every precision $\precdist > 0$ satisfying
	\begin{equation}
		\precdist^2 \leq \dfrac{2 \tamebound^2\diamX}{\strong} \parens*{ \dfrac{2\heavybound^\momexp}{\tamebound^2} }^{\frac{1}{2-p}},
	\label{eq:delta_small_cdt}
	\end{equation}
	running \eqref{eq:LMF} with constant learning rate 
	\begin{equation}
		\learn = \parens*{ \dfrac{\strong \hcst}{8 \hcst^{2 - \momexp} \diamX^{2-\momexp} \heavybound^\momexp} }^{\frac{1}{\momexp - 1}} \precdist^{\frac{2}{\momexp - 1}}
	\end{equation}
	yields the hitting-time bound 
	\begin{equation}
		\exof{\hit_\precdist} \leq \dfrac{4 \diamh}{\strong \hcst} \parens*{ \dfrac{8 \diamX^{2-\momexp} \heavybound^\momexp }{\strong}}^{\frac{1}{\momexp - 1}} \precdist^{- \frac{2\momexp}{\momexp - 1}}.
	\end{equation}
\label{cor:hitting_time_heavy}
\end{corollary}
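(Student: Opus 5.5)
This is a direct specialization of \cref{thm:hittingtime}: we plug the prescribed $\learn$ into \eqref{eq:hitting_time_bound_1} and simplify, using the smallness condition \eqref{eq:delta_small_cdt} to show that the tame part of the noise is dominated.

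\emph{Step 1: the numerator.} Since $\dstateinit = 0$, we have $\fench(\sol, 0) = \hreg(\sol) + \hconj(0) = \hreg(\sol) - \min\hreg \leq \diamh$. So the numerator of \eqref{eq:hitting_time_bound_1} is at most $2\hcst\diamh$.

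\emph{Step 2: the heavy term in the denominator.} Write the denominator as $\learn\,D(\learn)$ with
\begin{equation*}
D(\learn) = \strong\hcst\precdist^2 - \learn\tamebound^2 - 2\learn^{\momexp-1}\hcst^{2-\momexp}\diamX^{2-\momexp}\heavybound^\momexp .
\end{equation*}
With the prescribed $\learn$, we have $\learn^{\momexp-1} = \strong\hcst\precdist^2/(8\hcst^{2-\momexp}\diamX^{2-\momexp}\heavybound^\momexp)$. Hence the heavy term equals exactly $\strong\hcst\precdist^2/4$.

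\emph{Step 3: the tame term (main obstacle).} We need $\learn\tamebound^2 \leq \strong\hcst\precdist^2/4$. Then $D(\learn) \geq \strong\hcst\precdist^2/2$, and \eqref{eq:learning_rate_cdt} holds automatically: the $\precdist^{2/(\momexp-1)}$ constraint holds because $8 > 4$, and the first constraint holds because $\learn < \strong\hcst\precdist^2/(2\tamebound^2)$.

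Because $\learn \propto \precdist^{2/(\momexp-1)}$ and $2/(\momexp-1) > 2$ when $\momexp < 2$, the ratio $\learn\tamebound^2/\precdist^2 \propto \precdist^{2(2-\momexp)/(\momexp-1)}$ vanishes as $\precdist \to 0$. The required inequality is therefore exactly an upper bound on $\precdist^2$. Solving it gives a threshold of the form $(\ldots)^{1/(2-\momexp)}$, and this is what \eqref{eq:delta_small_cdt} should encode. I would carry out this algebra carefully and check that the stated threshold implies $\learn\tamebound^2 \leq \strong\hcst\precdist^2/4$; the constants $\hcst$ and $\diamX$ must line up.

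\emph{Step 4: conclude.} Combining the steps,
\begin{equation*}
\exof{\hit_\precdist} \leq \frac{2\hcst\diamh}{\learn\,\strong\hcst\precdist^2/2} = \frac{4\diamh}{\strong\learn\precdist^2}.
\end{equation*}
Substituting $1/\learn = (8\hcst^{2-\momexp}\diamX^{2-\momexp}\heavybound^\momexp/(\strong\hcst))^{1/(\momexp-1)}\precdist^{-2/(\momexp-1)}$ gives the power $\precdist^{-2-2/(\momexp-1)} = \precdist^{-2\momexp/(\momexp-1)}$. It remains to reconcile the $\hcst$ powers with the stated constant $\tfrac{4\diamh}{\strong\hcst}(8\diamX^{2-\momexp}\heavybound^\momexp/\strong)^{1/(\momexp-1)}$. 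Here $\hcst^{(1-\momexp)/(\momexp-1)} = \hcst^{-1}$ gives the prefactor $1/\hcst$, up to the normalization convention already implicit in \cref{thm:hittingtime}.
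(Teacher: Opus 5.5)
Your route is the paper's own: you bound the numerator of \eqref{eq:hitting_time_bound_1} by $2\hcst\diamh$, observe that the prescribed $\learn$ makes the heavy term exactly $\strong\hcst\precdist^2/4$, show the tame term is also at most $\strong\hcst\precdist^2/4$, and divide. Steps 1, 2 and 4 are correct as written. In Step 4 no ``normalization convention'' is needed, since $\bigl(8\hcst^{1-\momexp}\diamX^{2-\momexp}\heavybound^{\momexp}/\strong\bigr)^{1/(\momexp-1)} = \hcst^{-1}\bigl(8\diamX^{2-\momexp}\heavybound^{\momexp}/\strong\bigr)^{1/(\momexp-1)}$ gives exactly the stated constant.

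What is missing is Step 3, which you leave as ``I would check''. It is the only place where the hypothesis \eqref{eq:delta_small_cdt} enters, and it does close.
\begin{itemize}
\item Set $a = \tamebound^2$, $b = 2\hcst^{2-\momexp}\diamX^{2-\momexp}\heavybound^{\momexp}$ and $c = \strong\hcst$, so that $\learn = (c/(4b))^{1/(\momexp-1)}\precdist^{2/(\momexp-1)}$.
\item Solving $a\learn \le c\precdist^2/4$ for $\precdist$ gives exactly $\precdist^2 \le \frac{4a}{c}(b/a)^{1/(2-\momexp)}$.
\item Hypothesis \eqref{eq:delta_small_cdt} says $\precdist^2 \le \frac{2a}{c}(b/a)^{1/(2-\momexp)}$. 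The $\hcst$'s cancel because $(\hcst^{2-\momexp})^{1/(2-\momexp)} = \hcst$.
\end{itemize}
So the stated threshold is not the solved one but half of it. If you try to ``match constants'' you will be off by a factor of $2$, but only the implication is needed.

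The paper reads the hypothesis differently. It notes that \eqref{eq:delta_small_cdt} is equivalent to $(c/(2b))^{1/(\momexp-1)}\precdist^{2/(\momexp-1)} \le \frac{c}{2a}\precdist^2$, that is, the second threshold in \eqref{eq:learning_rate_cdt} lies below the first. It then concludes $\learn = 2^{-1/(\momexp-1)}(c/(2b))^{1/(\momexp-1)}\precdist^{2/(\momexp-1)} \le \frac{c}{4a}\precdist^2$, because $2^{1/(\momexp-1)} \ge 2$ for $\momexp < 2$.

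Either way your Step 3 inequality holds, and the rest of your argument goes through unchanged.
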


\begin{proof}
	For simplicity, let us define $a = \tamebound^2$, $b = 2\hcst^{2-\momexp} \diamX^{2-\momexp} \heavybound^\momexp$ and $c = \strong \hcst$. In particular, the expression of the learning rate $\learn$ can be written as
	\begin{equation}
		\learn = \parens*{\dfrac{c}{4b}}^{\frac{1}{\momexp-1}} \precdist^{\frac{2}{\momexp-1}},
	\end{equation}
	and the condition of \cref{eq:delta_small_cdt} on the size of $\precdist$ implies that 
	\begin{equation}
		\parens*{\dfrac{c}{2b}}^{\frac{1}{\momexp-1}} \precdist^{\frac{2}{\momexp-1}} \leq \parens*{\dfrac{c}{2a}} \precdist^2.
	\end{equation}
	We therefore obtain the upper bound
	\begin{equation}
		\learn \leq \dfrac{1}{2^{1/(\momexp-1)}} \parens*{\dfrac{c}{2a}} \precdist^2 \leq \dfrac{c}{4a} \precdist^2,
	\end{equation}
	where we have used that $p \in (1, 2)$ to get $2^{1/(\momexp-1)} \geq 2$. As a result, the denominator of \cref{eq:hitting_time_bound_1} satisfies
	\begin{equation}
		\learn\parens*{c\precdist^2 - a\learn - b\learn^{\momexp-1}} \geq \parens*{\dfrac{c}{4b}}^{\frac{1}{\momexp-1}} \precdist^{\frac{2}{\momexp-1}} \parens*{c\precdist^2 - a \dfrac{c}{4a} \precdist^2 - b \dfrac{c}{4b} \precdist^2} = \dfrac{c}{2} \parens*{\dfrac{c}{4b}}^{\frac{1}{\momexp-1}} \precdist^{\frac{2\momexp}{\momexp-1}}.
	\end{equation}
	Moreover, we also get $\fench(\sol, \learn \dstateinit) = h(\sol) - h(\stateinit) \leq \diamh$ since $\dstateinit=0$. The hitting time estimate of \cref{thm:hittingtime} therefore yields
	\begin{equation}
		\exof*{\hit_\precdist} \leq \dfrac{4\hcst \diamh}{c} \parens*{ \dfrac{4b}{c} }^{\frac{1}{\momexp-1}} \precdist^{-\frac{2\momexp}{\momexp-1}},
	\end{equation}
	which recovers our claim when substituting $a$, $b$ and $c$ with their respective values.
\end{proof}

On the other hand, when the noise is \emph{tame}---that is, when $\momexp=2$---we obtain the following optimized upper bound:

\begin{corollary}[Hitting time estimate, case $\momexp = 2$]
	Under the same assumptions as in \cref{thm:hittingtime}, further assume that $\dstateinit = 0$ and $\momexp = 2$. Then, for every precision $\precdist > 0$, running \eqref{eq:LMF} with constant learning rate
	\begin{equation}
		\learn = \dfrac{\strong \hcst}{2\noisebound^2} \precdist^2
	\end{equation}
	yields the hitting-time bound
	\begin{equation}
		\exof{\hit_\precdist} \leq \dfrac{8 \noisebound^2 \diamh}{\strong^2 \hcst} \precdist^{-4},
	\end{equation}
	where $\noisebound^2 = \tamebound^2 + 2\heavybound^2$ is the total intensity of the noise.
\label{cor:hitting_time_light}
\end{corollary}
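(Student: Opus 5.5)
The plan is to specialize \cref{thm:hittingtime} directly to $\momexp=2$ and then evaluate its bound at the prescribed learning rate, exactly as in the proof of \cref{cor:hitting_time_heavy} but with simpler algebra. When $\momexp=2$, the factors $\hcst^{2-\momexp}$ and $\diamX^{2-\momexp}$ both equal $1$, and the exponent $1/(\momexp-1)$ equals $1$. The admissibility condition \eqref{eq:learning_rate_cdt} therefore reduces to
\begin{equation}
\learn < \min\parens*{\frac{\strong\hcst}{2\tamebound^{2}}\precdist^{2}, \, \frac{\strong\hcst}{4\heavybound^{2}}\precdist^{2}}
\eqcomma
\end{equation}
and the denominator of \eqref{eq:hitting_time_bound_1} becomes $\strong\hcst\learn\precdist^{2} - \learn^{2}(\tamebound^{2}+2\heavybound^{2}) = \learn\bigl(\strong\hcst\precdist^{2} - \learn\noisebound^{2}\bigr)$.

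First step: check that $\learn = \strong\hcst\precdist^{2}/(2\noisebound^{2})$ is admissible. Write $2\noisebound^{2} = 2\tamebound^{2} + 4\heavybound^{2}$. This is strictly larger than $2\tamebound^{2}$ whenever $\heavybound>0$, and strictly larger than $4\heavybound^{2}$ whenever $\tamebound>0$. If one of the two intensities vanishes, the corresponding constraint is vacuous (its threshold is $+\infty$). Hence the condition holds in every non-degenerate case $\noisebound>0$; the degenerate noiseless case is not the regime of interest and can be set aside. Notice also that, unlike \cref{cor:hitting_time_heavy}, no smallness condition on $\precdist$ is needed: for $\momexp=2$ both noise contributions scale linearly in $\learn$, so a single rate balances them.

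Second step: plug in. With this $\learn$ we have $\learn\noisebound^{2} = \strong\hcst\precdist^{2}/2$, so the denominator equals
\begin{equation}
\learn \cdot \frac{\strong\hcst\precdist^{2}}{2} = \frac{(\strong\hcst\precdist^{2})^{2}}{4\noisebound^{2}}
\eqstop
\end{equation}
For the numerator, since $\dstateinit=0$ we get $\stateinit = \mirror(0)$ and $\fench(\sol,0) = \hreg(\sol) - \hreg(\stateinit) \leq \diamh$, exactly as in the proof of \cref{cor:hitting_time_heavy}. Combining the two gives
\begin{equation}
\exof{\hit_\precdist} \leq \frac{2\hcst\diamh \cdot 4\noisebound^{2}}{\strong^{2}\hcst^{2}\precdist^{4}} = \frac{8\noisebound^{2}\diamh}{\strong^{2}\hcst}\precdist^{-4}
\eqcomma
\end{equation}
which is the claimed bound.

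There is no genuine obstacle here. The only point needing care is the strictness of the admissibility inequality in the boundary cases $\tamebound=0$ or $\heavybound=0$, which is handled by reading the corresponding constraint as vacuous.
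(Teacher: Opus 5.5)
The computation itself is correct and follows the paper's route: you substitute the prescribed $\learn$ into the bound \eqref{eq:hitting_time_bound_1} of \cref{thm:hittingtime}, observe that the denominator becomes $\learn(\strong\hcst\precdist^2 - \learn\noisebound^2) = (\strong\hcst\precdist^2)^2/(4\noisebound^2)$, and bound the numerator by $2\hcst\diamh$ using $\dstateinit = 0$. The paper's proof does only this and never checks admissibility. The error is in the admissibility check you add on top of it.

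Each strict inequality you derive relies on the \emph{other} intensity being positive. The bound $\learn < \strong\hcst\precdist^2/(2\tamebound^2)$ holds strictly only if $\heavybound>0$, and $\learn < \strong\hcst\precdist^2/(4\heavybound^2)$ holds strictly only if $\tamebound>0$.
\begin{itemize}
\item If $\heavybound = 0 < \tamebound$, the heavy constraint is vacuous. However, your $\learn = \strong\hcst\precdist^2/(2\tamebound^2)$ then sits exactly on the tame threshold.
\item If $\tamebound = 0 < \heavybound$, your $\learn = \strong\hcst\precdist^2/(4\heavybound^2)$ sits exactly on the heavy threshold.
\end{itemize}
Declaring ``the corresponding constraint vacuous'' removes the wrong constraint. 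The strict condition \eqref{eq:learning_rate_cdt} fails in both boundary cases, so your claim that it holds whenever $\noisebound>0$ is false. This includes the purely tame case $\heavybound=0$, which is hardly a degenerate regime for $\momexp=2$.

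The fix uses something you already computed. Condition \eqref{eq:learning_rate_cdt} is only a convenient sufficient condition: it forces each noise term separately below $\strong\precdist^2/4$. The proof of \cref{thm:hittingtime} needs only that the coefficient of $\exof{\hit_\precdist\wedge\time}$ is strictly positive. For $\momexp=2$ that coefficient is $\frac{\strong}{2}\precdist^2 - \frac{\learn}{2\hcst}\noisebound^2$, and at your $\learn$ it equals $\strong\precdist^2/4>0$. This is equivalent to the positivity of the denominator you wrote down. So invoke that argument rather than the theorem's stated hypothesis. The bound then holds for every $\precdist>0$ and every $\noisebound>0$.
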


\begin{proof}
	The upper bound is obtained directly when replacing $\learn$ by its value in \cref{thm:hittingtime} and by bounding the numerator as in the proof of \cref{cor:hitting_time_heavy}.
\end{proof}

We now strengthen our assumptions by considering objectives $\obj$ that are \define{$\strong$-strongly convex relative to $\hreg$}, that is, functions satisfying
\begin{equation}
\obj(\pointalt)
	\geq \obj(\point)
		+ \braket{\subsel\obj(\point)}{\pointalt - \point}
		+ \strong \breg(\pointalt,\point)
\end{equation}
for all $\point\in\proxdom$ and all $\pointalt\in\points$.
This sharper condition allows us to refine the convergence analysis of \eqref{eq:LMF} to obtain stronger bounds on the last iterate rather than just on ergodic averages. We begin by establishing the first part of \cref{thm:str-LMD}, which corresponds to the case where \eqref{eq:LMF} is run with a constant learning rate:

\begin{theorem}[Convergence rate for relatively strongly convex functions, constant learning rate]
	Let $\hreg$ be a steep regularizer. If $\obj$ is $\strong$-strongly convex relative to $\hreg$ with minimum $\sol \in \points$ and if $\learn(\time) \equiv \learn > 0$ is constant, then 
	\begin{equation}
		\exof{\norm{\stateof{\time} - \sol}^2} \leq \dfrac{\tamebound^2}{\strong \hcst^2} \learn + \dfrac{2 \diamX^{2 - \momexp} \heavybound^\momexp}{\strong \hcst^\momexp} \learn^{\momexp - 1} + \dfrac{2 \breg(\sol, \stateinit)}{\hcst} e^{-\strong \learn \time}.
	\end{equation}
\label{thm:relatively_convex_continuous1}
\end{theorem}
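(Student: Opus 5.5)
We will run a Grönwall argument on the expected energy $\exof{\energy(\time)}$, where, with constant learning rate, $\energy(\time) = \learn^{-1}\fench(\sol,\learn\dstateof{\time})$ for the fixed base point $\base = \sol$. The starting point is \cref{lemma:dV_1} with $\dot\learn \equiv 0$. The term $-\frac{\dot\learn}{\learn^{2}}[\hreg(\sol)-\hreg(\state)]$ then vanishes, and \cref{lemma:I_upper_bound} bounds the increment of $I$. This gives
\begin{equation*}
d\energy \leq -\braket{\state-\sol}{\nabla\obj(\state)}\dd\time + \Bigl(\frac{\learn\tamebound^{2}}{2\hstr} + \frac{\learn^{\momexp-1}\diamX^{2-\momexp}\heavybound^{\momexp}}{\hstr^{\momexp-1}}\Bigr)\dd\time + d\firstmart + d\secondmart .
\end{equation*}
Here $\firstmart$ and $\secondmart$ are martingales null at zero, by \cref{lemma:firstmart-bound,lemma:secondmart-bound}.

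The key new ingredient is to bound the drift term from below by a multiple of the energy itself, using relative strong convexity. Applying \eqref{eq:strong-rel} twice, once at $(\point,\pointalt) = (\state,\sol)$ and once at $(\sol,\state)$, would require $\sol\in\proxdom$. Instead I use only the first application together with optimality of $\sol$. This gives $\braket{\nabla\obj(\state)}{\state-\sol} \geq \obj(\state)-\obj(\sol) + \strong\breg(\sol,\state) \geq \strong\breg(\sol,\state)$. Since $\hreg$ is steep, $\state = \mirror(\learn\dstate)\in\relint\points$, so \cref{prop:Fench}(b) gives $\breg(\sol,\state) = \fench(\sol,\learn\dstate) = \learn\,\energy$. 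Hence the drift is at most $-\strong\learn\,\energy\,\dd\time$. One subtlety: the drift is evaluated at $\stateof{\timealt}$ but must be compared with $\energy(\timealt)$. This is harmless because the two differ only on the countable jump set, which is Lebesgue-null, as noted in the proof of \cref{thm:convex_convergence_continuous}.

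Taking expectations then yields the integral inequality
\begin{equation*}
u(\time) \leq u(0) + \int_0^\time \bigl(C - \strong\learn\, u(\timealt)\bigr)\dd\timealt,
\end{equation*}
where $u(\time) = \exof{\energy(\time)}$ and $C$ is the constant above. The martingale terms disappear because their expectations are zero. This is also where care is needed. The inequality must hold between arbitrary times $s<\time$, not just from $0$, so I will apply the weak Itô formula on $[s,\time]$. I also need $u$ to be finite and measurable in time. Finiteness follows because $\fench$ is Lipschitz in its second argument (\cref{lemma:F_Lipschitz}) and $\dstate$ has finite $\momexp$-th moments (\cref{lemma:moments_dual}).

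A comparison (Grönwall) argument applied to the differential inequality $u' \leq C - \strong\learn u$, understood in integral form, gives
\begin{equation*}
u(\time) \leq \frac{C}{\strong\learn} + u(0)\,e^{-\strong\learn\time}.
\end{equation*}
I expect the main obstacle to be making this Grönwall step rigorous for a merely measurable $u$ satisfying only the integral inequality. To handle it, I will multiply by $e^{\strong\learn\time}$ and apply the Itô product rule to $e^{\strong\learn\time}\energy(\time)$ directly at the pathwise level, which avoids comparison lemmas altogether. Finally, $u(0) = \fench(\sol,\learn\dstateinit)/\learn = \breg(\sol,\stateinit)/\learn$, and \cref{prop:Fench}(c) gives $\norm{\state-\sol}^{2} \leq \frac{2}{\hstr}\fench(\sol,\learn\dstate) = \frac{2\learn}{\hstr}\energy$. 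Multiplying the bound on $u$ by $2\learn/\hstr$ produces exactly the three terms in the statement of \cref{thm:relatively_convex_continuous1}.
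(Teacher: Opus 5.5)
Your proposal is correct and follows essentially the same route as the paper: \cref{lemma:dV_1} with $\dot\learn\equiv0$, the bound of \cref{lemma:I_upper_bound}, a single application of relative strong convexity combined with optimality of $\sol$, the steepness identity $\breg(\sol,\stateof{\time})=\fench(\sol,\learn\dstateof{\time})$, an integral inequality for the expected energy between arbitrary times $\timealt<\time$, Grönwall, and finally \cref{prop:Fench}. The only difference is how the Grönwall step is justified: the paper uses \cref{lemma:bounded_to_differentiable} to get almost-everywhere differentiability of $\exof{\breg(\sol,\stateof{\time})}$, whereas you use a weighted product rule, which in its pathwise form needs the weak Itô inequality to hold simultaneously for all $\timealt<\time$ (obtainable from rational times plus right-continuity).
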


Before proving \cref{thm:relatively_convex_continuous1}, we first establish a preliminary technical lemma that will allows us to rigorously differentiate the map $\exof*{ \timed{\energy} }$ with respect to $\time$.

\begin{lemma}
	Let $F \from [0, \infty) \to \R$ be a function such that 
	\begin{equation}
		F(\time) - F(\timealt) \leq \int_{\timealt}^{\time} g(r) dr \quad \text{for every } \time \geq \timealt  \geq 0,
	\label{eq:bounded_to_differentiable}
	\end{equation}
	where $g \from [0, \infty) \to \R$ is a locally integrable function. Then $F$ is almost everywhere differentiable with $F'(\time) \leq g(\time)$.
\label{lemma:bounded_to_differentiable}
\end{lemma}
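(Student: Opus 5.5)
The plan is to split off the integral term and reduce the claim to two classical theorems of real analysis: Lebesgue's differentiation theorem for monotone functions, and the Lebesgue differentiation theorem for indefinite integrals. Concretely, I will set
\begin{equation*}
H(\time) = \int_{0}^{\time} g(r) \dd r
\quad\text{and}\quad
G(\time) = F(\time) - H(\time),
\qquad \time \geq 0.
\end{equation*}
Since $g$ is locally integrable, $H$ is finite and absolutely continuous on every compact interval. Since $F$ is real-valued, $G$ is a well-defined real function on $[0,\infty)$.

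\emph{Step 1: $G$ is nonincreasing.} For $\time \geq \timealt \geq 0$, the hypothesis \eqref{eq:bounded_to_differentiable} gives
\begin{equation*}
G(\time) - G(\timealt) = F(\time) - F(\timealt) - \int_{\timealt}^{\time} g(r) \dd r \leq 0 .
\end{equation*}
Lebesgue's theorem for monotone functions then says that $G$ is differentiable at Lebesgue-almost every $\time$. At every such point, $G'(\time) \leq 0$, because $G'(\time)$ is the limit of the nonpositive difference quotients $[G(\time + h) - G(\time)]/h$ with $h > 0$.

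\emph{Step 2: $H$ is differentiable almost everywhere with $H' = g$.} This is the Lebesgue differentiation theorem for locally integrable functions. It holds at every Lebesgue point of $g$, and hence almost everywhere on each interval $[0,n]$. Taking the countable union over $n \in \N$ gives the statement on all of $[0,\infty)$.

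\emph{Step 3: conclusion.} Intersecting the two full-measure sets from Steps 1 and 2, I get that $F = G + H$ is differentiable almost everywhere, and at each such point
\begin{equation*}
F'(\time) = G'(\time) + g(\time) \leq g(\time),
\end{equation*}
as claimed.

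I do not expect a real obstacle here; the argument is essentially bookkeeping once $G$ is introduced. The only points needing care are two. First, $F$ is not assumed continuous or of bounded variation, so the a.e.\ differentiability has to come entirely from the monotonicity of $G$ rather than from any regularity of $F$. Second, the exceptional null sets from Steps 1 and 2 must be combined, which is harmless because there are only countably many of them.
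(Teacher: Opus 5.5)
Your proof is correct and follows the paper's own argument: subtract the indefinite integral of $g$, note that the remainder is nonincreasing, and combine Lebesgue's theorem for monotone functions with the Lebesgue differentiation theorem for $\int_0^t g$ (the paper only swaps the names of $G$ and $H$). You are slightly more explicit than the paper in two places: you invoke $H' = g$ almost everywhere as a separate step, and you intersect the exceptional null sets, whereas the paper passes over both silently.
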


\begin{proof}
	Let $G(\time) = \int_0^\time g(r) dr$ and $H = F - G$. Then \cref{eq:bounded_to_differentiable} implies that $H(\time) \leq H(\timealt)$ for all $\time \geq \timealt \geq 0$, and thus $H$ is a nonincreasing function. In virtue of Lebesgue differentiation theorem, $H$ is therefore almost everywhere differentiable (on open intervals) and satisfies $H'(\time) \leq 0$. Since $F = G + H$ is the sum of two almost everwhere differentiable function, it implies in turn that $F$ is also as such and that $F'(\time) = G'(\time) + H'(\time) \leq g(\time)$, which concludes the proof.
\end{proof}

\begin{proof}[Proof of \cref{thm:relatively_convex_continuous1}]
	Starting from \cref{lemma:dV_1} and the estimates of \cref{lemma:I_upper_bound,lemma:firstmart-bound,lemma:secondmart-bound}, and noting that $\breg(\sol, \stateof{\time}) = \fench(\sol, \learn \dstateof{\time})$ since $\stateof{\time} \in \intpoints$ for all times (consequence of $\hreg$ being steep, \cf \cref{prop:Fench}), we get 
	\begin{align}
		\dfrac{1}{\learn}
			&\exof*{\breg(\sol, \stateof{\time})} 
				= \exof*{\energy(\time)}
		\notag\\
		&\leq \exof*{\energy(\timealt)}
			- \exof*{\int_{\timealt}^\time \braket*{\state - \sol}{\nabla \obj(\state)} dr} + \dfrac{\learn}{2\hcst}\tamebound^2 \int_{\timealt}^\time dr + \dfrac{\learn^{\momexp-1} \diamX^{2-\momexp}}{\hcst^{\momexp-1}} \heavybound^\momexp \int_{\timealt}^\time dr
		\notag\\
		&\leq \dfrac{1}{\learn} \exof*{\breg(\sol, \stateof{\timealt})}
		\notag\\
		&\quad
			- \int_{\timealt}^\time \bracks*{ \exof*{ \obj(\state) - \obj(\sol) + \strong \breg(\sol, \state)} - \parens*{ \dfrac{\learn\tamebound^2}{2\hcst} + \dfrac{\learn^{\momexp-1} \diamX^{2-\momexp} \heavybound^\momexp}{\hcst^{\momexp-1}} } }dr
		\notag\\
		&\leq \dfrac{1}{\learn} \exof*{\breg(\sol, \stateof{\timealt})} - \int_{\timealt}^\time \bracks*{ \strong \exof*{\breg(\sol, \state)}  - \parens*{ \dfrac{\learn\tamebound^2}{2\hcst} + \dfrac{\learn^{\momexp-1} \diamX^{2-\momexp} \heavybound^\momexp}{\hcst^{\momexp-1}} } }dr,
	\end{align}
	where the second inequality comes from the relative strong convexity of $\obj$, and the last inequality is due to $\sol$ being the minimum of $\obj$. Since the above inequality holds true for every $\time \geq \timealt \geq 0$, it follows from \cref{lemma:bounded_to_differentiable} that the function $\varphi(\time) \defeq \exof*{\breg(\sol, \stateof{\time})}$ is almost-everywhere differentiable and that it satisfies
	\begin{equation}
		\varphi'(\time) \leq -\strong \learn \varphi(\time) + c_1 \learn^2 + c_2 \learn^\momexp,
	\end{equation}
	where we have set $c_1 = \frac{1}{2\hcst}(\tamebound^2)$ and $c_2 = \frac{\diamX^{2-\momexp}}{\hcst^{\momexp-1}} \heavybound^\momexp$.
	In particular, we can therefore use Grönwall's lemma to obtain
	\begin{align}
		\exof*{\breg(\sol, \stateof{\time})} 
		= \varphi(\time) 
		&\leq \varphi(0) e^{-\strong \learn \time} + \dfrac{c_1 \learn^2 + c_2 \learn^{\momexp}}{\strong \learn} \parens*{ 1 - e^{-\strong \learn \time} }
		\notag\\
		&\leq \dfrac{\tamebound^2}{2\hcst \strong} \learn + \dfrac{\diamX^{2 - \momexp} \heavybound^\momexp}{\strong \hcst^{\momexp-1}} \learn^{\momexp-1} + \breg(\sol, \stateinit)e^{-\strong \learn \time},
	\end{align}
	and the result then follows thanks to the inequality $\norm{\stateof{\time} - \sol}^2 \leq \frac{2}{\hcst} \breg(\sol, \stateof{\time})$ (\cf \cref{prop:Fench}).
\end{proof}

\begin{remark}
	A result analogous to \cref{thm:relatively_convex_continuous1} for Brownian-type stochastic perturbations was established by \citet{TRRB23a} for the \emph{mirror Langevin dynamics}, a special case of \eqref{eq:SMF} with constant learning rate in which the diffusion coefficient is proportional to the Hessian of the regularizer. Their result arises as a byproduct of an interpretation of continuous-time mirror descents methods through the lens of inverse optimal control. 
\end{remark}

Finally, we now establish the second part of \cref{thm:str-LMD}, concerning the convergence rates of the last iterate when the learning rate is decreasing.

\begin{theorem}[Convergence rate for relatively strongly convex functions, decreasing learning rate]
	Let $\hreg$ be a steep regularizer. If $\obj$ is $\strong$-strongly convex relative to $\hreg$ with minimum $\sol \in \points$ and if $\eta(\time) = (1+\time)^{-1/\momexp}$, then
	\begin{align}
		\exof{\norm{\stateof{\time} - \sol}^2}
			&\leq \dfrac{\tamebound^2}{\strong \hcst^2} \time^{- \frac{1}{\momexp}}
				+ \dfrac{2}{\strong \hcst} \parens*{ \dfrac{\diamX^{2 - \momexp} \heavybound^\momexp}{\hcst^{\momexp - 1}} + \dfrac{\diamh}{\momexp} } \time^{-\frac{\momexp - 1}{\momexp}}
			\notag\\
			&+ \dfrac{2 \breg(\sol, \stateinit)}{\hcst} \exp\bracks*{ - \frac{\strong \momexp}{\momexp - 1} \parens*{ \time^{\frac{\momexp - 1}{\momexp}} - 1 } }.
	\end{align}
	In particular, $\stateof{\time} \to \sol$ in mean square as $\time \to \infty$.
\label{thm:relatively_convex_continuous2}
\end{theorem}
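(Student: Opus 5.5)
I will follow the template of the proof of \cref{thm:relatively_convex_continuous1}. Since $\hreg$ is steep, $\stateof{\time}\in\intpoints$ for all $\time$, so $\fench(\sol,\learn(\time)\dstateof{\time}) = \breg(\sol,\stateof{\time})$ by \cref{prop:Fench}. Hence $\timed{\energy} = \breg(\sol,\stateof{\time})/\timed{\learn}$, and I set $\varphi(\time)\defeq\exof{\breg(\sol,\stateof{\time})}$.

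Integrating \cref{lemma:dV_1} between $\timealt$ and $\time$ and invoking \cref{lemma:I_upper_bound} gives the drift bound. The martingales $\firstmart,\secondmart$ have zero-mean increments by \cref{lemma:firstmart-bound,lemma:secondmart-bound}, so they vanish upon taking expectations. Relative strong convexity gives $\braket{\state-\sol}{\nabla\obj(\state)}\geq \obj(\state)-\obj(\sol)+\strong\breg(\sol,\state)\geq\strong\breg(\sol,\state)$. The term $-\frac{\dot\learn}{\learn^{2}}[\hreg(\sol)-\hreg(\state)]$ is bounded by $-\frac{\dot\learn}{\learn^2}\diamh$, since $\dot\learn\leq 0$. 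Altogether,
\begin{equation}
\frac{\varphi(\time)}{\learn(\time)}-\frac{\varphi(\timealt)}{\learn(\timealt)}
\leq\int_{\timealt}^{\time}\Bigl[-\strong\varphi(r)-\frac{\dot\learn(r)}{\learn(r)^{2}}\diamh+c_{1}\learn(r)+c_{2}\learn(r)^{\momexp-1}\Bigr]dr,
\end{equation}
with $c_{1}=\tamebound^{2}/(2\hcst)$ and $c_{2}=\diamX^{2-\momexp}\heavybound^{\momexp}/\hcst^{\momexp-1}$.

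By \cref{lemma:bounded_to_differentiable}, $u\defeq\varphi/\learn$ is a.e.\ differentiable with
\begin{equation}
u'\leq-\strong\learn u-\frac{\dot\learn}{\learn^2}\diamh+c_1\learn+c_2\learn^{\momexp-1}.
\end{equation}
I will then apply Grönwall with integrating factor $e^{\Lambda(\time)}$, where $\Lambda(\time)=\strong\int_0^\time\learn$. For $\learn=(1+\time)^{-1/\momexp}$ this gives $\Lambda(\time)=\frac{\strong\momexp}{\momexp-1}[(1+\time)^{(\momexp-1)/\momexp}-1]$, which dominates the stated exponent since $(1+\time)^{a}\geq\time^{a}$. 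The resulting bound is
\begin{equation}
\varphi(\time)\leq\learn(\time)e^{-\Lambda(\time)}\frac{\varphi(0)}{\learn(0)}+\learn(\time)\int_0^\time e^{\Lambda(r)-\Lambda(\time)}g(r)\,dr,
\end{equation}
where $g$ collects the three forcing terms. The first piece gives $\breg(\sol,\stateinit)e^{-\Lambda}$, using $\learn(\time)\leq\learn(0)=1$.

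The main obstacle is estimating the convolution integral with the right powers of $\time$. The approach is a Laplace-type argument: since $\Lambda'=\strong\learn$, write each forcing term as $g=(\strong\learn)\cdot(g/(\strong\learn))$. When the ratio $h\defeq g/(\strong\learn)$ is nonincreasing, the integral is bounded by $h(\time)\int_0^\time\Lambda'(r)e^{\Lambda(r)-\Lambda(\time)}dr\leq h(\time)$. For the $c_1$ term, $h=c_1/\strong$ is constant, contributing $\learn(\time)c_1/\strong$. For the $c_2$ term, $h=c_2\learn^{\momexp-2}/\strong$ is nondecreasing when $\momexp<2$, so monotonicity fails. Instead I will bound $\learn(\time)\int_0^\time\learn(r)^{\momexp-1}e^{\Lambda(r)-\Lambda(\time)}dr$ directly. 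Here $\int_0^\time\learn^{\momexp-1}\leq\frac{\momexp}{\momexp-1}... $ is too crude, so I plan to split at $\time/2$: the early part is exponentially small, and on the late part $\learn(r)\approx\learn(\time)$, giving about $\learn(\time)^{\momexp-1}c_2/\strong$. The diameter term has $-\dot\learn/\learn^2=\frac1\momexp(1+r)^{1/\momexp-1}$ and is handled the same way, giving $\frac{\diamh}{\momexp}\learn(\time)^{\momexp-1}/\strong$ up to constants.

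Finally, multiplying by $2/\hcst$ via $\norm{\stateof{\time}-\sol}^{2}\leq\frac{2}{\hcst}\breg(\sol,\stateof{\time})$ and using $\learn(\time)\leq\time^{-1/\momexp}$ yields the stated rate. The constants in the stated bound suggest a slightly sharper bookkeeping, which I will adjust during the computation. Mean-square convergence follows because every term vanishes as $\time\to\infty$.
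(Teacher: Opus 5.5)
Your overall architecture matches the paper's: the same deflated energy $\varphi/\eta$ with $\varphi(t)=\mathbb{E}[D(x^*,X(t))]$, the same differential inequality via \cref{lemma:bounded_to_differentiable}, and the same Gr\"onwall step with integrating factor $e^{\Lambda}$, $\Lambda(t)=\mu\int_0^t\eta$. The gap is in the convolution estimate, where you have the monotonicity criterion backwards. The bound
\[
\int_0^t \Lambda'(r)\,e^{\Lambda(r)-\Lambda(t)}\,h(r)\,dr \;\le\; h(t)\bigl(1-e^{-\Lambda(t)}\bigr)\;\le\; h(t)
\]
holds when $h$ is \emph{nondecreasing}, so that $h(r)\le h(t)$ for $r\le t$. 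For nonincreasing $h$ it is false in general: take $h$ large on $[0,t)$ and small at $t$.

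Consequently, the terms you flag as the ``main obstacle'' are actually the easy case. For both the $c_2$ term and the $H$ term (recall $-\dot\eta/\eta^2=\frac1p\,\eta^{p-1}$), the ratio is
\[
h(r)=\frac{c_3}{\mu}\,\eta(r)^{p-2}=\frac{c_3}{\mu}(1+r)^{(2-p)/p},\qquad c_3=c_2+H/p .
\]
This is nondecreasing for $p\le 2$. So the integral is at most $\frac{c_3}{\mu}(1+t)^{(2-p)/p}$, and multiplying by $\eta(t)$ gives exactly $\frac{c_3}{\mu}(1+t)^{-(p-1)/p}$. This is precisely what the paper's \cref{lemma:integral_estimate} does: it substitutes $r\leftrightarrow(1+s)^{(p-1)/p}$ and bounds the factor $r^{(2-p)/(p-1)}$ by its value at the upper endpoint.

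As written, your split at $t/2$ would only give the right order $t^{-(p-1)/p}$, not the statement's explicit constants for all $t>0$. The early piece adds an extra term that is not small for moderate $t$, since $\Lambda(t)-\Lambda(t/2)$ is then $O(1)$. The comparison ``$\eta(r)\approx\eta(t)$'' on the late piece also costs a multiplicative constant.

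With the direction corrected, no splitting is needed. The rest of your bookkeeping then yields the stated bound verbatim:
\begin{itemize}
\item $\eta(t)\le\eta(0)=1$ for the transient term;
\item the $c_1$ term, where $h=c_1/\mu$ is constant;
\item $(1+t)^a\ge t^a$;
\item $\|X-x^*\|^2\le\frac{2}{K}D(x^*,X)$.
\end{itemize}
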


\begin{proof}\let\qed\relax
	Following the same arguments as in the proof of \cref{thm:relatively_convex_continuous1} but with a non-constant learning rate, we obtain that the function $\psi(\time) \defeq \frac{1}{\learn(\time)} \exof*{\breg(\sol, \stateof{\time})}$ is almost-everywhere differentiable and that it satisfies
	\begin{align}
		\psi'(\time) &\leq - \strong \exof*{\breg(\sol, \state)} + c_1 \learn(\time) + c_2 \learn(\time)^{\momexp-1} - \dfrac{\dot\learn(\time)}{\learn^2(\time)} \exof*{ \hreg(\sol) - \hreg(\state) }
		\notag\\
		&\leq - \strong \learn(\time) \psi(\time) + c_1 \learn(\time) + c_2 \learn(\time)^{\momexp-1} - \dfrac{\dot\learn(\time)}{\learn^2(\time)} \diamh,
	\end{align}
	where we recall that $c_1 = \frac{\tamebound^2}{2\hcst}$ and $c_2 = \frac{\diamX^{2-\momexp}}{\hcst^{\momexp-1}} \heavybound^\momexp$. Letting $\learn(\time) = (1+\time)^{-1/\momexp}$, we therefore get
	\begin{equation}
		\psi'(\time) \leq - \dfrac{\strong}{(1+\time)^{\frac{1}{\momexp}}} \psi(\time) + \dfrac{c_1}{(1+\time)^{\frac{1}{\momexp}}} + \dfrac{c_2}{(1+\time)^{\frac{\momexp-1}{\momexp}}} + \dfrac{\diamh}{\momexp (1+\time)^{\frac{\momexp-1}{\momexp}}},
	\end{equation}
	and Grönwall's lemma can then be invoked to write
	\begin{align}
		\psi(\time)
			&\leq \psi(0) \exp\parens*{ -\strong \int_0^\time \dfrac{1}{(1+\timealt)^{\frac{1}{\momexp}}} d\timealt}
			\notag\\
			&+ \int_0^\time \parens*{ \dfrac{c_1}{(1+\timealt)^{\frac{1}{\momexp}}} + \dfrac{c_3}{(1+\timealt)^{\frac{\momexp-1}{\momexp}}} } \exp\parens*{ - \strong \int_{\timealt}^\time \dfrac{1}{(1+r)^{\frac{1}{\momexp}}} dr } d\timealt,
	\label{eq:psi_inequality_1}
	\end{align}
	where we have set $c_3 = c_2 + \frac{\diamh}{\momexp}$. To estimate the integral appearing in the right-hand side of \eqref{eq:psi_inequality_1}, we need the following intermediate lemma:
\end{proof}

\begin{lemma}
	For all $a, b \geq 0$, $c > 0$, and all exponents $p \in (1, 2]$, we have:
	\begin{equation} 
	I(\time) \defeq \int_0^\time \parens*{ \dfrac{a}{(1+\timealt)^{\frac{1}{\momexp}}} + \dfrac{b}{(1+\timealt)^{\frac{\momexp-1}{\momexp}}}} \exp\parens*{ - c \int_{\timealt}^\time \dfrac{1}{(1+r)^{\frac{1}{\momexp}}} dr } d\timealt \leq \dfrac{1}{c} \parens*{ a + b (1+\time)^{\frac{2 - \momexp}{\momexp}} }.
	\end{equation}
\label{lemma:integral_estimate}
\end{lemma}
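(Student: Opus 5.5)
The plan is to write the exponential kernel in closed form and then treat the two summands of the integrand separately. Set $\Lambda(\timealt) = \int_0^{\timealt} (1+r)^{-1/\momexp}\,dr$, so that $\Lambda'(\timealt) = (1+\timealt)^{-1/\momexp}$ and
\begin{equation}
\exp\parens*{-c\int_{\timealt}^{\time}\frac{dr}{(1+r)^{1/\momexp}}} = e^{-c[\Lambda(\time)-\Lambda(\timealt)]}.
\end{equation}
By linearity, $I(\time) = a\,I_1(\time) + b\,I_2(\time)$, and it suffices to show
\begin{equation}
I_1(\time) \leq \frac{1}{c}
\quad\text{and}\quad
I_2(\time) \leq \frac{1}{c}(1+\time)^{(2-\momexp)/\momexp}.
\end{equation}

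The first term is exact. Since the integrand of $I_1$ is precisely $\Lambda'(\timealt)e^{-c[\Lambda(\time)-\Lambda(\timealt)]} = \frac{1}{c}\frac{d}{d\timealt}e^{-c[\Lambda(\time)-\Lambda(\timealt)]}$, we get
\begin{equation}
I_1(\time) = \frac{1}{c}\bigl(1 - e^{-c\Lambda(\time)}\bigr) \leq \frac{1}{c}.
\end{equation}

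For the second term, factor the integrand as
\begin{equation}
(1+\timealt)^{-(\momexp-1)/\momexp} = (1+\timealt)^{-1/\momexp}\,(1+\timealt)^{(2-\momexp)/\momexp}.
\end{equation}
The factor $(1+\timealt)^{(2-\momexp)/\momexp}$ is nondecreasing in $\timealt$ because $\momexp\leq2$. On $[0,\time]$ it is therefore bounded by $(1+\time)^{(2-\momexp)/\momexp}$. Pulling it out leaves exactly $I_1(\time)$, and the first step gives $I_2(\time)\leq(1+\time)^{(2-\momexp)/\momexp}/c$.

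Adding the two bounds yields the claim. There is no real obstacle here. The only point needing care is the sign of the exponent $(2-\momexp)/\momexp\geq0$, which is what makes the monotone bound valid; at $\momexp=2$ the factor is constant, so the argument also covers that endpoint.
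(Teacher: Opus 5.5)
Your proof is correct and is essentially the paper's argument. Both bound the nondecreasing factor $(1+s)^{(2-p)/p}$ by its value at $s=t$ and then integrate the kernel $(1+s)^{-1/p}\exp\bigl(-c\int_s^t (1+r)^{-1/p}\,dr\bigr)$ exactly, which gives the same intermediate bound with the extra factor $1-e^{-c\Lambda(t)}$. The only difference is cosmetic: the paper uses the explicit substitution $r=(1+s)^{(p-1)/p}$, whereas you work directly with the antiderivative $\Lambda$, which is slightly cleaner.
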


\begin{proof}
	To begin with, note that
	\begin{equation}
		\int_{\timealt}^\time \dfrac{1}{(1+r)^{1/\momexp}} dr = \dfrac{\momexp}{\momexp-1}\bracks*{(1+\time)^{\frac{\momexp-1}{\momexp}} - (1+\timealt)^{\frac{\momexp-1}{\momexp}}},
	\end{equation}
	so $I(\time)$ can be written as
	\begin{multline}
		I(\time) = \exp\parens*{ - \dfrac{c \momexp}{\momexp-1} (1+\time)^{\frac{\momexp-1}{\momexp}} } \int_0^\time \parens*{ a (1+\timealt)^{-\frac{1}{\momexp}} + b(1+\timealt)^{-\frac{\momexp-1}{\momexp}} } \\ \times \exp\parens*{ \dfrac{c\momexp}{\momexp-1} (1+\timealt)^{\frac{\momexp-1}{\momexp}}} d\timealt.
	\end{multline}
	Using the substitution $r \leftrightarrow (1+\timealt)^{\frac{\momexp-1}{\momexp}}$, we further get
	\begin{equation}
		I(\time) = \exp\parens*{ - \dfrac{c \momexp}{\momexp-1} (1+\time)^{\frac{\momexp-1}{\momexp}} } \dfrac{\momexp}{\momexp-1} \int_1^{(1+\time)^{\frac{\momexp-1}{\momexp}}} \parens*{a + b r^{\frac{2-\momexp}{\momexp-1}}} \exp\parens*{\dfrac{c \momexp}{\momexp-1} r} dr.
	\end{equation}
	Now, since $\frac{2-\momexp}{\momexp-1} \geq 0$ and $0 \leq r \leq (1+\time)^{\frac{\momexp-1}{\momexp}}$, we also have $r^{\frac{2-\momexp}{\momexp-1}} \leq (1+\time)^{\frac{2-\momexp}{\momexp}}$ and thus
	\begin{align}
		I(\time) &\leq \exp\parens*{ - \dfrac{c \momexp}{\momexp-1} (1+\time)^{\frac{\momexp-1}{\momexp}} } \dfrac{\momexp}{\momexp-1} \parens*{a + b (1+\time)^{\frac{2-\momexp}{\momexp}}} \int_1^{(1+\time)^{\frac{\momexp-1}{\momexp}}} \dfrac{\momexp}{\momexp-1} \exp\parens*{\dfrac{c\momexp}{\momexp-1} r} dr
		\notag\\
		&=\dfrac{1}{c}\parens*{a + b (1+\time)^{\frac{2-\momexp}{\momexp}}} \bracks*{ 1 - \exp \parens*{ \dfrac{c\momexp}{\momexp-1} \parens*{ 1 - (1+\time)^{\frac{\momexp-1}{\momexp}}} } }
		\notag\\
		&\leq \dfrac{1}{c}\parens*{a + b (1+\time)^{\frac{2-\momexp}{\momexp}}},
	\end{align}
	which concludes the proof.
\end{proof}

\begin{proof}[Proof of \cref{thm:relatively_convex_continuous2} (cont'd)]
	Using \cref{lemma:integral_estimate} in \cref{eq:psi_inequality_1} and replacing the function $\psi(\time)$ by its expression, we readily get 
	\begin{align}
		\exof*{(1+\time)^{1/\momexp} \breg(\sol, \stateof{\time})}
			&\leq \breg(\sol, \stateinit) \exp\parens*{ - \dfrac{\strong \momexp}{\momexp-1} \parens*{ (1+\time)^{\frac{\momexp-1}{\momexp}} - 1} }
			\notag\\
			&+ \dfrac{1}{\strong}\parens*{c_1 + c_3(1+\time)^{\frac{2-\momexp}{\momexp}}}
	\end{align}
	which, after dividing both sides by $(1+\time)^{1/\momexp}$ and using using the inequality $\norm{\stateof{\time}-\sol}^2 \leq \frac{2}{\hcst} \breg(\sol, \stateof{\time})$, yields
	\begin{align}
		\exof*{\norm{\stateof{\time} - \sol}^2}
			&\leq \dfrac{2c_1}{\hcst\strong} (1+\time)^{-\frac{1}{\momexp}}
				+ \dfrac{2c_3}{\hcst \strong} (1+\time)^{-\frac{\momexp-1}{\momexp}}
			\notag\\
			&+ \dfrac{2\breg(\sol, \stateinit)}{\hcst} \exp\parens*{ - \dfrac{\strong \momexp}{\momexp-1} \parens*{ (1+\time)^{\frac{\momexp-1}{\momexp}} - 1 } }.
	\end{align}
	The final result is then obtained when replacing $c_1$ and $c_3$ with their respective values.
\end{proof}

\begin{remark}
	For a learning rate $\learn \propto \time^{-1/\momexp}$, \cref{thm:relatively_convex_continuous2} implies that for any $\precval > 0$, the accuracy $\exof*{ \norm{\stateof{\time} - \sol}^2 } < \precval$ is reached for times $\time \geq \bigoh\parens*{\precval^{-\frac{\momexp}{\momexp-1}}}$. Unlike the complexity bound for constant learning rates, this rate is of the same order as those for convex or strongly convex functions, and therefore does not even attain the optimal $\bigoh\parens*{1/\precval}$ complexity for $\momexp=2$. As we discuss further in the discrete-time section, this limitation arises because a positive term proportional to $-\dot{\learn} / \learn^2$ always appears in the computations. This is a consequence of the learning rate acting on the aggregated gradient steps rather than on individual steps, which prevents the use of a $1/\time$-type learning rate to achieve sharper convergence rates, as is standard in the analysis of stochastic gradient methods.
\end{remark}

\section{Results in discrete time: Omitted proofs from \cref{sec:discrete}}
\label{app:discrete}

In this section, we draw inspiration from the continuous-time analysis of \eqref{eq:LMF} to study the convergence properties of the Stochastic Dual Averaging \eqref{eq:SDA} algorithm given by
\begin{equation}
\tag{SDA}
\begin{aligned}
\next[\dpoint]
	&= \curr[\dpoint]
		- \curr[\sgrad]
	\\
\next[\point]
	&= \mirror(\next[\learn] \next[\dpoint])
\end{aligned}
\end{equation}
where $\curr[\learn] > 0$ is a nonincreasing \define{learning rate} sequence and $\curr[\sgrad]$ is a \emph{stochastic gradient} of the form $\curr[\sgrad] = \nabla\obj(\curr[\point]) + \curr[\noise]$ with gradient noise term $\curr[\noise] = \err(\curr;\curr[\seed])$ satisfying the following assumptions:
\begin{enumerate}
[label=\upshape(\itshape\alph*\hspace*{.5pt}\upshape)]
\item
\define{Unbiasedness:}
$\exof{\err(\point;\seed)} = 0$ for all $\point\in\dom\subd\obj$.
\item
\define{Bounded $\momexp$-th central moments:}
	$\exof{\dnorm{\err(\point;\seed)}^{\momexp}} \leq \sdev^{\momexp}$ for some $\momexp\in(1,2]$ and all $\point\in\dom\subd\obj$.
\end{enumerate}
Throughout this section, we further assume that $\obj$ is \emph{$\lips$-smooth relative to $\hreg$}, that is,
\begin{equation}
\obj(\pointalt)
	\leq \obj(\point)
		+ \braket{\nabla\obj(\point)}{\pointalt - \point}
		+ \lips \breg(\pointalt,\point) \quad \text{for all } \point\in\proxdom, \pointalt\in\points\eqstop.
\end{equation}

\subsection{Convex setting}

We begin by exploring the case where $\obj$ is a convex function. As in the continuous-time analysis, our approach is based on tracking the evolution of the \emph{$\learn$-deflated Fenchel coupling}
\begin{equation}
	\curr[\energy](\basealt, \dpoint) = \dfrac{1}{\curr[\learn]} \fench(\basealt, \curr[\learn] \dpoint) \ \quad \basealt \in \points, \dpoint \in \dpoints
\end{equation}
along the iterates of \eqref{eq:SDA}. In contrast to \eqref{eq:LMF}, however, the discrete-time nature of \eqref{eq:SDA} allows us to derive the evolution of $\curr[\energy]$ without needing a weak stochastic chain rule (Itô’s formula), which considerably simplifies the analysis.
This leads to the following result, which can be viewed as a discrete-time analogue of the classical \emph{three-point identity} (\cref{lem:3point}) for the $\learn$-deflated Fenchel coupling.

\begin{lemma}[Three-point identity for $\learn$-deflated Fenchel coupling]
	For every $\basealt \in \points$, the process $\curr[\energy] \defeq \curr[\energy](\basealt, \curr[\dpoint])$ along trajectories of \eqref{eq:SDA} satisfies
	\begin{align}
		\next[\energy] - \curr[\energy]
			&= \parens*{ \dfrac{1}{\next[\learn]} - \dfrac{1}{\curr[\learn]} } \parens*{ \hreg(\basealt) - \hreg(\next[\point]) }
			\notag\\
			&+ \braket*{\curr[\point] - \basealt}{\next[\dpoint] - \curr[\dpoint]}
			+ \braket*{\next[\point] - \curr[\point]}{\next[\dpoint] - \curr[\dpoint]} - \dfrac{1}{\curr[\learn]} \fench(\next[\point], \curr[\learn] \curr[\dpoint]).
	\end{align}
\label{lemma:difference_of_V}
\end{lemma}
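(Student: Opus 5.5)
The plan is a direct algebraic computation: expand both deflated couplings through the definition \eqref{eq:Fench} and rewrite every conjugate value $\hconj$ using the optimality of the mirror map. No stochastic argument is involved; the identity holds pathwise. Write $\curr[\energy] = \frac{1}{\curr[\learn]}\bigl[\hreg(\basealt) + \hconj(\curr[\learn]\curr[\dpoint]) - \curr[\learn]\braket{\curr[\dpoint]}{\basealt}\bigr]$. Since $\curr = \mirror(\curr[\learn]\curr[\dpoint])$, we have $\hconj(\curr[\learn]\curr[\dpoint]) = \curr[\learn]\braket{\curr[\dpoint]}{\curr} - \hreg(\curr)$, which gives
\begin{equation*}
\curr[\energy] = \frac{\hreg(\basealt) - \hreg(\curr)}{\curr[\learn]} + \braket{\curr[\dpoint]}{\curr - \basealt}.
\end{equation*}
The same computation with index $\run+1$ gives the analogous expression for $\next[\energy]$, with $\next = \mirror(\next[\learn]\next[\dpoint])$.

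Next I will isolate the learning-rate change by writing $\frac{1}{\next[\learn]} = \bigl(\frac{1}{\next[\learn]} - \frac{1}{\curr[\learn]}\bigr) + \frac{1}{\curr[\learn]}$. This produces the first term of the claim, $\bigl(\frac{1}{\next[\learn]} - \frac{1}{\curr[\learn]}\bigr)\bigl(\hreg(\basealt) - \hreg(\next)\bigr)$. The remainder is
\begin{equation*}
\frac{\hreg(\curr) - \hreg(\next)}{\curr[\learn]} + \braket{\next[\dpoint]}{\next - \basealt} - \braket{\curr[\dpoint]}{\curr - \basealt}.
\end{equation*}

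I must then show that this remainder equals the remaining three terms of the claim. For that I expand $\frac{1}{\curr[\learn]}\fench(\next,\curr[\learn]\curr[\dpoint])$ and again use the optimality of $\curr$:
\begin{equation*}
\frac{1}{\curr[\learn]}\fench(\next,\curr[\learn]\curr[\dpoint]) = \frac{\hreg(\next) - \hreg(\curr)}{\curr[\learn]} + \braket{\curr[\dpoint]}{\curr - \next}.
\end{equation*}
Substituting this for the $\hreg$-difference, the remainder becomes $-\frac{1}{\curr[\learn]}\fench(\next,\curr[\learn]\curr[\dpoint])$ plus the bilinear expression $\braket{\curr[\dpoint]}{\curr - \next} + \braket{\next[\dpoint]}{\next - \basealt} - \braket{\curr[\dpoint]}{\curr - \basealt}$.

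That bilinear expression collapses to $\braket{\next[\dpoint] - \curr[\dpoint]}{\next - \basealt}$. Splitting $\next - \basealt = (\curr - \basealt) + (\next - \curr)$ then yields exactly the two inner-product terms in the statement.

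The only delicate point is bookkeeping. The Fenchel coupling in the last term must be evaluated at the \emph{old} scaling $\curr[\learn]\curr[\dpoint]$. This is what allows the optimality of $\curr$, rather than of $\next$, to be used, so that the $\hreg(\curr)$ terms cancel. No convexity or smoothness assumption is needed beyond $\mirror$ being single-valued (\cref{prop:mirror}).
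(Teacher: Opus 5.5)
Your computation is correct and follows essentially the same route as the paper: expand the couplings through the definition of $\fench$, use the optimality of $\next$ to rewrite $\hconj(\next[\learn]\next[\dpoint])$, peel off the $\bigl(1/\next[\learn] - 1/\curr[\learn]\bigr)$ factor, and split $\next - \basealt = (\curr - \basealt) + (\next - \curr)$. The only cosmetic difference is that the paper forms $\fench(\next,\curr[\learn]\curr[\dpoint]) - \fench(\basealt,\curr[\learn]\curr[\dpoint])$ directly, so $\hconj(\curr[\learn]\curr[\dpoint])$ cancels without invoking the optimality of $\curr$; you instead use that optimality twice and let the two uses cancel.
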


\begin{proof}
	First, note that 
	\begin{equation}
		\fench(\next[\point], \curr[\learn]\curr[\dpoint]) - \fench(\basealt, \curr[\learn]\curr[\dpoint]) = \hreg(\next[\point]) - \hreg(\basealt) - \curr[\learn] \braket*{\next[\point] - \basealt}{\curr[\dpoint]}
	\eqcomma
	\end{equation}
	and, by optimality of $\next[\point]$, we also have
	\begin{align}
		\fench(\basealt, \next[\learn]\next[\dpoint]) 
		&= \hreg(\basealt) + \hconj(\next[\learn]\next[\dpoint]) - \next[\learn] \braket*{\basealt}{\next[\dpoint]}
		\notag\\
		&= \hreg(\basealt) + \next[\learn] \braket*{\next[\point]}{\next[\dpoint]} - \hreg(\next[\point]) - \next[\learn]\braket*{\basealt}{\next[\dpoint]}
		\notag\\
		&= \hreg(\basealt) - \hreg(\next[\point]) + \next[\learn] \braket*{\next[\point] - \basealt}{\next[\dpoint]}.
	\end{align}
	This leads to 
	\begin{align}
		\next[\energy] - \curr[\energy] + \dfrac{1}{\curr[\learn]} \fench(\next[\point], \curr[\learn]\curr[\dpoint])
		&= \dfrac{1}{\next[\learn]} \fench(\basealt, \next[\learn]\next[\dpoint]) + \dfrac{1}{\curr[\learn]} \parens*{ \fench(\next[\point], \curr[\learn]\curr[\dpoint]) - \fench(\basealt, \curr[\learn]\curr[\dpoint]) }\notag\\
		&= \parens*{\dfrac{1}{\next[\learn]} - \dfrac{1}{\curr[\learn]}} \parens*{ \hreg(\basealt) - \hreg(\next[\point]) }
		\notag\\
		&\quad
			+ \braket*{\next[\point] - \basealt}{\next[\dpoint] - \curr[\dpoint]}
		\notag\\
		&= \parens*{\dfrac{1}{\next[\learn]} - \dfrac{1}{\curr[\learn]}} \parens*{ \hreg(\basealt) - \hreg(\next[\point]) }
		\notag\\
		&\quad
			+ \braket*{\curr[\point] - \basealt}{\next[\dpoint] - \curr[\dpoint]}
			+ \braket*{\next[\point] - \curr[\point]}{\next[\dpoint] - \curr[\dpoint]}
	\eqcomma
	\end{align}
	which concludes the proof after rearranging the terms.
\end{proof}

We now invoke the relative smoothness of the objective together with the bounded $\momexp$-th central moment assumption on the gradient noise to obtain the following estimate:

\begin{lemma}[Heavy-tailed noise estimate]
	If $\obj$ is relatively $\lips$-smooth and $\curr[\learn] \leq \frac{1}{\momexp \lips}$, then the iterates of \eqref{eq:SDA} satisfy
	\begin{equation}
		\braket*{\next[\point] - \curr[\point]}{\next[\dpoint] - \curr[\dpoint]} - \dfrac{1}{\curr[\learn]} \fench(\next[\point], \curr[\learn] \curr[\dpoint]) \leq \obj(\curr[\point]) - \obj(\next[\point]) + \dfrac{2^{\momexp - 1} \diamX^{2 -\momexp}}{\momexp \hcst^{\momexp-1}} \curr[\learn]^{\momexp - 1} \dnorm{\curr[\noise]}^{\momexp}. 
	\label{eq:heavy_tails_descent_discrete}
	\end{equation}
\label{lemma:heavy_tails_descent}
\end{lemma}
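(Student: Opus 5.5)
The plan is to expand the dual increment through the oracle model and reduce everything to a one-dimensional estimate in $r \defeq \norm{\next[\point]-\curr[\point]}$. By \eqref{eq:SDA} and \eqref{eq:sgrad}, $\next[\dpoint]-\curr[\dpoint] = -\curr[\sgrad] = -\nabla\obj(\curr)-\curr[\noise]$, so the left-hand side of \eqref{eq:heavy_tails_descent_discrete} equals
\begin{equation*}
\braket{\nabla\obj(\curr)}{\curr-\next}
	- \braket{\curr[\noise]}{\next-\curr}
	- \tfrac{1}{\curr[\learn]}\fench(\next,\curr[\learn]\curr[\dpoint]).
\end{equation*}

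\textbf{Deterministic part.} The relative smoothness inequality \eqref{eq:RS}, applied at $\curr\in\proxdom$ (recall $\curr = \mirror(\curr[\learn]\curr[\dpoint])\in\im\mirror=\proxdom$), gives
\begin{equation*}
\braket{\nabla\obj(\curr)}{\curr-\next} \leq \obj(\curr)-\obj(\next)+\lips\breg(\next,\curr).
\end{equation*}
Since $\curr=\mirror(\curr[\learn]\curr[\dpoint])$, \cref{prop:Fench}(b) yields $\breg(\next,\curr)\leq\fench(\next,\curr[\learn]\curr[\dpoint])$. Hence the Bregman term and the Fenchel term combine into $-\tfrac{1-\curr[\learn]\lips}{\curr[\learn]}\fench(\next,\curr[\learn]\curr[\dpoint])$. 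The step-size condition $\curr[\learn]\leq 1/(\momexp\lips)$ gives $1-\curr[\learn]\lips\geq(\momexp-1)/\momexp$. Then \cref{prop:Fench}(c) shows that this combined term is at most $-a r^{2}$, with $a = (\momexp-1)\hstr/(2\momexp\curr[\learn])$.

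\textbf{Noise part.} By the dual-norm inequality, $-\braket{\curr[\noise]}{\next-\curr}\leq u r$ with $u=\dnorm{\curr[\noise]}$. It therefore remains to bound $\phi(r)=ur-ar^{2}$ over $0\leq r\leq\diamX$ by a multiple of $u^{\momexp}\curr[\learn]^{\momexp-1}$.

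\textbf{Main obstacle.} This scalar bound is the key step, because the naive maximizer $u^{2}/(4a)$ scales with $u^{2}$, which has infinite mean when $\momexp<2$. I will use the interpolation
\begin{equation*}
\phi(r)\leq\min\{u\diamX,\;u^{2}/(4a)\}\leq(u\diamX)^{2-\momexp}\bigl(u^{2}/(4a)\bigr)^{\momexp-1}
	= \frac{\diamX^{2-\momexp}u^{\momexp}}{(4a)^{\momexp-1}}.
\end{equation*}
Substituting $a$ gives a bound of the form $\Const_{\momexp}\,\diamX^{2-\momexp}\hstr^{1-\momexp}\curr[\learn]^{\momexp-1}\dnorm{\curr[\noise]}^{\momexp}$, which is exactly the structure of \eqref{eq:heavy_tails_descent_discrete}. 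If the constant this route produces does not match $2^{\momexp-1}/\momexp$, I would try another split instead: keep a fraction of $-ar^{2}$, write $ur=u\,r^{2-\momexp}r^{\momexp-1}\leq u\,\diamX^{2-\momexp}r^{\momexp-1}$, and apply the weighted Young inequality with exponents $\momexp$ and $\momexp/(\momexp-1)$, calibrating the weight against $ar^{2}$. This is the same manipulation as in \cref{lemma:diffF_upperbound}, with the constant chosen differently.

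Adding the deterministic and noise parts then gives the claimed inequality.
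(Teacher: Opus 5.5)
Your deterministic part is correct and is essentially the paper's. The paper splits $\frac{1}{\curr[\learn]}\fench(\next,\curr[\learn]\curr[\dpoint])$ into a $1/\momexp$ share, which absorbs $\lips\breg(\next,\curr)$ because $\curr[\learn]\leq1/(\momexp\lips)$, and a $(\momexp-1)/\momexp$ share kept for the noise. This leaves exactly your penalty $-ar^{2}$ with $a=(\momexp-1)\hstr/(2\momexp\curr[\learn])$.

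\textbf{The gap.} It lies in the scalar step and concerns the constant, which is part of the statement. With this $a$, your interpolation $\min\{u\diamX,u^{2}/(4a)\}\leq(u\diamX)^{2-\momexp}\bigl(u^{2}/(4a)\bigr)^{\momexp-1}$ gives the constant $\bigl(\momexp/(2(\momexp-1))\bigr)^{\momexp-1}$ instead of $2^{\momexp-1}/\momexp$. The former is strictly larger for every $\momexp\in(1,2)$. With $\epsilon=\momexp-1$, the log-ratio is $(1+\epsilon)\log(1+\epsilon)-2\epsilon\log2-\epsilon\log\epsilon$, which vanishes as $\epsilon\to0^{+}$ and at $\epsilon=1$ and is positive in between. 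At $\momexp=3/2$ you get $\sqrt{3/2}\approx1.22$ against $\sqrt{2}/(3/2)\approx0.94$. So this route proves only a weaker lemma: harmless for the rates, but not the stated inequality.

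\textbf{Why the fallback fails.} Once you replace $r^{2-\momexp}$ by $\diamX^{2-\momexp}$, you must bound $\sup_{r\geq0}\{u\diamX^{2-\momexp}r^{\momexp-1}-ar^{2}\}$. This scales like $u^{2/(3-\momexp)}$ as $u\to0$. Since $2/(3-\momexp)<\momexp$ for $\momexp\in(1,2)$, no bound of the form $c\,u^{\momexp}$ is possible. Concretely, Young with exponents $\momexp,\momexp/(\momexp-1)$ on $u\cdot r^{\momexp-1}$ produces an $r^{\momexp}$ term that $ar^{2}$ cannot absorb near $r=0$. Nor is this the manipulation of \cref{lemma:diffF_upperbound}: that lemma uses the Lipschitz continuity of $\mirror$. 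Here that would control $r$ only through the full dual increment, hence through $\nabla\obj(\curr)$ and not just $\curr[\noise]$.

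\textbf{The fix.} Spend the diameter on the penalty rather than on the linear term. Since $r\leq\diamX$, we have $r^{\momexp/(\momexp-1)}=r^{2}\,r^{(2-\momexp)/(\momexp-1)}\leq\diamX^{(2-\momexp)/(\momexp-1)}r^{2}$, so
\[
-ar^{2}\leq-\frac{\momexp-1}{\momexp}\,C\,r^{\momexp/(\momexp-1)}
\quad\text{with}\quad
C=\frac{\hstr}{2\curr[\learn]}\,\diamX^{-(2-\momexp)/(\momexp-1)}.
\]
Now apply Young's inequality with exponents $\momexp$ and $\momexp/(\momexp-1)$ to $u\cdot r$ itself (equivalently, maximize over $r\geq0$):
\[
ur-\frac{\momexp-1}{\momexp}\,C\,r^{\momexp/(\momexp-1)}\leq\frac{C^{1-\momexp}}{\momexp}\,u^{\momexp}=\frac{2^{\momexp-1}\diamX^{2-\momexp}}{\momexp\hstr^{\momexp-1}}\,\curr[\learn]^{\momexp-1}u^{\momexp}.
\]
This is exactly the stated constant, and it is the paper's argument. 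With this replacement, the rest of your proof goes through unchanged.
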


\begin{proof}
	Since $\next[\dpoint] - \curr[\dpoint] = - \curr[\sgrad] = - (\nabla \obj(\curr[\point]) + \curr[\noise])$ and $\momexp \in (1, 2]$, we can decompose the left-hand side of \cref{eq:heavy_tails_descent_discrete} as
	\begin{align}
		\braket*{\next[\point] - \curr[\point]}{\next[\dpoint] - \curr[\dpoint]} - \dfrac{1}{\curr[\learn]}\fench(\next[\point], \curr[\learn] \curr[\dpoint]) 
		&=  \braket*{\nabla \obj(\curr[\point])}{\curr[\point] - \next[\point]} - \dfrac{1}{\momexp \curr[\learn]} \fench(\next[\point], \curr[\learn]\curr[\dpoint])
		\label{eq:xi_1_discrete}\\
		&+  \braket*{\curr[\noise]}{\curr[\point] - \next[\point]} - \dfrac{\momexp-1}{\momexp\curr[\learn]} \fench(\next[\point], \curr[\learn]\curr[\dpoint])\label{eq:xi_2_discrete}.
	\end{align}
	Focusing first on bounding \eqref{eq:xi_1_discrete}, the relative smoothness of $\obj$ yields
	\begin{align}
		\text{\eqref{eq:xi_1_discrete}} 
		&\leq \obj(\curr[\point]) - \obj(\next[\point]) + \lips \breg(\next[\point], \curr[\point]) - \dfrac{1}{\momexp \curr[\learn]} \fench(\next[\point], \curr[\learn]\curr[\dpoint])
		\notag\\
		&\leq \obj(\curr[\point]) - \obj(\next[\point]) + \lips \fench(\next[\point], \curr[\learn]\curr[\dpoint]) - \dfrac{1}{\momexp \curr[\learn]} \fench(\next[\point], \curr[\learn]\curr[\dpoint])
		\notag\\
		&= \obj(\curr[\point]) - \obj(\next[\point]) - \dfrac{\lips}{\curr[\learn]} \parens*{ \dfrac{1}{\momexp \lips} - \curr[\learn] } \fench(\next[\point], \curr[\learn]\curr[\dpoint])
		\notag\\
		&\leq \obj(\curr[\point]) - \obj(\next[\point]),
	\end{align}
	where the second inequality comes from $\breg(\point, \mirror(\dpoint')) \leq \fench(\point, \dpoint')$ (\cf \cref{prop:Fench}) and the last inequality from the assumption that $\curr[\learn] \leq 1/(\momexp \lips)$. On the other hand, for \eqref{eq:xi_2_discrete}, we use the Cauchy-Schwarz inequality and the fact that $\fench(\point, \dpoint) \geq \frac{\hcst}{2} \norm{\point - \mirror(\dpoint)}^2$ (see \cref{prop:Fench}) to get
	\begin{equation}
		\text{\eqref{eq:xi_2_discrete}} \leq \dnorm{\curr[\noise]} \norm{\next[\point] - \curr[\point]} - \dfrac{\momexp-1}{\momexp \curr[\learn]} \dfrac{\hcst}{2} \norm{\next[\point] - \curr[\point]}^2.
	\label{eq:noisebound_discrete}
	\end{equation}
	Since the diameter $\diamX$ of $\points$ is assumed to be finite, we know that $\norm{\next[\point] - \curr[\point]} \leq \diamX$ and therefore
	\begin{align}
		\norm{\next[\point] - \curr[\point]}^2
			&= \diamX^{- \frac{2-\momexp}{\momexp - 1}} \diamX^{\frac{2-\momexp}{\momexp - 1}} \norm{\next[\point] - \curr[\point]}^2
			\notag\\
			&\geq \diamX^{-\frac{2-\momexp}{\momexp - 1}} \norm{\next[\point] - \curr[\point]}^{\frac{2-\momexp}{\momexp - 1}} \norm{\next[\point] - \curr[\point]}^2 =  \diamX^{-\frac{2-\momexp}{\momexp - 1}} \norm{\next[\point] - \curr[\point]}^{\frac{\momexp}{\momexp - 1}}.
	\end{align}
	Putting this estimate back into \cref{eq:noisebound_discrete} then yields
	\begin{align}
	\eqref{eq:xi_2_discrete}
		&\leq \dnorm{\curr[\noise]} \norm{\next[\point] - \curr[\point]} - \dfrac{\momexp-1}{\momexp} \dfrac{\hcst \diamX^{-\frac{2-\momexp}{\momexp-1}}}{2\curr[\learn]} \norm{\next[\point] - \curr[\point]}^{\frac{\momexp}{\momexp-1}}
		\notag\\
		&\leq \max_{w \geq 0} \parens*{ \dnorm{\curr[\noise]} w - \dfrac{\momexp - 1}{\momexp} C w^{\frac{\momexp}{\momexp-1}} }
	\eqcomma 
	\end{align}
	where we have defined $C = \frac{\hcst}{2\curr[\learn]}\diamX^{-\frac{2-\momexp}{\momexp-1}}$ for conciseness. The maximization problem is attained at $w = (\dnorm{\curr[\noise]}/C)^{\momexp - 1}$ and thus 
	\begin{align}
		\eqref{eq:xi_2_discrete}
			&\leq \dnorm{\curr[\noise]} \parens*{ \dfrac{\dnorm{\curr[\noise]}}{C} }^{\momexp -1} - \dfrac{\momexp-1}{\momexp} C \parens*{ \dfrac{\dnorm{\curr[\noise]}}{C} }^{\frac{\momexp}{\momexp-1}(\momexp-1)}
			\notag\\
			&= \parens*{1 - \frac{\momexp-1}{\momexp}} C^{1-\momexp} \dnorm{\curr[\noise]}^{\momexp} = \dfrac{2^{\momexp-1} \diamX^{2-\momexp} }{\momexp \hcst^{\momexp-1}} \curr[\learn]^{\momexp-1} \dnorm{\curr[\noise]}^{\momexp}.
	\end{align}
	Combining the upper bounds for \eqref{eq:xi_1_discrete} and \eqref{eq:xi_2_discrete} therefore yields the desired result.
\end{proof}

We can now combine the first two lemmas to obtain the following theorem, which---similarly to its continuous-time analogue---serves as the baseline for most of our subsequent results.

\begin{theorem}
	If $\obj$ is relatively $\lips$-smooth, then for every $\basealt \in \points$ and every nonincreasing learning rate sequence with $\learn_1 \leq \frac{1}{\momexp \lips}$, the iterates of \eqref{eq:SDA} satisfy
	\begin{multline}
		\exof*{\sum_{\run=1}^\horizon \braket*{ \curr[\point] - \basealt}{\nabla \obj(\curr[\point])} } \leq \braket*{\point_1 - \basealt}{\dpoint_1} + \obj(\point_1) - \min \obj \\ + \dfrac{\diamh}{\learn_{\horizon+1}} + \dfrac{2^{\momexp - 1} \diamX^{2-\momexp}}{\momexp \hcst^{\momexp-1}} \sdev^{\momexp} \sum_{\run=1}^\horizon \curr[\learn]^{\momexp-1}.
	\end{multline}
\label{thm:discrete_relative_smooth_cv}
\end{theorem}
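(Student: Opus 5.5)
We will combine \cref{lemma:difference_of_V} and \cref{lemma:heavy_tails_descent}, telescope over $\run=1,\dots,\horizon$, and take expectations.

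For each $\run$, \cref{lemma:difference_of_V} with $\next[\dpoint]-\curr[\dpoint] = -\curr[\sgrad]$ gives
\[
\next[\energy]-\curr[\energy]
	= \parens*{\tfrac{1}{\next[\learn]}-\tfrac{1}{\curr[\learn]}}\parens*{\hreg(\basealt)-\hreg(\next[\point])}
	- \braket*{\curr[\point]-\basealt}{\nabla\obj(\curr[\point])+\curr[\noise]}
	+ \Delta_\run ,
\]
where $\Delta_\run$ collects the last two terms. Since $\learn$ is nonincreasing, $\curr[\learn]\le\learn_1\le 1/(\momexp\lips)$, so \cref{lemma:heavy_tails_descent} applies and bounds $\Delta_\run$ by
\[
\obj(\curr[\point])-\obj(\next[\point]) + \tfrac{2^{\momexp-1}\diamX^{2-\momexp}}{\momexp\hcst^{\momexp-1}}\curr[\learn]^{\momexp-1}\dnorm{\curr[\noise]}^{\momexp}.
\]
The first bracket is at most $\diamh\,(1/\next[\learn]-1/\curr[\learn])$, because $1/\learn$ is nondecreasing and $\hreg(\basealt)-\hreg(\next[\point])\le\diamh$.

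Next we sum from $1$ to $\horizon$ and rearrange.
\begin{itemize}
\item The $\learn$-terms telescope to at most $\diamh/\learn_{\horizon+1}-\diamh/\learn_1$.
\item The $\obj$-terms telescope to $\obj(\point_1)-\obj(\point_{\horizon+1})\le \obj(\point_1)-\min\obj$.
\item On the left we keep $\energy_{\horizon+1}\ge0$ and bring over $\energy_1$.
\end{itemize}
As in the proof of \cref{thm:convex_convergence_continuous}, optimality of $\point_1=\mirror(\learn_1\dpoint_1)$ gives
\[
\energy_1 = \tfrac{1}{\learn_1}\bracks*{\hreg(\basealt)-\hreg(\point_1)} + \braket*{\point_1-\basealt}{\dpoint_1} \le \diamh/\learn_1 + \braket*{\point_1-\basealt}{\dpoint_1}.
\]
The $\diamh/\learn_1$ contribution cancels the negative telescoped term. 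This yields a pathwise inequality for $\sum_\run\braket*{\curr[\point]-\basealt}{\nabla\obj(\curr[\point])}$ with the extra terms $-\sum_\run\braket*{\curr[\point]-\basealt}{\curr[\noise]}$ and $\sum_\run c\,\curr[\learn]^{\momexp-1}\dnorm{\curr[\noise]}^{\momexp}$.

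Finally, we take expectations. Because $\curr[\point]$ is $\curr[\filter]$-measurable and bounded, and $\exof{\curr[\noise]\given\curr[\filter]}=0$, the cross terms vanish. The moment assumption gives $\exof{\dnorm{\curr[\noise]}^{\momexp}\given\curr[\filter]}\le\sdev^{\momexp}$; we then use the tower property, with $\curr[\learn]$ deterministic.

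The main point requiring care is the measurability and integrability needed to kill the martingale term. This holds since $\points$ is compact and $\curr[\noise]$ is integrable ($\momexp>1$). Everything else is routine telescoping.
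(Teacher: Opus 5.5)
Your proposal is correct and matches the paper's proof essentially step for step. Both arguments sum \cref{lemma:difference_of_V}, bound the cross terms with \cref{lemma:heavy_tails_descent} (valid since $\learn_\run\leq\learn_1$), telescope the $\learn$- and $\obj$-terms, bound $\energy_1$ via optimality of $\point_1$, and remove the noise terms in expectation using unbiasedness and the $\momexp$-th moment bound. You also use the correct index $\energy_{\horizon+1}\geq 0$, where the paper's write-up has the small typo $\energy_\horizon$.
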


\begin{proof}
	Summing the equality of \cref{lemma:difference_of_V} over $\run = 1, \dots, \horizon$ and using the upper bound of \cref{lemma:heavy_tails_descent}, we get
	\begin{align}
		0
			\leq \exof{\energy_\horizon}
			\leq \energy_1
				&+ \sum_{\run=1}^\horizon \parens*{\dfrac{1}{\next[\learn]} - \dfrac{1}{\curr[\learn]}} \exof*{\hreg(\basealt) - \hreg(\next[\point]) }
			\notag\\
				&- \exof*{\sum_{\run=1}^\horizon \braket*{\curr[\point] - \basealt}{\nabla \obj(\curr[\point])}} + \obj(\point_1) - \exof*{ \obj(\point_{\horizon+1}) }
			\notag\\
				&- \sum_{\run=1}^\horizon \exof*{\braket*{\curr[\point] - \basealt}{\curr[\noise]}} + \dfrac{2^{\momexp-1} \diamX^{2-\momexp}}{\momexp \hcst^{\momexp-1}} \sum_{\run=1}^\horizon \curr[\learn]^{\momexp-1} \exof*{\dnorm{\curr[\noise]}^{\momexp}}.
	\end{align}
	Note that since $\curr[\learn]$ is a nonincreasing sequence and since $\hreg$ is continuous on the compact set $\points$, we have
	\begin{equation}
		\sum_{\run=1}^\horizon \parens*{\dfrac{1}{\next[\learn]} - \dfrac{1}{\curr[\learn]}} \exof*{\hreg(\basealt) - \hreg(\next[\point])} \leq \sum_{\run=1}^\horizon \parens*{\dfrac{1}{\next[\learn]} - \dfrac{1}{\curr[\learn]}} \diamh = \dfrac{\diamh}{\learn_{\horizon+1}} - \dfrac{\diamh}{\learn_1}.
	\end{equation}
	Moreover, by optimality of $\point_1$, 
	\begin{equation}
		\energy_1 = \frac{1}{\learn_1} \fench(\basealt, \learn_1 \dpoint_1) = \dfrac{1}{\learn_1} \parens*{ \hreg(\basealt) - \hreg(\point_1) - \braket*{\learn_1\dpoint_1}{\basealt - \point_1} } \leq \dfrac{\diamh}{\learn_1} - \braket*{\basealt - \point_1}{\dpoint_1}.
	\end{equation}
	Now, by the blanket assumptions made on the noise term $\curr[\noise]$ and by definition of the filtration $\curr[\filter]$, we have
	\begin{equation}
		\exof*{\dnorm{\curr[\noise]}^{\momexp}} = \exof*{ \exof*{ \dnorm{\err(\curr[\point];\curr[\seed])}^{\momexp} \given \curr[\filter] } } \leq \sdev^{\momexp} 
	\end{equation}
	and
	\begin{equation}
		\exof*{ \braket*{\curr[\point] - \basealt}{\curr[\noise]} } = \exof*{ \exof*{ \braket*{ \curr[\point] - \basealt }{ \err(\curr[\point];\curr[\seed]) } \given \curr[\filter] } } = \exof*{ \braket*{ \curr[\point] - \basealt }{ \exof*{ \err(\curr[\point];\curr[\seed]) \given \curr[\filter]} } } = 0.
	\end{equation}
	All of these estimates ultimately lead to the bound
	\begin{multline}
		\exof*{ \sum_{\run=1}^\horizon \braket*{\curr[\point] - \basealt}{\nabla \obj(\curr[\point])}} \leq \braket*{\point_1 - \basealt}{\dpoint_1} + \obj(\point_1) - \min \obj \\ + \dfrac{\diamh}{\learn_{\horizon+1}} + \dfrac{2^{\momexp-1} \diamX^{2-\momexp}}{\momexp \hcst^{\momexp-1}} \sdev^{\momexp} \sum_{\run=1}^\horizon \curr[\learn]^{\momexp-1}
	\eqcomma
	\end{multline}
	which concludes our proof.
\end{proof}

Specializing \cref{thm:discrete_convex_convergence} to convex objectives yields the following result, which generalizes \cref{thm:cvx-SDA} from the main text to arbitrary learning rates and arbitrary initial conditions.

\begin{theorem}[Convergence rate of \eqref{eq:SDA} for convex functions]
	If $\obj$ is convex and relatively $\lips$-smooth, then for every nonincreasing learning rate sequence with $\learn_1 \leq \frac{1}{\momexp \lips}$ and for every $\sol \in \argmin\obj$, the iterates of \eqref{eq:SDA} satisfy
	\begin{multline}
		\exof*{ \obj(\avg{\point}_\horizon) - \min\obj} \leq \dfrac{\braket*{\point_1 - \sol}{\dpoint_1} + \obj(\point_1) - \min \obj}{\horizon} \\ + \dfrac{\diamh}{\horizon \learn_{\horizon+1}} + \dfrac{2^{\momexp-1} \diamX^{2-\momexp} \sdev^{\momexp}}{\momexp \hcst^{\momexp-1}} \dfrac{\sum_{\run=1}^\horizon \curr[\learn]^{\momexp-1} }{\horizon}
	\eqcomma
	\end{multline}
	where $\avg{\point}_\horizon = \frac{1}{\horizon}\sum_{\run=1}^\horizon \curr[\point]$.
	In particular, the choice $\curr[\learn] = \beta \run^{-1/\momexp}$ with $\beta \leq \frac{1}{\momexp \lips}$ yields
	\begin{multline}
		\exof*{ \obj(\avg{\point}_\horizon) - \min\obj } \leq \dfrac{\braket*{\point_1 - \sol}{\dpoint_1} + \obj(\point_1) - \min \obj}{\horizon} \\ + \parens*{\dfrac{\diamh}{\beta} + \dfrac{2^{\momexp-1} \beta^{\momexp-1}\diamX^{2-\momexp} \sdev^{\momexp}}{\hcst^{\momexp-1}}} \horizon^{-\frac{\momexp-1}{\momexp}}.
	\end{multline}
\end{theorem}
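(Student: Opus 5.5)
The plan is to specialize \cref{thm:discrete_relative_smooth_cv} with $\basealt \gets \sol$ and then pass from the summed gradient pairing to the averaged iterate by convexity and Jensen. This parallels the continuous-time \cref{thm:convex_convergence_continuous_explicit}.

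First, fix $\sol\in\argmin\obj$. By convexity of $\obj$ and the fact that $\nabla\obj(\curr)\in\subd\obj(\curr)$, we have $\braket{\nabla\obj(\curr)}{\curr-\sol}\geq \obj(\curr)-\min\obj$ for every $\run$. Here $\curr\in\proxdom\subseteq\dom\subd\obj$, since the relative smoothness setting presumes gradients are defined along the iterates. Summing over $\run=\start,\dots,\horizon$ and applying Jensen's inequality to the convex $\obj$ gives
\begin{equation}
\sum_{\run=\start}^{\horizon}\braket{\curr-\sol}{\nabla\obj(\curr)} \geq \horizon\bracks{\obj(\avg{\point}_\horizon)-\min\obj}.
\end{equation}
Taking expectations and invoking \cref{thm:discrete_relative_smooth_cv} with $\basealt=\sol$ (legitimate because $\curr[\learn]$ is nonincreasing and $\learn_1\leq 1/(\momexp\lips)$), then dividing by $\horizon$, yields the first displayed bound verbatim.

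For the specialization $\curr[\learn]=\beta\run^{-1/\momexp}$, the sequence is nonincreasing with $\learn_1=\beta\leq 1/(\momexp\lips)$, so the general bound applies. I then estimate the two $\learn$-dependent terms.

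The first is $\diamh/(\horizon\learn_{\horizon+1}) = \diamh(\horizon+1)^{1/\momexp}/(\beta\horizon)$. This only gives $\diamh\horizon^{-(\momexp-1)/\momexp}/\beta$ up to the factor $(1+1/\horizon)^{1/\momexp}\leq 2^{1/\momexp}$, so this is the one delicate point. If an exact constant is required, I would absorb the mismatch by re-indexing. The cleanest route is to note that the proof of \cref{thm:discrete_relative_smooth_cv} only uses $\learn_{\horizon+1}$ through the telescoping sum of $1/\learn$ differences. One can therefore stop the telescoping at $\learn_\horizon$, bounding the last increment $(1/\learn_{\horizon+1}-1/\learn_\horizon)(\hreg(\sol)-\hreg(\point_{\horizon+1}))$ together with the nonnegative energy $\energy_{\horizon+1}$. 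Failing that, I would accept the harmless constant, which is absorbed in the stated order.

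The second term uses $\sum_{\run=1}^{\horizon}\run^{-(\momexp-1)/\momexp}\leq\int_0^{\horizon}s^{-(\momexp-1)/\momexp}\dd s=\momexp\horizon^{1/\momexp}$. Hence
\begin{equation}
\frac{2^{\momexp-1}\diamX^{2-\momexp}\sdev^{\momexp}}{\momexp\hcst^{\momexp-1}}\cdot\frac{\beta^{\momexp-1}\momexp\horizon^{1/\momexp}}{\horizon} = \frac{2^{\momexp-1}\beta^{\momexp-1}\diamX^{2-\momexp}\sdev^{\momexp}}{\hcst^{\momexp-1}}\horizon^{-(\momexp-1)/\momexp}.
\end{equation}
Here the factor $\momexp$ from the integral cancels the $1/\momexp$. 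Collecting the terms gives the claimed rate.

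The only genuine obstacle is the $\learn_{\horizon+1}$ versus $\horizon^{1/\momexp}$ index mismatch just discussed; everything else is direct substitution.
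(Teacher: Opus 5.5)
Your argument is the paper's: specialize \cref{thm:discrete_relative_smooth_cv} at $q=x^\ast$, lower-bound the summed pairing by $T[f(\bar x_T)-\min f]$ via convexity and Jensen, and use $\sum_{t\le T} t^{-(p-1)/p}\le pT^{1/p}$.

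You are right that the $\eta_{T+1}$ mismatch needs a word; the paper's proof passes over it silently. The clean fix is to note that $\bar x_T$ depends only on $\eta_1,\dots,\eta_T$. You may therefore apply the general bound to the (still nonincreasing) sequence in which $\eta_{T+1}$ is replaced by $\eta_T$. This gives exactly $H/(T\eta_T)=(H/\beta)T^{-(p-1)/p}$.

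The mechanism you actually describe, absorbing the last increment into $E_{T+1}\ge 0$, does not work as stated. Indeed,
\[
E_{T+1}-\Bigl(\frac{1}{\eta_{T+1}}-\frac{1}{\eta_T}\Bigr)\bigl(h(x^\ast)-h(x_{T+1})\bigr)=\frac{1}{\eta_T}\bigl(h(x^\ast)-h(x_{T+1})\bigr)-\langle y_{T+1},x^\ast-x_{T+1}\rangle
\]
can be negative, because the subgradient of $h$ at $x_{T+1}$ is $\eta_{T+1}y_{T+1}$, not $\eta_T y_{T+1}$. The best general lower bound for this quantity just reproduces the $H/\eta_{T+1}$ term. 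Your fallback of keeping the factor $(1+1/T)^{1/p}$ proves a weaker inequality than the one stated.
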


\begin{proof}
	Due to $\obj$ being convex,
	\begin{equation}
		\sum_{\run=1}^\horizon \braket*{\curr[\point] - \sol}{\nabla \obj(\curr[\point])} \geq \sum_{\run=1}^\horizon \bracks*{ \obj(\curr[\point]) - \obj(\sol) } \geq \horizon \bracks*{ \obj(\avg{\point}(\horizon)) - \min \obj }
	\eqcomma
	\end{equation}
	and the convergence rate for general learning rates is therefore an immediate consequence of \cref{thm:discrete_relative_smooth_cv}. The specialized bound for $\curr[\learn] = \beta \run^{-1/\momexp}$ is then established by using the upper bound $\sum_{\run=1}^\horizon \run^{-\frac{\momexp-1}{\momexp}} \leq \momexp \horizon^{1/\momexp}$ obtained from a standard integral estimation.
\end{proof}

\begin{remark}
	For any precision $\precval > 0$, running \eqref{eq:SDA} with learning rate $\learn = \bigoh\parens*{(\precval/\sdev^{\momexp})^{\frac{1}{\momexp-1}}}$ yields $\exof*{\obj(\avg{\point}_\horizon) - \min \obj} \leq \precval$ for every $\horizon \geq \bigoh\parens*{(\sdev/\precval)^{\frac{\momexp}{\momexp-1}}}$, thus matching the complexity obtained in the continuous-time setting, \cf \cref{remark:complexity_rate_convex_continuous}. 
\label{remark:complexity_rate_convex_discrete}
\end{remark}

\subsection{Strongly convex setting}

We now turn to the convergence analysis of \eqref{eq:SDA} for strongly convex objectives. Leveraging the upper bound in \cref{thm:discrete_convex_convergence}, we first obtain a convergence guarantee for the ergodic averages that holds for any strongly convex objective, independently of the choice of regularizer.

\begin{theorem}[Convergence rate of \eqref{eq:SDA} for strongly convex functions]
	If $\obj$ is $\strong$-strongly convex with minimum $\sol \in \points$ and relatively $\lips$-smooth, then for every nonincreasing learning rate sequence with $\step_1 \leq \frac{1}{\momexp \lips}$, the iterates of \eqref{eq:SDA} satisfy
	\begin{multline}
		\exof*{ \norm{\avg{\point}_\horizon - \sol }^2} \leq 2\dfrac{\braket*{\point_1 - \sol}{\dpoint_1} + \obj(\point_1) - \min \obj}{\strong\horizon} \\ + \dfrac{2\diamh}{\strong\horizon \learn_{\horizon+1}} + \dfrac{2^{\momexp} \diamX^{2-\momexp} \sdev^{\momexp}}{\strong\momexp \hcst^{\momexp-1}} \dfrac{\sum_{\run=1}^\horizon \curr[\learn]^{\momexp-1} }{\horizon}.
	\end{multline}
	In particular, the choice $\curr[\learn] = \beta \run^{-1/\momexp}$ with $\beta \leq \frac{1}{\momexp \lips}$ yields
	\begin{multline}
		\exof*{ \norm{\avg{\point}_\horizon - \sol }^2} \leq 2\dfrac{\braket*{\point_1 - \sol}{\dpoint_1} + \obj(\point_1) - \min \obj}{\strong\horizon} \\ + \dfrac{2}{\strong}\parens*{\dfrac{\diamh}{\beta} + \dfrac{2^{\momexp-1} \beta^{\momexp-1}\diamX^{2-\momexp} \sdev^{\momexp}}{\hcst^{\momexp-1}}} \horizon^{-\frac{\momexp-1}{\momexp}}.
	\end{multline}
\end{theorem}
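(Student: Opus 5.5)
This follows the same route as \cref{thm:strongly_convex_continuous} in continuous time. The plan is to lower-bound the weighted regret-type sum $\sum_{\run=1}^\horizon \braket{\curr[\point] - \sol}{\nabla\obj(\curr[\point])}$ by a multiple of $\norm{\avg{\point}_\horizon - \sol}^2$. We then feed in the upper bound of \cref{thm:discrete_relative_smooth_cv} with $\basealt \gets \sol$, which applies since $\obj$ is relatively $\lips$-smooth and the learning rate is nonincreasing with $\learn_1 \leq 1/(\momexp\lips)$. I read the condition ``$\step_1 \leq 1/(\momexp\lips)$'' in the statement as this learning-rate condition.

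\textbf{Step 1 (strong convexity lower bound).} Apply \eqref{eq:strong} at $\point = \curr[\point]$, $\pointalt = \sol$ to get
\begin{equation}
\braket{\nabla\obj(\curr[\point])}{\curr[\point] - \sol} \geq \obj(\curr[\point]) - \obj(\sol) + \tfrac{\strong}{2}\norm{\curr[\point] - \sol}^2 \geq \tfrac{\strong}{2}\norm{\curr[\point] - \sol}^2 ,
\end{equation}
where the last step uses $\obj(\curr[\point]) \geq \min\obj$. Summing over $\run$ and using convexity of $\norm{\cdot}^2$ (Jensen) gives
\begin{equation}
\sum_{\run=1}^\horizon \braket{\curr[\point] - \sol}{\nabla\obj(\curr[\point])} \geq \tfrac{\strong\horizon}{2}\norm{\avg{\point}_\horizon - \sol}^2 .
\end{equation}

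\textbf{Step 2 (combine).} Take expectations and combine with \cref{thm:discrete_relative_smooth_cv}. Dividing by $\strong\horizon/2$ yields the first bound directly, including the factor $2^{\momexp}/(\strong\momexp)$ in front of the noise term, which is $2/\strong$ times $2^{\momexp-1}/\momexp$.

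\textbf{Step 3 (specialization).} For $\curr[\learn] = \beta\run^{-1/\momexp}$, use $\learn_{\horizon+1} \geq \beta(2\horizon)^{-1/\momexp}$. The cruder route is to bound $\diamh/(\horizon\learn_{\horizon+1})$ by $\diamh/(\beta\horizon^{(\momexp-1)/\momexp})$ up to the harmless handling of $\horizon+1$ versus $\horizon$, as done in the convex theorem. Also use the integral estimate $\sum_{\run\leq\horizon}\run^{-(\momexp-1)/\momexp} \leq \momexp\horizon^{1/\momexp}$, which cancels the $1/\momexp$ factor. Then factor out $2/\strong$.

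\textbf{Main obstacle.} There is no real obstacle here. The only point needing care is the index shift in $\learn_{\horizon+1}$. I would handle it exactly as in the preceding convex corollary, absorbing the shift into the stated constant, e.g.\ by running the sum to $\horizon$ with the bound stated there.
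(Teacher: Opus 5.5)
Your argument for the first bound is the paper's: strong convexity at $\sol$ plus Jensen give $\sum_{\run=1}^{\horizon}\braket{\curr[\point]-\sol}{\nabla\obj(\curr[\point])} \geq \frac{\strong\horizon}{2}\norm{\avg{\point}_\horizon-\sol}^2$. Feeding this into \cref{thm:discrete_relative_smooth_cv} with $\basealt=\sol$ reproduces that bound exactly, constants included.

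The specialization is where your proposal falls short. Your suggested estimate $\learn_{\horizon+1}\geq\beta(2\horizon)^{-1/\momexp}$ puts an extra factor $2^{1/\momexp}$ on the $\diamh$-term, so it does not give the stated constant $\diamh/\beta$. "Absorbing the shift" is not a valid step as written. The paper's proof, and its convex analogue, pass over the same point silently.

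A clean fix uses the fact that $\point_1,\dots,\point_\horizon$ depend only on $\learn_1,\dots,\learn_\horizon$. Hence $\avg{\point}_\horizon$, and the left-hand side of \cref{thm:discrete_relative_smooth_cv}, do not depend on $\learn_{\horizon+1}$. You may therefore apply that theorem to the modified sequence $\learn'_\run = \learn_{\min(\run,\horizon)}$. It is still nonincreasing with $\learn'_1\le 1/(\momexp\lips)$, and the heavy-tail lemma at step $\horizon$ only involves $\learn_\horizon$.

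This replaces $\diamh/\learn_{\horizon+1}$ by $\diamh/\learn_\horizon = (\diamh/\beta)\horizon^{1/\momexp}$. Together with $\sum_{\run\le\horizon}\run^{-(\momexp-1)/\momexp}\le\momexp\horizon^{1/\momexp}$, this yields the second bound with exactly the stated constants.
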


\begin{proof}
	By strong convexity of $\obj$, 
	\begin{align}
		\sum_{\run=1}^\horizon \braket*{\curr[\point] - \sol}{\nabla \obj(\curr[\point])} \geq \dfrac{\strong}{2} \sum_{\run=1}^\horizon \norm{\curr[\point] - \sol}^2 \geq \dfrac{\strong\horizon}{2} \norm{\avg{\point}_\horizon - \sol}^2,
	\end{align}
	which directly yields the results by \cref{thm:discrete_relative_smooth_cv}.
\end{proof}

\begin{remark}
	Similarly to \cref{remark:complexity_rate_convex_discrete}, we get that for every $\precval > 0$, the learning rate $\learn = \bigoh\parens*{(\precval/\sdev^{\momexp})^{\frac{1}{\momexp-1}}}$ achieves the accuracy $\exof*{ \norm{\avg{\point}_\horizon - \sol }^2} \leq \precval$ for all horizon $\horizon \geq \bigoh\parens*{(\sdev/\precval)^{\frac{\momexp}{\momexp-1}}}$.
\label{remark:complexity_rate_strongly_convex_discrete}
\end{remark}

We now proceed by estimating the fraction of time that \eqref{eq:SDA} spends away from $\argmin \obj$, and the time that it takes to get close to it for the first time. These results specialize to \cref{thm:occmeas-disc,thm:hitting-disc} from the main text when the algorithms are initialized to the prox-center $\point_c = \argmin \hreg$ of $\points$.

\begin{theorem}[Concentration around global minimum] 
	Assume that $\obj$ is $\strong$-strongly convex with minimum $\sol$ and $\lips$-relatively smooth. Let 
	\begin{equation}
		\meas_\horizon(\nhd_\precdist) = \dfrac{1}{\horizon} \sum_{\run = 1}^\horizon \one\braces{\curr[\point] \in \nhd_\precdist}
	\end{equation}
	be the fraction of time that $\curr[\point]$ spends in $\nhd_\precdist = \setdef{\point \in \points}{\norm{\point - \sol} \leq \precdist}$ up to time $\horizon$. If \eqref{eq:SDA} is run with constant learning rate $\curr[\learn] \equiv \learn$, we have
	\begin{equation}
		\exof*{\meas_\horizon(\nhd_\precdist)} \geq 1 - \dfrac{2 \braket*{\point_1 - \sol}{\dpoint_1}}{\strong \precdist^2 \horizon} - \dfrac{2\big( \obj(\point_1) - \min\obj\big)}{\strong \precdist^2 \horizon} - \dfrac{2 \diamh}{\learn \strong \precdist^2 \horizon} - \dfrac{2^\momexp \learn^{\momexp-1} \diamX^{2-\momexp} \sdev^\momexp}{\strong \momexp \hcst^{\momexp-1} \precdist^2}.
	\end{equation}
\end{theorem}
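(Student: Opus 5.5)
The plan is to mirror the continuous-time proof of \cref{thm:occupation_measure}: a Markov-type bound on the indicator of leaving $\nhd_\precdist$, followed by an appeal to the baseline estimate \cref{thm:discrete_relative_smooth_cv}.

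First, for every $\run$ we have the pointwise bound
\begin{equation}
\one\braces{\curr[\point] \notin \nhd_\precdist}
	\leq \precdist^{-2} \norm{\curr[\point] - \sol}^{2}.
\end{equation}
Summing over $\run = 1,\dots,\horizon$ then gives $1 - \meas_\horizon(\nhd_\precdist) \leq (\precdist^{2}\horizon)^{-1} \sum_{\run} \norm{\curr[\point]-\sol}^{2}$.

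Next, I would invoke strong convexity at $\curr[\point]$ with $\pointalt = \sol$. Since $\obj(\sol) \leq \obj(\curr[\point])$, this yields
\begin{equation}
\tfrac{\strong}{2}\norm{\curr[\point]-\sol}^{2}
	\leq \braket{\nabla\obj(\curr[\point])}{\curr[\point]-\sol},
\end{equation}
and therefore
\begin{equation}
1 - \meas_\horizon(\nhd_\precdist)
	\leq \frac{2}{\strong\precdist^{2}\horizon} \sum_{\run=1}^{\horizon} \braket{\curr[\point]-\sol}{\nabla\obj(\curr[\point])}.
\end{equation}

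Taking expectations and applying \cref{thm:discrete_relative_smooth_cv} with $\basealt = \sol$ and the constant sequence $\curr[\learn] \equiv \learn$ (so $\learn_{\horizon+1} = \learn$ and $\sum_{\run} \curr[\learn]^{\momexp-1} = \horizon\learn^{\momexp-1}$) bounds the expected sum by
\begin{equation}
\braket{\point_1 - \sol}{\dpoint_1} + \obj(\point_1) - \min\obj + \frac{\diamh}{\learn} + \frac{2^{\momexp-1}\diamX^{2-\momexp}}{\momexp\hcst^{\momexp-1}}\sdev^{\momexp}\horizon\learn^{\momexp-1}.
\end{equation}
Multiplying by $2/(\strong\precdist^{2}\horizon)$ produces exactly the four subtracted terms in the statement; in particular the last one becomes $2^{\momexp}\learn^{\momexp-1}\diamX^{2-\momexp}\sdev^{\momexp}/(\strong\momexp\hcst^{\momexp-1}\precdist^{2})$.

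The main obstacle is a hypothesis hidden in the baseline: \cref{thm:discrete_relative_smooth_cv} requires $\learn_1 \leq 1/(\momexp\lips)$, which the present statement does not list. I would make it explicit by assuming $\learn \leq 1/(\momexp\lips)$, as in \cref{thm:str-SDA}, so that \cref{lemma:heavy_tails_descent} applies. I would also check that strong convexity is only needed at points $\curr[\point] \in \proxdom$, where $\nabla\obj$ is defined; this is automatic because $\curr[\point] = \mirror(\cdot) \in \proxdom$.
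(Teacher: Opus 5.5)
Your proposal is correct and follows the paper's proof essentially verbatim. The shared steps are a Markov/Chebyshev bound on each indicator, then strong convexity to pass to $\sum_{\run}\braket{\curr[\point]-\sol}{\nabla\obj(\curr[\point])}$, then \cref{thm:discrete_relative_smooth_cv} with $\basealt=\sol$ and constant $\learn$. Your remark about the hypothesis $\learn\le 1/(\momexp\lips)$ is apt: the paper's proof invokes the same baseline theorem and so implicitly needs it, even though the statement omits it.
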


\begin{proof}
	Markov's inequality together with the strong convexity of $\obj$ yield
	\begin{align}
		\exof*{\meas_\horizon(\points \setminus \nhd_\precdist)}
			&= \dfrac{1}{\horizon} \sum_{\run=1}^\horizon \probof{ \norm{\curr[\point] - \sol} > \precdist }
			\notag\\
			&\leq \dfrac{1}{\horizon} \sum_{\run=1}^\horizon \dfrac{\exof*{\norm{\curr[\point] - \sol}^2}}{\precdist^2}
			\leq \dfrac{2}{\strong \precdist^2 \horizon} \exof*{\sum_{\run=1}^\horizon \braket*{\curr[\point] - \sol}{\nabla \obj(\curr[\point])}}
		\eqcomma
	\end{align}
	and the concentration bound therefore follows directly from \cref{thm:discrete_relative_smooth_cv}.
\end{proof}

\begin{theorem}[Hitting time estimate]
	Assume that $\obj$ is $\strong$-strongly convex with minimum $\sol \in \points$ and relatively $\lips$-smooth. Let 
	\begin{equation}
		\hit_\precdist = \inf\setdef{\time \in \N}{\norm{\curr[\point] - \sol} \leq \precdist}
	\end{equation}
	denote the first time $\curr[\point]$ gets within $\precdist$ of $\sol$ for some precision $\precdist > 0$. If \eqref{eq:SDA} is run with constant learning rate
	\begin{equation}
		\learn < \parens*{ \dfrac{ \strong \momexp \hcst }{ 2^\momexp \hcst^{2-\momexp} \diamX^{2-\momexp} \sdev^{\momexp} } }^{\frac{1}{\momexp-1}} \precdist^{\frac{2}{\momexp-1}}
	\eqcomma
	\end{equation}
	then the average value of $\hit_\precdist$ is bounded as
	\begin{equation}
		\exof*{\hit_\precdist} \leq \dfrac{ 2\momexp\hcst \fench(\sol, \learn \dpoint_1) + 2\momexp\hcst\big( \obj(\point_1) - \min\obj + \frac{\strong \precdist^2}{2} \big) \learn }{ \strong \momexp \hcst \precdist^2 \learn - 2^\momexp \hcst^{2-\momexp} \diamX^{2-\momexp} \sdev^\momexp \learn^\momexp  }.
	\label{eq:discrete_hitting_time_1}
	\end{equation}
	In particular, running \eqref{eq:SDA} with $\dpoint_1 = 0$ and
	\begin{equation}
		\learn = \parens*{ \dfrac{ \strong \hcst }{ 2^\momexp \hcst^{2-\momexp} \diamX^{2-\momexp} \sdev^{\momexp} } }^{\frac{1}{\momexp-1}} \precdist^{\frac{2}{\momexp-1}}
	\end{equation}
	yields the hitting-time estimate
	\begin{equation}
		\exof*{\hit_\precdist} \leq \dfrac{\momexp}{\momexp-1} \braces*{ \dfrac{2\diamh}{\strong\hcst} \parens*{ \dfrac{2^\momexp \diamX^{2-\momexp} \sdev^\momexp}{\strong} }^{\frac{1}{\momexp-1}} \precdist^{-\frac{2\momexp}{\momexp-1}} + \dfrac{2\big( \obj(\point_1) - \min \obj \big)}{\strong} \precdist^{-2} + 1 }.
	\label{eq:discrete_hitting_time_2}
	\end{equation}
\end{theorem}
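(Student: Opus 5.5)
We mimic the continuous-time proof of \cref{thm:hittingtime}, using the discrete energy $\curr[\energy] = \frac{1}{\learn}\fench(\sol,\learn\curr[\dpoint])$ with constant $\learn$. Because $\learn$ is constant, the term involving $1/\next[\learn]-1/\curr[\learn]$ in \cref{lemma:difference_of_V} vanishes. Combining \cref{lemma:difference_of_V} with \cref{lemma:heavy_tails_descent} (valid since $\learn\le 1/(\momexp\lips)$, which we take as part of the standing assumptions) gives the one-step bound
\begin{equation*}
\next[\energy] - \curr[\energy]
	\leq -\braket{\curr - \sol}{\nabla\obj(\curr)}
		- \braket{\curr - \sol}{\curr[\noise]}
		+ \obj(\curr) - \obj(\next)
		+ c\,\learn^{\momexp-1}\dnorm{\curr[\noise]}^{\momexp},
\end{equation*}
where $c = 2^{\momexp-1}\diamX^{2-\momexp}/(\momexp\hcst^{\momexp-1})$. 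We then use strong convexity, $\braket{\curr-\sol}{\nabla\obj(\curr)} \geq \frac{\strong}{2}\norm{\curr-\sol}^2$.

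Next we sum this bound from $\run=1$ to $\hit_\precdist\wedge\nRuns$. Since $\hit_\precdist$ is a stopping time for $(\curr[\filter])$, the optional stopping theorem for the martingale $\sum\braket{\curr-\sol}{\curr[\noise]}$ at a bounded time gives zero expectation. The same argument turns $\sum_{\run\le\hit\wedge\nRuns}\dnorm{\curr[\noise]}^\momexp$ into at most $\sdev^\momexp\,\exof{\hit_\precdist\wedge\nRuns}$, by conditioning term by term on $\curr[\filter]$ and using $\one\{\run\le\hit\}\in\curr[\filter]$. The $\obj$-differences telescope to $\obj(\point_1) - \obj(\point_{\hit\wedge\nRuns+1}) \leq \obj(\point_1)-\min\obj$.

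The remaining difficulty is the strong-convexity term. For $\run<\hit_\precdist$ we have $\norm{\curr-\sol}>\precdist$, but at $\run=\hit_\precdist$ itself we do not. So we lower bound $\sum_{\run\le\hit\wedge\nRuns}\norm{\curr-\sol}^2 \geq \precdist^2(\hit\wedge\nRuns - 1)$. This off-by-one is exactly what produces the extra $\strong\precdist^2/2$ (times $\learn$ after rescaling) in the numerator. Using $\energy\ge0$ we obtain
\begin{equation*}
\Bigl(\tfrac{\strong}{2}\precdist^2 - c\,\learn^{\momexp-1}\sdev^\momexp\Bigr)\exof{\hit_\precdist\wedge\nRuns}
	\leq \tfrac{1}{\learn}\fench(\sol,\learn\dpoint_1) + \obj(\point_1)-\min\obj + \tfrac{\strong\precdist^2}{2}.
\end{equation*}
The condition on $\learn$ is precisely positivity of the bracket. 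Multiplying through by $2\momexp\hcst\learn$ and letting $\nRuns\to\infty$ by monotone convergence gives \eqref{eq:discrete_hitting_time_1}. One should double-check that the constants match: $2\momexp\hcst\learn\cdot c\,\learn^{\momexp-1}\sdev^\momexp = 2^\momexp\hcst^{2-\momexp}\diamX^{2-\momexp}\sdev^\momexp\learn^\momexp$.

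For the specialization, set $\dpoint_1=0$, so that $\fench(\sol,0)=\hreg(\sol)-\min\hreg\le\diamh$. With the stated $\learn$, the subtracted term equals $\strong\hcst\precdist^2\learn$, so the denominator is $\strong(\momexp-1)\hcst\precdist^2\learn$. Dividing, the first term gives $\frac{2\momexp\diamh}{(\momexp-1)\strong\precdist^2\learn}$; substituting $\learn$ yields the $\precdist^{-2\momexp/(\momexp-1)}$ term. The other two terms give $\frac{\momexp}{\momexp-1}\bigl(\frac{2(\obj(\point_1)-\min\obj)}{\strong\precdist^2}+1\bigr)$.

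The main obstacle is making the optional-stopping step rigorous for heavy-tailed noise. The noise term $\braket{\curr-\sol}{\curr[\noise]}$ is integrable because $\points$ is bounded and $\momexp>1$. Stopping at $\hit\wedge\nRuns$ keeps everything finite, so this step should go through.
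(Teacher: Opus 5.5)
Your proposal is correct and follows essentially the same route as the paper. Both arguments use the constant-$\eta$ three-point identity combined with the heavy-tail descent lemma, then stop the summed inequality at $\tau_\delta\wedge T$. The noise martingale is killed and the $p$-th moment terms are bounded by conditioning on $\mathcal{F}_t$, and the off-by-one bound $\delta^2(\tau_\delta\wedge T-1)$ supplies the extra $\mu\delta^2/2$ term. Monotone convergence closes the argument, and the specialization uses the same constant bookkeeping. You are right to impose $\eta\le 1/(pL)$ explicitly, since the paper's proof also relies on the descent lemma without stating that condition in the theorem.
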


\begin{remark}
	Since $\dpoint_1 = 0$, the quantity $\obj(\point_1) - \min \obj$ in \cref{eq:discrete_hitting_time_2} is in fact equal to $\obj(\argmin \hreg) - \obj(\sol)$, that is, to the value difference between the \emph{prox-center} $\point_c = \argmin \hreg$ and the {minimizer} $\sol$ of $\obj$.
\end{remark}

\begin{proof}
	Since the learning rate $\learn$ is assumed constant, summing the equality of \cref{lemma:difference_of_V} over $\run=1, \dots, \horizon$ and invoking the upper bound of \cref{lemma:heavy_tails_descent} yields
	\begin{align}
		0 
		& \leq \energy_\horizon \notag \\
		& \leq \energy_1 - \sum_{\run=1}^\horizon \braket*{\curr[\point] - \sol}{\nabla \obj(\curr[\point])} + \obj(\point_1) - \min\obj \\
		- \,\, & \sum_{\run=1}^\horizon \braket*{\curr[\point] - \sol}{\curr[\noise]} + \dfrac{2^{\momexp-1} \diamX^{2 - \momexp} \learn^{\momexp-1}}{\momexp \hcst^{\momexp-1}} \sum_{\run=1}^\horizon \dnorm{\curr[\noise]}^\momexp.
	\end{align}
	By analogy with the arguments used to prove the hitting time estimate for continuous \eqref{eq:LMF} dynamics (\cf \cref{thm:hittingtime}), we then apply the above inequality to the bounded stopping times $\horizon \leftarrow \hit_\precdist \wedge \horizon$ and take the expectation on both sides, which leads to 
	\begin{align}
		0
		\leq \energy_1
			&+ \obj(\point_1) - \min\obj -
				\exof*{ \sum_{\run=1}^{\hit_\precdist \wedge \horizon} \braket*{\curr[\point] - \sol}{\nabla \obj(\curr[\point])} }
			\notag\\
			&+ \dfrac{2^{\momexp-1} \diamX^{2 - \momexp} \learn^{\momexp-1}}{\momexp \hcst^{\momexp-1}} \exof*{ \sum_{\run=1}^{\hit_\precdist \wedge \horizon} \dnorm{\curr[\noise]}^\momexp} - \exof*{\sum_{\run=1}^{\hit_\precdist \wedge \horizon} \braket*{\curr[\point] - \sol}{\curr[\noise]}}.
	\end{align}
	Before proceeding further, recall that $\filter_\run$ is the filtration generated by $\point_\runalt$, $1 \leq \runalt \leq \run$, and that $\curr[\noise]$ is a martingale difference sequence. In particular, $\curr[\noise]$ is $\filter_{\run+1}$-measurable and $\exof{\curr[\noise] \given \curr[\filter]} = 0$. 
	By the tower property of conditional expectations, we can therefore write
	\begin{align}
		\exof*{ \sum_{\run=1}^{\hit_\precdist \wedge \horizon} \braket*{\curr[\point] - \sol}{\curr[\noise]} } 
		&= \sum_{\run=1}^\horizon \exof*{ \one\braces{\hit_\precdist \geq \run} \braket*{\curr[\point] - \sol}{\curr[\noise]} }
		\notag\\
		&= \sum_{\run=1}^{\horizon} \exof*{ \exof*{ \one\braces{\hit_\precdist \geq \run} \braket*{\curr[\point] - \sol}{\curr[\noise]} \given \curr[\filter] }  }\notag\\
		&= \sum_{\run=1}^\horizon \exof*{ \one\braces{\hit_\precdist \geq \run} \braket*{ \curr[\point] - \sol }{ \exof*{ \curr[\noise] \given \curr[\filter] } } }
		= 0,
	\end{align}
	where we have used that both $\curr[\point]$ and $\one\braces{\hit_\precdist \geq \run}$ are $\curr[\filter]$-measurable. Indeed, to prove the latter, one notes that
	\begin{equation}
		\braces{\hit_\precdist \geq \run} = \intersect_{\runalt=1}^{\run-1} \braces{ \norm{\point_\runalt - \sol} > \precdist } \in \curr[\filter]
	\end{equation}
	since all events $\braces{\norm{\point_\runalt - \sol} > \precdist}$, $1 \leq \runalt \leq \run - 1$, belong to $\curr[\filter]$. Similarly, the bounded $\momexp$-th central moment condition satisfied by $\curr[\noise]$ yields
	\begin{align}
		\exof*{ \sum_{\run=1}^{\hit_\precdist \wedge \horizon } \dnorm{\curr[\noise]}^\momexp} 
		&= \sum_{\run=1}^\horizon \exof*{ \one\braces{\hit_\precdist \geq \run} \dnorm{\noise}^\momexp }
		\notag\\
		&= \sum_{\run=1}^\horizon \exof*{ \one\braces{\hit_\precdist \geq \run} \exof*{ \dnorm{\noise}^\momexp \given \curr[\filter] } }
		\leq \sdev^\momexp \exof*{ \sum_{\run=1}^{\hit_\precdist \wedge \horizon } 1 }
		= \sdev^\momexp \exof*{\hit_\precdist \wedge \horizon}.
	\end{align}
	Moreover, by strong convexity of $\obj$ and definition of $\hit_\precdist$, we also get
	\begin{align}
		\sum_{\run=1}^{\hit_\precdist \wedge \horizon} \braket*{ \curr[\point] - \sol }{\nabla \obj(\curr[\point])}
			&\geq \dfrac{\strong}{2} \sum_{\run=1}^{\hit_\precdist \wedge \horizon} \norm{\curr[\point] - \sol}^2
			\notag\\
			&\geq \dfrac{\strong}{2} \sum_{\run=1}^{\hit_\precdist \wedge \horizon - 1} \norm{\curr[\point] - \sol}^2
			\geq \dfrac{\strong}{2} \sum_{\run=1}^{\hit_\precdist \wedge \horizon - 1} \precdist^2 = \dfrac{\strong \precdist^2}{2} (\hit_\precdist \wedge \horizon - 1).
	\end{align}
	The previous computations lead to 
	\begin{align}
		0 &\leq \energy_1 + \obj(\point_1) - \min\obj - \dfrac{\strong \precdist^2}{2} ( \exof*{\hit_\precdist \wedge \horizon} - 1 ) + \dfrac{2^{\momexp-1} \diamX^{2-\momexp} \learn^{\momexp-1} \sdev^\momexp}{\momexp \hcst^{\momexp-1}} \exof*{\hit_\precdist \wedge \horizon}
		\notag\\
		&= \energy_1 + \obj(\point_1) - \min\obj + \dfrac{\strong \precdist^2}{2} - \parens*{ \dfrac{\strong \precdist^2}{2} - \dfrac{2^{\momexp-1} \diamX^{2-\momexp} \sdev^{\momexp} \learn^{\momexp-1}}{\momexp \hcst^{\momexp-1}} } \exof*{\hit_\precdist \wedge \horizon}
	\eqcomma
	\end{align}
	where the multiplicative coefficient in front of $\exof*{\tau_\precdist \wedge \horizon}$ is (strictly) positive whenever the learning rate $\learn$ satisfies 
	\begin{equation}
		\learn < \parens*{ \dfrac{ \strong \momexp \hcst }{ 2^\momexp \hcst^{2-\momexp} \diamX^{2-\momexp} \sdev^{\momexp} } }^{\frac{1}{\momexp-1}} \precdist^{\frac{2}{\momexp-1}}.
	\end{equation}
	Rearranging the terms therefore yields the upper bound
	\begin{align}
		\exof*{\hit_\precdist \wedge \horizon} 
			&\leq \dfrac{ \energy_1 + \obj(\point_1) - \min\obj + \frac{\strong\precdist^2}{2} }{ \frac{\strong\precdist^2}{2} - \frac{2^{\momexp-1} \diamX^{2-\momexp} \sdev^{\momexp} \learn^{\momexp-1}}{\momexp \hcst^{\momexp-1}} }
			\notag\\
			&= \dfrac{ 2\momexp\hcst \fench(\sol, \learn \dpoint_1) + 2\momexp\hcst\big( \obj(\point_1) - \min\obj + \frac{\strong \precdist^2}{2} \big) \learn }{ \strong \momexp \hcst \precdist^2 \learn - 2^\momexp \hcst^{2-\momexp} \diamX^{2-\momexp} \sdev^\momexp \learn^\momexp  }
		\eqcomma
	\end{align}
	which proves \cref{eq:discrete_hitting_time_1} when taking the monotone limit as $\horizon \to \infty$.
	Now let us assume $\dpoint_1 = 0$ so that $\fench(\sol, \learn \dpoint_1) = \hreg(\sol) - \hreg(\point_1) \leq \diamh$, and choose
	\begin{equation}
		\learn = \parens*{ \dfrac{ \strong \hcst }{ 2^\momexp \hcst^{2-\momexp} \diamX^{2-\momexp} \sdev^{\momexp} } }^{\frac{1}{\momexp-1}} \precdist^{\frac{2}{\momexp-1}}
	\eqcomma
	\end{equation}
	which is the learning rate maximizing the denominator of \cref{eq:discrete_hitting_time_1}. 
	For notational convenience, we also define
	\begin{align}
		c_1 = 2 \momexp \hcst \diamh, \,\, c_2 = 2\momexp \hcst \big( \obj(\point_1) - \min\obj + \frac{\strong\precdist^2}{2} \big), \,\, c_3 = \strong \momexp \hcst \precdist^2, \,\, c_4 = (2 \sdev)^\momexp (\hcst \diamX)^{2-\momexp}.
	\notag
	\end{align}
	In particular $\learn = [ c_3 / (\momexp c_4) ]^{1/(\momexp-1)}$ and we therefore obtain from \cref{eq:discrete_hitting_time_1} that
	\begin{align}
		\exof*{\hit_\precdist}
		&\leq \dfrac{c_1 + c_2 \learn}{c_3 \learn - c_4 \learn^\momexp} \\
		&= \bracks*{ c_3 \parens*{ \dfrac{c_3}{\momexp c_4} }^{\frac{1}{\momexp-1}} - c_4 \parens*{ \dfrac{c_3}{\momexp c_4} }^{\frac{\momexp}{\momexp-1}} }^{-1} \bracks*{ c_1 + c_2 \parens*{ \dfrac{c_3}{\momexp c_4} }^{\frac{1}{\momexp-1}} }\notag\\
		&= \bracks*{ \parens*{ \dfrac{c_3^\momexp}{\momexp c_4} }^{\frac{1}{\momexp-1}} \big( 1 - \frac{1}{\momexp} \big) }^{-1} \bracks*{ c_1 + c_2 \parens*{ \dfrac{c_3}{\momexp c_4} }^{\frac{1}{\momexp-1}} }\notag\\
		&= \dfrac{\momexp}{\momexp-1} \parens*{ \dfrac{\momexp c_4}{c_3^\momexp} }^{\frac{1}{\momexp-1}} \bracks*{ c_1 + c_2 \parens*{ \dfrac{c_3}{\momexp c_4} }^{\frac{1}{\momexp-1}} }\notag\\
		&= \dfrac{\momexp}{\momexp-1} \braces*{ c_1 \parens*{ \dfrac{\momexp c_4}{c_3^\momexp} }^{\frac{1}{\momexp-1}} + \dfrac{c_2}{c_3} }\notag\\
		&= \dfrac{\momexp}{\momexp-1} \braces*{ 2 \momexp \hcst \diamh \parens*{ \dfrac{\momexp 2^\momexp \hcst^{2-\momexp} \diamX^{2-\momexp} \sdev^\momexp }{\strong^\momexp \momexp^\momexp \hcst^\momexp \precdist^{2\momexp}} }^{\frac{1}{\momexp-1}} + \dfrac{2 \momexp \hcst \big( \obj(\point_1) - \min\obj + \frac{\strong \precdist^2}{2} \big)}{\strong \momexp \hcst \precdist^2} }\notag\\
		&= \dfrac{\momexp}{\momexp-1} \braces*{ \dfrac{2\diamh}{\strong\hcst} \parens*{ \dfrac{2^\momexp \diamX^{2-\momexp} \sdev^\momexp}{\strong} }^{\frac{1}{\momexp-1}} \precdist^{-\frac{2\momexp}{\momexp-1}} + \dfrac{2\big( \obj(\point_1) - \min \obj \big)}{\strong} \precdist^{-2} + 1 }
	\eqcomma
	\end{align}
	which concludes our proof.
\end{proof}

We next consider objectives whose convexity is adapted to the geometry induced by the regularizer, namely objectives that are \emph{strongly convex relative to $\hreg$}. Specifically, we assume that 
\begin{equation}
\obj(\pointalt)
	\geq \obj(\point)
		+ \braket{\subsel\obj(\point)}{\pointalt - \point}
		+ \strong \breg(\pointalt,\point)
\end{equation}
for all $\point\in\proxdom$ and all $\pointalt\in\points$. 

We begin by establishing a convergence guarantee for constant learning rates, yielding a discrete-time analogue of \cref{thm:relatively_convex_continuous1}. The result also slightly refines \cref{thm:str-SDA} from the main text, as it no longer requires \eqref{eq:SDA} to be initialized at the prox-center of $\points$.

\begin{theorem}[Convergence rate of \eqref{eq:SDA} for relatively strongly convex functions, constant learning rate]
	Let $\hreg$ be a steep regularizer. If $\obj$ is relatively $\lips$-smooth and relatively $\strong$-strongly convex with minimum $\sol \in \points$, then running \eqref{eq:SDA} with a constant learning rate $\learn \leq \frac{1}{\momexp \lips}$ achieves
	\begin{equation}
		\exof*{\norm{\curr[\point] - \sol}^2} \leq \dfrac{2^{\momexp-1} \diamX^{2 - \momexp}}{\momexp \strong \hcst^{\momexp}} \sdev^{\momexp} \learn^{\momexp - 1} + \dfrac{2 \breg(\sol, \point_1)}{\hcst} (1 - \learn \strong)^{\run - 1}.
	\end{equation}
\label{thm:cv_relative_strong_discrete}
\end{theorem}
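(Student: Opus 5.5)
With constant $\learn$, I would track the energy $\curr[\energy] = \frac{1}{\learn}\fench(\sol,\learn\curr[\dpoint])$ and derive a one-step contraction for it. Since $\hreg$ is steep, every iterate lies in $\intpoints$, so \cref{prop:Fench} gives $\fench(\sol,\learn\curr[\dpoint]) = \breg(\sol,\curr[\point])$. Hence $\learn\curr[\energy] = \breg(\sol,\curr[\point])$.

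\textbf{One-step bound.} Apply \cref{lemma:difference_of_V} with $\basealt = \sol$. Because the learning rate is constant, the first term on its right-hand side vanishes. Then \cref{lemma:heavy_tails_descent}, valid since $\learn \le 1/(\momexp\lips)$, gives
\begin{equation}
\next[\energy] \le \curr[\energy] - \braket{\curr[\point]-\sol}{\nabla\obj(\curr[\point])} - \braket{\curr[\point]-\sol}{\curr[\noise]} + \obj(\curr[\point]) - \obj(\next[\point]) + \frac{2^{\momexp-1}\diamX^{2-\momexp}}{\momexp\hcst^{\momexp-1}}\learn^{\momexp-1}\dnorm{\curr[\noise]}^{\momexp}.
\end{equation}

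\textbf{Using relative strong convexity.} Relative strong convexity at $\point=\curr[\point]$, $\pointalt=\sol$ gives
\begin{equation}
\braket{\curr[\point]-\sol}{\nabla\obj(\curr[\point])} \ge \obj(\curr[\point]) - \obj(\sol) + \strong\breg(\sol,\curr[\point]).
\end{equation}
The $\obj(\curr[\point])$ terms cancel, leaving $-\strong\learn\curr[\energy] + \obj(\sol) - \obj(\next[\point]) \le -\strong\learn\curr[\energy]$, because $\sol$ is the minimizer. The discretization term is therefore absorbed in this step, which is exactly where the telescoped $\obj(\point_1)-\min\obj$ of the convex analysis disappears.

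\textbf{Taking expectations.} Conditioning on $\curr[\filter]$ removes the martingale term and bounds the noise term by $\sdev^{\momexp}$. Writing $e_\run = \exof{\curr[\energy]}$, this yields
\begin{equation}
e_{\run+1} \le (1-\strong\learn)e_\run + C\learn^{\momexp-1}\sdev^{\momexp},
\qquad
C = \frac{2^{\momexp-1}\diamX^{2-\momexp}}{\momexp\hcst^{\momexp-1}}.
\end{equation}
Note that $\strong\learn \le \strong/(\momexp\lips) < 1$, since relative strong convexity and relative smoothness together force $\strong\le\lips$.

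\textbf{Unrolling and converting back.} Unrolling the recursion gives
\begin{equation}
e_\run \le (1-\strong\learn)^{\run-1}e_1 + \frac{C\learn^{\momexp-1}\sdev^{\momexp}}{\strong\learn}.
\end{equation}
Multiply by $\learn$ to return to $\exof{\breg(\sol,\curr[\point])}$, using $\learn e_1 = \breg(\sol,\point_1)$. Finally apply $\norm{\curr[\point]-\sol}^2 \le \frac{2}{\hcst}\breg(\sol,\curr[\point])$ from \cref{prop:Fench}.

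\textbf{Constants and the main obstacle.} The noise term becomes $\frac{2C}{\hcst\strong}\learn^{\momexp-1}\sdev^{\momexp} = \frac{2^{\momexp}\diamX^{2-\momexp}}{\momexp\strong\hcst^{\momexp}}\learn^{\momexp-1}\sdev^{\momexp}$. This is off by a factor $2$ from the stated $2^{\momexp-1}$, so I would recheck the constant, possibly through the $1/\learn$ normalization. The main obstacle is the sign bookkeeping around $\obj(\sol)-\obj(\next[\point])\le 0$, together with the fact that the contraction must act on $\breg(\sol,\curr[\point])$ at the current iterate; steepness is exactly what makes the energy equal that divergence.
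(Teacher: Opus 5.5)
Your proof is the paper's proof essentially step for step: the constant-$\learn$ three-point identity, \cref{lemma:heavy_tails_descent}, relative strong convexity with $\obj(\sol)-\obj(\next[\point])\le 0$ absorbing the discretization term, steepness giving $\learn\curr[\energy]=\breg(\sol,\curr[\point])$, and unrolling the resulting contraction. It is correct. Your constant $2^{\momexp}$ is also right. The paper's own argument reaches $\exof{\breg(\sol,\curr[\point])}\le(1-\learn\strong)^{\run-1}\breg(\sol,\point_1)+\frac{2^{\momexp-1}\diamX^{2-\momexp}}{\momexp\strong\hcst^{\momexp-1}}\learn^{\momexp-1}\sdev^{\momexp}$, and multiplying by $2/\hcst$ gives $2^{\momexp}$, not the stated $2^{\momexp-1}$. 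The factor-$2$ discrepancy is therefore a slip in the statement, not an error on your side, and no change of $\learn$-normalization will remove it.
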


\begin{proof}
	Starting from \cref{lemma:difference_of_V,lemma:heavy_tails_descent} with $\basealt = \sol$ and then using the relative strong convexity of $\obj$, we readily get
	\begin{align}
		\exof*{\next[\energy]} 
		&\leq \exof*{\curr[\energy]}
			+ \exof*{\braket*{\sol - \curr[\point]}{\nabla \obj(\curr[\point])}}
		\notag\\
		&\qquad
			+ \exof*{ \obj(\curr[\point]) - \obj(\next[\point]) } + \dfrac{2^{\momexp-1} \diamX^{2 - \momexp}}{\momexp \hcst^{\momexp-1}} \learn^{\momexp-1} \sdev^{\momexp}
		\notag\\
		&\leq \exof*{\curr[\energy]} + \exof*{ \obj(\sol) - \obj(\curr[\point]) } - \strong\exof*{ \breg(\sol, \curr[\point]) }
		\notag\\
		&\qquad
			+ \exof*{ \obj(\curr[\point]) - \obj(\next[\point]) } + \dfrac{2^{\momexp-1} \diamX^{2 - \momexp}}{\momexp \hcst^{\momexp-1}} \learn^{\momexp-1} \sdev^{\momexp}.
	\end{align}
	But since $\hreg$ is steep, we have $\learn \curr[\energy] = \fench(\sol, \learn \curr[\dpoint]) = \breg(\sol, \curr[\point])$ for all times $\run \geq 1$, and thus
	\begin{align}
		\exof*{\breg(\sol, \next[\point])} 
		&\leq (1 - \learn \strong) \exof*{\breg(\sol, \curr[\point])} + \learn \exof*{ \obj(\sol) - \obj(\next[\point]) } + \dfrac{2^{\momexp-1} \diamX^{2 - \momexp}}{\momexp \hcst^{\momexp-1}} \learn^{\momexp} \sdev^{\momexp}
		\notag\\
		&\leq (1 - \learn \strong) \exof*{\breg(\sol, \curr[\point])} + \dfrac{2^{\momexp-1} \diamX^{2 - \momexp}}{\momexp \hcst^{\momexp-1}} \learn^{\momexp} \sdev^{\momexp}
	\eqcomma
	\end{align}
	where we have used in the second inequality that $\sol$ is the minimum of $\obj$. Note in particular that $\learn \strong \leq 1$ due to the assumption that $\learn \leq 1/(\momexp \lips)$ and the fact that $\strong \leq \lips$ (\cf \citep[Proposition~1.1]{LFN18} ). By recurrence, we therefore get
	\begin{equation}
		\exof*{\breg(\sol, \curr[\point])} \leq (1 - \learn \strong)^{\run - 1} \breg(\sol, \point_1) + \dfrac{2^{\momexp-1} \diamX^{2 - \momexp}}{\momexp \strong \hcst^{\momexp-1}} \learn^{\momexp-1} \sdev^{\momexp}
	\eqcomma
	\end{equation}
	and it only remains to use the lower bound $\norm{\point - \point'}^2 \leq \frac{2}{\hcst} \breg(\point, \point')$ of \cref{prop:Fench} to obtain the claimed inequality.
\end{proof}

\begin{remark}
	Due to \cref{thm:cv_relative_strong_discrete}, we note that the learning rate $\learn = \bigoh\parens*{(\precval/\sdev^{\momexp-1})^{\frac{1}{\momexp}}}$ for any accuracy $\precval > 0$ achieves the last iterate estimate $\exof*{ \norm{\curr[\point] - \sol}^2 } \leq \precval$ for all times $\time \geq \Omega\parens*{\sdev^{\frac{\momexp}{\momexp-1}} \precval^{-\frac{1}{\momexp-1}} \log(1/\precval)}$. This complexity rate is better than the one obtained for convex or strongly convex functions, and, as mentioned in the continuous-time section, this also recovers the optimal rate known for first-order stochastic methods when $\momexp=2$. 
\end{remark}

Finally, when the learning rate sequence is proportional to $\run^{-1/\momexp}$, we leverage relative strong convexity together with a discrete-time Grönwall inequality to obtain the following convergence guarantee:

\begin{theorem}[Convergence rate of \eqref{eq:SDA} for relatively strongly convex functions, decreasing learning rate]
	Let $\hreg$ be a steep regularizer. If $f$ is relatively $\lips$-smooth and relative $\strong$-strongly convex with minimum $\sol \in \points$, then running \eqref{eq:SDA} with learning rate $\curr[\learn] = \beta \run^{-1/\momexp}$ where $\beta \leq \frac{1}{\momexp \lips}$ achieves
	\begin{align}
		\exof*{\norm{\curr[\point] - \sol}^2}
			&\leq \dfrac{2}{\strong \hcst \momexp} \parens*{ \dfrac{\diamh}{\beta} + \dfrac{2^{\momexp - 1} \beta^{\momexp -1} \diamX^{2 - \momexp}}{\hcst^{\momexp-1}} \sdev^{\momexp} } \run^{-\frac{\momexp-1}{\momexp}}
			\notag\\
			&+ \dfrac{2 \breg(\sol, \point_1)}{\hcst} \run^{-\frac{1}{\momexp}} \exp\bracks*{ - \dfrac{\strong \beta \momexp}{\momexp - 1} \parens*{ \run^{\frac{\momexp - 1}{\momexp}} - 1 } }.
	\end{align}
	In particular, $\curr[\point] \to \sol$ in mean square as $\time \to \infty$.
\label{thm:discrete_relative_strong_variable_learning_rate}
\end{theorem}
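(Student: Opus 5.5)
This is the discrete analogue of \cref{thm:relatively_convex_continuous2}: we derive a one-step recursion for the deflated energy $\psi_\run \defeq \exof{\curr[\energy]} = \curr[\learn]^{-1}\exof{\breg(\sol,\curr)}$ and unroll it with a discrete Grönwall argument. Steepness gives $\fench(\sol,\curr[\learn]\curr[\dpoint]) = \breg(\sol,\curr)$, as in the proof of \cref{thm:cv_relative_strong_discrete}.

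\textbf{Step 1 (one-step recursion).} Apply \cref{lemma:difference_of_V} with $\basealt=\sol$ and bound the last two terms by \cref{lemma:heavy_tails_descent}; this is valid since $\curr[\learn]\le\beta\le 1/(\momexp\lips)$. Take expectations and use unbiasedness to remove $\exof{\braket{\curr-\sol}{\curr[\noise]}}$. Relative strong convexity gives $\braket{\nabla\obj(\curr)}{\sol-\curr}\le \obj(\sol)-\obj(\curr)-\strong\breg(\sol,\curr)$. Combining,
\begin{equation}
\psi_{\run+1}
	\le (1-\strong\curr[\learn])\psi_\run
	+ \exof{\obj(\sol)-\obj(\next)}
	+ \Big(\tfrac{1}{\next[\learn]}-\tfrac{1}{\curr[\learn]}\Big)\diamh
	+ c\,\curr[\learn]^{\momexp-1}\sdev^{\momexp},
\end{equation}
where $c = 2^{\momexp-1}\diamX^{2-\momexp}/(\momexp\hcst^{\momexp-1})$ and the second term is nonpositive, so it can be dropped. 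For $\curr[\learn]=\beta\run^{-1/\momexp}$, the mean value theorem gives $1/\next[\learn]-1/\curr[\learn]\le \beta^{-1}\momexp^{-1}\run^{-(\momexp-1)/\momexp}$. The forcing term is therefore $b_\run \defeq (\diamh/(\beta\momexp)+c\beta^{\momexp-1}\sdev^\momexp)\run^{-(\momexp-1)/\momexp}$, and the contraction factor is $1-a_\run$ with $a_\run=\strong\beta\run^{-1/\momexp}\le 1$.

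\textbf{Step 2 (unrolling).} Iterating gives
\begin{equation}
\psi_\run\le \psi_1\prod_{s=1}^{\run-1}(1-a_s)+\sum_{s=1}^{\run-1}b_s\prod_{k=s+1}^{\run-1}(1-a_k).
\end{equation}
For the transient term, use $1-a\le e^{-a}$ and $\sum_{s=1}^{\run-1}s^{-1/\momexp}\ge\int_1^\run u^{-1/\momexp}du = \frac{\momexp}{\momexp-1}(\run^{(\momexp-1)/\momexp}-1)$. Multiplying by $\curr[\learn]=\beta\run^{-1/\momexp}$ and using $\psi_1=\breg(\sol,\point_1)/\beta$ produces exactly $\breg(\sol,\point_1)\run^{-1/\momexp}\exp[\dots]$. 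For the accumulated noise term, the target is $\sum_s b_s\prod_k(1-a_k)\lesssim \max_{s\le\run}(b_s/a_s)$, in analogy with \cref{lemma:integral_estimate}. Since $b_s/a_s\propto s^{(2-\momexp)/\momexp}$ is nondecreasing, this gives the bound $\frac{1}{\strong\beta}(\diamh/(\beta\momexp)+c\beta^{\momexp-1}\sdev^\momexp)\run^{(2-\momexp)/\momexp}$. Multiplying by $\curr[\learn]$ yields order $\run^{-(\momexp-1)/\momexp}$. Finally, $\norm{\curr-\sol}^2\le (2/\hcst)\breg(\sol,\curr)$ converts this into the stated bound.

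\textbf{Main obstacle.} The delicate step is the discrete sum estimate, which must be proved with clean constants. I would establish it by induction: show $S_\run\le M_\run/1$, where $S_{\run+1}=(1-a_\run)S_\run+b_\run$ and $M_\run=\max_{s\le\run}b_s/a_s$. If $S_\run\le M_\run$, then $S_{\run+1}\le(1-a_\run)M_{\run+1}+a_\run M_{\run+1}=M_{\run+1}$, using $b_\run\le a_\run M_{\run+1}$. This recovers the constant $\frac{2}{\strong\hcst\momexp}(\dots)$ only up to how $\diamh/\momexp$ versus $c\beta^{\momexp-1}$ are grouped; reconciling the exact prefactor in the statement (the factor $1/\momexp$ appearing on both terms) may require slightly loosening or regrouping $c$, which I would check last.
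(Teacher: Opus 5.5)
Your proposal is correct and follows the paper's proof essentially step for step. The steps match: the same recursion $\psi_{\run+1}\le(1-\strong\curr[\learn])\psi_\run+b_\run$ obtained from \cref{lemma:difference_of_V,lemma:heavy_tails_descent} plus the mean value bound, and the same $1-a\le e^{-a}$ and integral estimate for the transient. For the noise sum, the paper writes $\curr[\learn]^{\momexp-1}\le\learn_{\horizon}^{\momexp-2}\curr[\learn]$ and telescopes $\sum_{\run}\strong\curr[\learn]\prod_{\runalt>\run}(1-\strong\learn_{\runalt})\le 1$, which is your $\max_s b_s/a_s$ bound proved by telescoping instead of induction.

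Your worry about the prefactor is unfounded. Since $c$ already contains $1/\momexp$, you get $b_\run=\frac{1}{\momexp}\bigl(\diamh/\beta+2^{\momexp-1}\beta^{\momexp-1}\diamX^{2-\momexp}\sdev^{\momexp}/\hcst^{\momexp-1}\bigr)\run^{-(\momexp-1)/\momexp}$, and your argument reproduces the stated constant exactly with no loosening.
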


\begin{proof}
	Following the same idea as in the proof of \cref{thm:cv_relative_strong_discrete} but with learning rate $\curr[\learn] = \beta \run^{-1/\momexp}$, we obtain
	\begin{align}
		\exof*{\next[\energy]} 
		&\leq \exof*{\curr[\energy]} + \parens*{\dfrac{1}{\next[\learn]} - \dfrac{1}{\curr[\learn]}} \diamh - \strong \exof*{\breg(\sol, \curr[\point])} + \dfrac{2^{\momexp-1} \diamX^{2 - \momexp}}{\momexp \hcst^{\momexp-1}} \learn^{\momexp-1} \sdev^{\momexp}
		\notag\\
		&= (1 - \strong \curr[\learn])\exof*{\curr[\energy]} + \parens*{(\run+1)^{1/\momexp} - \run^{1/\momexp}} \dfrac{\diamh}{\beta} + \dfrac{2^{\momexp-1} \diamX^{2 - \momexp}}{\momexp \hcst^{\momexp-1}} \learn^{\momexp-1} \sdev^{\momexp}.
	\end{align}
	Let $\psi(x) = x^{1/\momexp}$. Then, by the mean value theorem, there exists $\xi \in (\run, \run+1)$ such that 
	\begin{equation}
		(\run+1)^{1/\momexp} - \run^{1/\momexp} = \psi(\run+1) - \psi(\run) \leq \psi'(\xi) = \frac{1}{\momexp} \xi^{-\frac{\momexp-1}{\momexp}} \leq \frac{1}{\momexp} \run^{-\frac{\momexp-1}{\momexp}} = \frac{1}{\momexp \beta^{\momexp-1}} \curr[\learn]^{\momexp-1}.
	\end{equation}
	This yields 
	\begin{equation}
		\exof*{\next[\energy]} \leq (1-\strong \curr[\learn]) \exof*{\curr[\energy]} + \dfrac{1}{\momexp} \parens*{ \dfrac{\diamh}{\beta^{\momexp}} + \dfrac{2^{\momexp-1} \diamX^{2 - \momexp} \sdev^{\momexp}}{\hcst^{\momexp-1}} } \curr[\learn]^{\momexp-1}.
	\end{equation}
	Letting $C$ be the constant in front of $\curr[\learn]^{\momexp-1}$ and iterating the inequality for all $\run = 1, \dots, \horizon-1$ then leads to 
	\begin{equation}
		\exof*{\energy_\horizon} \leq \energy_1 \prod_{\run=1}^{\horizon-1} (1 - \strong \curr[\learn]) + C \sum_{\run=1}^{\horizon-1} \curr[\learn]^{\momexp-1} \prod_{\runalt = \run + 1}^{\horizon-1} (1 - \strong \learn_\runalt) \eqdef \energy_1 S_{1, \horizon} + C S_{2, \horizon}.
	\label{eq:upperbound_on_V_discrete}
	\end{equation}
	To estimate $S_{1, \horizon}$, we use the classic inequality $1 - x \leq e^{-x}$ and an integral bound to obtain
	\begin{align}
		S_{1, \horizon}
			&\leq \prod_{\run=1}^{\horizon-1} e^{-\strong \curr[\learn]}
			= \exp\parens*{- \strong \sum_{\run=1}^{\horizon-1} \curr[\learn]}
			= \exp\parens*{ -\beta \strong \sum_{\run=1}^{\horizon-1} \run^{-1/\momexp} }
			\notag\\
			&\leq \exp\parens*{ - \beta \strong \int_1^\horizon  \time^{-1/\momexp} d\time }
			= \exp\parens*{ - \dfrac{\strong \beta \momexp}{\momexp-1} \parens*{ \horizon^{\frac{\momexp-1}{\momexp}} - 1 }}. 
	\end{align}
	On the other hand, note that $\curr[\learn]^{\momexp-1} = \curr[\learn]^{\momexp-2} \curr[\learn] \leq \learn_{\horizon}^{\momexp-2} \curr[\learn]$ for every $\run=1, \dots, \horizon-1$ due to $\momexp \leq 2$ and the fact that $\curr[\learn]$ is a nonincreasing sequence. This allows us to write
	\begin{align}
		S_{2, \horizon} 
		\leq \learn_{\horizon}^{\momexp-2} \sum_{\run=1}^{\horizon-1} \curr[\learn] \prod_{\runalt = \run + 1}^{\horizon-1} (1 - \strong \learn_{\runalt}) 
		&= \dfrac{\learn_{\horizon}^{\momexp-2}}{\strong} \sum_{\run=1}^{\horizon-1}(1 + \strong \curr[\learn] - 1) \prod_{\runalt = \run + 1}^{\horizon - 1} (1 - \strong \learn_\runalt)
		\notag\\
		&= \dfrac{\learn_{\horizon}^{\momexp-2}}{\strong} \sum_{\run = 1}^{\horizon-1} \parens*{ \prod_{\runalt=\run+1}^{\horizon-1} (1 - \strong \learn_{\runalt}) - \prod_{\runalt=\run}^{\horizon-1} (1 - \strong \learn_{\runalt}) }
		\notag\\
		&=\dfrac{\learn_{\horizon}^{\momexp-2}}{\strong} \parens*{ 1 - \prod_{\runalt=1}^{\horizon-1} (1 - \strong \learn_\runalt) }
		\notag\\
		&\leq \dfrac{\learn_{\horizon}^{\momexp-2}}{\strong}.
	\end{align}
	Injecting the bounds for $S_{1,\horizon}$ and $S_{2,\horizon}$ back into \cref{eq:upperbound_on_V_discrete} and replacing $\energy_{\horizon}$ by its expression, it follows that 
	\begin{align}
		\exof*{\breg(\sol, \curr[\state])}
			= \curr[\learn] \exof*{\curr[\energy]}
			&\leq \dfrac{\breg(\sol, \state_0)}{\run^{1/\momexp}} \exp\parens*{ - \dfrac{\strong \beta \momexp}{\momexp-1} \parens*{ \run^{\frac{\momexp-1}{\momexp}} - 1 }}
			\notag\\
			&+ \dfrac{1}{\strong \momexp} \parens*{ \dfrac{\diamh}{\beta} + \dfrac{2^{\momexp - 1} \beta^{\momexp -1} \diamX^{2 - \momexp}}{\hcst^{\momexp-1}} \sdev^{\momexp} } \run^{-\frac{\momexp-1}{\momexp}}
	\eqcomma
	\end{align}
	and the claim is therefore proved by using the inequality $\norm{\sol - \curr[\state]}^2 \leq \frac{2}{\hcst} \breg(\sol, \curr[\state])$.
\end{proof}

\begin{remark}
	Analogous to its associated continuous-time result (\cref{thm:relatively_convex_continuous2}), \cref{thm:discrete_relative_strong_variable_learning_rate} only implies that \eqref{eq:SDA} achieves a complexity rate of order $\time = \Omega\parens*{\precval^{-\frac{\momexp}{\momexp-1}}}$ for learning rates $\learn(\time) \propto \time^{-1/\momexp}$, which thus does not reach the optimal lower bound even for the tame regime $\momexp=2$.
\end{remark}

\subsection{Extension: Lazy Mirror Descent \eqref{eq:LMD} under heavy-tailed noise}

In this section, we extend our previous results to the stochastic mirror descent with lazy updates, described by the system 
\begin{equation}
\tag{LMD}
\begin{aligned}
\next[\dpoint]
	&= \curr[\dpoint]
		- \curr[\step]\curr[\sgrad]
	\\
\next[\point]
	&= \mirror(\next[\dpoint])
\end{aligned}
\end{equation} 

Following the discussion in the main text, the key difference between \eqref{eq:LMD} and \eqref{eq:SDA} is that, in \eqref{eq:LMD}, gradient steps are \emph{pre-multiplied} by a stepsize $\curr[\step]$, whereas in \eqref{eq:SDA} they are multiplied by a learning rate $\curr[\learn]$ \emph{after} aggregation.

As will become clear in the proofs, the convergence results for \eqref{eq:LMD} under heavy-tailed noise rely on the same arguments as those used for \eqref{eq:SDA} with a constant learning rate. In particular, we no longer need to introduce the $\learn$-deflated Fenchel coupling and can instead directly analyze the evolution of the Fenchel coupling along the iterates of \eqref{eq:LMD}:

\begin{lemma}[Three-point identity]
	For every $\basealt \in \points$, the iterates of \eqref{eq:LMD} satisfy
	\begin{equation}
		\fench(\basealt, \next[\dpoint]) - \fench(\basealt, \curr[\dpoint]) = \braket*{\curr[\point] - \basealt}{\next[\dpoint] - \curr[\dpoint]} + \braket*{ \next[\point] - \curr[\point] }{ \next[\dpoint] - \curr[\dpoint] } - \fench(\next[\point], \curr[\dpoint]).
	\end{equation}
\label{lemma:LMD_diff_of_F}
\end{lemma}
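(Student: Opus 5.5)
The identity is purely algebraic: it is the primal–dual three-point identity of \cref{lem:3point} applied with the pair $(\curr[\dpoint],\next[\dpoint])$, followed by a rearrangement of the inner-product term. I would not need any assumption on $\obj$ or on the noise, since the relation holds pathwise for any $\curr[\dpoint],\next[\dpoint]\in\dpoints$ with $\curr=\mirror(\curr[\dpoint])$ and $\next=\mirror(\next[\dpoint])$.

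Concretely, first invoke the first claim \eqref{eq:onestep1} of \cref{lemma:Fenchel_update_tame_noise} with $\dpoint\gets\curr[\dpoint]$, $\dvec\gets\next[\dpoint]-\curr[\dpoint]$, so that $\new[\dpoint]=\next[\dpoint]$ and $\new=\mirror(\next[\dpoint])=\next$ by the definition of \eqref{eq:LMD}. This gives
\begin{equation*}
\fench(\basealt,\next[\dpoint])
	= \fench(\basealt,\curr[\dpoint])
		+ \braket{\next[\dpoint]-\curr[\dpoint]}{\next-\basealt}
		- \fench(\next,\curr[\dpoint]).
\end{equation*}
Second, split $\next-\basealt=(\curr-\basealt)+(\next-\curr)$ inside the pairing, which produces exactly the two inner products in the statement. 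Alternatively, and as a consistency check, the same identity can be obtained directly by expanding the three Fenchel couplings via \eqref{eq:Fench}, using $\hconj(\next[\dpoint])=\braket{\next[\dpoint]}{\next}-\hreg(\next)$ from the optimality of $\next$. This mirrors the computation in \cref{lemma:difference_of_V} with $\curr[\learn]\equiv1$, so the $(1/\next[\learn]-1/\curr[\learn])$ term vanishes.

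I do not expect a genuine obstacle. The only point requiring care is the sign bookkeeping, and making sure that it is $\fench(\next,\curr[\dpoint])$ (primal point $\next$, dual point $\curr[\dpoint]$) that appears rather than its transpose. This follows from reading \eqref{eq:onestep1} with $\new=\next$.
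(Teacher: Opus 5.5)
Your proposal is correct and is essentially the paper's argument. The paper expands the three Fenchel couplings directly, using the optimality of $\next$ to rewrite $\hconj(\next[\dpoint])$, which is exactly your ``consistency check'' route. Citing \eqref{eq:onestep1} (itself a consequence of \cref{lem:3point}) packages the same algebra, and both versions finish by splitting $\next - \basealt = (\curr - \basealt) + (\next - \curr)$ inside the pairing.
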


\begin{proof}
	On one hand, we have
	\begin{equation}
		\fench(\next[\point], \curr[\dpoint]) - \fench(\basealt, \curr[\dpoint]) = \hreg(\next[\point]) - \hreg(\basealt) - \braket*{\next[\point] - \basealt}{\curr[\point]}
	\eqcomma
	\end{equation}
	and, on the other one, using the optimality of $\next[\point]$, 
	\begin{equation}
		\fench(\basealt, \next[\dpoint]) = \hreg(\basealt) + \hconj(\next[\dpoint]) - \braket*{\basealt}{\next[\dpoint]} = \hreg(\basealt) - \hreg(\next[\point]) + \braket*{\next[\point] - \basealt}{\next[\dpoint]}.
	\end{equation}
	Combining both of these equalities therefore yields
	\begin{align}
		\fench(\basealt, \next[\dpoint]) + \fench(\next[\point], \curr[\dpoint]) - \fench(\basealt, \curr[\dpoint])
		&= \braket*{\next[\point] - \basealt}{\next[\dpoint] - \curr[\dpoint]}
		\notag\\
		&= \braket*{\curr[\point] - \basealt}{\next[\dpoint] - \curr[\dpoint]} + \braket*{\next[\point] - \curr[\point]}{\next[\dpoint] - \curr[\dpoint]}
	\eqcomma
	\end{align}
	hence proving the claim.
\end{proof}

Analogously to \eqref{eq:SDA}, leveraging the relative smoothness of the objective and the moment bounds on the gradient noise leads to the following essential estimate:

\begin{lemma}[Heavy-tailed noise estimate]
	If $\obj$ is relatively $\lips$-smooth and $\curr[\step] \leq \frac{1}{\momexp \lips}$, then trajectories of \eqref{eq:LMD} satisfy
	\begin{equation}
		\braket*{\next[\point] - \curr[\point]}{\next[\dpoint] - \curr[\dpoint]} - \fench(\next[\point], \curr[\dpoint]) \leq \curr[\step] \bracks*{ \obj(\curr[\point]) - \obj(\next[\point]) } + \dfrac{2^{\momexp-1} \diamX^{2 - \momexp}}{\momexp \hcst^{\momexp-1}} \curr[\step]^{\momexp} \dnorm{\curr[\noise]}^{\momexp}.
	\end{equation}
\label{lemma:LMD_heavy_descent}
\end{lemma}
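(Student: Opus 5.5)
This is the (LMD) analogue of \cref{lemma:heavy_tails_descent}, and I would follow that proof almost verbatim, with the learning rate $\curr[\learn]$ replaced by the pre-multiplier $\curr[\step]$ and the deflated coupling replaced by the plain Fenchel coupling. Since $\next[\dpoint] - \curr[\dpoint] = -\curr[\step]\curr[\sgrad] = -\curr[\step](\nabla\obj(\curr[\point]) + \curr[\noise])$, I would split the left-hand side into two pieces, allotting a fraction $1/\momexp$ of $\fench(\next[\point],\curr[\dpoint])$ to the gradient part and the fraction $(\momexp-1)/\momexp$ to the noise part:
\begin{equation}
A = \curr[\step]\braket{\nabla\obj(\curr[\point])}{\curr[\point]-\next[\point]} - \tfrac{1}{\momexp}\fench(\next[\point],\curr[\dpoint]),
\qquad
B = \curr[\step]\braket{\curr[\noise]}{\curr[\point]-\next[\point]} - \tfrac{\momexp-1}{\momexp}\fench(\next[\point],\curr[\dpoint]).
\end{equation}

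For $A$, I would apply relative smoothness \eqref{eq:RS} at $\curr[\point]$. This requires $\curr[\point]\in\proxdom$, which holds because $\curr[\point] = \mirror(\curr[\dpoint])$ and $\im\mirror = \proxdom$ by \cref{prop:mirror}. It gives $\curr[\step]\braket{\nabla\obj(\curr[\point])}{\curr[\point]-\next[\point]} \leq \curr[\step][\obj(\curr[\point]) - \obj(\next[\point])] + \curr[\step]\lips\,\breg(\next[\point],\curr[\point])$. Next, \cref{prop:Fench}(b) gives $\breg(\next[\point],\curr[\point]) \leq \fench(\next[\point],\curr[\dpoint])$. Hence $A \leq \curr[\step][\obj(\curr[\point])-\obj(\next[\point])] + (\curr[\step]\lips - 1/\momexp)\fench(\next[\point],\curr[\dpoint])$, and the last coefficient is nonpositive by the step-size condition $\curr[\step]\leq 1/(\momexp\lips)$ together with $\fench\geq 0$.

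For $B$, I would use the duality bound and \cref{prop:Fench}(c) to get $B \leq \curr[\step]\dnorm{\curr[\noise]}\,w - \frac{\momexp-1}{\momexp}\frac{\hcst}{2}w^{2}$, where $w = \norm{\next[\point]-\curr[\point]} \leq \diamX$. The diameter bound lets me lower-bound $w^{2} \geq \diamX^{-(2-\momexp)/(\momexp-1)}\,w^{\momexp/(\momexp-1)}$. I would then maximize $aw - \frac{\momexp-1}{\momexp}Cw^{\momexp/(\momexp-1)}$ over $w\geq0$, with $a = \curr[\step]\dnorm{\curr[\noise]}$ and $C = \frac{\hcst}{2}\diamX^{-(2-\momexp)/(\momexp-1)}$. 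The maximum equals $\frac{1}{\momexp}C^{1-\momexp}a^{\momexp} = \frac{2^{\momexp-1}\diamX^{2-\momexp}}{\momexp\hcst^{\momexp-1}}\curr[\step]^{\momexp}\dnorm{\curr[\noise]}^{\momexp}$. Adding the bounds for $A$ and $B$ yields the claim.

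There is no real obstacle here. The one step needing care is the exponent bookkeeping in the Young-type maximization, to check that the power of $\curr[\step]$ comes out as $\momexp$ rather than $\momexp-1$. This is consistent because the factor $1/\curr[\learn]$ present in \cref{lemma:heavy_tails_descent} is absent here, so $C$ no longer carries a $1/\curr[\step]$ and all of the step-size dependence enters through $a^{\momexp}$.
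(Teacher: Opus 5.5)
Your proposal is correct and is essentially the paper's proof: the paper divides the left-hand side by $\curr[\step]$, which gives exactly the $1/\momexp$ versus $(\momexp-1)/\momexp$ split of $\fench(\next[\point],\curr[\dpoint])$ from \cref{lemma:heavy_tails_descent} with $\curr[\learn]$ replaced by $\curr[\step]$, and then multiplies back. Keeping the factor $\curr[\step]$ inside the two pieces, as you do, is the same computation with the same constants, and your exponent bookkeeping giving $\curr[\step]^{\momexp}$ checks out.
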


\begin{proof}
	Since $\next[\dpoint] - \curr[\dpoint] = - \curr[\step] \curr[\sgrad] = - \curr[\step](\nabla \obj(\curr[\point]) + \curr[\noise])$, we have
	\begin{align}
		\dfrac{1}{\curr[\step]} \bracks*{ \braket*{ \next[\point] - \curr[\point] }{\next[\dpoint] - \curr[\dpoint]} - \fench(\next[\point], \curr[\dpoint])} &= \braket*{\nabla \obj(\curr[\point])}{\curr[\point] - \next[\point]} - \dfrac{1}{\momexp \curr[\step]} \fench(\next[\point], \curr[\dpoint])
		\notag\\
		&+ \braket*{\curr[\noise]}{\curr[\point] - \next[\point]} - \dfrac{\momexp-1}{\momexp \curr[\step]} \fench(\next[\point], \curr[\dpoint]),
	\end{align}
	and the rest of the proof can therefore be carried on by invoking the same computations as in \cref{lemma:heavy_tails_descent}.
\end{proof}

From the two lemmas above, we obtain the following theorem, providing the analogue of \cref{thm:discrete_convex_convergence} for \eqref{eq:LMD}.

\begin{theorem}
	If $\obj$ is relative $\lips$-smooth, then for every $\basealt \in \points$ and every nonincreasing stepsize sequence with $\step_1 \leq \frac{1}{\momexp \lips}$, the iterates of \eqref{eq:LMD} satisfy
	\begin{equation}
		\exof*{ \sum_{\run=1}^\horizon \curr[\step] \braket*{ \curr[\point] - \basealt }{ \nabla \obj(\curr[\point]) }} \leq \fench(\basealt, \dpoint_1) + \step_1 \bracks*{ \obj(\point_1) - \min\obj } + \dfrac{2^{\momexp-1} \diamX^{2-\momexp} \sdev^{\momexp}}{\momexp \hcst^{\momexp-1}} \sum_{\run=1}^{\horizon} \curr[\step]^{\momexp}.
	\end{equation}
\end{theorem}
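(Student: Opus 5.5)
The plan is to mirror the proof of \cref{thm:discrete_relative_smooth_cv}, with the (undeflated) Fenchel coupling $\fench(\basealt,\curr[\dpoint])$ as the energy function. Since the step-size enters before aggregation, no $\learn$-deflation is needed, and so no term of the form $(1/\next[\learn]-1/\curr[\learn])$ arises.

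\textbf{Step 1: one-step inequality.} Fix $\basealt\in\points$. Combining the three-point identity of \cref{lemma:LMD_diff_of_F} with the heavy-tailed estimate of \cref{lemma:LMD_heavy_descent} will give
\begin{align}
\fench(\basealt,\next[\dpoint]) - \fench(\basealt,\curr[\dpoint])
	&\leq -\curr[\step]\braket{\curr[\point]-\basealt}{\nabla\obj(\curr[\point])} - \curr[\step]\braket{\curr[\point]-\basealt}{\curr[\noise]}
	\notag\\
	&\quad + \curr[\step]\bracks{\obj(\curr[\point])-\obj(\next[\point])} + \frac{2^{\momexp-1}\diamX^{2-\momexp}}{\momexp\hcst^{\momexp-1}}\curr[\step]^{\momexp}\dnorm{\curr[\noise]}^{\momexp}.
\end{align}
Here we use $\next[\dpoint]-\curr[\dpoint] = -\curr[\step](\nabla\obj(\curr[\point])+\curr[\noise])$. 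The applicability of \cref{lemma:LMD_heavy_descent} at every $\run$ follows from $\curr[\step]\leq\step_1\leq 1/(\momexp\lips)$, since the step-size sequence is nonincreasing.

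\textbf{Step 2: summation, which is the only delicate point.} The function-value terms $\curr[\step]\bracks{\obj(\curr[\point])-\obj(\next[\point])}$ do not telescope directly because of the varying weights. I would use summation by parts:
\begin{equation}
\sum_{\run=1}^{\horizon}\curr[\step]\bracks{\obj(\curr[\point])-\obj(\next[\point])}
	= \step_1\obj(\point_1) - \step_{\horizon}\obj(\point_{\horizon+1}) - \sum_{\run=2}^{\horizon}(\step_{\run-1}-\curr[\step])\obj(\curr[\point]).
\end{equation}
Subtracting $\min\obj$ from each $\obj(\cdot)$ leaves the sum unchanged, because the weights telescope to zero: $\step_1-\step_\horizon-\sum_{\run\ge2}(\step_{\run-1}-\step_\run)=0$. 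After this shift every term is nonpositive except $\step_1\bracks{\obj(\point_1)-\min\obj}$, where we use $\step_{\run-1}\geq\curr[\step]$ and $\obj-\min\obj\geq 0$. This gives the bound $\step_1\bracks{\obj(\point_1)-\min\obj}$.

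\textbf{Step 3: expectations.} Sum the one-step inequality over $\run=1,\dots,\horizon$ and drop $\fench(\basealt,\dpoint_{\horizon+1})\geq0$ using \cref{prop:Fench}. Then take expectations. The noise cross terms vanish by the tower property, exactly as in \cref{thm:discrete_relative_smooth_cv}: $\curr[\step]$ is deterministic, $\curr[\point]$ is $\curr[\filter]$-measurable, and $\exof{\curr[\noise]\given\curr[\filter]}=0$. The moment terms are controlled through $\exof{\dnorm{\curr[\noise]}^{\momexp}}\leq\sdev^{\momexp}$. Rearranging then yields the stated bound, with $\fench(\basealt,\dpoint_1)$ as the initial energy.
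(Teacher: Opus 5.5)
Your proposal is correct and follows essentially the same route as the paper's proof. Both combine the three-point identity with the heavy-tailed descent estimate, treat the weighted terms $\step_{\run}[\obj(\point_{\run})-\obj(\point_{\run+1})]$ by summation by parts using $\step_{\run+1}\leq\step_{\run}$ and $\obj\geq\min\obj$ (your zero-sum shift by $\min\obj$ is just a repackaging of the paper's termwise bound), drop $\fench(\basealt,\dpoint_{\horizon+1})\geq 0$, and remove the noise cross terms via the tower property.
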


\begin{proof}
	After rearranging the terms, \cref{lemma:LMD_diff_of_F,lemma:LMD_heavy_descent} yield
	\begin{equation}
		\curr[\step] \braket*{\curr[\point] - \basealt}{\curr[\sgrad]} \leq \fench(\basealt, \curr[\dpoint]) - \fench(\basealt, \next[\dpoint]) + \curr[\step] \bracks*{ \obj(\curr[\point]) - \obj(\next[\point])} + \dfrac{2^{\momexp-1} \diamX^{2-\momexp}}{\momexp \hcst^{\momexp-1}} \curr[\step]^{\momexp} \dnorm{\curr[\noise]}^{\momexp}.
	\end{equation}
	Summing over $\run=1, \dots, \horizon$ and taking the overall expectation, we therefore get
	\begin{align}
		\exof*{ \sum_{\run=1}^\horizon \curr[\step] \braket*{\curr[\point] - \basealt}{\nabla \obj(\curr[\point])}}
			\leq \fench(\basealt, \dpoint_1) - \fench(\basealt, \next[\dpoint])
				&+ \exof*{ \sum_{\run=1}^\horizon \curr[\step] \bracks*{ \obj(\curr[\point]) - \obj(\next[\point]) } }
			\notag\\
				&+ \dfrac{2^{\momexp-1} \diamX^{2-\momexp} \sdev^{\momexp}}{\momexp \hcst^{\momexp-1}} \sum_{\run=1}^\horizon \curr[\step]^{\momexp}
	\eqcomma
	\end{align}
	where we have used the blanket assumptions on $\curr[\noise]$ as in the proof of \cref{thm:discrete_relative_smooth_cv}. Now, note that we can write
	\begin{align}
		\sum_{\run=1}^\horizon \curr[\step] \bracks*{ \obj(\curr[\point]) - \obj(\next[\point]) }
			&= \sum_{\run=0}^{\horizon-1} \next[\step] \obj(\next[\point]) - \sum_{\run=1}^\horizon \curr[\step] \obj(\next[\point])
			\notag\\
			&= \step_1 \obj(\point_1) - \step_\horizon \obj(\point_{\horizon+1}) + \sum_{\run=1}^{\horizon-1} (\next[\step] - \curr[\step]) \obj(\next[\point]),
	\end{align}
	but $\next[\step] - \curr[\step] \leq 0$ by assumption and $\obj(\next[\point]) \geq \min \obj$, so
	\begin{align}
		\sum_{\run=1}^\horizon \curr[\step] \bracks*{ \obj(\curr[\point]) - \obj(\next[\point]) } 
		&\leq \step_1 \obj(\point_1) - \step_\horizon \obj(\point_{\horizon+1}) + \sum_{\run=1}^{\horizon-1} (\next[\step] - \curr[\step]) \min \obj
		\notag\\
		&= \step_1 \bracks*{ \obj(\point_1) - \min\obj } + \step_\horizon\bracks*{ \min \obj - \obj(\point_{\horizon+1}) }
		\notag\\
		&\leq \step_1 \bracks*{ \obj(\point_1) - \min \obj }.
	\end{align}
	Moreover, $\fench(\basealt, \next[\dpoint]) \geq 0$ by \cref{prop:Fench}, and thus
	\begin{equation}
		\exof*{ \sum_{\run=1}^\horizon \curr[\step] \braket*{\curr[\point] - \basealt}{\nabla \obj(\curr[\point]) }} \leq \fench(\basealt, \dpoint_1) + \step_1 \bracks*{ \obj(\point_1) - \min \obj } + \dfrac{2^{\momexp-1} \diamX^{2-\momexp} \sdev^{\momexp}}{\momexp \hcst^{\momexp-1}} \sum_{\run=1}^\horizon \curr[\step]^{\momexp}
	\eqcomma
	\end{equation}
	which concludes the proof.
\end{proof}

As a first consequence, we obtain a convergence guarantee of \eqref{eq:LMD} for convex objectives that are smooth relative to $\hreg$:

\begin{theorem}[Convergence rate of \eqref{eq:LMD} for convex functions]
	If $\obj$ is convex and relative $\lips$-smooth, then for every nonincreasing stepsize sequence with $\step_1 \leq \frac{1}{\momexp \lips}$, the iterates of \eqref{eq:LMD} satisfy
	\begin{align}
		\exof*{ \obj(\tilde{\point}_\horizon) - \min\obj} 
		&\leq \dfrac{ \fench(\argmin\obj, \dpoint_1) + \step_1 \bracks*{\obj(\point_1) - \min\obj} }{ \sum_{\run=1}^\horizon \curr[\step]} \notag \\
		&+ \dfrac{2^{\momexp-1} \diamX^{2-\momexp} \sdev^{\momexp}}{\momexp \hcst^{\momexp-1}} \dfrac{\sum_{\run=1}^\horizon \curr[\step]^{\momexp} }{\sum_{\run=1}^\horizon \curr[\step]}
	\eqcomma
	\end{align}
	where $\tilde{\point}_\horizon = \sum_{\run=1}^\horizon \curr[\step] \curr[\point]/\sum_{\run=1}^\horizon \curr[\step]$.
	In particular, the choice $\curr[\step] = \beta \run^{-1/\momexp}$ with $\beta \leq \frac{1}{\momexp \lips}$ yields
	\begin{align}
		\exof*{ \obj(\tilde{\point}_\horizon) - \min\obj }
			&\lesssim \parens*{ \frac{1}{\beta}\fench(\argmin\obj, \dpoint_1) +  \obj(\point_1) - \min \obj } \horizon^{-\frac{\momexp-1}{\momexp}}
			\notag\\
			&+ \dfrac{2^{\momexp-1} \beta^{\momexp-1} \diamX^{2-\momexp} \sdev^{\momexp}}{\momexp \hcst^{\momexp-1}} (\log\horizon) \horizon^{-\frac{\momexp-1}{\momexp}}.
	\end{align}
\label{thm:discrete_convex_convergence}
\end{theorem}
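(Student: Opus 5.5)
The bound follows from the preceding (unnumbered) \eqref{eq:LMD} theorem, which controls $\exof*{\sum_{\run=1}^\horizon \curr[\step]\braket*{\curr[\point]-\basealt}{\nabla\obj(\curr[\point])}}$. I will apply it with $\basealt = \sol \in \argmin\obj$ and lower-bound the left-hand side using convexity. Since $\braket*{\curr[\point]-\sol}{\nabla\obj(\curr[\point])} \geq \obj(\curr[\point]) - \min\obj$ and $\curr[\step]>0$, we get
\begin{equation}
\sum_{\run=1}^\horizon \curr[\step]\braket*{\curr[\point]-\sol}{\nabla\obj(\curr[\point])}
	\geq \sum_{\run=1}^\horizon \curr[\step]\bracks*{\obj(\curr[\point]) - \min\obj}
	\geq \parens*{\sum_{\run=1}^\horizon \curr[\step]} \bracks*{\obj(\tilde{\point}_\horizon) - \min\obj}.
\end{equation}
The last inequality is Jensen's inequality for the convex $\obj$ under the weights $\curr[\step]/\sum_{\runalt}\step_{\runalt}$. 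The sum of step-sizes is deterministic, so I take expectations and divide by $\sum_\run \curr[\step]$. Substituting the upper bound of the previous theorem, with $\fench(\sol,\dpoint_1)$ written as $\fench(\argmin\obj,\dpoint_1)$ for a fixed chosen minimizer, gives the first display directly.

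For the specialization $\curr[\step] = \beta\run^{-1/\momexp}$, the sequence is nonincreasing and $\step_1 = \beta \leq 1/(\momexp\lips)$, so the hypotheses hold. I then need three elementary integral comparisons. First, $\sum_{\run=1}^\horizon \curr[\step] \geq \beta\int_1^{\horizon+1} x^{-1/\momexp}\,dx \gtrsim \beta\horizon^{(\momexp-1)/\momexp}$. Second, $\step_1 = \beta$. Third, $\sum_{\run=1}^\horizon \curr[\step]^{\momexp} = \beta^{\momexp}\sum_{\run=1}^\horizon \run^{-1} \leq \beta^{\momexp}(1+\log\horizon)$. Dividing the first term gives $\parens*{\fench(\sol,\dpoint_1)/\beta + \obj(\point_1)-\min\obj}\horizon^{-(\momexp-1)/\momexp}$, up to a constant depending on $\momexp$. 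Dividing the noise term gives $\beta^{\momexp-1}(\log\horizon)\horizon^{-(\momexp-1)/\momexp}$ times the stated constant. Both match the claim, with absolute and $\momexp$-dependent factors absorbed into $\lesssim$.

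None of these steps is a real obstacle. The only point that needs care is that the weighted average $\tilde{\point}_\horizon$ lies in $\points$, so that $\obj(\tilde{\point}_\horizon)$ is finite and Jensen applies. This holds because $\points$ is convex and $\obj$ is continuous on $\points$. The harmonic-sum bound also needs $\horizon \geq 2$ in order to write $1+\log\horizon \lesssim \log\horizon$.
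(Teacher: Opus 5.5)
Your proof is correct and is essentially the paper's argument: set $\basealt=\sol$ in the preceding \eqref{eq:LMD} bound, lower-bound the weighted sum by $\parens{\sum_{\run}\curr[\step]}\bracks{\obj(\tilde{\point}_\horizon)-\min\obj}$ via convexity and Jensen, divide by the deterministic $\sum_{\run}\curr[\step]$, and specialize using $\sum_{\run\leq\horizon}\run^{-1/\momexp}\geq\horizon^{(\momexp-1)/\momexp}$ and $\sum_{\run\leq\horizon}\run^{-1}\leq 1+\log\horizon$. You also cite the correct predecessor result (the paper's text points by label to the \eqref{eq:SDA} bound, evidently a slip), and since the inequality holds for every $\sol\in\argmin\obj$ you may take the minimizer of $\fench(\cdot,\dpoint_1)$ over the compact set $\argmin\obj$, which is how the paper reads $\fench(\argmin\obj,\dpoint_1)$.
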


\begin{proof}
	Let $\basealt = \sol \in \argmin\obj$ satisfying $\fench(\sol, \dpoint_1) = \fench(\argmin \obj, \dpoint_1)$. Then, invoking the convexity of $\obj$ yields
	\begin{align}
		\sum_{\run=1}^\horizon \curr[\step] \braket*{\curr[\point] - \sol}{\nabla \obj(\curr[\point])}
			&\geq \sum_{\run=1}^\horizon \curr[\step]\bracks*{ \obj(\curr[\point]) - \obj(\sol) }
			= \bracks*{ \dfrac{\sum_{\run=1}^\horizon \curr[\step] \obj(\curr[\point])}{\sum_{\run=1}^\horizon \curr[\step]} - \min\obj }  \sum_{\run=1}^\horizon \curr[\step]
			\notag\\
			&\geq \bracks*{ \obj( \tilde{\point}_\horizon) - \min \obj} \sum_{\run=1}^\horizon \curr[\step],
	\end{align}
	and it only remains to apply \cref{thm:discrete_relative_smooth_cv} and divide both sides by $\insum_{\run=1}^\horizon \curr[\step]$ to obtain the result.
\end{proof}

\begin{remark}
	\cref{thm:discrete_convex_convergence} also recovers exactly the convergence rates established by \citet[p.~190]{NY83} and \citet[Theorem 6]{VYB+22} for optimizing convex function with \eqref{eq:SMD}. Two points are worth emphasizing: (a) our result remains valid when $\obj$ is only \emph{relatively} smooth with respect to the regularizer, whereas both of the cited works assume smoothness relative to the standard Euclidean norm, thus making our setting more flexible---especially when $\hreg$ is steep; and (b) their convergence rates also hold for regularizers that are not strongly convex but merely \emph{$(1, \momexp/(\momexp-1))$-uniformly convex}, that is, if
	\begin{equation}
		\breg(\point', \point) = \hreg(\point') - \hreg(\point) - \braket*{\nabla \hreg(\point)}{\point' - \point} \geq \dfrac{\momexp-1}{\momexp} \norm{\point - \point'}^{\frac{\momexp}{\momexp-1}} \quad \text{for all } \point, \point' \in \points_\hreg.
	\end{equation}
	Importantly, \cref{lemma:heavy_tails_descent} and, by extension, \cref{lemma:LMD_heavy_descent}, can be easily extended to this setting, since we then directly obtain $\fench(\point, \dpoint') \geq \frac{\momexp-1}{\momexp} \norm{\point - \mirror(\dpoint')}^{\frac{\momexp}{\momexp-1}}$ by \cref{prop:Fench}. Hence, all of our results remain valid when strong convexity is relaxed to uniform convexity of the regularizer.
\end{remark}

Analogously to stochastic dual averaging, we can then derive convergence rates for the \emph{last iterate} of \eqref{eq:LMD} under relative strong convexity. For a constant stepsize schedule, this gives the following bound, which coincides with that of \eqref{eq:SDA}.

\begin{theorem}[Convergence rate of \eqref{eq:LMD} for relatively strongly convex functions, constant stepsize]
	Let $\hreg$ be a steep regularizer. If $\obj$ is relatively $\lips$-smooth and relatively $\strong$-strongly convex with minimum $\sol \in \points$, then running \eqref{eq:LMD} with constant stepsizes $\step \leq \frac{1}{\momexp \lips}$ achieves
	\begin{equation}
		\exof*{\norm{\curr[\point] - \sol}^2} \leq \dfrac{2^{\momexp-1} \diamX^{2 - \momexp}}{\momexp \strong \hcst^{\momexp}} \sdev^{\momexp} \step^{\momexp - 1} + \dfrac{2 \breg(\sol, \point_1)}{\hcst} (1 - \step \strong)^{\run - 1}.
	\end{equation}
\end{theorem}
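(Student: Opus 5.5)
The plan mirrors the proof of \cref{thm:cv_relative_strong_discrete}, with the $\learn$-deflated coupling replaced by the plain Fenchel coupling $\fench(\sol,\curr[\dpoint])$ along \eqref{eq:LMD}. I would start from the three-point identity \cref{lemma:LMD_diff_of_F} with $\basealt=\sol$, and bound the last two terms by \cref{lemma:LMD_heavy_descent}. The step-size condition $\step\le 1/(\momexp\lips)$ is exactly what that lemma requires. Writing $\next[\dpoint]-\curr[\dpoint]=-\step(\nabla\obj(\curr)+\curr[\noise])$, this gives
\begin{equation*}
\fench(\sol,\next[\dpoint])
	\leq \fench(\sol,\curr[\dpoint])
	- \step\braket{\curr-\sol}{\nabla\obj(\curr)}
	- \step\braket{\curr-\sol}{\curr[\noise]}
	+ \step\bracks{\obj(\curr)-\obj(\next)}
	+ \frac{2^{\momexp-1}\diamX^{2-\momexp}}{\momexp\hcst^{\momexp-1}}\step^{\momexp}\dnorm{\curr[\noise]}^{\momexp}.
\end{equation*}

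Next I take expectations. Since $\curr$ is $\curr[\filter]$-measurable, the cross term $\exof{\braket{\curr-\sol}{\curr[\noise]}}$ vanishes by the tower property, and $\exof{\dnorm{\curr[\noise]}^{\momexp}}\le\sdev^{\momexp}$. For the drift term I use relative strong convexity at $\point=\curr$, $\pointalt=\sol$:
\begin{equation*}
-\braket{\curr-\sol}{\nabla\obj(\curr)}\le \obj(\sol)-\obj(\curr)-\strong\breg(\sol,\curr).
\end{equation*}
Combined with the $\step[\obj(\curr)-\obj(\next)]$ term, this leaves $\step[\obj(\sol)-\obj(\next)]\le 0$, so it can be dropped.

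Because $\hreg$ is steep, $\curr=\mirror(\curr[\dpoint])\in\relint\points$ for every $\run$. By \cref{prop:Fench}(b) this gives $\fench(\sol,\curr[\dpoint])=\breg(\sol,\curr)$. Setting $\varphi_{\run}=\exof{\breg(\sol,\curr)}$, the recursion becomes
\begin{equation*}
\varphi_{\run+1}\le(1-\step\strong)\varphi_{\run}+\frac{2^{\momexp-1}\diamX^{2-\momexp}\sdev^{\momexp}}{\momexp\hcst^{\momexp-1}}\step^{\momexp}.
\end{equation*}

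The step I expect to need the most care is checking the contraction factor. Using $\strong\le\lips$ (as cited from \cite{LFN18} in the SDA case), we get $\step\strong\le 1/\momexp<1$, so $1-\step\strong\in(0,1)$ and unrolling the recursion is legitimate. Unrolling gives
\begin{equation*}
\varphi_{\run}\le(1-\step\strong)^{\run-1}\breg(\sol,\point_1)+\frac{1}{\step\strong}\cdot\frac{2^{\momexp-1}\diamX^{2-\momexp}\sdev^{\momexp}}{\momexp\hcst^{\momexp-1}}\step^{\momexp},
\end{equation*}
where the second term comes from summing the geometric series.

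Finally, \cref{prop:Fench}(c) gives $\norm{\curr-\sol}^{2}\le(2/\hcst)\breg(\sol,\curr)$, which yields the stated bound. Strictly, this factor $2$ carries the noise term to $2^{\momexp}/(\momexp\strong\hcst^{\momexp})$ rather than the stated $2^{\momexp-1}/(\momexp\strong\hcst^{\momexp})$. I would match the statement's convention as in the SDA theorem, or else note that the constant should carry the extra factor of $2$.
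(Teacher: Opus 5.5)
Your proposal is correct and is essentially the paper's own proof. The paper simply reruns the constant-learning-rate \ac{SDA} argument for \eqref{eq:LMD}: three-point identity, heavy-tail estimate, relative strong convexity, steepness (so that $\fench(\sol,\curr[\dpoint])=\breg(\sol,\curr)$), $\strong\le\lips$, and unrolling the geometric recursion. Your remark on the constant is also right: the final step $\norm{\curr-\sol}^{2}\le(2/\hcst)\breg(\sol,\curr)$ gives the noise term $2^{\momexp}\diamX^{2-\momexp}\sdev^{\momexp}\step^{\momexp-1}/(\momexp\strong\hcst^{\momexp})$, so the stated $2^{\momexp-1}$, inherited from the \ac{SDA} theorem, is off by a factor of $2$.
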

\begin{proof}
	Since the stepsizes are constant, this result follows from the exact same arguments and computations used to prove \cref{thm:cv_relative_strong_discrete} for \eqref{eq:SDA} with constant learning rate.
\end{proof}

On the other hand, for a variable stepsize of order $1/\run^{1/\momexp}$ we obtain the following convergence guarantee, which requires a slightly more refined non-asymptotic version of Chung's lemma in order to be derived.

\begin{theorem}[Convergence rate of \eqref{eq:LMD} for relatively strongly convex functions, decreasing stepsize]
\label{thm:str-LMD-var}
	Let $\hreg$ be a steep regularizer. If $\obj$ is relatively $\lips$-smooth and relative $\strong$-strongly convex with minimum $\sol \in \points$, then running \eqref{eq:LMD} with stepsizes $\curr[\step] = \beta \run^{-1/\momexp}$ where $\beta \leq \frac{1}{\momexp \lips}$ achieves the convergence rate
	\begin{equation}
		\exof*{\norm{\curr[\point] - \sol}^2} \leq \dfrac{2^{\momexp+1} \beta^{\momexp-1}\diamX^{2-\momexp} \sdev^{\momexp}}{\strong \hcst^\momexp} \time^{-\frac{\momexp-1}{\momexp}} + \dfrac{2 \breg(\sol, \point_1)}{\hcst} \exp\parens*{ - \dfrac{\strong \beta}{2} \run^{\frac{\momexp-1}{\momexp} }}
	\end{equation}
	for all times $\run \geq \bracks*{ \frac{2(\momexp-1)}{\momexp \strong \beta}}^{\frac{\momexp}{\momexp-1}}$. In particular, $\curr[\point] \to \sol$ in mean square as $\time \to \infty$.
\end{theorem}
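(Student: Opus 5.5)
The plan is to follow the constant-stepsize argument for \eqref{eq:LMD} step by step, but keep the stepsize variable throughout and close the resulting recursion with a non-asymptotic Chung-type estimate. Since $\hreg$ is steep, $\curr\in\intpoints$, so \cref{prop:Fench} gives $\fench(\sol,\curr[\dpoint])=\breg(\sol,\curr)$. Set $D_\run=\exof{\breg(\sol,\curr)}$.

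First I combine the three-point identity of \cref{lemma:LMD_diff_of_F} (with $\basealt=\sol$) with the heavy-tail estimate of \cref{lemma:LMD_heavy_descent}, which applies because $\curr[\step]\le\step_1\le1/(\momexp\lips)$. Taking expectations, the noise cross term vanishes by unbiasedness, and this yields
\begin{equation*}
D_{\run+1}\le D_\run-\curr[\step]\exof{\braket{\curr-\sol}{\nabla\obj(\curr)}}+\curr[\step]\exof{\obj(\curr)-\obj(\next)}+C\curr[\step]^{\momexp}\sdev^{\momexp},
\qquad
C=\frac{2^{\momexp-1}\diamX^{2-\momexp}}{\momexp\hcst^{\momexp-1}}.
\end{equation*}
Relative strong convexity gives $\braket{\curr-\sol}{\nabla\obj(\curr)}\ge\obj(\curr)-\obj(\sol)+\strong\breg(\sol,\curr)$. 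The $\obj(\curr)$ terms then cancel, and $\obj(\sol)-\obj(\next)\le0$. This leaves the recursion
\begin{equation*}
D_{\run+1}\le(1-\strong\curr[\step])D_\run+C\sdev^{\momexp}\curr[\step]^{\momexp},
\qquad
\curr[\step]=\beta\run^{-1/\momexp}.
\end{equation*}
Note that $\strong\curr[\step]\le1$ because $\strong\le\lips$.

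Next I unroll the recursion:
\begin{equation*}
D_\run\le D_1\prod_{k<\run}(1-\strong\step_k)+C\sdev^{\momexp}\sum_{k<\run}\step_k^{\momexp}\prod_{k<j<\run}(1-\strong\step_j).
\end{equation*}
For the transient term I use $1-x\le e^{-x}$ together with $\sum_{k<\run}k^{-1/\momexp}\ge\frac{\momexp}{\momexp-1}\bigl(\run^{(\momexp-1)/\momexp}-1\bigr)$. This gives a bound $\exp\bigl(-\strong\beta\frac{\momexp}{\momexp-1}(\run^{(\momexp-1)/\momexp}-1)\bigr)$, which is at most $\exp\bigl(-\frac{\strong\beta}{2}\run^{(\momexp-1)/\momexp}\bigr)$ once $\run$ is large enough; the threshold in the statement is meant to absorb the $-1$.

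The noise sum is the main obstacle. Chung's lemma suggests the answer $\sum\approx\step_\run^{\momexp-1}/\strong=\beta^{\momexp-1}\run^{-(\momexp-1)/\momexp}/\strong$, but I need an explicit, non-asymptotic constant. I will prove by induction that $D_\run\le A\run^{-(\momexp-1)/\momexp}+(\text{transient})$, with $A=2C\sdev^{\momexp}\beta^{\momexp-1}/\strong$ (up to factors of 2). The induction step requires
\begin{equation*}
(1-\strong\beta\run^{-1/\momexp})\,A\run^{-a}+C\sdev^{\momexp}\beta^{\momexp}\run^{-1}\le A(\run+1)^{-a},
\qquad a=\tfrac{\momexp-1}{\momexp}.
\end{equation*}
Using $(\run+1)^{-a}\ge\run^{-a}(1-a/\run)$, this reduces to
\begin{equation*}
A\strong\beta\run^{-1/\momexp-a}\ge Aa\run^{-a-1}+C\sdev^{\momexp}\beta^{\momexp}\run^{-1}.
\end{equation*}
Since $1/\momexp+a=1$, this holds provided $\strong\beta\run^{-1/\momexp}\cdot\frac12\ge a/\run$, i.e.\ $\run^{(\momexp-1)/\momexp}\ge\frac{2(\momexp-1)}{\momexp\strong\beta}$. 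This is exactly the threshold in the statement. In the same step, half of the contraction $\strong\beta\run^{-1/\momexp}$ is used to absorb the growth condition, which explains the halved exponent $\strong\beta/2$ in the transient term.

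For early times I expect to handle the initial phase by comparing $\sum_{k<\run}\step_k^{\momexp}\prod_{k<j<\run}(1-\strong\step_j)$ directly, split at $k\approx\run/2$. The far terms are exponentially damped and get merged into the transient factor $e^{-\strong\beta\run^{a}/2}$. The near terms satisfy $\step_k^{\momexp-1}\le2^{(\momexp-1)/\momexp}\step_\run^{\momexp-1}$, and a telescoping argument as in \cref{thm:discrete_relative_strong_variable_learning_rate} bounds them by $\step_\run^{\momexp-1}/\strong$. Finally, $\norm{\curr-\sol}^2\le\frac{2}{\hcst}\breg(\sol,\curr)$ converts the bound to the stated form, with the constant $2^{\momexp+1}/(\strong\hcst^{\momexp})$ collecting the factors of 2 picked up above.
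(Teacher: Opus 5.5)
\textbf{There is a genuine gap: the early phase.} Your recursion $D_{t+1}\le(1-\mu\gamma_t)D_t+C\sigma^p\gamma_t^p$ is exactly the one the paper derives. Your induction step is also correct: it is the core of a non-asymptotic Chung lemma and reproduces the paper's threshold $t^{a}\ge 2a/(\mu\beta)$. The problem is initialization. The step only propagates for $t\ge t_0$, so you need a base case $D_{t_0}\le At_0^{-a}+(\text{a multiple of }\breg(\sol,\point_1))$, and your early-phase sketch does not supply it.

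After the split at $k\approx t/2$, the far part $C\sigma^p\sum_{k<t/2}\gamma_k^p\prod_{k<j<t}(1-\mu\gamma_j)$ has size $C\sigma^p\beta^p\log t\cdot e^{-\Theta(\mu\beta t^{a})}$. It is produced by the noise and carries no factor $\breg(\sol,\point_1)$. So it cannot be ``merged into the transient'' $\frac{2}{K}\breg(\sol,\point_1)e^{-\mu\beta t^{a}/2}$, which vanishes when $\point_1=\sol$. Nor can it be absorbed into $At^{-a}$: that would need $\mu\beta t^{a}\log t\cdot e^{-\Theta(\mu\beta t^{a})}\lesssim 1$. At $t=t_0$, however, $\mu\beta t_0^{a}=2a$ is fixed while $\log t_0=\frac1a\log\frac{2a}{\mu\beta}$ is unbounded as $\mu\beta\to0$.

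A cleverer split will not fix this. Take $\point_1=\sol$ and the recursion with equality. Since $\mu\beta\sum_{j<t_0}j^{-1/p}\le\mu\beta t_0^{a}/a=2$, every product $\prod_{k<j<t_0}(1-\mu\gamma_j)$ is at least $e^{-2-o(1)}$ as $\mu\beta\to0$. Hence $D_{t_0}\ge(e^{-2}-o(1))\,C\sigma^p\beta^p\log t_0$. Reaching the stated constant, even using the extra factor $p$ it has over $\frac{2}{K}A$, requires $D_{t_0}\le\frac{p}{a}C\sigma^p\beta^p$. So the stated bound cannot be extracted from the unshifted recursion uniformly in $\mu\beta$. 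You would need an extra noise-driven transient term, something like $C\sigma^p\beta^p\log t_0\prod_{t_0\le k<t}(1-\mu\gamma_k)$, or a lower bound on $\mu\beta$.

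The paper does not do this step by hand. It invokes \citep[Theorem~24]{JLMQ24} and quotes the resulting bound in the shifted index $t+t_0$. That form is suited to a recursion with contraction $1-\mu\beta(t+t_0)^{-1/p}$ and forcing $d/(t+t_0)$, which has no early phase. Passing from your unshifted recursion (contraction $1-\mu\beta t^{-1/p}$, forcing $d/t$) to that form is exactly what your sketch leaves open.

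A smaller point concerns your transient estimate: the threshold does not always ``absorb the $-1$''. The inequality $\frac1a(t^{a}-1)\ge\frac12t^{a}$ needs $t^{a}\ge\frac{2}{2-a}$. For $t\ge2$ you can instead use $\sum_{k<t}k^{-1/p}\ge(t-1)t^{-1/p}\ge\frac12t^{a}$. At $t=1$, which is admissible when the threshold is below $1$ (possible for $p\le 3/2$), the product is empty and the factor $e^{-\mu\beta/2}<1$ is simply not available.
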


\begin{proof}
	Following the same arguments as in the proof of \cref{thm:discrete_relative_strong_variable_learning_rate} but starting from \cref{lemma:LMD_diff_of_F,lemma:LMD_heavy_descent} instead, we get
	\begin{align}
		\exof*{\breg(\sol, \next[\point])} 
		&\leq \exof*{\breg(\sol, \curr[\point])} - \strong \curr[\step] \exof*{\breg(\sol, \curr[\point])} + \dfrac{2^{\momexp-1} \diamX^{2-\momexp} \sdev^{\momexp}}{\momexp \hcst^{\momexp-1}} \curr[\step]^{\momexp}
		\notag\\
		&= (1- \strong \curr[\step]) \exof*{\breg(\sol, \curr[\point])} + \dfrac{2^{\momexp-1}\diamX^{2-\momexp} \sdev^{\momexp}}{\momexp \hcst^{\momexp-1}} \curr[\step]^{\momexp}.
	\label{eq:LMD_bregman_recurrence}
	\end{align}
	In particular, letting $\curr[\step] = \beta \run^{-1/\momexp}$, we obtain
	\begin{equation}
		\exof*{\breg(\sol, \next[\point])} \leq \parens*{1 - \frac{\strong\beta}{\run^{1/\momexp}}} \exof*{\breg(\sol, \curr[\point])} + \dfrac{2^{\momexp-1} \diamX^{2-\momexp} \sdev^{\momexp} \beta^{\momexp}}{\momexp \hcst^{\momexp-1}} \frac{1}{\run}.
	\end{equation}
	Defining $c = \strong \beta$ and $d = 2^{\momexp-1}\hcst^{1-\momexp} \diamX^{2-\momexp} \sdev^{\momexp} \beta^{\momexp}/momexp$, we can then use a non-asymptotic version of Chung's lemma (\cf \citep[Theorem 24]{JLMQ24}) to get that, for every $\run \geq \run_0 \geq \bracks*{ \frac{2(\momexp-1)}{\momexp c} }^{\momexp/(\momexp-1)}$, 
	\begin{align}
		\exof*{\breg(\sol, \curr[\point])} &\leq \dfrac{2d}{c} \parens*{\run + \run_0}^{- \frac{\momexp-1}{\momexp}} + \breg(\sol, \point_1) \exp\parens*{ - \dfrac{c \run}{(\run + \run_0)^{1/\momexp}} }
		\notag\\
		&\leq \dfrac{2d}{c} \run^{-\frac{\momexp-1}{\momexp}} + \breg(\sol, \point_1) \exp\parens*{ - \dfrac{c \run}{(2\run)^{1/\momexp}} }
		\notag\\
		&= \dfrac{2d}{c} \run^{-\frac{\momexp-1}{\momexp}} + \breg(\sol, \point_1) \exp\parens*{ - \frac{c}{2} \run^{\frac{\momexp-1}{\momexp} }}
	\eqcomma
	\end{align}
	and it only remains to use the upper bound $\norm{\curr[\point] - \sol}^2 \leq \frac{2}{\hcst} \breg(\sol, \curr[\point])$ to prove the claimed convergence rate.
\end{proof}

We conclude this section by proving \cref{thm:str-LMD}, restated here for completeness, which provides refined convergence rates for \eqref{eq:LMD} under the standard stepsize $\curr[\step] \propto 1/\run$.

\STRLMD*


\begin{proof}
	Coming back to \cref{eq:LMD_bregman_recurrence} and putting $\curr[\step] = \beta \run^{-1}$, we obtain
	\begin{equation}
		\exof*{\breg(\sol, \next[\point])} \leq \parens*{1 - \frac{\beta \strong}{\run}}\exof*{\breg(\sol, \curr[\point])} + \frac{d}{\run^{\momexp}}
	\eqcomma
	\end{equation}
	and the asymptotic convergence rates are therefore direct consequences of Chung's lemma (see \eg \citep[Lemma 4, p.45]{Pol87}).
\end{proof}

\begin{remark}
	Non-asymptotic convergence rates could also be obtained through the use of non-asymptotic Chung-type estimates, such as those in \citep[Theorem 24]{JLMQ24}, but we restrict our presentation here to asymptotic bounds for simplicity.
\end{remark}

%

\begin{remark}
	Analogous to the convergence rates, we obtain that for every target $\precval > 0$, the stepsize sequence $\curr[\step] = \beta/\time$ achieves the accuracy $\exof*{\norm{\curr[\point] - \sol}^2} \leq \precval$ for every times
	\begin{equation}
		\time = \begin{cases}
			\Omega\parens*{\precval^{-\frac{1}{\momexp-1}}} & \text{if } \momexp < 1 + \beta\strong\\
			\Omega\parens*{\bracks*{ \frac{1}{\precval} \log\parens*{\frac{1}{\precval}} }} & \text{if } \momexp = 1 + \beta\strong\\
			\Omega\parens*{\precval^{-\frac{1}{\beta\strong}}} & \text{if } \momexp > 1 + \beta\strong.
		\end{cases}
	\end{equation}
	In particular, the first two regimes---that is, when $\momexp \leq 1 + \beta\strong$---matches the complexity rate obtained with a optimally tuned constant stepsize. By contrast, when $\momexp > 1 + \beta \strong$, the resulting complexity is always worse than that of the constant stepsize rule, although it remains sharper than the convex case as long as $\momexp \leq 1 + \frac{\beta\strong}{1-\beta\strong}$. This gap with the optimal lower bounds stems from our use of relative smoothness. Indeed, if one could take $\beta = 1/\strong$, the complexity would become $\Omega(\precval^{-1/(\momexp-1)})$ for all $\momexp \in (1, 2]$, thereby recovering the optimal lower bound for $\momexp=2$ and matching the constant-stepsize complexity when $\momexp<2$. However, the argument used in \cref{lemma:LMD_heavy_descent} additionally requires the condition $\beta \leq 1/(\momexp \lips)$, preventing this optimal stepsize choice. That said, we believe this limitation is methodological rather than fundamental. For instance, \citet{Lu19} obtained the optimal complexity bound $\Omega(1/\precval)$ for \eqref{eq:SMD} with stepsize $\curr[\step] \propto 1/\run$, under the relative boundedness condition
	\begin{equation}
		\exof*{\dnorm{\orcl(\point;\seed)}^2} \leq M^2 \dfrac{\breg(\base, \point)}{\norm{\base - \point}^2} \quad \text{for every } \point,\base \in \points
	\end{equation}
	on the stochastic oracle, showing that optimal rates are achievable in stochastic mirror descent under refined assumptions. Extending this approach to heavy-tailed noise is, however, nontrivial. In particular, we conjecture that matching the optimal convergence rate $\bigoh\parens*{\time^{-2(\momexp-1)/\momexp}}$ requires working with the $\momexp/2$-th power of the Bregman divergence, as was done by \citet{FHL25} for projected SGD (where the Bregman divergence reduces to the Euclidean distance). We therefore leave this question open for future work.
\end{remark}

\bibliographystyle{icml}
\bibliography{bibtex/IEEEabrv,bibtex/Bibliography-PM,bibtex/HeavyTailsMD}

\end{document}